\newcommand{\RR}{\mathbb{R}}
\newcommand{\CC}{\mathbb{C}}
\newcommand{\HH}{\mathbb{H}}
\newcommand{\PP}{\mathbb{P}}
\newcommand{\op}{\mathcal{O}\mathfrak{p}}
\DeclareMathOperator{\Osc}{Osc}
\DeclareMathOperator{\Ep}{Ep}
\DeclareMathOperator{\rk}{rk}
\DeclareMathOperator{\Dev}{Dev}
\DeclareMathOperator{\II}{II}
\DeclareMathOperator{\Hess}{Hess}
\DeclareMathOperator{\Inj}{Inj}
\DeclarePairedDelimiter{\norm}{\lVert}{\rVert}
\newtheorem{thm}{Theorem}[section]
\newtheorem{lemma}[thm]{Lemma}
\newtheorem{prop}[thm]{Proposition}
\newtheorem{coro}[thm]{Corollary}
\newtheorem{ques}[thm]{Question}
\theoremstyle{remark}
\newtheorem{rmk}[thm]{Remark}
\theoremstyle{definition}
\newtheorem{defi}[thm]{Definition}
\newtheorem{ex}[thm]{Example}
\title{Epstein-Poincar\'e surfaces for $G-$opers}
\author{Joaqu\'in Lema}
\address{Department of Mathematics, Boston College Chestnut Hill, MA 02467}
\email{lemajo@bc.edu}
\date{}
\begin{document}
\maketitle
\begin{abstract}
 Given a complex, simple Lie group $G$ of adjoint type, we introduce the notion of an Epstein-Poincar\'e surface associated to a $G$-oper.
 These surfaces generalize Epstein's classical construction for $G=PGL_2 (\mathbb{C})$.
 As an application, we provide a criterion ensuring that the holonomy of the oper is $\Delta$-Anosov.
 Finally, we discuss how the developing map of the oper interacts with domains of discontinuity of the holonomy (whenever Anosov) and the transversality properties it satisfies.
 Along the way, we provide a quick review of opers that we hope serves as a self-contained introduction.
\end{abstract}

\section{Introduction}

\subsection{The Ahlfors-Weill Theorem}

A \emph{complex projective structure} on a Riemann surface $X$ is a $(\mathbb{P}^1, PGL_2 (\mathbb{C}))$ structure (in the language of \cite[Chapter 3]{thurston2022geometry}) with an atlas compatible with the complex structure of $X$.
By a developing map construction, this information is equivalent to a pair $(\mathcal{D},\rho)$, where $\mathcal{D}: \tilde{X} \rightarrow \mathbb{P}^1$ is a locally injective holomorphic map equivariant with respect to the deck group action and a representation $\rho: \pi_1 (X) \rightarrow PGL_2 (\mathbb{C})$ called the \emph{holonomy} of the complex projective structure.

Thinking of a developing map as a holomorphic curve into $\mathbb{P}^1$, one can check that the pair $(\mathcal{D},\rho)$ is determined (up to isomorphisms) by a special type of complex ODE, classically referred to as the Schwarzian differential equation.
This perspective allows one to identify the space $\mathcal{P}(X)$ of isomorphism classes of complex projective structures on $X$ with the space $H^0 (X,K^2)$ of holomorphic quadratic differentials on $X$ (see \cite{dumas2009complex}).

If $X$ is a hyperbolic Riemann surface, i.e., $X$ is biholomorphic to $\mathbb{H}^2/\rho_F (\pi_1 (X))$ for some discrete and faithful representation $\rho_F : \pi_1 (X) \rightarrow PSL_2 (\mathbb{R})$, the pair $(\mathcal{D}_F,\rho_F)$ defines what we call the Fuchsian projective structure, where $\mathcal{D}_F$ is the inclusion of $\tilde{X} \cong \mathbb{H}^2 \subset \mathbb{P}^1$.
Thinking of $PGL_2 (\mathbb{C})$ as the isometry group of $\mathbb{H}^3$, the representation $\rho_F$ defines a hyperbolic structure on $S \times \mathbb{R}$ that is \textit{convex cocompact} (see \cite[Chapter 8]{thurston2022geometry}).
A representation $\rho: \pi_1 (X) \rightarrow PGL_2 (\mathbb{C})$ giving rise to such a convex cocompact hyperbolic structure is called \emph{quasi-Fuchsian}, or just convex cocompact. 
These form an open set of the space of all discrete and faithful representations of $\pi_1 (X)$ into $PGL_2 (\mathbb{C})$ and are characterized by the property that their limit set $\Lambda \subset \mathbb{P}^1$ is a Jordan curve that splits $\mathbb{P}^1$ into two (topological) disks on which the representation acts properly discontinuously.

In particular, the holonomy of a complex projective structure close to the Fuchsian structure will induce a quasi-Fuchsian representation.
Ahlfors and Weill gave an effective bound on how open the convex cocompact property really is.

\begin{thm}[\cite{ahlfors1962uniqueness}]
\label{thm:ahlforsweill}
Let $X$ be a compact hyperbolic Riemann surface, and identify $\mathcal{P}(X)$ with $H^0 (X,K^2)$ using the Fuchsian projective structure as a basepoint.
Then, the complex projective structure associated to an $\alpha \in H^0 (X,K^2)$ satisfying $\norm{\alpha} < \frac{1}{2}$ will have quasi-Fuchsian holonomy. 
Here $\norm{\cdot}$ denotes the norm on $K^2$ defined by the hyperbolic metric of constant curvature $-1$.
\end{thm}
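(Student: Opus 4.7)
The plan is to prove the theorem via the classical Ahlfors-Weill quasiconformal reflection, realizing the developing map of the perturbed projective structure as the restriction to $\mathbb{H}$ of a global $\rho_F$-equivariant quasiconformal self-homeomorphism $f: \mathbb{P}^1 \to \mathbb{P}^1$. Once such an $f$ is produced, it conjugates $\rho_F$ to the holonomy $\rho$ of the new projective structure, and a quasiconformal conjugate of a Fuchsian representation is automatically quasi-Fuchsian: its limit set is the quasicircle $f(\mathbb{R}\cup\{\infty\})$, which bounds two Jordan domains on which $\rho$ acts properly discontinuously by the equivariance $f\circ\rho_F(\gamma)=\rho(\gamma)\circ f$.

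First I would lift $\alpha$ to a $\rho_F$-equivariant holomorphic quadratic differential $\phi(z)\,dz^2$ on $\mathbb{H}$. In these coordinates the hyperbolic pointwise norm at $z$ is $y^2|\phi(z)|$ where $y=\mathrm{Im}\,z$, so the hypothesis becomes $\sup_z y^2|\phi(z)|<1/2$. The Ahlfors-Weill Beltrami coefficient $\mu$ on $\mathbb{P}^1$ is then defined by $\mu\equiv 0$ on $\mathbb{H}$ together with an explicit formula on the lower half plane involving $\phi(\bar z)$ and the factor $(z-\bar z)^2$. This formula is engineered so that (i) $\|\mu\|_\infty = 2\sup_z y^2|\phi(z)| < 1$, and (ii) $\mu$ is $\rho_F$-equivariant as a $(-1,1)$-form, the latter being a direct computation that uses crucially that elements of $\rho_F(\pi_1(X))\subset PSL_2(\mathbb{R})$ have real coefficients, so that they intertwine the reflection $z\mapsto\bar z$ with themselves. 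Applying the Measurable Riemann Mapping Theorem yields a normalized quasiconformal homeomorphism $f:\mathbb{P}^1\to\mathbb{P}^1$ with $\partial_{\bar z}f = \mu\,\partial_z f$; the equivariance of $\mu$ translates into $f\rho_F f^{-1}$ being a well-defined representation $\rho:\pi_1(X)\to PGL_2(\mathbb{C})$, our candidate holonomy.

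The main step, and the main obstacle, is verifying that $f|_{\mathbb{H}}$ --- which is holomorphic since $\mu$ vanishes there --- has Schwarzian derivative equal to $\phi$. Once this is established, the correspondence $\mathcal{P}(X)\cong H^0(X,K^2)$ recalled in the introduction identifies $f|_{\mathbb{H}}$ with the developing map of the projective structure determined by $\alpha$, and $\rho$ with its holonomy, completing the argument. The Schwarzian identification is the technical heart of Ahlfors-Weill: one observes that inside $\bar{\mathbb{H}}$, the quasiconformal solution $f$ agrees with an explicit holomorphic family of M\"obius transformations tangent to $f|_{\mathbb{H}}$ along $\mathbb{R}$, chosen so that the third-order tangency governing the Schwarzian recovers $\phi$. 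This tangency calculation is precisely what singles out the Ahlfors-Weill formula for $\mu$ and closes the loop between the geometric and analytic sides of the statement.
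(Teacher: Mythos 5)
Your proposal reproduces the original Ahlfors--Weill argument: build a Beltrami coefficient on the lower half-plane from the lifted differential $\phi$, solve the Beltrami equation equivariantly, and identify the conformal restriction to $\mathbb{H}$ with the developing map via its Schwarzian. This is a legitimate route to the theorem, but it is not the one this paper is organized around. The paper cites the result and then emphasizes Epstein's later geometric proof, which is what the rest of the article generalizes: one builds from the hyperbolic metric at infinity and the developing map an equivariant surface $\Ep:\tilde{X}\to\mathbb{H}^3$, shows that the bound $\norm{\alpha}<\tfrac12$ forces its principal curvatures into $(-1,1)$, and concludes either that $\Ep$ is a quasi-isometric embedding or that its equidistant surfaces foliate $\mathbb{H}^3$; either statement yields quasi-Fuchsian holonomy and, moreover, that the developing map embeds onto a domain of discontinuity. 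What your approach buys is economy and sharp constants inside classical one-complex-variable machinery; what Epstein's approach buys is a formulation entirely in terms of curvature data of an equivariant surface in the symmetric space, which is exactly what admits the higher-rank generalization via $\tau_\Theta$-nearly geodesic surfaces carried out in this paper.

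One caution on your ``main step.'' If you first solve the Beltrami equation to get $f$ and only then try to verify $S(f|_{\mathbb{H}})=\phi$, you risk circularity: the uniqueness clause of the measurable Riemann mapping theorem only identifies $f$ (up to M\"obius) with the explicit osculating-M\"obius extension $G$ of a solution $g$ of $S(g)=\phi$ \emph{after} you have shown that $G$ is itself a global homeomorphism of $\mathbb{P}^1$. Proving that $G$ is injective under the hypothesis $\sup y^2|\phi|<\tfrac12$ is the genuinely hard analytic content of Ahlfors--Weill (it is also where univalence of $g$ on $\mathbb{H}$ is obtained, which you cannot assume in advance), and your sketch asserts this tangency-and-homeomorphism property rather than proving it. The rest of the outline --- the norm computation $\norm{\mu}_\infty=2\sup y^2|\phi|<1$, the $\rho_F$-equivariance of $\mu$ using real coefficients and conjugation, and the conclusion that a quasiconformal conjugate of a Fuchsian group is quasi-Fuchsian --- is correct as stated.
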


In particular, the developing map of the projective structure defined by $\alpha$ will be an embedding onto one of the domains of discontinuity.
It is well-known that this result is in fact sharp, as there are complex projective structures defined by an $\alpha$ with hyperbolic norm $\frac{1}{2}$ which are not quasi-Fuchsian.

Ahlfors and Weill's original proof reduces to a clever complex analysis computation.
Later, Epstein in \cite{epstein2024envelopes} was able to generalize Theorem \ref{thm:ahlforsweill} by using the geometry of hyperbolic three-space.
The main ingredient he introduced is the notion of what we now call \emph{Epstein surfaces}, which have recently proved useful in and around hyperbolic geometry (see for instance \cite{krasnov2008renormalized}, \cite{bridgeman2025universal}).

An Epstein surface is a map $\Ep: \tilde{X} \rightarrow \mathbb{H}^3$ built from a conformal metric at infinity and the developing map of the projective structure.
Furthermore, the surface is equivariant with respect to the holonomy.
Whenever the input is the hyperbolic metric of constant negative curvature $-1$ at infinity, we refer to $\Ep$ as the \emph{Epstein-Poincar\'e} (EP) surface associated to the conformal structure.

Epstein proved that under the hypothesis of Theorem \ref{thm:ahlforsweill}, the EP-surfaces are embedded, and have principal curvatures inside the $(-1,1)$ interval.
This key property implies that:

\begin{description}
   \item[Epstein's approach] $\Ep$ is a quasi-isometric embedding.
   \item[Foliation approach] The surfaces equidistant to the Epstein surface foliate $\mathbb{H}^3$.
\end{description}

Either of these two facts implies Theorem \ref{thm:ahlforsweill}.

The goal of this article is to propose a higher rank generalization of EP-surfaces to the context of $G-$opers, which one can think of as a higher rank generalization of complex projective structures, as we now describe.

\subsection{Generalized Epstein-Poincar\'e surfaces}

It is classically known that if one composes the developing map of a complex projective structure, with the Veronese embedding $V: \mathbb{P}^1 \rightarrow \mathbb{P}^n$, the resulting map from $\tilde{X}$ to $\mathbb{P}^n$ is described by a particular type of ODE of order $n+1$ (see for instance \cite{hejhal1975monodromy}, \cite{wilczynski1906projective}).
One can perturb this ODE using $k-$differentials ($2\leq k \leq n+1$), to obtain maps from $\tilde{X}$ to $\mathbb{P}^n$ equivariant with respect to a representation $\rho : \pi_1 (X) \rightarrow PGL_{n+1} (\mathbb{C})$.

Beilinson–Drinfeld greatly generalized these classical constructions under the general framework of \emph{opers} (see \cite{beilinson2005opers}).
Given $G$ a complex semisimple Lie group of adjoint type with full flag manifold $\mathcal{B}$, a $G-$oper on the Riemann surface $X$ is a pair $(\mathcal{D},\rho)$, where $\mathcal{D}: \tilde{X} \rightarrow \mathcal{B}$ is a holomorphic immersion, equivariant with respect to a holonomy representation $\rho: \pi_1 (X) \rightarrow G$ and with the property that $\mathcal{D}$ is everywhere tangent to the set of principal directions $\mathcal{O} \subset T \mathcal{B}$.

The set $\mathcal{O}$ is defined as the set of tangent vectors to principal embeddings $\mathcal{P} : \mathbb{P}^1 \rightarrow \mathcal{B}$ (see Section \ref{subsection:triplets}).
When $G = PGL_{n+1} (\mathbb{C})$, principal embeddings are defined as the flags osculating the image of $\mathbb{P}^1$ under the Veronese embedding to $\mathbb{P}^n$ (see Example \ref{ex:veronese}).

Requiring the curve to be tangent to $\mathcal{O}$ imposes strong conditions on the curve.
Beilinson-Drinfeld proved that if we fix a principal embedding of $\phi :PGL_2 (\mathbb{C}) \hookrightarrow G$, then for any pair of opers $\mathcal{D}_1, \mathcal{D}_2$ there is a canonical holomorphic map $\Osc (\mathcal{D}_1,\mathcal{D}_2) : \tilde{X} \rightarrow G$ interpolating $\mathcal{D}_1$ and $\mathcal{D}_2$ in the sense that $\Osc (\mathcal{D}_1,\mathcal{D}_2) (z) . \mathcal{D}_1 (z) = \mathcal{D}_2 (z)$, plus some properties we describe later (see Section \ref{section:ds}). 
This generalizes the notion of osculating M\"obius maps to the context of $G-$opers (see \cite{dumas2009complex}, \cite{thurston1986zippers}).

With these osculating maps in hand, one can prove that the space of isomorphism classes of opers $\op_X (G)$ is an affine space modeled on the Hitchin base $\bigoplus_{i=1}^l H^0 (X, K^{m_i+1})$, for $m_i$ the exponents of $G$. 
However, when $\rk  (G) >1$ the identification depends on some Lie theoretic choices that we describe in detail in Section \ref{sec:introopers}.
We introduce a specific parametrization $\op_X (G) \cong \bigoplus_{i=1}^l H^0 (X,K^{m_i+1})$ that we call \emph{normalized} which fixes some of these choices.

Picking a Fuchsian projective structure $(\mathcal{D}_F, \rho_F)$, and composing it with a principal embedding $\mathcal{P}: \mathbb{P}^1 \rightarrow \mathcal{B}$ produces a $G-$oper which we will also call \textit{Fuchsian}.
The holonomy of this oper preserves a totally geodesic copy of the hyperbolic plane inside the symmetric space $\mathbb{X}$ of $G$ (thought of as a real Lie group), that we can parametrize as $\Ep^0 : \tilde{X} \rightarrow \mathbb{X}$ in a way that is compatible with $\mathcal{P} \circ D_F$ (as described in Section \ref{sec:epstein}).

The osculating transformations let us now push this surface to any other $G-$oper:

\begin{defi}
\label{defi:naiveepstein}
 Fix a principal embedding $\phi: PGL_2 (\mathbb{C}) \rightarrow G$. 
 Given $(\mathcal{D},\rho)$ a $G-$oper over $X$, we define its \textit{associated Epstein-Poincar\'e surface} to be $\Ep : \tilde{X} \rightarrow \mathbb{X}$ defined as:
   $$\Ep (z) = \Osc (\mathcal{P}_\phi \circ \mathcal{D}_F, \mathcal{D}) (z) . \Ep^0 (z),$$
 where $\mathcal{P}_\phi$ is the principal embedding defined by $\phi$, and $(\mathcal{D}_F, \rho_F)$ is a Fuchsian oper as before (see Figure \ref{fig:epstein}).
\end{defi}

The surface we obtain is independent of the choice of the reference principal embedding.
When $G = PGL_2 (\mathbb{C})$, this recovers the notion of EP-surface (see Proposition \ref{prop:epsteinisepstein}).

Just as in the $PGL_2 (\mathbb{C})$ situation, EP maps can be naturally ``thickened'' to get a one-parameter family of equidistant surfaces in $\mathbb{X}$, generalizing the $\mathbb{H}^3$ picture to higher rank (see \cite[Section 5.8]{bridgeman2024epstein}).
This fact will not play a role in our applications, thus we defer it to Appendix \ref{sec:equidistant}.

\begin{rmk}
All of this makes sense in a ``universal'' setting (i.e., without asking for a surface group representation) if one simply studies holomorphic maps from the disk to $\mathcal{B}$ tangent to $\mathcal{O}$.
\end{rmk}

\subsection{Main results and Open Questions}

Just as Fuchsian projective structures belong to the family of quasi-Fuchsian representations, the holonomy of a Fuchsian oper is an example of an \textit{Anosov representation}.
Initially introduced by Labourie \cite{labourie2006} for $G = SL_n (\mathbb{R})$, and later generalized by Guichard-Wienhard \cite{guichard2012anosov} for general semisimple Lie groups, the Anosov condition is nowadays understood as a higher rank generalization of convex cocompact representations in rank one.

To be Anosov is an open condition in the character variety of representations to $G$.
However, verifying the condition for an explicit (non-Fuchsian) representation, or deducing the Anosov condition from infinitesimal data, has proven challenging.
See for instance \cite{bronstein2025anosov}, \cite{filip2021uniformization}, \cite{riestenberg2024quantified}, \cite{zhang2025non} for works in this direction.

Crucially for us, Davalo \cite{davalo2025nearly} produced a criterion to verify Anosovness using what he called $\tau\text{-}$nearly geodesic surfaces. 
Given $\tau \subset \partial \mathbb{X}$ a stratum of the visual boundary (i.e., the $G-$orbit of a point), the $\tau\text{-}$nearly geodesic property is a convexity condition imposed on the surface that forbids it from entering inside any tangent horoball based at a point in $\tau$.
This generalizes the notion of surfaces in $\mathbb{H}^3$ with principal curvatures in $(-1,1)$.
Given $\Theta \subset \Delta$ a Weyl-orbit of simple roots, Davalo describes a stratum $\tau_\Theta$, such that if we are able to find a $\tau_\Theta-$nearly geodesic surface equivariant with respect to a given representation to $G$, then the representation will be $\Theta-$Anosov.

Having our EP-surfaces in hand and Davalo's criteria, we proceed to find opers with Anosov holonomy.

\begin{defi}
 Given $(D,\rho)$ a $G-$oper associated to the hyperbolic Riemann surface $X$, we say that:
   \begin{itemize}
      \item $(D,\rho)$ is \textit{quasi-Hitchin} if its holonomy is $\Delta-$Anosov and can be connected to the Fuchsian oper by a path of opers with $\Delta-$Anosov holonomy.
      \item $(D,\rho)$ is \textit{EP-witnessed} if the EP-surface associated to it is $\tau_\Theta-$nearly geodesic, for $\Theta$ any Weyl orbit of simple roots.
   \end{itemize}
 We denote the isomorphism classes of quasi-Hitchin opers as $\mathcal{QH} (X)$, and by $\mathcal{E} (X)$ the EP-witnessed classes.
\end{defi}

Since being $\tau_\Theta-$nearly geodesic is an open condition, $\mathcal{E}(X)$ is open and its connected component containing the Fuchsian oper is contained in $\mathcal{QH}(X)$ by Davalo's work.
Explicit control on the second fundamental form of EP-surfaces yields the following neighborhood of EP-whitnessed opers:

\begin{thm}
\label{thm:maingeneral}
Let $\mathfrak{g}$ be a simple Lie algebra and consider $\mathcal{U}_X (G)$ the set of differentials $\vec{\alpha}\in \bigoplus_{i=1}^l H^0 (X,K^{m_i+1})$ satisfying the inequality:

\[
\frac{\left( \sum_{i=1}^l m_i \norm{\alpha_i}^2 + \norm {\nabla \alpha_i}^2 \right)^{\frac{1}{2}} + \frac{l c_{\mathfrak{g}}}{2} \sum_{i=1}^l \norm{\alpha_i}^2}{(1 - \frac{\norm{\alpha_1}}{2})^2 + \frac{1}{4} \sum_{i=2}^l \norm{\alpha_i}^2} \leq 2 \sqrt{\kappa (e,f)} \cos^2 (2\phi_{\Theta_{S}}), 
\]

pointwise, where:
\begin{itemize}
   \item $(e,h,f)$ is a principal $\mathfrak{sl}_2$ triplet.
   \item $\kappa$ is the Killing form.
   \item $m_i$ for $i=1,\ldots,l$ are the exponents of $\mathfrak{g}$.
   \item $\phi_{\Theta_{S}}$ is the minimal angle between $h$ and the walls defined by the Weyl orbit of the short root.
\end{itemize}

Finally, the constant $c_{\mathfrak{g}}$ depends only on the Lie algebra, and the norms are taken with respect to the hyperbolic metric on $X$ with constant curvature $-2$.\footnote{See Section \ref{sec:notation} below for the conventions we adopt in this article.}
Then under a normalized parametrization $\op_X (G) \cong \bigoplus_{i=1}^l H^0 (X,K^{m_i+1})$ , any oper associated with $\vec{\alpha} \in \mathcal{U}_X (G)$ is quasi-Hitchin.
\end{thm}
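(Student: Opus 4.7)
The plan is to apply Davalo's $\tau_\Theta$-nearly geodesic criterion uniformly over every Weyl orbit $\Theta$ of simple roots. Concretely, I would show (i) every $\vec{\alpha} \in \mathcal{U}_X(G)$ is EP-witnessed, i.e.\ $\mathcal{U}_X(G) \subseteq \mathcal{E}(X)$, and (ii) $\mathcal{U}_X(G)$ is path-connected to the Fuchsian basepoint $\vec{0}$ inside $\mathcal{E}(X)$. Together these place any $\vec{\alpha} \in \mathcal{U}_X(G)$ in the connected component of $\mathcal{E}(X)$ containing the Fuchsian oper, which by Davalo's theorem is contained in $\mathcal{QH}(X)$.

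For (ii) the natural candidate path is the rescaling $\{t\vec{\alpha}\}_{t \in [0,1]}$. Evaluated on $t\vec{\alpha}$, the inequality defining $\mathcal{U}_X(G)$ rearranges to a quadratic $P(t) = c_0 + c_1 t + c_2 t^2 \geq 0$ with $c_0 > 0$, $c_1 \leq 0$, and $c_2$ whose sign is governed by the Lie-theoretic constant $c_{\mathfrak{g}}$: choosing $c_{\mathfrak{g}}$ large enough to force $c_2 \leq 0$ makes $P$ concave on $[0,1]$, so its minimum is attained at an endpoint. Both endpoints are nonnegative ($P(0) > 0$ trivially, $P(1) \geq 0$ by hypothesis), so the full ray lies in $\mathcal{U}_X(G)$.

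The heart of the argument is (i), the second fundamental form estimate for $\Ep$. Differentiating Definition \ref{defi:naiveepstein}, $\Ep(z) = \Osc(\mathcal{P}_\phi \circ \mathcal{D}_F, \mathcal{D})(z) . \Ep^0(z)$, and using the structural properties of osculating maps from Section \ref{section:ds}, the first derivative of $\Osc$ is encoded by a Maurer--Cartan form whose components along the principal grading of $\mathfrak{g}$ read off the normalized holomorphic differentials $\alpha_i$ in the weight-$(-m_i)$ piece, and a further derivative introduces $\nabla \alpha_i$ together with brackets against the principal $\mathfrak{sl}_2$-triplet $(e,h,f)$. Pushing these expressions through the left action on $\Ep^0$, the induced metric on $\Ep$ inherits the factor $(1 - \norm{\alpha_1}/2)^2 + \frac{1}{4}\sum_{i\geq 2}\norm{\alpha_i}^2$, with $\alpha_1$ playing the asymmetric role of a conformal rescaling of the Fuchsian metric (matching Ahlfors--Weill's $\norm{\alpha} < 1/2$ when $G = PGL_2(\mathbb{C})$), while the shape operator decomposes into a tangential piece of norm at most $\bigl( \sum m_i \norm{\alpha_i}^2 + \norm{\nabla \alpha_i}^2 \bigr)^{1/2}$ coming from the derivative of $\Osc$, and a zeroth-order piece of norm at most $\frac{l c_{\mathfrak{g}}}{2} \sum \norm{\alpha_i}^2$ coming from the brackets against $(e,h,f)$, where $c_{\mathfrak{g}}$ absorbs the structure constants of $\mathfrak{g}$.

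The final step is to translate this shape operator bound into $\tau_\Theta$-near geodesicity. Davalo's criterion asserts that $\Ep$ is $\tau_\Theta$-nearly geodesic exactly when the operator norm of $\II$ measured in the induced metric stays below a threshold governed by the angular gap between $h$ and the walls of the Weyl orbit of the short root; the Killing-form normalization $2\sqrt{\kappa(e,f)}$ and the $\cos^2(2\phi_{\Theta_S})$ factor together prescribe exactly the displayed right-hand side. Dividing the shape operator bound by the induced-metric factor produces the displayed left-hand side, closing the argument. I expect the main technical challenge to lie in the Lie-theoretic bookkeeping of the shape operator computation: obtaining the correct tangent/normal decomposition and the sharp constants requires decomposing $\mathfrak{g}$ into irreducibles under the principal $\mathfrak{sl}_2$ and exploiting Weyl symmetry to control cross-terms between distinct weight spaces, since a naive pointwise estimate is almost certainly too loose to recover the stated bound.
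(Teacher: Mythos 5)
Your overall architecture matches the paper's: bound the second fundamental form of the EP-surface, invoke Davalo's criterion for $\tau_\Theta$-near geodesicity over every Weyl orbit $\Theta$, and use openness plus a path to $\vec{0}$ to land in $\mathcal{QH}(X)$. But the two steps that carry the actual content are asserted rather than proved, and one of them rests on a misstatement of the criterion you are applying. Davalo's sufficient condition (Theorem \ref{thm:davalosufficient}) is not ``the operator norm of $\II$ in the induced metric stays below a threshold''; it is $\norm{\II(v,v)} < c_\Theta\,|\alpha(\mu(du(v)))|^2$, where $\mu$ is the Cartan projection of the derivative and $\alpha$ ranges over the roots in $\Theta$. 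The denominator $(1-\tfrac{\norm{\alpha_1}}{2})^2 + \tfrac14\sum_{i\geq 2}\norm{\alpha_i}^2$ in the statement is not an induced-metric normalization: it is a lower bound for $|\alpha(\mu(\omega(v_\phi)))|^2$ obtained by comparing the angle between $\mu(\omega(v_\phi))$ and the regular direction $h$ with the angle to the walls of the Weyl chamber (Proposition \ref{prop:cartan} and Lemma \ref{lemma:angles}); this requires the regularity analysis of Section \ref{sec:regularity} and the orthogonal splitting $\omega = \omega^{\alpha_1} + \sum_{i\geq 2}(\cdots)$, none of which appears in your sketch. Without it, ``dividing by the induced-metric factor'' does not produce the displayed left-hand side, and the $\cos^2(2\phi_{\Theta_S})$ factor has no source.

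The second gap is the mechanism behind the numerator. Your ``tangential piece of the shape operator'' is not a meaningful object ($\II$ is normal-valued by definition), and you give no route by which $\norm{\nabla\alpha_i}^2$ enters. In the paper it enters through the Gauss equation: $\langle\II_{zz},\II_{\overline{zz}}\rangle$ is never computed directly but is traded for $R^g_{z\overline{z}z\overline{z}} - R^{LC}_{z\overline{z}z\overline{z}} + \langle\II_{z\overline{z}},\II_{z\overline{z}}\rangle$; the intrinsic curvature $R^g$ is then bounded via the Liouville equation and the Bochner identity, which is exactly where $\sum\norm{\nabla\alpha_i}^2$ (and the cross terms $\norm{\nabla\alpha_i\,\alpha_j - \alpha_i\nabla\alpha_j}^2$) appear, while the ambient term $R^{LC}$ produces the bracket constants $c_{ijkl}=\kappa([e_{m_i},\theta e_{m_j}],[e_{m_k},\theta e_{m_l}])/\kappa(e,f)$ that get absorbed into $c_{\mathfrak{g}}$. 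Your worry that ``a naive pointwise estimate is almost certainly too loose'' is well placed --- that is precisely why the paper routes the estimate through curvature identities rather than a direct weight-space expansion of $\II_{zz}$. On the connectivity point (ii), your quadratic-in-$t$ rescaling argument is a reasonable sketch, but note the paper uses connectedness to $\vec{0}$ for a different purpose as well: to propagate the angle condition $\frac{1-\norm{\alpha_1}/2}{\sqrt{(1-\norm{\alpha_1}/2)^2+\frac14\sum_{i\geq2}\norm{\alpha_i}^2}}\geq\cos\phi_{min}$ that makes the Cartan-projection lower bound (and hence the right-hand side of the inequality) positive at all, so connectivity cannot be treated as an afterthought independent of step (i).
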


In fact, our methods provide a larger neighborhood described by a more complicated expression (see Theorem \ref{thm:megatheorem}).
We do not expect any of these neighborhoods to be sharp, since we use some rough estimates in the computation in favor of a general result for any $G-$oper.

A $G-$oper is \emph{cyclic} if it is defined by the differential $\alpha \in H^0 (X,K^{m_l+1})$ alone.
In this context, we can get sharper estimates:

\begin{thm}
\label{thm:maincyclic}
Given $\mathfrak{g} \neq \mathfrak{sl}_2 (\mathbb{C})$ a simple Lie algebra, and let $\mathcal{U}_X (G)$ the set of differentials $\alpha \in H^0 (X,K^{m_l+1})$ satisfying:
\[
\frac{ \sqrt{\norm{\alpha}^4 c_l + 2m_l \norm{\alpha}^2 + 2 \frac{\norm{\nabla \alpha}^2}{1 + \frac{1}{4} \norm {\alpha}^2}}}{\left( 2\cot (\phi_{\Theta_{S}}) - \norm{\alpha}\right)^2} \leq \sqrt{\kappa (e,f)} \sin (\phi_{\Theta_{S}}),
\]
where $c_l = \frac{m_l^2 \kappa (h,h)}{\sin^2 (\phi_\theta)}$ for $\theta$ is the highest root of $\mathfrak{g}$, and $\phi_\theta$ is the angle between $h$ and $\ker \theta$.
Then the cyclic oper associated to $\alpha$ under a normalized parametrization is quasi-Hitchin.
\end{thm}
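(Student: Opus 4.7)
The plan is to follow the same high-level strategy as Theorem \ref{thm:maingeneral}: show that the EP-surface of the cyclic oper is $\tau_\Theta$-nearly geodesic for every Weyl orbit $\Theta$ of simple roots, conclude $\Delta$-Anosovness via Davalo's criterion, and then argue that the hypothesis survives along the rescaling path $t \mapsto t\alpha$, $t \in [0,1]$, so that the cyclic oper can be connected to the Fuchsian one through $\Delta$-Anosov opers. The gain in the cyclic setting is rigidity: one expects the EP-surface, its derivatives, and in particular its second fundamental form to be expressible in terms of $\alpha$, $\nabla \alpha$ and the principal triple $(e,h,f)$ alone, with no cross terms between distinct exponents.

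First, I would revisit Definition \ref{defi:naiveepstein} for a cyclic oper. Since $\vec{\alpha}=(0,\dots,0,\alpha)$, the osculating transformation $\Osc(\mathcal{P}_\phi \circ \mathcal{D}_F,\mathcal{D})$ depends on $\alpha$ only through the top-weight nilpotent direction (morally, through $f^{m_l}$) with no interference from lower-exponent nilpotents. Differentiating twice and projecting onto the normal bundle of $\Ep$ inside $\mathbb{X}$ then gives a formula for the second fundamental form $\II$ whose coefficients involve $\alpha$, $\nabla \alpha$ and $(e,h,f)$, but no mixed contributions across exponents. The $\tau_\Theta$-nearly geodesic condition, following Davalo, is a quantitative convexity condition on $\II$ measured against the walls $\ker \Theta$. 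Because $\phi_{\Theta_S}$ is the smallest among the angles $\phi_\Theta$ as $\Theta$ ranges over Weyl orbits of simple roots, the short-root orbit is the extremal case, and producing the bound for $\Theta_S$ automatically handles the others.

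The heart of the argument is then the pointwise estimate. Bounding the contribution of $\alpha$ to $\II$ uses that only the highest-weight component of $\mathcal{D}$ deviates from Fuchsian, so the relevant coefficient is proportional to the norm of $f^{m_l}$ inside $\mathfrak{g}$, which via the Killing form manifests as $c_l = m_l^2 \kappa(h,h)/\sin^2(\phi_\theta)$; the angle $\phi_\theta$ appears because $f^{m_l}$ lies in the root space $\mathfrak{g}_{-\theta}$ for the highest root. The $\norm{\nabla \alpha}^2$ term arises from differentiating the osculating transformation, while the denominator $(2\cot(\phi_{\Theta_S}) - \norm{\alpha})^2$ comes from renormalizing by the induced metric of the EP-surface, which differs from the hyperbolic metric by a correction controlled by $\norm{\alpha}$. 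Once this pointwise inequality is established, quasi-Hitchinness follows because both sides are monotonic in $\norm{\alpha}$ and $\norm{\nabla \alpha}$, so the rescaled family $t\alpha$ stays EP-witnessed for every $t \in [0,1]$, producing an Anosov path to the Fuchsian oper.

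The main obstacle will be the bookkeeping in the second-fundamental-form computation: tracking how the cyclic structure allows one to avoid the Cauchy--Schwarz losses present in the proof of Theorem \ref{thm:maingeneral}, where the $c_\mathfrak{g}$ term and the $\cos^2(2\phi_{\Theta_S})$ factor appear precisely to accommodate arbitrary combinations of lower-order differentials. Replacing those general bounds by their sharp cyclic analogues --- essentially a single-term spherical-trigonometric estimate along $\ker \Theta_S$ --- is what upgrades the $\cos^2(2\phi_{\Theta_S})$ on the right-hand side of Theorem \ref{thm:maingeneral} into the cleaner $\sqrt{\kappa(e,f)}\sin(\phi_{\Theta_S})$ appearing here.
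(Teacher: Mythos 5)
Your high-level strategy is the paper's: bound the second fundamental form of the EP-surface, invoke Davalo's criterion (Theorems \ref{thm:davaloholonomy} and \ref{thm:davalosufficient}) to get $\Delta$-Anosovness, and use monotonicity along $t \mapsto t\alpha$ to connect to the Fuchsian oper. Your intuition that the cyclic case avoids the Cauchy--Schwarz losses is also correct: the paper's mechanism is Lemma \ref{lemma:simplifications}, which shows $\langle \II_{zz},\II_{zz}\rangle$ and $\langle \II_{zz},\II_{z\overline{z}}\rangle$ both vanish when only the top differential is nonzero (the first because $\alpha_1=0$, the second by the Killing-form orthogonality of the graded pieces), so Lemma \ref{lemma:secondform-cyclic} gives an exact equality for $\norm{\II(v_\phi,v_\phi)}^2$ rather than an upper bound.

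The genuine gap is the other half of Davalo's inequality. Theorem \ref{thm:davalosufficient} requires $\norm{\II(v,v)} < c_\Theta\,|\beta(\mu(d\Ep(v)))|^2$, so you must produce a \emph{lower} bound on the simple-root evaluations of the Cartan projection of $d\Ep$, and this is where the denominator $(2\cot(\phi_{\Theta_S}) - \norm{\alpha})^2$ actually comes from --- not, as you claim, from ``renormalizing by the induced metric of the EP-surface.'' Concretely, the paper first shows (Theorem \ref{thm:regularity}, via Proposition \ref{prop:cartan} and Lemma \ref{lemma:angles}) that the angle between $\mu(\omega(v_\phi))$ and $h$ is controlled by $\sum_{i\geq 2}\norm{\alpha_i}^2$, and then converts this into the estimate $\beta(\mu(\omega(v_\phi)))^2 \geq 2H\kappa(e,f)\norm{\beta}\left(\cos\phi_\beta - \sin\phi_\beta \frac{\norm{\alpha}}{2}\right)^2$, whose right-hand side factors as $\frac{\sin^2\phi_\beta}{2}(2\cot\phi_\beta - \norm{\alpha})^2$ times the prefactor. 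Your plan never establishes that the Cartan projection stays a definite angle inside the Weyl chamber (equivalently, that $\Ep$ is a regular immersion), and without that step the right-hand side of Davalo's criterion could degenerate to zero and the whole argument fails. Relatedly, the paper must check that this regularity condition holds throughout the connected component of $0$ (it does, because its failure forces the main inequality to fail); your monotonicity argument for the path $t\alpha$ is fine, but it only closes the loop once this regularity ingredient is in place.
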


In this situation, we cannot obtain a better bound using the methods we have.
Notice that, unlike the Ahlfors-Weill Theorem, the open sets of Anosov opers we get around the identity do depend on the conformal structure we equip the surface with.
One can use standard Cauchy estimates to produce criteria depending only on the norm of the differential and the injectivity radius of the underlying Riemann surface:

\begin{coro}
\label{coro:cauchy}
 Consider $\alpha \in H^0 (X,K^{m_l+1})$ satisfying:
      $$\norm {\alpha}^2 \leq \frac{-(m_l + C(X)) + \sqrt{(m_l + C(X))^2 + 16 c_l \kappa (e,f) \cos^4 (\phi_\theta)}}{c_l},$$
 where $C(X) = \underset{\alpha \in H^0 (X,K^{m_l+1})\setminus \{0\}}{\sup}  \frac{\norm{\nabla \alpha}_{\infty}}{\norm{\alpha}_\infty}$. 
 Then the associated cyclic oper via a normalized parametrization is quasi-Hitchin.
\end{coro}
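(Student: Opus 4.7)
My plan is to deduce the corollary from Theorem \ref{thm:maincyclic} by replacing its pointwise gradient term with a uniform Cauchy-type bound.

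First I would verify that $C(X)$ is finite. Since $H^0(X,K^{m_l+1})$ is a finite-dimensional complex vector space, the seminorms $\alpha \mapsto \norm{\alpha}_\infty$ and $\alpha \mapsto \norm{\nabla \alpha}_\infty$ are equivalent, so their ratio is bounded on nonzero sections. Equivalently, on coordinate disks of radius comparable to $\Inj(X)$ one may invoke the Cauchy integral formula to dominate $\nabla \alpha$ by $\alpha$ uniformly, which justifies the terminology. In either case, $\norm{\nabla \alpha}_\infty \leq C(X)\norm{\alpha}_\infty$ for every $\alpha$.

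Next, with $C(X)$ in hand, I would upgrade the hypothesis of Theorem \ref{thm:maincyclic} to a sup-norm condition. The numerator of the left-hand side of its main inequality is increasing in both $\norm{\alpha}$ and $\norm{\nabla \alpha}$, while the denominator is decreasing in $\norm{\alpha}$ as long as the latter remains below $2\cot \phi_{\Theta_S}$. Substituting $\norm{\nabla \alpha}^2 \leq C(X)^2 \norm{\alpha}_\infty^2$, using $1+\tfrac{1}{4}\norm{\alpha}^2 \geq 1$ to simplify the gradient denominator, and replacing every pointwise norm by $N := \norm{\alpha}_\infty$ would then produce a condition at least as strong as the hypothesis of Theorem \ref{thm:maincyclic}.

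The last step is algebraic: after squaring and rearranging, the resulting condition becomes a quadratic inequality in $x = N^2$ of the form $c_l x^2 + 2(m_l + C(X))x \leq K$, with $K$ a constant depending on $\kappa(e,f)$ and the angular data $\phi_\theta$; applying the quadratic formula should then yield exactly the bound stated in the corollary. The main obstacle is the bookkeeping: one has to carefully trade the short-root angle $\phi_{\Theta_S}$ appearing in Theorem \ref{thm:maincyclic} for the highest-root angle $\phi_\theta$ that appears in the final expression, and track the factors of two introduced by squaring. Once this is done, the quasi-Hitchin conclusion follows directly from Theorem \ref{thm:maincyclic}.
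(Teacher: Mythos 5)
Your overall strategy is the paper's: combine the cyclic Anosov criterion with the uniform estimate $\norm{\nabla \alpha}_\infty \leq C(X) \norm{\alpha}_\infty$, pass to sup-norms by monotonicity, and solve a biquadratic in $\norm{\alpha}^2$. The finiteness of $C(X)$ and the monotonicity bookkeeping are fine as you describe them.

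The gap is in your last step. Squaring and clearing denominators in the inequality of Theorem \ref{thm:maincyclic} does \emph{not} yield a quadratic in $x = N^2$ with constant right-hand side: the factor $(2\cot\phi_{\Theta_{S}} - N)^4$ still depends on $N$ and is not a polynomial in $N^2$, so "applying the quadratic formula" is not yet available. The missing ingredient is a uniform lower bound on the angular factor, namely the inequality from the remark following Theorem \ref{thm:megatheorem} specialized to $\alpha_1 = 0$, which gives $\cos\phi_{\Theta_{S}} - \sin\phi_{\Theta_{S}}\tfrac{\norm{\alpha}}{2} \geq \cos(2\phi_{\Theta_{S}})\bigl(1+\tfrac14\norm{\alpha}^2\bigr)^{1/2} \geq \cos(2\phi_{\Theta_{S}})$. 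For this reason the paper argues from Equation (\ref{eq:criteriacyclic}) rather than from the packaged statement of Theorem \ref{thm:maincyclic}: the right-hand side then becomes the genuine constant $16\kappa(e,f)\cos^4(2\phi_{\Theta_{S}})$ and the condition reduces to $c_l x^2 + 2(m_l + C(X))x \leq 16\kappa(e,f)\cos^4(2\phi_{\Theta_{S}})$, which the quadratic formula converts into the stated bound. This is precisely the "angle trade" you defer to bookkeeping, and it is where the content of the proof lies. (Two smaller discrepancies, present in the source as well as in your sketch: substituting $\norm{\nabla\alpha}^2 \leq C(X)^2 \norm{\alpha}^2$ produces $m_l + C(X)^2$, not $m_l + C(X)$, in the linear coefficient; and the corollary's $\cos^4(\phi_\theta)$ does not match the $\cos^4(2\phi_{\Theta_{S}})$ that the argument actually delivers.)
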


Since $C(X) \approx \frac{1}{\Inj (X)}$ when $\Inj (X)$ is small (see Appendix \ref{subsec:computations2}), the bound from above gets worse as we escape compact sets of $\mathcal{T}(S)$.

\begin{ques}
Let $\bigoplus_{i=1}^l \mathcal{K}^{m_i} \rightarrow \mathcal{T}(S)$ be the bundle over Teichm\"uller space whose fiber over $X$ is the vector space $\bigoplus_{i=1}^l H^0 (X,K^{m_i + 1})$.
Is there a uniform neighborhood around the zero section for which the associated opers are $\Delta-$Anosov?
\end{ques}

Holonomies of opers seem to occupy a special place in the character variety of representations to complex semisimple Lie groups.
In \cite{sanders2018pre}, Sanders proved that the map mapping an oper to its holonomy is locally injective, although the map is never globally injective (see for instance \cite{goldman1987projective}). 
However, if we fix a point in Teichmuller space, then the holonomy map is injective.
We believe this fact is well known to experts. 
We include a concise proof of this (Theorem \ref{thm:injectivity}) since we could not find it in the literature except for $PGL_n (\mathbb{C})$ (see \cite{wentworth2016higgs}).
Recently, El-Emam and Sagman parametrized a neighborhood of the character variety of rank two complex semisimple Lie groups containing the Fuchsian locus and found out that opers play a ubiquitous role in their parametrization (see \cite[Theorem D]{emam2025complex}).

An advantage of our techniques is that they provide information about the domains of discontinuity of the holonomy of an oper.
Davalo showed that whenever we have a $\rho-$equivariant $\tau_{\Theta}\text{-}$nearly geodesic immersion, we get a preferred cocompact domain of discontinuity $\Omega_\rho$ in the partial flag manifold $\mathcal{P}$ isomorphic to the stratum $\tau_{\Theta}$ (see Section \ref{subsec:domains}).
This domain comes from a balanced ideal construction as in \cite{kapovich2017dynamics}.
Moreover, the quotient $\Omega_\rho /\rho (\pi_1 (S))$ is a smooth fiber bundle over $S$ (which is an instance of more general results, see \cite{alessandrini2025fiber}).
We prove:

\begin{thm}
\label{thm:domaindiscontinuity}
 Let $\pi: \mathcal{B} \rightarrow \mathcal{P}$ be the projection of the full flag manifold to the partial flag manifold $\mathcal{P}$ defined above, and let $(\mathcal{D},\rho)$ be an EP-whitnessed oper. 
 Then $\pi \circ \mathcal{D} (\tilde{X})$ is contained in $\Omega_\rho$ and descends to a smooth section of the fiber bundle $\Omega_\rho /\rho (\pi_1 (S))$.
\end{thm}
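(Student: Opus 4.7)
The plan is to reduce the theorem to a geometric compatibility between the developing map $\mathcal{D}$ and the Epstein--Poincar\'e surface $\Ep$, and then invoke the balanced ideal description of Davalo's domain of discontinuity.

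First I would recall the structural feature of Davalo's construction that we need: given the $\tau_\Theta$-nearly geodesic surface $\Ep$, the cocompact domain $\Omega_\rho \subset \mathcal{P}$ arises from a balanced ideal of flag types, and the bundle structure $\Omega_\rho/\rho(\pi_1(S)) \to S$ comes from a natural $\rho$-equivariant projection $\mathrm{pr}: \Omega_\rho \to \Ep(\tilde{X})$ recording the unique point of $\Ep$ that lies in the ``normal direction'' of a given partial flag. Membership in $\Omega_\rho$ is precisely the condition that this projection be well defined and single valued, which the nearly geodesic hypothesis guarantees for partial flags of the correct type.

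The heart of the argument is the following identification, which I would establish by unpacking Definition \ref{defi:naiveepstein}: for every $z \in \tilde{X}$ the partial flag $\pi(\mathcal{D}(z)) \in \mathcal{P}$ is the outward normal flag of type $\tau_\Theta$ at $\Ep(z) \in \Ep(\tilde{X})$, so that $\mathrm{pr}(\pi(\mathcal{D}(z))) = \Ep(z)$. In the Fuchsian case this is the classical picture for $\mathbb{H}^3$: the point at infinity read along the outward perpendicular geodesic to a spacelike hypersurface recovers the corresponding point in $\mathbb{P}^1$, and the analogous statement for principal embeddings identifies $\pi \circ \mathcal{P}_\phi \circ \mathcal{D}_F(z)$ with the normal direction to $\Ep^0(z)$. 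The definition $\Ep(z) = \Osc(\mathcal{P}_\phi \circ \mathcal{D}_F, \mathcal{D})(z) \cdot \Ep^0(z)$ together with $\mathcal{D}(z) = \Osc(\mathcal{P}_\phi \circ \mathcal{D}_F, \mathcal{D})(z) \cdot \mathcal{P}_\phi(\mathcal{D}_F(z))$ then transports the Fuchsian identity to the general oper, since the osculating transformation is a $G$-valued gauge intertwining the $G$-equivariant projection $\pi$ and the $G$-equivariant construction of the normal flag.

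Given the identification above, the statements in the theorem follow directly. The containment $\pi \circ \mathcal{D}(\tilde{X}) \subset \Omega_\rho$ holds because each point in the image is a normal flag of type $\tau_\Theta$ to the nearly geodesic surface $\Ep$, hence lies in the balanced ideal complement by Davalo's criterion; the identity $\mathrm{pr}(\pi(\mathcal{D}(z))) = \Ep(z)$ combined with the $\rho$-equivariance of $\mathcal{D}$ and $\Ep$ shows that the descended map $X \to \Omega_\rho/\rho(\pi_1(S))$ is a section of the fiber bundle; and smoothness is automatic since $\mathcal{D}$ is holomorphic and $\pi$ is a $G$-equivariant smooth submersion. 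The main obstacle is precisely the Fuchsian-to-general transport of the geometric compatibility: one must align the Lie-theoretic data parametrizing principal embeddings and osculating transformations with the Lie-theoretic data defining the normal bundle to a $\tau_\Theta$-nearly geodesic surface in Davalo's construction, and verify that both use the same parabolic associated with $\Theta$. Once this bookkeeping is done, the remainder of the proof is essentially tautological.
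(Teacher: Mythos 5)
Your key geometric identification --- that $\pi(\mathcal{D}(z))$ is the flag of type $\tau_\Theta$ determined by the normal direction to the EP-surface at $\Ep(z)$ --- is correct, and it is exactly the content of Lemma \ref{lemma:normalweyl}, which the paper establishes by a direct computation (the distribution $\mathcal{O}\otimes W$ is orthogonal to $\omega(TX)$ and its apartment at infinity is $\mathcal{D}(z)$). Your re-derivation via the Fuchsian case and transport by the osculating map is a legitimate alternative route to that ingredient, though it re-proves something already available.

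The gap is in the step where you conclude membership in $\Omega_\rho$. You assert that a normal flag of type $\tau_\Theta$ ``lies in the balanced ideal complement by Davalo's criterion,'' and earlier that membership in $\Omega_\rho$ is guaranteed by the nearly geodesic hypothesis ``for partial flags of the correct type.'' The latter cannot be right as stated: the nearly geodesic hypothesis gives strict convexity of $\exp(c\, b_\xi(\Ep(\cdot),\theta_0))$ for \emph{every} $\xi\in\tau_\Theta$, but $\Omega_\rho$ is defined as the set of $\xi$ for which $b_\xi(\Ep(\cdot),\theta_0)$ is moreover proper and bounded below; if convexity alone sufficed, $\Omega_\rho$ would be all of $\tau_\Theta$ and the quotient could not be a cocompact domain of discontinuity. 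The missing argument --- which is the substance of the paper's proof --- runs as follows: the gradient of $b_{\xi(z)}(\cdot,\theta_0)$ at a point $\theta$ is the unit vector in $\mathfrak{p}_\theta$ pointing towards $\xi(z)$, so at $\Ep(z)$ this gradient is normal to the surface and $z$ is a critical point of $b_{\xi(z)}(\Ep(\cdot),\theta_0)$; strict convexity then forces this critical point to be the unique global minimum, which yields properness and boundedness below and hence $\xi(z)=\pi(\mathcal{D}(z))\in\Omega_\rho$. The same computation shows that the projection $\Omega_\rho\to\tilde{X}$ sends $\pi(\mathcal{D}(z))$ back to $z$, which is precisely what makes the descended map a section. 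Without this Busemann-gradient step, both the containment and the section property are asserted rather than proved.
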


Clearly, the quotient of the curve $\pi \circ \mathcal{D}$ is holomorphic in the complex manifold $\Omega_\rho/ \rho (\pi_1 (S))$.
It is worth noticing that $\Omega_\rho/ \rho (\pi_1 (S))$ is never a holomorphic fiber bundle, as one can see from \cite[Theorem D]{dumas2020geometry}.

The following seems plausible:

\begin{ques}
\label{ques:extension}
 Is the developing map of a quasi-Hitchin oper always contained in a domain of discontinuity of the full flag manifold $\mathcal{B}$? 
 If we identify $\tilde{X} \cong \mathbb{H}^2$, does the developing map $\mathcal{D}$ extend to a continuous map $\overline{\mathcal{D}} : \overline{\mathbb{H}^2} \rightarrow \mathcal{B}$, where the restriction to $\partial \mathbb{H}^2$ coincides with the Anosov limit map?
\end{ques}

We hope that a positive answer to the previous question may lead to holomorphic rigidity phenomena for their domains of discontinuity.
It is not hard to see that Question \ref{ques:extension} has a positive answer in a neighborhood of the $G-$Fuchsian oper (see Lemma \ref{lemma:localanswer}).
Coincidentally, when $G = PGL_3 (\mathbb{C})$, the partial flag manifold $\mathcal{P}$ introduced above is simply the full flag manifold, which, together with Theorem \ref{thm:domaindiscontinuity}, allows us to say something stronger:

\begin{thm}
\label{thm:extensionsl3}
 Let $(D,\rho)$ be an EP-whitnessed $PGL_3 (\mathbb{C})-$oper, then $\mathcal{D} (\tilde{X})$ is contained in a cocompact domain of discontinuity of $\rho$, and we can extend $\mathcal{D}$ to a map $\overline{\mathcal{D}}: \overline{\mathbb{H}^2} \rightarrow \mathcal{B}$ such that $\overline{\mathcal{D}}|_{\partial \mathbb{H}^2}$ is the limit map of $\rho$.
\end{thm}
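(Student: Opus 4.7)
For $\mathfrak{g}=\mathfrak{sl}_3(\mathbb{C})$ every root has the same length and the Weyl group $S_3$ acts transitively on the two simple roots, so $\Delta$ forms a single Weyl orbit. Hence $\Theta_S=\Delta$, the associated stratum $\tau_{\Theta_S}$ is the regular one, and the partial flag manifold $\mathcal{P}$ appearing in Theorem~\ref{thm:domaindiscontinuity} coincides with the full flag variety $\mathcal{B}$ (with $\pi=\mathrm{id}$). Applying Theorem~\ref{thm:domaindiscontinuity} verbatim yields $\mathcal{D}(\tilde{X})\subset\Omega_\rho\subset\mathcal{B}$ and that $\mathcal{D}$ descends to a smooth section of $\Omega_\rho/\rho(\pi_1(S))$. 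This establishes the first claim; moreover, over a compact fundamental domain $F\subset\tilde{X}$ the image $K:=\mathcal{D}(F)$ is a compact subset of $\Omega_\rho$.

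\textbf{Constructing and testing the extension.} Since $\rho$ is $\Delta$-Anosov, there is a continuous, $\rho$-equivariant, transverse limit map $\xi:\partial\mathbb{H}^2\to\mathcal{B}$. Define $\overline{\mathcal{D}}|_{\mathbb{H}^2}=\mathcal{D}$ and $\overline{\mathcal{D}}|_{\partial\mathbb{H}^2}=\xi$. Continuity at interior points is immediate from smoothness of $\mathcal{D}$, and continuity along $\partial\mathbb{H}^2$ is continuity of $\xi$; the content is continuity across the boundary. Fix $x\in\partial\mathbb{H}^2$ and a sequence $z_n\in\mathbb{H}^2$ with $z_n\to x$. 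By cocompactness of the $\pi_1(S)$-action, choose $\gamma_n\in\pi_1(S)$ with $w_n:=\gamma_n^{-1}z_n\in F$, so that
\[
\mathcal{D}(z_n)=\rho(\gamma_n)\,\mathcal{D}(w_n),\qquad \mathcal{D}(w_n)\in K.
\]
After extracting a subsequence, $\gamma_n\cdot o\to x$ for a basepoint $o\in F$ (since $d(\gamma_n\cdot o,z_n)$ is bounded by $\mathrm{diam}(F)$) and $\gamma_n^{-1}\cdot o\to y$ for some $y\in\partial\mathbb{H}^2$.

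\textbf{Anosov contraction on $K$.} The $\Delta$-Anosov contraction property implies that $\rho(\gamma_n)$ converges to the constant map $\xi(x)$ uniformly on any compact subset of $\mathcal{B}$ consisting of flags transverse (in the Bruhat sense) to $\xi(y)$. Thus the proof reduces to showing that $K\subset\Omega_\rho$ consists of flags transverse to every $\xi(y')$, $y'\in\partial\mathbb{H}^2$. This is built into the balanced-ideal characterization of $\Omega_\rho$ underlying Davalo's $\tau_{\Theta_S}$-nearly geodesic framework: the complement of $\Omega_\rho$ in $\mathcal{B}$ is the union of the Bruhat cells of flags failing transversality to a limit flag, so every point of $\Omega_\rho$ is transverse to the whole limit set $\xi(\partial\mathbb{H}^2)$. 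Consequently $\mathcal{D}(z_n)=\rho(\gamma_n)\mathcal{D}(w_n)\to\xi(x)=\overline{\mathcal{D}}(x)$, which is the required continuity. The main obstacle I anticipate is purely bookkeeping: carefully identifying the domain $\Omega_\rho$ produced by the $\tau_{\Theta_S}$-nearly geodesic construction with the Kapovich--Leeb--Porti/Guichard--Wienhard cocompact domain of discontinuity for $\Delta$-Anosov representations in $PGL_3(\mathbb{C})$, so that the transversality of $K$ to every limit flag is indeed available. Once that identification is granted, the Anosov convergence in the last paragraph is standard.
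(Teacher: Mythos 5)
Your first paragraph (reducing to Theorem \ref{thm:domaindiscontinuity} via $\mathcal{P}=\mathcal{B}$ for $A_2$) matches the paper, and your final contraction argument is essentially the paper's Lemma \ref{lemma:localanswer}. But there is a genuine gap in the step you dismiss as ``bookkeeping'': the claim that the complement of the cocompact domain $\Omega_\rho$ is the full non-transversality locus of the limit set, so that every point of $\Omega_\rho$ is transverse to all of $\xi(\partial\mathbb{H}^2)$. This is false. For $A_2$ the unique balanced ideal is $\{e,s_1,s_2\}$, so the thickening $Th(\Lambda_\rho)$ whose complement is $\Omega_\rho$ consists only of the (complex) one-dimensional Schubert varieties attached to each limit flag, whereas the non-transverse locus $\mathcal{Z}_{\xi(x)}$ is the union of the two codimension-one Schubert varieties. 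Flags in the intermediate cells $C_{s_1s_2}(\xi(x))$, $C_{s_2s_1}(\xi(x))$ are non-transverse to $\xi(x)$ yet lie in $\Omega_\rho$. (The full non-transversality locus corresponds to the $5$-element ideal, which is not balanced, and its complement is not cocompact.) So membership of $\mathcal{D}(\tilde X)$ in $\Omega_\rho$ does not give you the transversality to the whole limit set that your contraction argument needs; that transversality is the actual content of the theorem, not a formality.

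The paper closes this gap with a connectedness argument that you are missing. The set of EP-witnessed opers with $\mathcal{D}(\tilde X)$ transverse to $\Lambda_\rho$ is open (Lemma \ref{lemma:localanswer}, which is your last paragraph). To see its complement is also open, the paper observes that if $\mathcal{D}(z_0)$ is non-transverse to some $\xi(x)$ while lying in $\Omega_\rho$, then it sits in the smooth locus of a codimension-one Schubert variety $\mathcal{Z}_{\xi(x)}$, and by Remark \ref{rmk:transversality} the oper condition forces $d_{z_0}\mathcal{D}(TX)\oplus T_{\mathcal{D}(z_0)}\mathcal{Z}_{\xi(x)}=T_{\mathcal{D}(z_0)}\mathcal{B}$; transversality of this intersection makes the failure of limit-set transversality persist under perturbation of the oper and of $\xi$. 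Since the Fuchsian oper is transverse (Proposition \ref{lemma:fuchsiantransverse}), connectedness gives transversality for all EP-witnessed opers, and only then does your contraction argument apply. You need to supply this step; as written, your proof assumes the conclusion of the hard part.
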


We conclude this article by discussing some transversality properties of $G-$opers.
There is a notion of real oper in $SL_n (\mathbb{R})$ which are known to produce curves in the space of full flags with good transversality properties, which we now call Frenet, see \cite[Section 1.2]{labourie2006}, \cite{labourie2018goldman}, \cite{segal1991geometry}.
It is natural to question whether some of these good properties subsist in the complex setup.

\begin{defi}
Given $(\mathcal{D},\rho)$ a $G-$oper, we say that it is \emph{transverse} if given $z\neq w \in \tilde{X}$, we get that $\mathcal{D}(z)$ is transverse to $\mathcal{D}(w)$.
\end{defi}

Recall that a subset $S \subset \mathcal{B}$ is locally maximally transverse if there exists a neighborhood $S \subset \mathcal{U}$ such that any $\mathfrak{b} \in \mathcal{U}\setminus S$ is not transverse with respect to some element of $S$.
One can verify that $G-$Fuchsian opers are locally maximally transverse (this is a consequence of Proposition \ref{lemma:fuchsiantransverse}).
More generally, we prove:

\begin{thm}
\label{thm:transversality}
 Given $(\mathcal{D},\rho)$ be a $G-$oper and $z_0 \in \tilde{X}$, the set $$\text{NT}_{z_0} = \{ z \in \tilde{X}: D(z)\text{ is not transverse to }D(z_0)\}$$ is isolated.
\end{thm}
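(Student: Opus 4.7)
The plan is to reduce the statement to an analytic dichotomy and then verify a local non-vanishing using the principal $\mathfrak{sl}_2$ structure. The set $Z:=\{\mathfrak{b}\in\mathcal{B}:\mathfrak{b}\not\pitchfork\mathcal{D}(z_0)\}$ is a proper closed analytic subvariety of $\mathcal{B}$, being the complement, sliced at $\mathcal{D}(z_0)$, of the open $G$-orbit on $\mathcal{B}\times\mathcal{B}$. Hence $\text{NT}_{z_0}=\mathcal{D}^{-1}(Z)$ is a closed analytic subset of the connected Riemann surface $\tilde{X}$, and is therefore either all of $\tilde{X}$ or discrete. It suffices to prove that $z_0$ is isolated in $\text{NT}_{z_0}$.

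Work in a disk $U$ around $z_0$. Set $\mathfrak{b}_0:=\mathcal{D}(z_0)$, fix a principal $\mathfrak{sl}_2$-triple $(e,h,f)$ with $e,h\in\mathfrak{b}_0$, and choose a holomorphic lift $F:U\to G$ of $\mathcal{D}$ in Drinfeld--Sokolov gauge, so that $F(z_0)=e$, $F(z)\cdot\mathfrak{b}_0=\mathcal{D}(z)$, and $F^{-1}F'=f+u(z)$ with $u(z)\in\mathfrak{b}_0$. Then $\mathcal{D}(z)\pitchfork\mathcal{D}(z_0)$ exactly when $F(z)$ lies in the big Bruhat cell $B_0w_0B_0$, and this is detected by Plücker-type functions: for each fundamental weight $\omega_i$ with irreducible representation $V_{\omega_i}$ of highest weight vector $v_i^+$ and lowest weight vector $v_i^-$ (dual to $v_i^{-*}$), the function $\Delta_i(g):=\langle v_i^{-*},g\cdot v_i^+\rangle$ satisfies $\Delta_i(g)\ne 0$ for all $i$ if and only if $g\in B_0w_0B_0$. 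So it suffices to show each holomorphic function $z\mapsto\Delta_i(F(z))$ is not identically zero near $z_0$.

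The order of vanishing can be read off the Taylor expansion. Using $F'=F(f+u)$ recursively, $F^{(k)}(z_0)$ is a universal non-commutative polynomial in $f,u(z_0),u'(z_0),\ldots$ The weight count goes as follows: $f$ lowers the $\mathfrak{g}$-weight by a simple root, whereas each $u^{(j)}(z_0)\in\mathfrak{b}_0$ preserves or raises it. Reaching $v_i^-$ (which has height $N_i:=\omega_i(h)$ below $v_i^+$) therefore forces $k\ge N_i$, and the only surviving contribution at $k=N_i$ is the monomial $f^{N_i}$ inside $(f+u(z_0))^{N_i}$. Because the $\omega_i$-weight space of $V_{\omega_i}$ is one-dimensional and realizes the maximal principal $\mathfrak{sl}_2$-weight $N_i$, the vector $v_i^+$ generates the top $\mathfrak{sl}_2$-irreducible summand of $V_{\omega_i}$, and $f^{N_i}v_i^+$ is a nonzero multiple of $v_i^-$. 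Thus
\[
\Delta_i(F(z))=\frac{\langle v_i^{-*},f^{N_i}v_i^+\rangle}{N_i!}\,(z-z_0)^{N_i}+O\bigl((z-z_0)^{N_i+1}\bigr)\not\equiv 0,
\]
so $F(z)\in B_0w_0B_0$ on a punctured neighborhood of $z_0$, which is exactly what was needed.

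The main subtlety is the weight-theoretic bookkeeping. The Drinfeld--Sokolov gauge is essential to guarantee that every $u^{(j)}(z_0)$ lies in $\mathfrak{b}_0$; otherwise higher derivatives could contribute additional negative simple-root directions and collapse the order of vanishing estimate. The other nontrivial input is the non-vanishing of $\langle v_i^{-*},f^{N_i}v_i^+\rangle$, which follows from the uniqueness of the top principal $\mathfrak{sl}_2$-summand in the fundamental representations. A possibly slicker route is to observe that any germ of a $G$-oper agrees to high order at $z_0$ with a principal embedding, for which transversality of distinct osculating flags is automatic; however, making this comparison precise still requires essentially the same representation-theoretic input.
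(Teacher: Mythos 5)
Your proof is correct and follows essentially the same route as the paper's: lift $\mathcal{D}$ in Drinfeld--Sokolov gauge, detect non-transversality via the highest-weight sections/generalized minors $\Delta_i$, and show by weight-counting that $\Delta_i(F(z))$ vanishes to exactly the height of $V_{\omega_i}$ at $z_0$ (with leading term $f^{N_i}$), hence is not identically zero. The only cosmetic differences are your preliminary analytic-dichotomy reduction, your slightly more detailed justification of $\langle v_i^{-*}, f^{N_i}v_i^+\rangle \neq 0$ via the top principal $\mathfrak{sl}_2$-string, and a normalization slip in $N_i$ (with the paper's convention $[h,e]=e$ the height is $2\omega_i(h)$, not $\omega_i(h)$), none of which affects the argument.
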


Theorem \ref{thm:extensionsl3} implies that:

\begin{coro}
 When $G = PGL_3 (\mathbb{C})$ and the oper $(\mathcal{D},\rho)$ is EP-whitnessed, then the set $\text{NT}_{z_0}$ is finite for any $z_0 \in \tilde{X}$. 
\end{coro}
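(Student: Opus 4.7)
The plan is to bootstrap Theorem \ref{thm:transversality}, which already provides discreteness of $\text{NT}_{z_0}$ in $\tilde{X}$, up to finiteness by using the continuous boundary extension of the developing map guaranteed by Theorem \ref{thm:extensionsl3}. Identify $\tilde{X} \cong \mathbb{H}^2$ and argue by contradiction: if $\text{NT}_{z_0}$ were infinite, the fact that it has no accumulation point in $\mathbb{H}^2$ combined with compactness of $\overline{\mathbb{H}^2}$ would produce a sequence $z_n \in \text{NT}_{z_0}$ escaping every compact subset of $\mathbb{H}^2$ and, after passing to a subsequence, converging to some $\xi \in \partial \mathbb{H}^2$.

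Next I would pass to the limit in $\mathcal{B}$. By Theorem \ref{thm:extensionsl3}, $\mathcal{D}(z_n) \to \overline{\mathcal{D}}(\xi)$, where $\overline{\mathcal{D}}(\xi)$ belongs to the Anosov limit set $\Lambda_\rho \subset \mathcal{B}$ of $\rho$. Non-transversality of flags is a closed condition on $\mathcal{B}\times \mathcal{B}$ (it is cut out by the closure of the union of non-open Bruhat cells), and every $\mathcal{D}(z_n)$ fails to be transverse to $\mathcal{D}(z_0)$, so the limit flag $\overline{\mathcal{D}}(\xi)$ is also not transverse to $\mathcal{D}(z_0)$.

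To close the argument I would invoke the explicit description of the cocompact domain of discontinuity $\Omega_\rho \subset \mathcal{B}$ used in Theorem \ref{thm:extensionsl3}. For a $\Delta$-Anosov representation into $PGL_3(\mathbb{C})$ the balanced-ideal construction of \cite{kapovich2017dynamics} that produces $\Omega_\rho$ in the full flag manifold carves it out precisely as the set of flags transverse to every flag in $\Lambda_\rho$. Since Theorem \ref{thm:extensionsl3} places $\mathcal{D}(z_0)$ inside $\Omega_\rho$, this flag is transverse to $\overline{\mathcal{D}}(\xi) \in \Lambda_\rho$, contradicting the previous paragraph and forcing $\text{NT}_{z_0}$ to be finite.

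The step that requires the most care is the last one: one needs to confirm that the particular balanced ideal providing Davalo's cocompact domain $\Omega_\rho$ in the EP-witnessed $PGL_3(\mathbb{C})$ situation is indeed the non-transversality locus. This is a small combinatorial check in the Weyl group $S_3$, where among the ideals below the longest element $w_0$ the unique one that is balanced and yields a non-empty cocompact complement is the non-maximal Bruhat locus; so the check is routine but it is where the rank-two geometry is genuinely used, and it is what prevents the corollary from extending verbatim beyond $PGL_3(\mathbb{C})$.
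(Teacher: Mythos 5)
Your overall strategy is the intended one (the paper gives no explicit proof, deducing the corollary directly from Theorem \ref{thm:extensionsl3}): use Theorem \ref{thm:transversality} for discreteness, push an escaping sequence of non-transverse points to a boundary point $\xi$, use the continuous extension to land on the limit flag $\overline{\mathcal{D}}(\xi)\in\Lambda_\rho$, and use closedness of non-transversality to get a contradiction with a transversality property of $\mathcal{D}(z_0)$ relative to $\Lambda_\rho$. The first three steps are fine.

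The gap is in your final step. It is not true that the cocompact domain $\Omega_\rho\subset\mathcal{B}$ for $PGL_3(\mathbb{C})$ is the set of flags transverse to every flag of $\Lambda_\rho$. The unique balanced ideal in $W=S_3$ is $\{e,s_1,s_2\}$ (your candidate, the ``non-maximal Bruhat locus'' $W\setminus\{w_0\}$, has five elements and cannot be balanced), so the thickening of a limit flag $\xi'=(p_0,\ell_0)$ is the union of the two $\mathbb{P}^1$'s of flags sharing the point $p_0$ or the line $\ell_0$ --- a complex codimension-two set --- whereas the non-transversality locus $\mathcal{Z}_{\xi'}$ is a codimension-one divisor. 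Hence $\mathcal{D}(z_0)\in\Omega_\rho$ is compatible with $\overline{\mathcal{D}}(\xi)$ lying in relative position $s_1s_2$ or $s_2s_1$ with $\mathcal{D}(z_0)$, i.e.\ non-transverse, and your contradiction does not follow from membership in $\Omega_\rho$ alone. The fix is to invoke the stronger statement that the proof of Theorem \ref{thm:extensionsl3} actually establishes: for an EP-witnessed $PGL_3(\mathbb{C})$-oper, every point of $\mathcal{D}(\tilde{X})$ is transverse to every point of $\Lambda_\rho$ (this is exactly the open-and-closed property proved there, and it is also the hypothesis of Lemma \ref{lemma:localanswer} that produces the continuous extension you are using). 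With that substitution, $\mathcal{D}(z_0)$ is transverse to $\overline{\mathcal{D}}(\xi)\in\Lambda_\rho$, contradicting step four, and the argument closes.
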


This motivates the following question:

\begin{ques}
Are quasi-Hitchin opers transverse? Are they locally maximally transverse?
\end{ques}

Notice that without the quasi-Hitchin assumption, this question makes no sense even for $PGL_2 (\mathbb{C})$.

In the real setup, there is a rich study of transverse sets in partial flag manifolds (see for instance \cite{dey2025borel,dey2024restrictions,evans2025transverse}).
In many cases, one can show that any transverse circle has to be locally maximal transverse (see \cite[Theorem 0.19]{evans2025transverse}), which applies for instance to limit sets of Anosov representations.
Opers seem to point towards a rich picture in the complex setup as well.

\subsection{Structure of the article}
Sections \ref{sec:lietheory} and \ref{sec:introopers} are preliminaries and contain no new results, with the exception of Theorem \ref{thm:injectivity}.
A lot of care has been put into making it a self-contained introduction to opers and into bridging the gap between readers familiar with Higgs bundles and those with a Thurstonian perspective.

Section \ref{sec:epstein} introduces the notion of EP-surfaces, Section \ref{sec:regularity} develops a criterion to check whether these surfaces are immersed and regular, and Section \ref{sec:criterion} contains the bulk of the computations leading to Theorems \ref{thm:maingeneral}, \ref{thm:maincyclic}.
Finally, Section \ref{subsec:domains} studies the domain of discontinuity for EP-whitnessed opers, and Section \ref{sec:transversality} finishes with some remarks about transversality properties of opers.

We include a few appendices: the first one describes the foliated EP-surface picture, the second one contains some technical computations to streamline the flow of the main proofs, and the third one includes an explanation of the gauge theoretic interpretation of maps into homogeneous spaces.
The last one collects Lie theoretic constants relevant to our computations.

\subsection{Notation and Conventions}
\label{sec:notation}

We follow standard differential geometric notation as found in \cite{Kobayashi1963Foundations}. 
Throughout the paper, we assume $M$ is a smooth manifold, and we reserve $X$ for Riemann surfaces.

\begin{description}
   \item[Principal Bundles] 
 Given a principal $G$-bundle $P \to M$, and $V$ a vector space in which $G$ acts linearly, we denote the associated bundle by $P_G[V] = P \times_G V$. 
 A connection $A$ on $P$ induces a covariant derivative on $P_G[V]$, denoted simply by $\nabla$ or $d^\nabla$. 
 The curvature of an affine connection $\nabla$ is denoted as $R^\nabla$.
    
    \item[Bundles and Forms] 
 For a smooth vector bundle $E \to M$, we denote the space of smooth $k$-forms with values in $E$ by $\mathcal{A}^k(M, E)$. 
 Whenever $M$ is a complex manifold, we denote the space of complex-valued forms of type $(p,q)$ as $\mathcal{A}^{(p,q)}(M, E)$. 
 If $E$ is holomorphic, $H^0(M, E)$ denotes the space of holomorphic sections.

    \item[Riemann surfaces] 
 A Riemann surface $X$ admitting a compatible metric $H =H(z) dz \overline{dz}$ of constant negative curvature is called hyperbolic. 
 Given $K$ the holomorphic cotangent bundle, we equip each $K^l$, $l \in \mathbb{Z}$ with the hermitian metric $\langle \alpha,\beta\rangle = \frac{\alpha \overline{\beta}}{H^l}$.
 We abuse notation and denote this metric $\langle \cdot,\cdot \rangle$ for every $l$. 
 We denote by $*_l :K^l \rightarrow K^{-l}$ the antilinear isomorphism defined by the property $\alpha *_l \beta = \langle \alpha,\beta\rangle$ for every $\alpha,\beta \in \mathcal{A}^0 (X,K^l)$.
 The metrics $\langle \cdot,\cdot \rangle$ in $K^l$ have an associated Chern connection $\nabla^l$. 
 We normalize the hyperbolic metric so that $R^{\nabla^l} = l H(z)dz \wedge \overline{dz}$.
 This corresponds to $H$ having Gaussian curvature equal to $-2$.
\end{description}

We assume some familiarity with the structure theory of complex semisimple Lie algebras. 
A concrete introduction can be found in \cite{serre2000complex}.

\begin{description}
   \item[Adjoint group] We will denote by $G$ the adjoint group $G= (Aut (\mathfrak{g}))^0$.
 A Lie group $G$ is of adjoint type if it arises from the construction above.

   \item[Root decomposition] A Cartan subalgebra $\mathfrak{h} \subset \mathfrak{g}$ (maximal abelian) induce the root space decomposition $\mathfrak{g} = \mathfrak{h} \oplus \bigoplus_{\alpha \in \Sigma} \mathfrak{g}_\alpha$, where $\Sigma = \Sigma (\mathfrak{h}) \subset \mathfrak{h}^*$ is the set of roots.
 Each root $\alpha$ defines a coroot $h_\alpha \in \mathfrak{h}$ defined via the Killing form $\kappa$ as $\alpha (h) = 2\frac{\kappa (h,h_\alpha)}{\kappa (h_\alpha,h_\alpha)}$ for every $h \in \mathfrak{h}$.
 The real span of $h_\alpha$ for $\alpha \in \Sigma$ defines a real form $\mathfrak{a}$ of $\mathfrak{h}$. The hyperplanes $\ker \alpha$ split $\mathfrak{a}$ into cones called Weyl chambers.
 A choice of Weyl chamber splits $\Sigma$ into positive and negative roots, which induces an order on $\Sigma$.
 The set of minimal positive roots is called the set of simple roots, and we denote it by $\Pi$. 
\end{description}

\subsection*{Acknowledgements} This paper greatly benefited from discussions with Andy Sanders and Max Riestenberg, to both of whom I am grateful.
I also extend my gratitude to Le\'on Carvajales, Colin Davalo, Joaqu\'in Lejtreger, and Andr\'es Sambarino for enriching conversations and comments on earlier versions of this article.
Most importantly, I thank my advisor Martin Bridgeman for his support and guidance.

\section{Lie Theory Preliminaries}
\label{sec:lietheory}

In this text, we will be concerned with complex semisimple Lie algebras $\mathfrak{g}$.
We start this section by reviewing some geometric facts about the full flag manifold $\mathcal{B}$, and the symmetric space $\mathbb{X}$ of $\mathfrak{g}$.
Our approach to symmetric spaces is classical in the Higgs bundles literature (as in \cite{labourie2017cyclic}, \cite{hitchin1992lie}), but we include some details to try to clarify the geometry of the symmetric space.
We wrap up with a review of $\mathfrak{sl}_2$ triplets, which play a crucial role in this article.

\subsection{Full flag manifold}
\label{sec:fullflag}

Recall that when $G = PGL_n (\mathbb{C})$, the maximal (projective) compact homogeneous quotient of $PGL_n (\mathbb{C})$ is the set of full flags $\mathcal{F}_n$ in $\mathbb{C}^n$ whose elements are chains of subspaces $0 \subset F^1 \subset \ldots \subset F^n = \mathbb{C}^n$ satisfying that $\dim F_i = i$.
The tangent space at a point $F \in \mathcal{F}_n$ can be described as: 
$$T_F \mathcal{F}_n = \{ (\phi_i)_{i=1,\ldots,n-1} \in \bigoplus_{i=1}^{n-1} Hom (F^i, \mathbb{C}^n/F^i): \phi_i|_{F^{i-1}} = \pi_i \circ \phi_{i-1}, \forall i\},$$
where $Hom (V, W)$ denotes the set of linear maps between two vector spaces, $F = F^1 \subset \ldots \subset F^{n-1}$ is the flag, and $\pi_i: \mathbb{C}^n \rightarrow \mathbb{C}^n/F^i$ is the natural projection.
There is a canonical $PGL_n (\mathbb{C})-$invariant filtration of the tangent space of $\mathcal{F}_n$ by a family of distributions which in some way characterizes the geometry of the full flag manifold.
Define $T^i_F \mathcal{F}_n$ as:

$$T^i_F \mathcal{F}_n = \{ (\phi_i) \in T_F \mathcal{F}_n : \phi_j (F^j) \subset F^{i+j}/F^j, \forall j \},$$

with the convention that $F^l = \mathbb{C}^n$ for $l \geq n$ (see Figure \ref{fig:distribution}).
Clearly we get that $T^1 \mathcal{F}_n \subset \ldots \subset T^n_F \mathcal{F}_n = T \mathcal{F}_n$.
These distributions define something called a Cartan structure in $\mathcal{F}_n$ (see \cite{vcap2024parabolic} for more information).

\begin{center}
   \begin{figure}[h!]
   \label{fig:distribution}
   \includegraphics{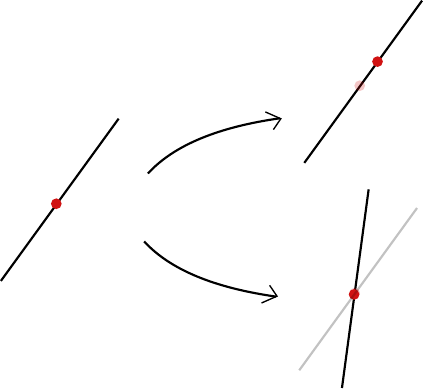}
   \caption{Directions tangent to $T^1 \mathcal{F}_n$ slide $F_i$ into $F_{i+1}$}
   \end{figure}
\end{center}

More generally, if $\mathfrak{g}$ is a complex semisimple Lie algebra, we define its \emph{full flag manifold} to be the set of Borel subalgebras $\mathfrak{b} \subset \mathfrak{g}$.
Recall that a Borel subalgebra is a maximally solvable subalgebra $\mathfrak{b} \subset \mathfrak{g}$.
One can show that $\mathcal{B}$ is a projective variety (thus compact), and is in fact the largest compact quotient of the adjoint group $G$ (see \cite[Chapter VIII]{humphreys2012linear}).

The tangent space $T_\mathfrak{b} \mathcal{B}$ is canonically identified with $\mathfrak{g}/\mathfrak{b}$.
The Borel subgalgebra $\mathfrak{b}$ acts in an upper triangular way on $T_\mathfrak{b} \mathcal{B}$, therefore inducing a filtration of $T \mathcal{B}$:

$$0 \subset T^1 \mathcal{B} \subset \ldots \subset T^l \mathcal{B} = T \mathcal{B},$$

where $l$ is the rank of $\mathfrak{g}$ (see Section \ref{subsection:triplets} below).
The distribution $T^i_\mathfrak{b}$ is $B-$invariant, for $B \subset G$ the group corresponding to $\mathfrak{b}$.
This filtration generalizes the picture we had for $\mathfrak{sl}_n (\CC)$.

\subsection{Symmetric spaces}
\label{subsection:complexasreal}

Let $\mathfrak{g}$ be a complex semisimple Lie algebra, equipped with the Killing form $\kappa (x,y) = tr (ad (x)\circ ad(y))$.
A \emph{Cartan involution} on $\mathfrak{g}$ is a complex antilinear automorphism $\theta : \mathfrak{g} \rightarrow \mathfrak{g}$ satisfying $\theta^2 = Id$, and the property that $- \kappa (x,\theta x) \geq 0$ for every $x \in \mathfrak{g}$.
The last condition is equivalent to the fixed point set $Fix (\theta) =: \mathfrak{k}_\theta$ being a (real) maximal compact subalgebra of $\mathfrak{g}$.
Every maximal compact subalgebra arises this way.

Define $\mathbb{X}$, the \emph{symmetric space of $\mathfrak{g}$}, as the space of Cartan involutions.
One can check that $\mathbb{X}$ is a submanifold of $Aut (\mathfrak{g})$.
Let $\underline{\mathfrak{g}} = \mathbb{X} \times \mathfrak{g}$ denote the trivial bundle over $\mathbb{X}$. 
Since $TAut (\mathfrak{g})$ trivializes via left-translation, we identify the pullback of the bundle $T Aut (\mathfrak{g})$ via the inclusion with $\underline{\mathfrak{g}}$.
The bundle $\underline{\mathfrak{g}}$ comes equipped with a tautological section $\Theta \in \mathcal{A}^0 (\mathbb{X},End (\underline{\mathfrak{g}}))$ for which $\Theta_\theta$ acts as $\theta$ on the fiber.

Define the \emph{Maurer-Cartan form} $\omega \in \mathcal{A}^1 (\mathbb{X},\underline{\mathfrak{g}})$ by the expression:

$$D \Theta = \Theta \circ ad (\omega),$$

where $D$ is the trivial flat connection on $\underline{\mathfrak{g}}$.
Since $\Theta$ is an involution ($\Theta^2 = Id$), differentiating yields $\Theta \circ D\Theta + D\Theta \circ \Theta = 0$, which implies that $\omega$ takes values in the $-1$ eigenspace of $\Theta$.
Denote the $\pm 1$ eigenspaces of the involution at $\theta \in \mathbb{X}$ by $\mathfrak{k}_\theta$, and $\mathfrak{p}_\theta$ respectively.
Thus, $\omega$ defines an isomorphism $T_\theta \mathbb{X} \cong \mathfrak{p}_\theta$.

Let $\underline{\mathfrak{k}}$, $\underline{\mathfrak{p}}$ the bundles that are fixed and antifixed by $\Theta$.
By the previous paragraph, $\underline{\mathfrak{p}}$ is identified with $T \mathbb{X}$.
The connection 
$$\nabla = D + \frac{1}{2} ad (\omega),$$ 
preserves the splitting on $\overline{\mathfrak{g}}$ because $d^\nabla \Theta = 0$.
This connection corresponds to the Levi-Civita connection on $\mathbb{X}$.
If we decompose the curvature of $\nabla$ into its $\underline{\mathfrak{p}}$ and $\underline{\mathfrak{k}}$ components, we get:

\begin{lemma}[Maurer-Cartan]
 The following two equations are satisfied:
   \begin{align*}
   &d^\nabla \omega = 0, \\
   &R^\nabla = -\frac{1}{8} ad([\omega,\omega])
   \end{align*}   
 where $\frac{1}{2} [\omega,\omega] (v,w) = [\omega (v),\omega(w)]$ is the bracket of $1-$forms with values in $\underline{\mathfrak{g}}$.
\end{lemma}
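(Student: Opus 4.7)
The plan is to derive both equations from the classical Maurer-Cartan equation $d\omega + \tfrac{1}{2}[\omega,\omega] = 0$, which I will extract first by differentiating the defining relation of $\omega$.

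\textbf{Step 1: Classical Maurer-Cartan.} Start from $D\Theta = \Theta \circ ad(\omega)$ (an identity of $End(\underline{\mathfrak{g}})$-valued 1-forms). Since $D$ is flat, $D^2\Theta = 0$. Applying $D$ to the right-hand side and using that $D$ acts as $d$ on the trivial bundle gives
\[
0 \;=\; d\Theta \wedge ad(\omega) \;+\; \Theta \circ ad(d\omega).
\]
Substitute $d\Theta = \Theta \circ ad(\omega)$ in the first term and evaluate on $(v,w)$: the wedge product produces $\Theta \circ \bigl( ad(\omega(v))\,ad(\omega(w)) - ad(\omega(w))\,ad(\omega(v)) \bigr) = \Theta \circ ad([\omega(v),\omega(w)])$ by the Jacobi identity. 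Hence
\[
\Theta \circ ad\bigl( d\omega(v,w) + [\omega(v),\omega(w)] \bigr) = 0.
\]
Since $\Theta$ is invertible and $ad$ is injective on the semisimple $\mathfrak{g}$, this gives exactly $d\omega + \tfrac{1}{2}[\omega,\omega] = 0$.

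\textbf{Step 2: The first equation.} With $\nabla = D + \tfrac{1}{2}ad(\omega)$, the covariant exterior derivative acts on a $\underline{\mathfrak{g}}$-valued 1-form $\alpha$ by $d^\nabla \alpha = d\alpha + \tfrac{1}{2}[\omega \wedge \alpha]$. Applied to $\omega$, this is $d^\nabla \omega = d\omega + \tfrac{1}{2}[\omega,\omega]$, which vanishes by Step~1.

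\textbf{Step 3: The second equation.} The curvature of a connection of the form $D + A$, with $D$ flat and $A$ an $End(\underline{\mathfrak{g}})$-valued 1-form, is $R = dA + A \wedge A$. Taking $A = \tfrac{1}{2} ad(\omega)$ and using again that $(ad(\omega) \wedge ad(\omega))(v,w) = ad([\omega(v),\omega(w)]) = \tfrac{1}{2} ad([\omega,\omega])(v,w)$, we obtain
\[
R^\nabla \;=\; \tfrac{1}{2} ad(d\omega) \;+\; \tfrac{1}{4}\, ad(\omega) \wedge ad(\omega) \;=\; \tfrac{1}{2} ad(d\omega) \;+\; \tfrac{1}{8} ad([\omega,\omega]).
\]
Substituting $d\omega = -\tfrac{1}{2}[\omega,\omega]$ from Step~1 collapses this to $R^\nabla = -\tfrac{1}{8} ad([\omega,\omega])$.

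The only real obstacle is bookkeeping: keeping straight the conventions for the bracket of $\mathfrak{g}$-valued forms (in particular the factor of two built into $[\omega,\omega](v,w) = 2[\omega(v),\omega(w)]$) and for the wedge of $End$-valued forms. Given those, both identities are a direct consequence of $D^2 = 0$ together with the Jacobi identity.
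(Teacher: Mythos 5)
Your proof is correct, and the paper itself states this lemma without proof, offering only the hint that it follows from decomposing the curvature of $\nabla$ into its $\underline{\mathfrak{k}}$ and $\underline{\mathfrak{p}}$ components; your argument is a complete and correct realization of essentially that computation. The only cosmetic difference is organizational: the paper's implied route expands $0 = R^D$ with $D = \nabla - \tfrac{1}{2}ad(\omega)$ and separates the two equations by $\Theta$-parity (the term $ad(d^\nabla\omega)$ swaps $\underline{\mathfrak{k}}$ and $\underline{\mathfrak{p}}$ while $R^\nabla$ and $ad(\omega)\wedge ad(\omega)$ preserve them), whereas you first extract the scalar identity $d\omega + \tfrac{1}{2}[\omega,\omega]=0$ from $D^2\Theta = 0$ and substitute; both hinge on the same two facts, flatness of $D$ and injectivity of $ad$ on a semisimple $\mathfrak{g}$, and your bookkeeping of the factor conventions is consistent throughout.
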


From the expression above, we get that geodesics passing through $\theta \in \mathbb{X}$ are of the form $Ad(e^{tx}) \circ \theta \circ Ad(e^{-tx})$ for $x \in \mathfrak{p}_\theta$, and that $\mathbb{X}$ is non-positively curved.
Here is another useful corollary:

\begin{coro}
\label{prop:symmetric}
Given $\mathfrak{s} \subset \mathfrak{g}$ a reductive subalgebra of $\mathfrak{g}$, then:
$$P_{\mathfrak{s}} = \{ \theta \in \mathbb{X}: \theta \mathfrak{s} = \mathfrak{s}\},$$
is a totally geodesic submanifold. 
The tangent space at $\theta \in \mathbb{X}$ is given by $\mathfrak{s} \cap \mathfrak{p}_\theta$.
\end{coro}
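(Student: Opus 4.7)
The plan is to exploit the explicit description of geodesics in $\mathbb{X}$ produced via the Maurer--Cartan formalism developed just above: any geodesic through $\theta \in \mathbb{X}$ takes the form $\gamma_x(t) = \mathrm{Ad}(e^{tx}) \circ \theta \circ \mathrm{Ad}(e^{-tx})$ for some $x \in \mathfrak{p}_\theta$. Verifying Corollary \ref{prop:symmetric} then reduces to two independent tasks: (i) showing that for each $\theta \in P_\mathfrak{s}$ and each $x \in \mathfrak{s} \cap \mathfrak{p}_\theta$ the geodesic $\gamma_x$ remains inside $P_\mathfrak{s}$, and (ii) identifying the tangent space $T_\theta P_\mathfrak{s}$ with $\mathfrak{s} \cap \mathfrak{p}_\theta$.

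For step (i), the argument is transparent: if $x \in \mathfrak{s}$, then $\mathrm{ad}(x)$ preserves $\mathfrak{s}$, so the one-parameter subgroup $\mathrm{Ad}(e^{tx})$ lies in the image of the adjoint group of $\mathfrak{s}$ inside $\mathrm{Aut}(\mathfrak{g})$ and thus preserves $\mathfrak{s}$. Composing with $\theta$, which by hypothesis preserves $\mathfrak{s}$, yields $\gamma_x(t)(\mathfrak{s}) = \mathfrak{s}$ for all $t$. Hence every vector of $\mathfrak{s} \cap \mathfrak{p}_\theta$ is tangent to $P_\mathfrak{s}$ and is realized by a geodesic staying entirely inside $P_\mathfrak{s}$.

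For step (ii), I would differentiate an arbitrary smooth curve $\theta_t \in P_\mathfrak{s}$ with $\theta_0 = \theta$. Unpacking $\dot\theta_0$ through the Maurer--Cartan form yields $x \in \mathfrak{p}_\theta$ with $\dot\theta_0 = \theta \circ \mathrm{ad}(x)$, and the preservation condition $\theta_t(\mathfrak{s}) = \mathfrak{s}$ differentiates to $\mathrm{ad}(x)(\mathfrak{s}) \subset \mathfrak{s}$, i.e.\ $x \in N_\mathfrak{g}(\mathfrak{s}) \cap \mathfrak{p}_\theta$. The reductivity hypothesis on $\mathfrak{s}$ is then invoked via complete reducibility to split $\mathfrak{g}$ as an $\mathfrak{s}$-module in a way compatible with the Cartan decomposition $\mathfrak{g} = \mathfrak{k}_\theta \oplus \mathfrak{p}_\theta$, which forces the $\mathfrak{p}_\theta$-component of the normalizer to lie inside $\mathfrak{s}$.

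The main obstacle, I expect, is concentrated in step (ii): a priori $N_\mathfrak{g}(\mathfrak{s})$ may be strictly larger than $\mathfrak{s}$, and it is precisely the interplay between reductivity and the Cartan decomposition that collapses $N_\mathfrak{g}(\mathfrak{s}) \cap \mathfrak{p}_\theta$ down to $\mathfrak{s} \cap \mathfrak{p}_\theta$. Once that is in place, total geodesicity is immediate from step (i): at every $\theta \in P_\mathfrak{s}$ the geodesics issuing in directions tangent to $P_\mathfrak{s}$ stay inside, which is exactly the defining property of a totally geodesic submanifold.
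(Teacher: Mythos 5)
The paper states this as an unproved corollary of the Maurer--Cartan lemma, so there is no in-text argument to compare against; judging your proposal on its own terms, step (i) is correct and is the natural argument, but step (ii) contains a genuine gap, and it sits exactly at the point you flagged as "the main obstacle." Complete reducibility gives $N_{\mathfrak{g}}(\mathfrak{s}) = \mathfrak{s} + Z_{\mathfrak{g}}(\mathfrak{s})$ (write $N_{\mathfrak{g}}(\mathfrak{s}) = \mathfrak{s} \oplus W$ as $\mathfrak{s}$-modules and note $[\mathfrak{s},W] \subseteq \mathfrak{s} \cap W = 0$), but it does \emph{not} force the $\mathfrak{p}_\theta$-component of the normalizer into $\mathfrak{s}$: since $\theta$ preserves $\mathfrak{s}$ it also preserves $Z_{\mathfrak{g}}(\mathfrak{s})$, so $Z_{\mathfrak{g}}(\mathfrak{s}) \cap \mathfrak{p}_\theta$ can be nonzero and disjoint from $\mathfrak{s}$. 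Concretely, take $\mathfrak{s}$ the upper-left $\mathfrak{sl}_2$-block in $\mathfrak{g} = \mathfrak{sl}_4(\CC)$ with $\theta(Y) = -Y^*$, and $x = \mathrm{diag}(1,1,-1,-1) \in Z_{\mathfrak{g}}(\mathfrak{s}) \cap \mathfrak{p}_\theta$. The geodesic $Ad(e^{tx}) \circ \theta \circ Ad(e^{-tx})$ preserves $\mathfrak{s}$ for all $t$ (exactly your step (i) argument applied to a normalizing, rather than $\mathfrak{s}$-valued, direction) and is nonconstant, so $T_\theta P_{\mathfrak{s}}$ strictly contains $\mathfrak{s} \cap \mathfrak{p}_\theta$. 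The correct general statement identifies $T_\theta P_{\mathfrak{s}}$ with $N_{\mathfrak{g}}(\mathfrak{s}) \cap \mathfrak{p}_\theta$; the version with $\mathfrak{s} \cap \mathfrak{p}_\theta$ holds precisely when $Z_{\mathfrak{g}}(\mathfrak{s}) \cap \mathfrak{p}_\theta \subseteq \mathfrak{s}$, which is the case for every $\mathfrak{s}$ the paper actually uses (Cartan subalgebras are self-normalizing, Levi subalgebras contain their centralizers, and a principal $\mathfrak{sl}_2$ has trivial centralizer since every irreducible summand $W^{2m_i}$ has $m_i \geq 1$). If you want a complete argument you should either add that hypothesis or prove the normalizer version.

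Two smaller points. First, differentiating "an arbitrary smooth curve in $P_{\mathfrak{s}}$" presupposes that $P_{\mathfrak{s}}$ is a submanifold, which is part of what must be shown; a cleaner route is to prove the exact equality $P_{\mathfrak{s}} = \exp_{\theta_0}\bigl(N_{\mathfrak{g}}(\mathfrak{s}) \cap \mathfrak{p}_{\theta_0}\bigr)$: writing $\theta = Ad(e^{x})\,\theta_0\,Ad(e^{-x})$ one gets $\theta\theta_0 = Ad(e^{2x})$, and a diagonalizable operator with positive real eigenvalues preserves $\mathfrak{s}$ if and only if its logarithm $2\,ad(x)$ does, so $\theta \in P_{\mathfrak{s}}$ forces $x \in N_{\mathfrak{g}}(\mathfrak{s})$. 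Second, total geodesy is most transparently phrased by observing that $\mathfrak{s} \cap \mathfrak{p}_\theta$ (or $N_{\mathfrak{g}}(\mathfrak{s}) \cap \mathfrak{p}_\theta$) is a Lie triple system, being the $\mathfrak{p}_\theta$-part of a $\theta$-stable subalgebra; your step (i) then exhibits the corresponding totally geodesic submanifold inside $P_{\mathfrak{s}}$, and the exponential description above shows it is all of $P_{\mathfrak{s}}$.
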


Since $\mathbb{X}$ is connected \cite[Section VI.2]{knapp1996lie}, it is diffeomorphic to $T_\theta \mathbb{X}$ via the exponential map.
Let $\theta' = Ad (e^x) \theta Ad (e^{-x})$ for $x \in \mathfrak{p}_{\theta}$.
One can show that any $x \in \mathfrak{p}_\theta$ acts on $\mathfrak{g}$ with real eigenvalues (see \cite[Theorem 6.16]{knapp1996lie}). 
Let $\mathfrak{l}_x$ the centralizer of $x$ (i.e., the zero eigenspace), and $\mathfrak{p}_x$ the sum of nonnegative eigenspaces (clearly $\mathfrak{l}_x \subset \mathfrak{p}_x$).
We call $\mathfrak{l}_x$ a \emph{Levi subalgebra}, and $\mathfrak{p}_x$ a \emph{parabolic subalgebra}.

The Levi subalgebra is always reductive, therefore Corollary \ref{prop:symmetric} tells us that it is tangent to a totally geodesic submanifold of $\mathbb{X}$ that we call a \emph{parallel set}.
Geometrically, it consists of all geodesics parallel to the geodesic passing through $\theta$ and $\theta'$ (as in \cite[Section 2.11]{eberlein1996geometry}).
Levi subalgebras associated to $x \in \mathfrak{p}_\theta$ always contain at least one Cartan subalgebra.
We say that the direction $x \in \mathfrak{p}_\theta$ is \emph{regular} if $\mathfrak{l}_x$ is precisely a Cartan subalgebra.

By Corollary \ref{prop:symmetric}, Cartan subalgebras define maximally flat, totally geodesic submanifolds, in the sense that they are the largest submanifolds with zero curvature.
Let $\mathfrak{h} \subset \mathfrak{l}_x$ be a Cartan, then the tangent space to the flat at $\theta$ corresponds to $\mathfrak{a} = \mathfrak{h} \cap \mathfrak{p}_\theta$, which is always a real form of the Cartan $\mathfrak{h}$ (in the sense that $\mathfrak{h} = \mathfrak{a} \oplus i \mathfrak{a}$).
Since there are always copies of Cartan subalgebras in $\mathfrak{l}_x$, any two points in $\mathbb{X}$ are connected by a flat.
This flat is unique if the direction joining the pair of points is regular.

We can identify the flat $P_\mathfrak{h}$ determined by a $\theta-$invariant Cartan with $\mathfrak{a} = \mathfrak{p}_\theta \cap \mathfrak{h}$ via the exponential map.
The roots $\Sigma$ defined by $\mathfrak{h}$ define coroots $\{h_\alpha: \alpha \in \Sigma\}$ whose real span is equal to $\mathfrak{a}$ by the Cartan involution condition.
The zeros of the hyperplanes $\ker \alpha$ split $\mathfrak{a}$ into sectors $W$ called \emph{Weyl chambers}.
Correspondingly, we get a splitting of $P_\mathfrak{h}$ into convex cones tipped at $\theta$, which we call Weyl chambers tipped at $\theta$.
We identify those with their corresponding $W \subset \mathfrak{a}$.

Given a Weyl chamber tipped at $\theta$ corresponding to $W \subset \mathfrak{a}$, any geodesic starting at $\theta$ tangent to $W$ will give us different points in the visual boundary $\partial \mathbb{X}$.
This embeds:
\begin{align*}
& \mathbb{P}^+(W) \xrightarrow{\iota_W} \partial \mathbb{X}\\
& \iota_W (\langle x \rangle) = \exp_\theta (\infty x),  
\end{align*}

where $\mathbb{P}^+(W)$ is the (positive) projectivization of $W$, and $\exp_\theta (\infty x)$ denotes the corresponding point at infinity of the geodesic starting at $\theta$ with direction $x$.
One can prove that the image of $\iota_W$ into $\partial \mathbb{X}$ is a fundamental domain for the action of $G$ on $\partial \mathbb{X}$.
We call such a fundamental domain of $\partial \mathbb{X}$ an \textit{apartment} of $\partial \mathbb{X}$.
The terminology comes from the theory of spherical buildings (see \cite[Chapter 2]{eberlein1996geometry} for more information).

Given a direction defined by $x \in W$, one can show that the stabilizer of the corresponding point at infinity is $P_x$, the group of $G$ integrating the parabolic $\mathfrak{p}_x$.
This shows that for each direction $\tau \in \mathbb{P}(W)$, the $G-$orbit of $G \iota_W (\tau) \subset \partial \mathbb{X}$ is canonically identified with the \emph{partial flag manifold} consisting of parabolic subalgebras in the same conjugacy class.
The isomorphism assigns a direction $x \in \mathfrak{g}$ in the $G-$orbit of an element in $\tau$ to its corresponding parabolic subalgebra $\mathfrak{p}_x$.

When the direction in $\mathbb{P}(W)$ corresponds to a regular vector, the parabolic subalgebra $\mathfrak{p}_x$ is a Borel subalgebra, and therefore the orbit of its corresponding point at infinity identifies with the full flag manifold introduced in the previous section.
The Borel subgroup not only fixes a regular vector, but also the whole apartment containing it. 
This identifies the $G-$orbit of an apartment with $\mathcal{B}$.
Figure \ref{fig:stratification} summarizes this discussion.

Sometimes, we will fix a direction in the Weyl chamber and look at the $G-$orbit of the point defined at infinity. 
We will call such an orbit a \textit{stratum} of $\partial \mathbb{X}$.

\begin{center}
\label{fig:stratification}
\begin{figure}[h!]
   \includegraphics[scale=1.2]{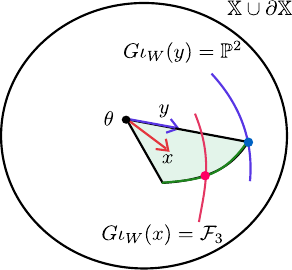}
   \caption{When $G = SL_3 (\mathbb{C})$, the boundary stratifies into $G$ orbits identified with the full flag manifold $\mathcal{F}_3$ or partial flag manifolds like $\mathbb{P}^2$}
\end{figure}   
\end{center}

\subsection{Principal triplets}
\label{subsection:triplets}

Let $(e_0,h_0,f_0)$ be the standard generators of $\mathfrak{sl}_2$ given by:

$$
e_0 = \begin{pmatrix}
0 & 1 \\
0 & 0
\end{pmatrix},\:
h_0 = 
\begin{pmatrix}
\frac{1}{2} & 0\\
0 & -\frac{1}{2}
\end{pmatrix}, \:
f_0 = \begin{pmatrix}
0 & 0\\
1 & 0
\end{pmatrix}.$$

These elements satisfy the relations:

\begin{equation}
\label{eq:triplets}
 [h_0,e_0] = e_0,\: [h_0,f_0] = -f_0, \: [e_0,f_0] = h_0,
\end{equation}

which make the representation theory of $\mathfrak{sl}_2$ easy to understand.
Any $\mathfrak{sl}_2$ representation splits as a direct sum of irreducible representations, and there is a unique irreducible $(n+1)-$dimensional representation $W^n$ for every $n$.

Each $W^n$ splits as a direct sum of $n+1$ lines, which are the eigenspaces of $h_0$ (the weight spaces), and the eigenspaces are ordered by their corresponding eigenvalue (weight).
The transformation $e_0$ sends the weight space at level $l$ to the weight space at level $l+1$, whereas $f$ lowers the levels.
This allows us to draw cartoons of these representations as in Figure \ref{fig:sp6}.

\begin{center}
   \begin{figure}[h!]
      \label{fig:sp6}
      \includegraphics[scale=1.2]{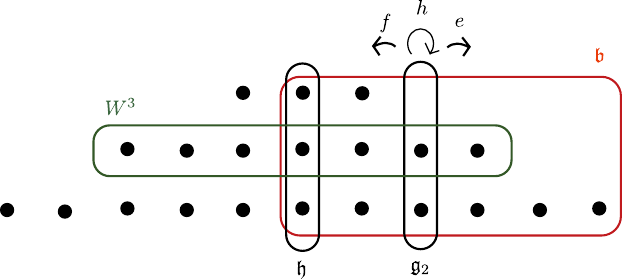}
      \caption{Decomposition induced on $\mathfrak{sp}_6$ by a principal triple (exponents $1,3$ and $5$). Each dot corresponds to a weight space for $h$.}
   \end{figure}
\end{center}

Given $\mathfrak{g}$ a complex Lie algebra, we say that a triple $(e,h,f) \in \mathfrak{g}^3$ is a $\mathfrak{sl}_2-$triplet if it satisfies the relation given by Equation (\ref{eq:triplets}) (we follow the convention by Kostant \cite{kostant2009principal}).
The span generated by such a triple is a subalgebra isomorphic to $\mathfrak{sl}_2 (\mathbb{C})$.
This subalgebra acts on $\mathfrak{g}$, and decomposes it as a direct sum of irreducible representations.

Recall that an element $x \in \mathfrak{g}$ is \emph{regular} if $\dim Z (x) = \rk (\mathfrak{g}) =l$ (which is the minimal possible dimension).
We say that a $\mathfrak{sl}_2$ triplet is \emph{principal} if any element in the triplet is regular.
Let $\mathfrak{g} \cong \bigoplus_{i=1}^l W^{k_i}$ be the decomposition of $\mathfrak{g}$ in irreducible $\mathfrak{sl}_2-$representations.
Each $W^{k_i}$ must contain a zero-weight space that spans a Cartan, so $k_i = 2m_i$, and the eigenvectors of $h$ are integers.
The numbers $m_i$, $i=1,\ldots,l$ appearing in that decomposition are called the \emph{exponents} of $\mathfrak{g}$ (see \cite{kostant2009principal}). 
We will assume the numbers $m_i$ are ordered in increasing order.

Collecting elements associated with the same weight space, we also get a grading $\mathfrak{g}$ $\mathfrak{g} = \oplus_{i=-m_l}^{m_l} \mathfrak{g}_{i},$ where $\mathfrak{g}_{i} = \{x \in \mathfrak{g}: [h,x] = ix\}$.
The grading is compatible with the bracket: $[\mathfrak{g}_{i}, \mathfrak{g}_{j}] \subset \mathfrak{g}_{(i+j)}$.
One can verify that the solvable subalgebra $\bigoplus_{i \geq 0} \mathfrak{g}_i$ is Borel.
Furthermore, $\mathfrak{g}^j = \bigoplus_{i \geq -j} \mathfrak{g}_i$ induces the filtration of Section \ref{sec:fullflag} in the sense that $\mathcal{D}_i = \mathfrak{g}^i/\mathfrak{b} \subset T_\mathfrak{b} \mathcal{B}$.

A principal $\mathfrak{sl}_2-$triple, can be constructed from a Chevalley basis $\langle h_\alpha, e_\alpha,f_\alpha : \alpha \in \Pi \rangle$ of $\mathfrak{g}$ (as in \cite[Chapter VI]{serre2000complex}) via:

$$h = \frac{1}{2}\sum_{\alpha \in \Sigma(\mathfrak{h})^+} h_\alpha =: \sum_{\alpha \in \Pi} r_\alpha h_\alpha, \\
e = \sum_{\alpha \in \Pi} \sqrt{r_\alpha} e_\alpha,\: f= \sum_{\alpha \in \Pi} \sqrt{r_\alpha} f_\alpha,$$

where $r_\alpha$ are integer numbers depending on the root datum.
For this triple, one can check that $\mathfrak{g}_i$ is the sum of the root spaces $\mathfrak{g}_\alpha$ for which $\alpha$ has height $i$.

\begin{rmk}
\label{rmk:highestroot}
If $\mathfrak{g}$ is simple, the highest weight vector in the representation $W^{m_l}$ corresponding to the largest exponent is always the root space $\mathfrak{g}_\theta$ associated to the highest root $\theta$ of $\mathfrak{g}$.
\end{rmk}

Let $\phi : \mathfrak{sl}_2 \rightarrow \mathfrak{g}$ be the morphism induced by an $\mathfrak{sl}_2$ triplet $(e,h,f)$, bijective onto $\mathfrak{s} \subset \mathfrak{g}$.
This further induces a \emph{principal embedding}:

$$\mathcal{P}_\phi : \mathcal{B}_{\mathfrak{sl}_2 (\CC)} \rightarrow \mathcal{B},$$

defined by sending a Borel $\mathfrak{b}$ of $\mathfrak{sl}_2$ to the unique Borel subalgebra of $\mathfrak{g}$ containing $\phi (\mathfrak{b})$.
It is not hard to check that this defines an embedding of $\mathbb{P}^1$ into $\mathcal{B}$.
Notice that $\mathcal{P}_\phi$ is $PGL_2 (\CC)-$equivariant. 
Fixing $\mathfrak{b}_0 = \langle h_0,e_0\rangle$, we get that $d_{\mathfrak{b}_0} \mathcal{P}_\phi (f_0 + \mathfrak{b}_0) = f + \mathcal{P}_\phi (\mathfrak{b}_0)$, therefore tangent to the first level of the canonical distribution in $T \mathcal{B}$.

\begin{prop}
\label{prop:principaldirections}
Let $v_f = f \: mod (\mathfrak{b}) \in T_\mathfrak{b} \mathcal{B}$, then $G v_f$ is open and dense in the sub-bundle $T^1 \mathcal{B}$.
\begin{proof}
It suffices to verify this at the base point $\mathfrak{b}$. 
The fiber $T^1_{\mathfrak{b}} \mathcal{B}$ identifies with $\mathfrak{g}_{-1} = \bigoplus_{\alpha \in \Pi} \mathfrak{g}_{-\alpha}$. 
Given $\mathfrak{n} = [\mathfrak{b},\mathfrak{b}] = \bigoplus_{i \geq 1} \mathfrak{g}_i$, note that $[\mathfrak{n}, \mathfrak{g}_{-1}] \subset \mathfrak{b}$, so the action of $B$ on the fiber factors through the group corresponding to the Cartan $H \cong B/N$. 
The adjoint action of $H$ scales each simple root space $\mathfrak{g}_{-\alpha}$ independently. 
Since $f$ has non-vanishing components in all simple root spaces, its $H$-orbit is isomorphic to $(\mathbb{C}^*)^l$, which is open and dense in $T^1_{\mathfrak{b}} \mathcal{B} \cong \mathbb{C}^l$.
\end{proof}
\end{prop}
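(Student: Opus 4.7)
The plan is to reduce the claim to a statement about a single fiber by exploiting the $G$-equivariance of everything in sight. Since $G$ acts transitively on $\mathcal{B}$ and the distribution $T^1\mathcal{B}$ is $G$-invariant, $Gv_f$ is dense and open in $T^1\mathcal{B}$ if and only if the orbit of $v_f$ under the isotropy group $B$ (the Borel subgroup fixing $\mathfrak{b}$) is dense and open inside the single fiber $T^1_\mathfrak{b}\mathcal{B}$. So the proof collapses to an analysis of the $B$-action on that one fiber.

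Next I would identify $T^1_\mathfrak{b}\mathcal{B}$ explicitly. Recall from the discussion of the principal grading that $\mathfrak{b} = \bigoplus_{i\geq 0}\mathfrak{g}_i$ and $\mathfrak{g}^1 = \bigoplus_{i\geq -1}\mathfrak{g}_i$, so the canonical identification $T_\mathfrak{b}\mathcal{B}\cong \mathfrak{g}/\mathfrak{b}$ induces $T^1_\mathfrak{b}\mathcal{B}\cong \mathfrak{g}_{-1}$. By the Chevalley description of $\mathfrak{g}_{-1}$, this is the direct sum $\bigoplus_{\alpha\in\Pi}\mathfrak{g}_{-\alpha}$ of the negative simple root spaces, and the explicit formula $f=\sum_{\alpha\in\Pi}\sqrt{r_\alpha}f_\alpha$ shows that $v_f$ has nonzero component in each of these lines.

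Now I would study how $B=HN$ acts on $\mathfrak{g}_{-1}$, where $N$ is the unipotent radical and $H$ the Cartan. The key observation is that $[\mathfrak{n},\mathfrak{g}_{-1}]\subset\bigoplus_{i\geq 0}\mathfrak{g}_i = \mathfrak{b}$, because $\mathfrak{n}=\bigoplus_{i\geq 1}\mathfrak{g}_i$ and the principal grading is compatible with the bracket. Hence $N$ acts trivially on the quotient $\mathfrak{g}^1/\mathfrak{b}$, and the $B$-action factors through $H$.

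The final step is to observe that $H\cong(\mathbb{C}^*)^l$ acts on each simple root line $\mathfrak{g}_{-\alpha}$ by the character $-\alpha$, and the simple roots $\Pi$ form a basis of $\mathfrak{h}^*$, so the characters $\{-\alpha:\alpha\in\Pi\}$ are linearly independent. This independence means the $H$-action factors as the standard componentwise scaling of $(\mathbb{C}^*)^l$ on $\mathbb{C}^l$. Since $v_f$ lies in the open subset $(\mathbb{C}^*)^l\subset\mathbb{C}^l$, its $H$-orbit is exactly $(\mathbb{C}^*)^l$, which is Zariski open and dense. The only real subtlety is justifying that the characters are independent, which is immediate from general root-system theory; there is no hard analytic estimate or delicate geometric input required.
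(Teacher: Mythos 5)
Your proposal is correct and follows essentially the same route as the paper's proof: reduce by equivariance to the $B$-action on the fiber $T^1_{\mathfrak{b}}\mathcal{B}\cong\mathfrak{g}_{-1}$, note that $N$ acts trivially since $[\mathfrak{n},\mathfrak{g}_{-1}]\subset\mathfrak{b}$, and conclude that the $H$-orbit of $f$ is $(\mathbb{C}^*)^l$, open and dense in $\mathbb{C}^l$. The only difference is that you spell out the linear independence of the simple roots as characters of $H$, which the paper leaves implicit.
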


The set $G v_f$ is the space of all vectors obtained as the differential of some principal embedding.
We will call $\mathcal{O} \subset T^1 \mathcal{B}$ the set of \emph{principal directions}.

\begin{ex}
\label{ex:veronese}
 We can think of $\mathbb{C}^{n+1}$ as the $n-th$ symmetric power $Sym^n \mathbb{C}^2$. 
 With this point of view, we have the Veronese map $V: \mathbb{P} (\mathbb{C}^2) \rightarrow \mathbb{P} (Sym^n \mathbb{C}^2)$ defined by sending a line $L$ to its $n-$th power $L^n$.
   
 This map is equivariant with respect to the standard embedding of $PGL_2 (\mathbb{C}) \rightarrow PGL_{n+1} (\mathbb{C})$ coming from the unique irreducible representation.
 The image of $V( \mathbb{P}^1)$ is called a rational normal curve.
 One can prove that it is characterized by the property that any $n+3$ tuple of different points is in general position in $\mathbb{P}^n$ (see \cite{chern1978abel}).
   
 This implies that for every $1 \leq k \leq n$, we get osculating copies of $\mathbb{P}^k$ around every point of $V(\mathbb{P}^1)$.
 To see this, fix a basepoint $p$ in the curve, and take $k$ different points nearby.
 The property of the rational normal curve implies that there is a unique copy of $\mathbb{P}^k$ passing through $p$ and these $k$ points.
 As we move our points closer to $p$, the $k-$planes converge to a copy of $\mathbb{P}^k$ tangent to the curve.

 This allows us to extend the Veronese map to a map from $\mathbb{P}^1$ into the full flag manifold $\mathcal{F}_{n+1}$.
 By equivariance, we get that it coincides with the map induced by the principal embedding.
 The osculating construction implies that the curve is tangent to the distribution $T^1 \mathcal{F}_n$ defined before.
\end{ex}

A morphism $\phi : \mathfrak{sl}_2 \rightarrow \mathfrak{g}$ similarly defines a map between the symmetric spaces:

$$\mathcal{H}_\phi : \mathbb{H}^3 \cong \mathbb{X}_{\mathfrak{sl}_2 (\mathbb{C})} \rightarrow \mathbb{X},$$

where $\mathcal{H}_\phi (\theta)$ is defined as the unique Cartan involution extending $\phi \circ \theta \circ \phi^{-1} : \mathfrak{s} \rightarrow \mathfrak{s}$.
This is well defined because of the following classical fact (see for instance \cite[Section 2.4]{labourie2017cyclic}).

\begin{lemma}
   Given $(e,h,f)$ a principal triple in $\mathfrak{g}$, then there is a unique Cartan involution $\theta$ verifying that $\theta (e) = -f,\: \theta (h) = -h, \: \theta (f) = -e.$
\end{lemma}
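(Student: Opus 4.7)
The plan is to prove existence constructively from a Chevalley basis adapted to the triple, and to prove uniqueness by recognizing the locus $\{\theta \in \mathbb{X} : \theta(h) = -h\}$ as a single maximal flat in the symmetric space and then reducing to a direct root-space computation inside it.

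\textbf{Existence.} Since $h$ is regular, it lies in a unique Cartan subalgebra $\mathfrak{h} \subset \mathfrak{g}$. Following the construction already sketched in the text, I would extend $(e,h,f)$ to a Chevalley basis $\{h_\alpha, e_\alpha, f_\alpha\}_{\alpha \in \Sigma}$ adapted to $\mathfrak{h}$, arranged so that $e = \sum_{\alpha \in \Pi} \sqrt{r_\alpha}\, e_\alpha$ and $f = \sum_{\alpha \in \Pi} \sqrt{r_\alpha}\, f_\alpha$ with positive integers $r_\alpha$. The associated compact real form involution $\theta^c$, defined on the Chevalley basis by $\theta^c(e_\alpha) = -f_\alpha$, $\theta^c(h_\alpha) = -h_\alpha$, $\theta^c(f_\alpha) = -e_\alpha$ and extended antilinearly, is a Cartan involution by the classical unitarian trick; direct substitution then yields $\theta^c(e) = -f$, $\theta^c(h) = -h$, $\theta^c(f) = -e$. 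For an arbitrary principal triple one invokes Kostant's result that principal $\mathfrak{sl}_2$-triples form a single $G$-orbit: transporting $\theta^c$ by the appropriate element of $G$ produces a Cartan involution (conjugation by $G$ preserves both antilinearity and the Cartan property) fulfilling the three identities for the given triple.

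\textbf{Uniqueness.} Let $\theta$ be any Cartan involution satisfying the three conditions. From $\theta(h) = -h$ and the regularity of $h$ one gets $\theta(\mathfrak{h}) = \theta(Z(h)) = Z(-h) = \mathfrak{h}$, so by Corollary \ref{prop:symmetric} the point $\theta$ lies on the totally geodesic maximal flat $F = P_\mathfrak{h}$, which is identified with $\mathfrak{a}$ via the exponential map based at any of its points. For $y \in \mathfrak{a}$, the transvection $\theta \mapsto \mathrm{Ad}(e^y)\theta\,\mathrm{Ad}(e^{-y})$ moves points within $F$ without affecting $\theta|_\mathfrak{h}$, since $\mathrm{ad}(y)$ annihilates $\mathfrak{h}$; hence $\theta(h) = -h$ is automatic on all of $F$ once it holds at one point. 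Writing any candidate as $\theta = \mathrm{Ad}(e^x)\theta_0\,\mathrm{Ad}(e^{-x})$ for a unique $x \in \mathfrak{a}$, where $\theta_0$ is the involution from the existence step, and using $[x, e_\alpha] = \alpha(x)\, e_\alpha$, $[x, f_\alpha] = -\alpha(x)\, f_\alpha$, $\theta_0(e_\alpha) = -f_\alpha$, together with $\alpha(x) \in \mathbb{R}$ for $x \in \mathfrak{a}$, a short computation gives
\[
\theta(e) \;=\; \mathrm{Ad}(e^x)\,\theta_0\,\mathrm{Ad}(e^{-x})\, e \;=\; -\sum_{\alpha \in \Pi} \sqrt{r_\alpha}\, e^{-2\alpha(x)} f_\alpha.
\]
Setting this equal to $-f = -\sum_{\alpha} \sqrt{r_\alpha}\, f_\alpha$ and comparing coefficients in the basis $\{f_\alpha\}_{\alpha \in \Pi}$ forces $e^{-2\alpha(x)} = 1$ for every simple $\alpha$, which by reality of $\alpha(x)$ yields $\alpha(x) = 0$ for all $\alpha \in \Pi$, hence $x = 0$ and $\theta = \theta_0$.

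The main conceptual step is recognizing the locus $\{\theta : \theta(h) = -h\}$ as a single flat parametrized transitively by $\exp(\mathfrak{a})$; once this is in hand, uniqueness becomes a one-line root-space computation, and the Kostant conjugacy invoked for existence is a convenience rather than a necessity (one could equally well stop after exhibiting $\theta^c$ for the model triple and then appeal to the uniqueness argument applied inside the flat determined by the given triple).
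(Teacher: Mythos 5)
Your proof is correct, but it is worth noting that the paper does not actually prove this lemma: it is stated as a classical fact with a pointer to \cite[Section 2.4]{labourie2017cyclic}, so there is no in-text argument to compare against. Your two-step argument is a legitimate self-contained justification. The existence step (compact-form involution $\theta^c$ attached to a Chevalley basis, then transport by Kostant conjugacy of principal triples) is standard and correct; the key point that makes the direct substitution work is that the coefficients $r_\alpha$ are positive reals, so $\sqrt{r_\alpha}$ survives the antilinearity. The uniqueness step is also correct, and as you note it only uses $\theta(h)=-h$ and $\theta(e)=-f$. The one place where you lean on something the paper itself only asserts is the identification of $P_{\mathfrak{h}}=\{\theta:\theta\mathfrak{h}=\mathfrak{h}\}$ with a \emph{single} flat $\exp(\mathfrak{a})\cdot\theta_0$: Corollary \ref{prop:symmetric} gives total geodesy and the tangent space, but connectedness of $P_{\mathfrak{h}}$ requires an extra word (e.g., any Cartan involution preserving $\mathfrak{h}$ is conjugate to $\theta_0$ by an element of the normalizer $N(H)$, Weyl representatives can be chosen in $K_{\theta_0}$, and the $\exp(i\mathfrak{a})$-part of $H$ fixes $\theta_0$, leaving exactly the orbit $\exp(\mathfrak{a})\cdot\theta_0$; alternatively one checks directly that any such $\theta$ restricts to $-\mathrm{Id}$ on the real span of the coroots, using that $\theta$ permutes root spaces and the positivity condition). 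With that gap filled, the comparison of coefficients $e^{-2\alpha(x)}=1$ for all simple $\alpha$ cleanly forces $x=0$, and the argument is complete.
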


This motivates the following definition:

\begin{defi}
\label{defi:adapted}
   We will say that a Cartan involution $\theta \in \mathbb{X}$ is \emph{adapted} to the triplet $(e,h,f)$ if it satisfies:
   $$\theta (e) = -f,\: \theta (h) = -h, \: \theta (f) = -e.$$
\end{defi}

Notice that the image of the map $\mathcal{H}_\phi$ is composed of those Cartan involutions leaving the subalgebra $\mathfrak{s}$ invariant, therefore, it is a totally geodesic submanifold by Corollary \ref{prop:symmetric}.
In fact, one can easily check that $\mathcal{H}_\phi$ is an isometry.

The maps $\mathcal{P}_\phi$ and $\mathcal{H}_\phi$ are compatible with each other in the sense that $\mathcal{B}_{\mathfrak{sl}_2 (\mathbb{C})}$ is naturally identified with $\partial \mathbb{X}_{\mathfrak{sl}_2 (\mathbb{C})}$ (because $\rk \mathfrak{sl}_2 (\mathbb{C}) =1$), and the boundary map $\partial \mathcal{H}_\phi : \partial \mathbb{X}_{\mathfrak{sl}_2 (\mathbb{C})} \rightarrow \partial \mathbb{X}$ lands in a stratum isomorphic to $\mathcal{B}$.
Passing through these identifications, $\partial \mathcal{H}_\phi =\mathcal{P}_\phi$.
See Appendix \ref{sec:equidistant} for a related discussion.

\section{Opers: definition and parametrization}
\label{sec:introopers}

In this section, we review basic results about opers.
We compile sketches of some crucial results with the hopes of making the material more accessible than the current literature.
New material includes a proof of Theorem \ref{thm:injectivity} using the osculating M\"obius approach, and Definition \ref{defi:normalizedparam} of normalized parametrization, which is adapted to our purposes.
Our exposition follows and expands on \cite[Sections 3,4]{sanders2018pre}.

\subsection{Developing data}
\label{subsec:developing}
In order to make computations in homogeneous spaces, it is often convenient to adopt a principal bundle perspective.
We introduce the following notation:

\begin{defi}
 Let $G$ be a Lie group, $H \subset G$ a closed subgroup, and $M$ a manifold.
 We will call a triple $(P_G,P_H,A)$ \emph{developing data for $G/H$}, where:
   \begin{itemize}
      \item $P_G \rightarrow M$ is a $G-$principal bundle.
      \item $P_H \hookrightarrow P_G$ is a reduction of structure to $H$.
      \item $A$ is a flat connection on $P_G$.
   \end{itemize}
\end{defi}

The nomenclature arises from the fact that developing data $(P_G, P_H, A)$ on $G$ defines maps $\Dev: \tilde{M} \rightarrow G/H$ equivariant with respect to a representation induced by $A$.
These maps are unique up to translation by $G$, and we will refer to them as \emph{Developing maps} for the triple $(P_G, P_H, A)$.
This fact is classical in the Higgs bundles literature (see for instance \cite[Section 4.2]{labourie2017cyclic}), but we remind the reader how this works in Appendix \ref{subsub:homogeneousspaces}.

Given $P$ a principal bundle, we identify the vertical bundle of $P$ with $\underline{\mathfrak{g}} = P \times \mathfrak{g}$ using the action of $G$.
Whenever we have developing data $(P_G,P_H,A)$, one can take the inclusion $\iota: P_H \rightarrow P_G$ and consider:

$$TP_H \xrightarrow{d\iota} TP_G \xrightarrow{A} \underline{\mathfrak{g}} \rightarrow \underline{\mathfrak{g}}/\underline{\mathfrak{h}},$$

where the last arrow corresponds to the natural projection coming from the quotient of $\underline{\mathfrak{g}}|_{P_H}$ and $\underline{\mathfrak{h}}$.
The result is a $1-$form $\overline{\omega} (P_G,P_H,A) \in \mathcal{A}^1 (P_H, \underline{\mathfrak{g}}/\underline{\mathfrak{h}})$ which can be checked, is $H-$invariant, thus descends to $\omega (P_G,P_H,A) \in \mathcal{A}^1 (M, P_H [\mathfrak{g}/\mathfrak{h}])$.
We will call $\omega (P_G, P_H, A)$ the \emph{Maurer-Cartan form} associated with the developing data.
This Maurer-Cartan form gets identified with the derivative of the developing map $d \Dev: T \tilde{M} \rightarrow T G/B$.
More precisely, $dDev = p^* \omega$, where $p : \tilde{M} \rightarrow M$ is a universal covering map.

We will be working with semisimple Lie groups $G$ of adjoint type.
Under those conditions, $G$-bundles over $M$ are in bijective correspondence with $\mathfrak{g}-$bundles on $M$.
By $\mathfrak{g}-$bundle, we mean a vector bundle equipped with a bracket $[\cdot, \cdot]$ making each fiber isomorphic to $\mathfrak{g}$.
Flat connections on the $G-$bundle are in correspondence with flat affine connections on the corresponding $\mathfrak{g}-$bundle.

We will be concerned with two special instances of this construction corresponding to maps to the flag manifold and the symmetric space.
In these cases, the developing data and the Maurer-Cartan form take a more concrete description:

\begin{description}
   \item[Maps to the full flag manifold] Let $B \subset G$ a Borel subgroups, developing data is determined by $(P_G [\mathfrak{g}], P_B [\mathfrak{b}], D)$.
 The Maurer-Cartan form $\omega$ can be computed by considering any connection $D_B$ that preserves the reduction of structure $P_B [\mathfrak{b}]$ and considering $\omega = D - D_B \: mod \: P_B[\mathfrak{b}]$.
   \item[Maps to the symmetric space] Developing data is determined by $(P_G [\mathfrak{g}],\Theta,D)$, where $\Theta \in Aut (P_G [\mathfrak{g}])$ is fiberwise a Cartan involution.
 The Maurer-Cartan form $\omega$ can be computed via the expression $D \Theta = \Theta \circ ad (\omega)$.
\end{description}

\subsection{Definitions and basic properties}
Let $X$ be a Riemann surface, and $G$ a complex semisimple Lie group of adjoint type.
Beilinson and Drinfeld introduced the following definition in \cite{beilinson2005opers}.
We rephrase it slightly to fit our purposes.

\begin{defi}
\label{defi:oper}
Let $X$ be a Riemann surface with a choice $p: \tilde{X} \rightarrow X$ of universal cover with deck group $\pi_1 (X)$.
A $G-$oper is a pair $(\mathcal{D}, \rho)$, where:
\begin{itemize}
    \item $\rho : \pi_1 (X) \rightarrow G$ is a representation.
    \item $\mathcal{D} : \tilde{X} \rightarrow \mathcal{B}$ is a $\rho-$equivariant holomorphic map such that $d_z \mathcal{D} (T_z X) \subset \mathcal{O}_{\mathcal{D}(z)}$ for all $z \in \tilde{X}$.
 Here $\mathcal{O} \rightarrow \mathcal{B}$ is the set of principal directions introduced in Section \ref{subsection:triplets}.
\end{itemize}
We will refer to $\mathcal{D}$ as the developing map, and $\rho$ as the holonomy of the oper.
\end{defi}

\vspace{\parskip}

\textbf{Examples:} 

\begin{description}
    \item[Complex projective structures] The condition $d \mathcal{D} (T\tilde{X}) \subset \mathcal{O}$, for $G = PGL_2 (\mathbb{C})$ is equivalent to $\mathcal{D}$ being locally injective.
    This is the same as giving a $(\mathbb{P}^1,PGL_2 (\mathbb{C}))$ structure.
    \item[$G-$opers coming from $PGL_2 (\mathbb{C})$ opers] Given $\mathcal{P}: \PP^1 \rightarrow \mathcal{B}$ a principal embedding onto the flag manifold of $G$, the composition of the developing map of the complex projective structure with $\mathcal{P}$ defines a $G-$Oper.
   \item[Opers and ODE's] As a subexample of the previous item, when $G = PGL_n (\mathbb{C})$, the principal embedding comes from the osculating flags construction on a rational normal curve in $\mathbb{P}^{n-1}$ as described in Section \ref{subsection:triplets}.
 The curve to projective space is determined by a special type of complex ODE, which has been classically recognized as part of a family parametrized by $\bigoplus_{i=2}^n H^0 (X, K^i)$ (see \cite[Chapter 4]{wentworth2016higgs}, \cite{hejhal1975monodromy}).
 A similar description exists for other classical Lie groups (see \cite{beilinson2005opers}).
\end{description}

There is a natural notion of isomorphic opers:

\begin{defi}
We will say that a pair of $G-$opers $(\mathcal{D}_1,\rho_1), ( \mathcal{D}_2,\rho_2)$ are \emph{isomorphic} if there exists $g \in G$ and $h : \tilde{X}_1 \rightarrow \tilde{X}_2$ biholomorphic and equivariant for the deck group action, such that the following diagram commutes:
\[
\begin{tikzcd}
    \tilde{X} \arrow{r}{\mathcal{D}_1} \arrow{d}{h} & \mathcal{B} \arrow{d}{Ad(g)} \\
    \tilde{X} \arrow{r}{\mathcal{D}_2} & \mathcal{B}
\end{tikzcd}
\]
The set of equivalence classes of opers will be called the \emph{moduli space of $G$ opers} and denoted $\op_X (G)$. 
\end{defi}

It will be convenient to describe opers using developing data as defined in the previous section.

\begin{prop}
Let $B \subset G$ be a Borel subgroup, and $(P_G [\mathfrak{g}],P_B [\mathfrak{b}],D)$ be developing data to $\mathcal{B}$ satisfying:
\begin{itemize}
    \item $D^{(0,1)} (P_B [\mathfrak{b}]) \subset P_B [\mathfrak{b}]$, where $D^{(0,1)}$ is the decomposition of $D$ to its $(0,1)$ component.
    \item The Maurer-Cartan form $\omega_{P_B} (A)$ takes values in $P_B[\mathcal{O}_\mathfrak{b}] \subset P_B [\mathfrak{g}/\mathfrak{b}]$.
    \end{itemize}
Then any developing map $\mathcal{D}: \tilde{X} \rightarrow \mathcal{B}$ will define an oper.
Different developing maps correspond to isomorphic opers.
\begin{proof}
The first condition corresponds to $\mathcal{D}$ being holomorphic. 
The second condition implies that the derivative is tangent to $\mathcal{O}$.
Finally, the equivalence of this opers is a consequence of the discussion in Appendix \ref{subsub:homogeneousspaces}.
\end{proof}
\end{prop}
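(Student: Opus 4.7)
The plan is to unpack each condition on the developing data into a corresponding property of the associated developing map. First I would invoke the Appendix construction to produce a developing map $\mathcal{D}: \tilde{X} \rightarrow \mathcal{B}$ from the flat connection $D$, together with the holonomy representation $\rho: \pi_1(X) \rightarrow G$ that it induces. The crucial identity $d\mathcal{D} = p^*\omega$ from that construction, where $\omega$ is the Maurer-Cartan form of the developing data and $p: \tilde{X} \rightarrow X$ is the universal cover, is what lets me translate hypotheses on $\omega$ into properties of $d\mathcal{D}$.

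To verify holomorphicity of $\mathcal{D}$, I would exploit the freedom in the choice of the auxiliary connection $D_B$ appearing in $\omega = D - D_B \mod P_B[\mathfrak{b}]$. The first hypothesis says that $D^{(0,1)}$ preserves $P_B[\mathfrak{b}]$, so it defines a Dolbeault operator on that subbundle. Extending this Dolbeault operator to a full connection $D_B$ on $P_B$ whose $(0,1)$-part agrees with $D^{(0,1)}$ on $P_B[\mathfrak{b}]$, one obtains that $D - D_B$ has vanishing $(0,1)$-component modulo $P_B[\mathfrak{b}]$. Hence $\omega$ is of type $(1,0)$, and $d\mathcal{D} = p^*\omega$ is complex linear, giving holomorphicity of $\mathcal{D}$. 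The oper tangency condition is then immediate from the second hypothesis: $\omega$ takes values in $P_B[\mathcal{O}_\mathfrak{b}]$, so $d\mathcal{D}(T_z \tilde{X}) \subset \mathcal{O}_{\mathcal{D}(z)}$ at each $z$. Combined with the equivariance already built into the developing map construction, $(\mathcal{D},\rho)$ is a $G$-oper.

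For the last claim, I would appeal once more to the Appendix: any two developing maps produced from the same developing data differ by the left action of a fixed $g \in G$, and the associated holonomies differ by conjugation by that same $g$. This is exactly the oper isomorphism relation with $h = \mathrm{id}_{\tilde{X}}$, so the isomorphism class of the resulting oper is an invariant of the data. The main obstacle is the technical point in the holomorphicity step, namely ensuring that the $(0,1)$-part of $D$ along $P_B[\mathfrak{b}]$ can always be completed to a full connection on the $B$-reduction; this is a standard existence of connections compatible with a given holomorphic structure, but it deserves attention to ensure compatibility with the principal bundle perspective, after which the other verifications reduce to bookkeeping via the identity $d\mathcal{D} = p^*\omega$.
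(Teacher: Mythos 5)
Your proposal is correct and follows essentially the same route as the paper's (very terse) proof: the first hypothesis yields holomorphicity of $\mathcal{D}$ via the vanishing of the $(0,1)$-part of the Maurer--Cartan form, the second gives tangency to $\mathcal{O}$ through $d\mathcal{D} = p^*\omega$, and the isomorphism claim follows from the Appendix's observation that developing maps are unique up to left translation by $G$. You have merely supplied the details (the choice of the auxiliary connection $D_B$ adapted to $D^{(0,1)}$) that the paper leaves implicit.
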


It is worth noticing that this is usually the starting definition of an oper in the literature.

\subsection{The Drinfeld-Sokolov normal form}
\label{section:ds}

Requiring the developing map to be tangent to the set of principal directions imposes strong restrictions on the developing data that gives rise to opers.
This is an observation attributed to Drinfeld and Sokolov \cite{drinfeld1981equations}, which allows us to parametrize the space of opers $\op_X (G)$.

Choose $(e,h,f)$ a principal $\mathfrak{sl}_2$ triplet, and $\mathfrak{b} = \bigoplus_{i \geq 0} \mathfrak{g}_i$, where $\mathfrak{g}_i$ are the graded pieces determined by the triplet.
Denote by $V_e = \ker (ad (e))$ the space of highest weight vectors for the $\mathfrak{sl}_2$ representation.
Notice that $V_e$ decomposes as a direct sum of weight spaces corresponding to $W^{m_i} \cap V_e$, where $W^{m_i}$ is an irreducible subrepresentation.

From now on, we will assume that the group $G$ is \emph{simple}.
We only use this condition to ensure that there is a unique appearance of $1$ as an exponent of $\mathfrak{g}$.
One can generalize all the results to semisimple groups by keeping track of each simple factor.

\begin{lemma}
\label{lemma:drinfeldsokolov}
Let $(P_G [\mathfrak{g}],P_B [\mathfrak{b}], D)$ be developing data defining a $G-$oper.
In any coordinate chart $z$ defined in $U \subset X$, we can trivialize $P_B [\mathfrak{b}]|_U \cong U \times \mathfrak{b}$ in such a way that $D$ takes the canonical form:
\begin{equation}
D = D^0 + ad(f + \sum_{i=1}^l \alpha_i (z) e_{m_i}) \otimes dz,      
\end{equation}
where $D^0$ is the trivial connection and $e_{m_i} \in V_e \cap W^{m_i}\setminus \{0\}$.
There are no non-trivial gauge transformations preserving this canonical form.
Furthermore, changing coordinate charts $z = \phi (w)$ and transforming back to canonical form, gives us a decomposition with:
$$\alpha_1 (w) = \alpha_1 (z)(\phi' (w))^2 - \frac{1}{2} S\phi (w),\: \alpha_i (w) = \alpha_i (z)(\phi' (w))^{m_i+1}, \forall i \geq 2$$
where $S \phi = (\frac{\phi''}{\phi'})' - \frac{1}{2} (\frac{\phi''}{\phi'})^2$ is the Schwarzian derivative.
\begin{proof}[Sketch]
Take any trivialization of $P_B [\mathfrak{b}]$ in the coordinate chart.
We can write:
$$D = D^0 + ad (A_{-1} (z) + v(z)) dz,$$
where $A_{-1}$ is a section of $P_B [\mathfrak{g}_{-1}]$ and $v (z)$ a section of $P_B [\mathfrak{b}]$.
Using the proof of Proposition \ref{prop:principaldirections}, we can apply a gauge transformation in $H$ to ensure that $A_{-1} (z) = f$.
Once we make this choice, the only remaining freedom we have is to apply $N = [B, B]$ gauge transformations.
A computation shows that we can push $v(z)$ upward in the grading using $N-$valued gauge transformations until only terms in $P_B [V_e]$ remain.
This relies on the fact that $[\mathfrak{n},\mathfrak{g}_i] \subset \bigoplus_{j\geq 1} \mathfrak{g_{i+j}}$, which allows us to use the gauge action to progressively eliminate terms layer-by-layer (with respect to the grading) until we get to the last one.

Now that we express $D$ locally as in the statement, notice that there are no gauge transformations preserving this form.
Such a $B-$valued gauge transformation should centralize $f$, but the centralizer of $f$ is the space of lowest weight vectors, which is not in $\mathfrak{b}$.
See \cite[Chapter 4]{frenkel2007langlands} for a detailed proof, and the change of coordinate computation.
\end{proof}
\end{lemma}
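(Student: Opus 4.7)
The plan is to work in a local trivialization of $P_B[\mathfrak{b}]|_U$ and iteratively reduce the flat connection to canonical form using the grading induced by the principal $\mathfrak{sl}_2$-triple. First I would trivialize $P_B[\mathfrak{b}]$ holomorphically in the chart $z$ (which is possible because $D^{(0,1)}$ preserves the $B$-reduction and defines a holomorphic structure on $U$), so that $D = D^0 + ad(A(z)) \otimes dz$ for a holomorphic $\mathfrak{g}$-valued function $A$. The oper condition forces the $\mathfrak{g}/\mathfrak{b}$-component of $A$ to lie pointwise in the principal $H$-orbit of $f$ inside $\mathfrak{g}_{-1}$, so we may write $A = A_{-1} + v$ with $v \in \mathfrak{b}$ and $A_{-1}$ in that orbit.

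The normalization then proceeds in two stages. First, an $H$-valued gauge transformation moves $A_{-1}$ to $f$ pointwise, which is possible by the argument in Proposition \ref{prop:principaldirections}. The remaining gauge freedom is $N = [B,B]$, which I would use to eliminate $v$ layer by layer in the grading $\mathfrak{b} = \bigoplus_{j\geq 0}\mathfrak{g}_j$, working upward from the lowest nonzero degree. The crucial $\mathfrak{sl}_2$-theoretic input is the splitting
\[
\mathfrak{g}_{k-1} = ad(f)(\mathfrak{g}_k) \oplus (V_e \cap \mathfrak{g}_{k-1}),
\]
valid for $k \geq 1$, so an infinitesimal $\mathfrak{g}_k$-gauge transformation cancels the $ad(f)$-image part of $v$ in degree $k-1$, modulo corrections in strictly higher degrees. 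Iterating up to degree $m_l$ leaves $v = \sum_{i=1}^l \alpha_i(z)\, e_{m_i}$ with $e_{m_i}$ spanning the one-dimensional $V_e \cap W^{m_i}$. Uniqueness is then immediate: a $B$-gauge preserving the normal form must centralize $f$, but the centralizer of a principal nilpotent is a sum of lowest-weight spaces, contained in $\bigoplus_{j \leq 0}\mathfrak{g}_j$, and thus meets $\mathfrak{b}$ only in $\{0\}$.

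For the coordinate change under $z = \phi(w)$, the form $f \otimes dz$ becomes $\phi'(w)\, f \otimes dw$, no longer canonical. I would restore it by applying the $H$-gauge $g = \exp(\log(\phi')\, h)$, which rescales $\phi' f$ back to $f$ using $[h,f]=-f$ and simultaneously multiplies each $e_{m_i}$-component by the weight factor $(\phi')^{m_i}$. Combined with the extra $\phi'$ from $dz = \phi'\, dw$, this immediately yields the homogeneous transformation law $\alpha_i(w) = \alpha_i(z)(\phi'(w))^{m_i+1}$ for each $i$. The rescaling however introduces an inhomogeneous correction $(\phi''/\phi')\, h \in \mathfrak{g}_0$ coming from $g^{-1}\partial_w g$, which lies outside $V_e$ and must be removed by a subsequent $\mathfrak{g}_1$-valued $N$-gauge; this final step, by the iterative scheme above, redistributes the correction into the $V_e \cap \mathfrak{g}_1$-component, producing the additional $-\tfrac{1}{2}S\phi$ term in $\alpha_1$. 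The main obstacle is keeping careful track of these derivative corrections through the iterative $N$-gauge reduction, and the cleanest route is to observe that the entire calculation takes place inside the $\mathfrak{sl}_2$-subalgebra generated by $(e,h,f)$, where it reduces to the classical Schwarzian identity for complex projective structures; higher-exponent coefficients $\alpha_i$ ($i\geq 2$) are unaffected because the inhomogeneous correction lives in degree zero and the upward propagation only reaches degree one.
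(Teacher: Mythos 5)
Your argument is correct and follows essentially the same route as the paper's sketch: an $H$-gauge to normalize the $\mathfrak{g}_{-1}$-component to $f$, an inductive $N$-gauge reduction whose key input is the Kostant splitting $\mathfrak{g}_{k-1} = ad(f)(\mathfrak{g}_k)\oplus(V_e\cap\mathfrak{g}_{k-1})$, uniqueness via the centralizer of $f$, and the $H$-gauge-plus-$\mathfrak{g}_1$-correction computation for the coordinate change; you simply supply more of the details that the paper defers to Frenkel's book. One pedantic remark: the centralizer of $f$ (the lowest-weight spaces) sits in degrees $-m_i\le -1$, not merely $\le 0$, and it is this strict negativity that forces trivial intersection with $\mathfrak{b}\supset\mathfrak{g}_0$.
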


As a corollary of Lemma \ref{lemma:drinfeldsokolov}, we get our desired parametrization of the space of opers:

\begin{thm}[{\cite{beilinson2005opers}}]
\label{thm:bd}
Fix $(e,h,f)$ a principal triplet and $(P_G^0[\mathfrak{g}],P_B^0 [\mathfrak{b}],D^0)$ the developing data of a fixed $G-$oper. 
Then, given $(P_G [\mathfrak{g}], P_B [\mathfrak{b}], D)$ developing data describing a $G-$oper, there exists a unique isomorphism $\phi: P^0_B [\mathfrak{b}] \rightarrow P_B [\mathfrak{b}]$ such that:
$$\phi^* D - D^0 \in H^0 (X, K \otimes P_B [V_e]).$$
Furthermore, choosing $e_{m_i} \in V_e \cap W^{m_i}$, $P_B^0 [V_e] \cong \bigoplus_{i=1}^l K^{m_i}$.
\end{thm}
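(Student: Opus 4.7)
The plan is to globalize Lemma \ref{lemma:drinfeldsokolov} by using its uniqueness clause as the patching tool. Cover $X$ by charts $(U_\alpha, z_\alpha)$ and apply the lemma on each chart to both developing data, producing the unique trivializations $\psi_\alpha^0 : P_B^0[\mathfrak{b}]|_{U_\alpha} \xrightarrow{\sim} U_\alpha \times \mathfrak{b}$ and $\psi_\alpha : P_B[\mathfrak{b}]|_{U_\alpha} \xrightarrow{\sim} U_\alpha \times \mathfrak{b}$ that put $D^0$ and $D$ into Drinfeld--Sokolov canonical form with holomorphic coefficients $\alpha_i^0$ and $\alpha_i$. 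The local isomorphism $\phi_\alpha := \psi_\alpha^{-1} \circ \psi_\alpha^0$ then satisfies
\[
\phi_\alpha^{\,*} D - D^0 \;=\; ad\!\left(\sum_{i=1}^l (\alpha_i - \alpha_i^0)(z_\alpha)\, e_{m_i}\right) \otimes dz_\alpha,
\]
which takes values in $K \otimes P_B^0[V_e]$ over $U_\alpha$.

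The crucial step is showing that the $\phi_\alpha$ glue. I claim the $B$-valued gauge transformation intertwining the DS canonical form in one coordinate chart with the DS canonical form in another depends only on the coordinate change $\varphi_{\alpha\beta}$, not on the specific coefficients $\alpha_i$: its Cartan part is forced to be $\exp(-(\log \varphi'_{\alpha\beta})\, h)$ in order to rescale $f$ correctly, and its nilpotent part is determined iteratively by requiring the $g^{-1}dg$ contribution to cancel the non-$V_e$ components produced along the way---a procedure involving only derivatives of $\varphi_{\alpha\beta}$. Consequently the transition functions for $P_B^0$ and $P_B$ in the DS trivializations coincide, so $\phi_\alpha = \phi_\beta$ on overlaps and the $\phi_\alpha$ glue to a global isomorphism $\phi : P_B^0[\mathfrak{b}] \to P_B[\mathfrak{b}]$. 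Uniqueness is then immediate: any rival $\phi'$ would give locally a $B$-gauge transformation of $D^0$ preserving its canonical form, which the last sentence of Lemma \ref{lemma:drinfeldsokolov} rules out beyond the identity.

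To read off the identification $P_B^0[V_e] \cong \bigoplus_i K^{m_i}$, use the coordinate-change formulas from the lemma. For $i \geq 2$, $\alpha_i(z_\beta) = \alpha_i(z_\alpha)(\varphi'_{\alpha\beta})^{m_i+1}$ is exactly the transition rule for a section of $K^{m_i+1}$; for $i = 1$, the Schwarzian anomaly $-\frac{1}{2} S\varphi_{\alpha\beta}$ depends only on the coordinate change, so it cancels in the difference $\alpha_1 - \alpha_1^0$, which therefore also transforms as a section of $K^2$. Factoring out the one power of $K$ carried by $dz_\alpha$, the choice of $e_{m_i} \in V_e \cap W^{m_i}$ frames a line subbundle of $P_B^0[V_e]$ isomorphic to $K^{m_i}$, yielding $P_B^0[V_e] \cong \bigoplus_i K^{m_i}$ and placing $\phi^* D - D^0$ in $H^0(X, K \otimes P_B^0[V_e]) \cong \bigoplus_i H^0(X, K^{m_i+1})$. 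Holomorphicity of each $\alpha_i - \alpha_i^0$ comes for free from the fact that the $(0,1)$-parts of $D, D^0$ preserve the respective Borel reductions. The subtle point of the proof is establishing the oper-independence of the DS transition function; with that in hand, the gluing of $\phi$ and the Hitchin identification of the target bundle are essentially forced by Lemma \ref{lemma:drinfeldsokolov}.
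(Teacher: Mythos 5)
The paper itself offers no proof of Theorem \ref{thm:bd} beyond declaring it a corollary of Lemma \ref{lemma:drinfeldsokolov} and citing Beilinson--Drinfeld, so your write-up is supplying the globalization argument that the paper omits; the overall strategy (patch the local DS trivializations using uniqueness, then read off the $K^{m_i+1}$ transformation law) is exactly the intended one, and your treatment of the target bundle and of the Schwarzian cancellation in $\alpha_1-\alpha_1^0$ is correct.

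The one step you assert without justification is the crucial one: that the $B$-valued gauge transformation relating the DS canonical forms in two charts depends only on $\varphi_{\alpha\beta}$ and not on the $\alpha_i$. A priori this could fail, because in the iterative elimination the terms $\sum_i \alpha_i e_{m_i}$ sit inside $\mathfrak{b}$ and could be pushed by $\mathrm{ad}(u)$ (for $u$ the nilpotent part already produced) into components outside $V_e$ at higher levels, forcing the later stages of the recursion to depend on the $\alpha_i$. The reason this does not happen is a specific Lie-theoretic fact you should record: after the Cartan part $\exp(-(\log\varphi')h)$, the only non-$V_e$ residue is the level-zero term $b'h = b'[e,f]$, so the nilpotent correction is $\exp(c\,e)$ with $c$ determined by $\varphi'$ alone; since $V_e=\ker(\mathrm{ad}\,e)$ is the centralizer of the regular nilpotent $e$ and is therefore abelian (Kostant), $\mathrm{ad}(c\,e)$ annihilates every $\alpha_i e_{m_i}$, the recursion terminates after this single step, and no $\alpha_i$-dependent garbage is ever created. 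With that observation your claim is correct. Alternatively, you can bypass the transition-function claim entirely: on any open set there is at most one isomorphism $\phi_U$ with $\phi_U^*D - D^0$ valued in $K\otimes V_e$ (if $\phi_U,\phi_U'$ both work, $\phi_U^{-1}\phi_U'$ is a $B$-gauge transformation carrying one canonical form to another, and the level-by-level argument using injectivity of $\mathrm{ad}\,f$ on $\mathfrak{n}$ together with $V_e\cap\mathrm{im}(\mathrm{ad}\,f)=0$ forces it to be the identity), so the $\phi_\alpha$ agree on overlaps automatically. Note also that your final uniqueness sentence quietly uses this stronger statement: a rival $\phi'$ gives a gauge transformation taking one canonical form to a possibly \emph{different} canonical form, which is not literally what the last clause of Lemma \ref{lemma:drinfeldsokolov} excludes, though the same argument disposes of it.
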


This shows that $\op_X (G)$ is an affine space modeled on the Hitchin base $\bigoplus_{i=1}^l H^0 (K^{m_i+1})$.
The parametrization depends on a choice of principal triplet and the $V_e$ basis $e_{m_i}$.
We will say that a basis $\{e_{m_i}\}$ of $V_e$ is \emph{homogeneous} if $e_{m_i} \in V_e \cap W^{m_i}$ for all $i$. 

A more down-to-earth treatment is the following: pick two $G-$opers with developing maps $\mathcal{D}^i : \tilde{X} \rightarrow \mathcal{B}$ and identify $\mathcal{B}$ with $G/B$.
The Drinfeld-Sokolov normal form provides canonical lifts $\overline{\mathcal{D}^i} : \tilde{X} \rightarrow G$ of $\mathcal{D}^i$ verifying that $(\overline{\mathcal{D}^i})^{-1} (\overline{\mathcal{D}^i})' = f + \sum_{j=1}^l \alpha_j^l e_{m_j}$.
The gauge transformation $\phi$ from Theorem \ref{thm:bd} is represented by the holomorphic map $\Osc: \tilde{X} \rightarrow G$ defined as $\Osc (z) = \overline{\mathcal{D}^1} (\overline{\mathcal{D}^2})^{-1}$.

Differentiating, we get the expression:

$$(\Osc(z))^{-1} (\Osc(z))' = Ad (\overline{\mathcal{D}^2} (z)) (\sum_{i=1}^l (\alpha_i^1 - \alpha_i^2) e_{m_i}),$$

where $\alpha_i^1 - \alpha_i^2$ are the $K^{m_i+1}$ differential defined by the previous theorem.
A computation shows that this implies that the $2-$jet of $\Osc(z) \mathcal{D}_2 (\cdot)$ coincides with the $2-$jet of $\mathcal{D}_1 (\cdot)$ at $z$ (after taking derivatives and projecting to $T_{gB} G/B = Ad(g) \mathfrak{b}$).
When $G= PGL_2 (\mathbb{C})$, this recovers a well-known result about osculating M\"obius transformations (see \cite[Section 3]{dumas2009complex},\cite{thurston1986zippers}).
However, higher derivatives have more complicated expressions.
Nevertheless, we will still refer to $\Osc (z)$ as the \textit{osculating maps}.

\begin{ex}
Let $G = PGL_{n+1} (\mathbb{C})$ and suppose $\mathcal{D}$ is the developing map of an oper satisfying 
$$\overline{\mathcal{D}^{-1}} \overline{\mathcal{D}}' (z)= 
\begin{pmatrix}
0 & 0 & \cdots & 0 & \alpha_n (z) \\
1 & 0 & \cdots & 0 & 0 \\
0 & 1 & \ddots & \vdots & \vdots \\
\vdots & \ddots & \ddots & 0 & 0 \\
0 & \cdots & 0 & 1 & 0
\end{pmatrix},$$ 
this is of the form $f + \alpha_n e_{m_n}$, for $e_{m_n}$ a highest weight vector of higher degree, for some $\mathfrak{sl}_2$ principal triplet in $\mathfrak{sl}_{n+1} (\mathbb{C})$.
The differential equation implies that $\overline{\mathcal{D}}$ is the Wronskian $W(x_1,\ldots,x_{n+1})$ of $(n+1)-$linearly independent solutions of the differential equation:
$$x^{(n+1)} - \alpha_n  x =0.$$
For each linearly independent solutions, the curve in $PGL_{n+1} (\mathbb{C})$ projects down to a unique curve in $\mathcal{F}_{n+1}$.
When $\alpha_n =0$, one can take the solutions to be $x_i (z) = z^i$, which recovers the curve osculating the Veronese embedding $(1:z) \to (1:z:\ldots:z^n)$.
\end{ex}

Notice that we have a natural \emph{holonomy map} $Hol: \op_X (G) \rightarrow Hom (\pi_1 (X), G)/G$ into the character variety $Hom (\pi_1 (X), G)/G$ defined as the quotient of $Hom (\pi_1 (X), G)$ under the conjugation action (one can check that the image lands on smooth points of the character variety).
We get the following corollary of Theorem \ref{thm:bd}, that we could only find documented for $G = PGL_n (\mathbb{C})$ in \cite{wentworth2016higgs}.

\begin{thm}
\label{thm:injectivity}
If $X$ is a compact Riemann surface, then the holonomy map $Hol: \op_X (G) \rightarrow Hom (\pi_1 (X), G)/G$ is injective.
\begin{proof}
   Suppose we have two opers with developing maps $\mathcal{D}^i$ and holonomies $\rho^i$ for $i=1,2$.
   The osculating map introduced above satisfies $\Osc (\gamma (z)) \rho_2 (\gamma) = \rho_1 (\gamma) \Osc (z)$. 
   
   If both opers share the same holonomy $\rho$, we get that $\rho(\gamma)^{-1} \Osc (\gamma(z)) \rho (\gamma) = \Osc (z)$.
   Particularly $tr (Ad(\Osc (\gamma z)^k)) = tr (Ad(\Osc (z))^k))$ for every $k \in \mathbb{Z}_{\geq 1}$, $z \in \tilde{X}$ and $\gamma$ (we think of $\Osc$ as acting in $\mathfrak{g}$). 
   This implies that $z \to tr (Ad(\Osc (z)^k))$ descends to an holomorphic function $F_k : X \rightarrow \mathbb{C}$.
   Since $X$ is compact, the function is constant for every $k$ by the maximum principle. 
   This implies that $\Osc(z) =g \in G$ is a constant function, and the opers are isomorphic via $g$.
\end{proof}
\end{thm}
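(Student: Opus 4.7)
The plan is to construct a holomorphic ``osculating'' interpolation between the two opers and use compactness of $X$ together with the maximum principle to force this interpolation to be a single element of $G$. Throughout, suppose $(\mathcal{D}^1, \rho_1)$ and $(\mathcal{D}^2, \rho_2)$ are opers with the same holonomy class; after conjugating the second oper I may assume $\rho_1 = \rho_2 =: \rho$. Lemma~\ref{lemma:drinfeldsokolov} supplies canonical holomorphic lifts $\overline{\mathcal{D}^i} : \tilde{X} \to G$ of the developing maps, and the equivariance $\mathcal{D}^i(\gamma z) = \rho(\gamma) \mathcal{D}^i(z)$ combined with the uniqueness clause of the Drinfeld--Sokolov normal form promotes to $\overline{\mathcal{D}^i}(\gamma z) = \rho(\gamma) \overline{\mathcal{D}^i}(z)$. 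Consequently the holomorphic osculating map $\Osc(z) := \overline{\mathcal{D}^1}(z)\, (\overline{\mathcal{D}^2}(z))^{-1}$ satisfies the conjugation equivariance $\Osc(\gamma z) = \rho(\gamma)\, \Osc(z)\, \rho(\gamma)^{-1}$ for every $\gamma \in \pi_1(X)$.

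Next I would exploit this equivariance to manufacture holomorphic functions on the compact base $X$. For each integer $k \geq 1$, the class function $\chi_k(g) := \trace(\mathrm{Ad}(g)^k)$ is conjugation invariant on $G$, so the composition $F_k(z) := \chi_k(\Osc(z))$ descends to a well-defined holomorphic function $F_k : X \to \mathbb{C}$. Since $X$ is compact, the maximum principle forces each $F_k$ to be a constant. The whole family $\{F_k\}_{k\geq 1}$ then pins down the characteristic polynomial of $\mathrm{Ad}(\Osc(z)) \in \mathrm{End}(\mathfrak{g})$ independently of $z$, and hence the conjugacy class of $\Osc(z)$ in $G$ is independent of $z$.

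The main obstacle is promoting constancy of all conjugation-invariant data into honest pointwise constancy of $\Osc$. By connectedness of $\tilde{X}$ and holomorphy of $\Osc$, the image is already confined to a single adjoint orbit of $G$; to cut down to a single point I would iterate the descent argument with every polynomial invariant on $G$ under conjugation, or alternatively with matrix coefficients of non-adjoint representations, exploiting that $G$ is of adjoint type so $\mathrm{Ad}$ is faithful. Once all such invariants are constant, a standard argument (non-constant holomorphic maps from compact $X$ into an affine target are obstructed by the maximum principle) shows $\Osc \equiv g$ for some fixed $g \in G$. The identity $\overline{\mathcal{D}^1} = g \cdot \overline{\mathcal{D}^2}$ then projects to the required isomorphism $(\mathcal{D}^1, \rho) \cong (\mathcal{D}^2, \rho)$, establishing the injectivity of the holonomy map.
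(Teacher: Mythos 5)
Your argument follows the paper's proof step for step through its core: form the osculating map $\Osc = \overline{\mathcal{D}^1}\,(\overline{\mathcal{D}^2})^{-1}$, note that equality of holonomies upgrades its equivariance to the conjugation law $\Osc(\gamma z) = \rho(\gamma)\Osc(z)\rho(\gamma)^{-1}$, push the class functions $\trace(\mathrm{Ad}(\cdot)^k)$ through $\Osc$ to obtain holomorphic functions on the compact surface $X$, and apply the maximum principle to make them constant. Up to that point the proposal is correct and identical in strategy to the paper, which at this stage concludes directly that $\Osc$ is constant.

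The problem lies in the extra machinery you introduce to justify that final step; neither of your proposed mechanisms works. First, matrix coefficients of representations other than the adjoint one are not conjugation-invariant, so $z \mapsto \alpha(\rho_V(\Osc(z))v)$ does not satisfy $F(\gamma z)=F(z)$ and does not descend to $X$; only class functions descend, and the maximum principle is unavailable for anything else. Second, even if every conjugation-invariant polynomial of $\Osc(z)$ is constant, this only confines $\Osc(\tilde{X})$ to a single fiber of the adjoint quotient $G \to T/W$, and such a fiber is a positive-dimensional union of conjugacy classes (the fiber through the identity is the whole unipotent variety), so the invariants do not cut the target down to a point. Relatedly, constancy of the characteristic polynomial of $\mathrm{Ad}(\Osc(z))$ does not place the image in ``a single adjoint orbit'': the Jordan type can a priori jump within a fiber. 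Finally, your closing appeal to ``non-constant holomorphic maps from compact $X$ into an affine target are obstructed by the maximum principle'' cannot be applied to $\Osc$ itself, because $\Osc$ is only conjugation-equivariant and therefore does not descend to a map defined on $X$. Note that $\Osc$ is constant exactly when $(\Osc)^{-1}\Osc' = \mathrm{Ad}(\overline{\mathcal{D}^2})\bigl(\sum_i(\alpha_i^1-\alpha_i^2)e_{m_i}\bigr)$ vanishes, i.e.\ exactly when the defining differentials agree, so the entire content of the theorem is concentrated in this last step; the invariant-theoretic detour you sketch does not supply it.
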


More generally, Sanders \cite{sanders2018pre} proves that the holonomy map is locally injective when we allow ourselves to change the Riemann surface structure on the surface.

\begin{rmk}
As mentioned before, a $PGL_n (\mathbb{C})$ oper is determined by a curve $c: \tilde{X} \rightarrow \mathbb{P}^{n-1}$.
If we identify $\tilde{X}$ with the unit disk $\Delta$, then for $z_0 \in \Delta$ and $\varepsilon$ sufficiently small, the oper condition ensures that the points $c (z_0), c(z_0 + \varepsilon),\ldots, c(z_0 + (n+2)\varepsilon)$ lie in general position.
By the defining property of rational normal curves (recall Example \ref{ex:veronese}), there exists a unique rational normal curve $C_\varepsilon$ passing through these $n+3$ points.
One may expect that when $\varepsilon$ approaches zero, the rational normal curves $C_\varepsilon$ converge to a rational normal curve ``osculating'' $c$ at $z_0$.
Some arduous computations for $G = PGL_3 (\mathbb{C})$ show that this intuition is correct.
When $G = PGL_2 (\mathbb{C})$, this is a well-known computation (see \cite[Section 1.3]{ovsienko2004projective}).
Since the group $PGL_n (\mathbb{C})$ acts transtively on parametrized normal curves, thus the construction recovers the osculating map as the only transformation that maps the standard normal curve to the osculating one.
\end{rmk}

\subsection{Normalized parametrization of \texorpdfstring{$\op_X (G)$}{Op_X (G)}}
\label{subsec:normalizedparam}

We have seen that a base-point oper, a principal triplet, and a homogeneous basis for $V_e$ identify the moduli of opers $\op_X (G)$ with the Hitchin base.
Our goal for this section is to pick a special parametrization of $\op_X (G)$ which we call \emph{normalized}.

Our preferred basepoint for $\op_X (G)$ will be the \emph{Fuchsian oper} that we now describe.
Recall that the Uniformization Theorem claims that $\tilde{X}$ is biholomorphic to either a round disk, $\CC$, or $\PP^1$.
We call a map $u : \tilde{X} \rightarrow \PP^1$ realizing this biholomorphism a \emph{uniformization}, and we denote its holonomy by $\rho_f : \pi_1 (X) \rightarrow PGL_2 (\CC)$.

\begin{defi}
The \emph{Fuchsian oper} is the class $[(p,\mathcal{P}_\phi \circ u, \phi \circ \rho_f)] \in \op_X (\mathfrak{g})$, for $\phi : PGL_2 (\CC) \rightarrow G$ a morphism onto a principal $PGL_2 (\CC)$, and $\mathcal{P}_\phi$ the induced map between the flag manifolds.
\end{defi}

Notice that the definition is independent of $\mathcal{P}_\phi$, and the choice of uniformization.

The standard assumption from now on will be that $X$ is hyperbolic with metric $H$ (as in Section \ref{sec:notation}).

In \cite[Example 1.5]{hitchin1987self}, Hitchin gave a gauge theoretic proof of the uniformization theorem that allows us to explicitly write down a principal bundle, and a flat connection that gives rise to the holonomy $\rho_f$ which we now describe.

Let $(e,h,f)$ be an $\mathfrak{sl}_2-$triple of $\mathfrak{sl}_2 (\CC)$ decomposing $\mathfrak{sl}_2 (\CC) = \bigoplus_{i=-1}^1 \mathfrak{g}_i$ according to the weight spaces.
We can use them to define an $\mathfrak{sl}_2-$bundle over $X$ as: 
$$P_{PGL_2 (\CC)} [\mathfrak{sl}_2 (\CC)] = (K^{-1} \otimes \underline{\mathfrak{g}_{-1}}) \oplus (\mathcal{O} \oplus \underline{\mathfrak{g}_0}) \oplus (K \otimes \underline{\mathfrak{g}_1}),$$
where given $V$ a vector space, $\underline{V} = X \times V$ denotes the trivial bundle.
This bundle admits a natural reduction of structure to the upper triangular group $B$ by defining $P_B [\mathfrak{b}] = (\mathcal{O} \otimes \underline{\mathfrak{g}_0}) \oplus (K \otimes \underline{\mathfrak{g}_1}).$
Moreover, $P_{PGL_2 (\mathbb{C})} [\mathfrak{sl}_2 (\mathbb{C})]$ inherits a connection $\nabla$ from the Chern connection on $K^i$ by declaring $\nabla (\beta_i \otimes x_i) = (\nabla^i \beta_i) \otimes x_i,\: \forall \beta_i \otimes x_i \in \mathcal{A}^0 (X, K^i \otimes \underline{\mathfrak{g}_i}),\: i=-1,0,1$ (this connection is well defined because the bundles $\underline{\mathfrak{g}_i}$ are trivial).
It is not hard to see that $\nabla$ cannot be flat.
However, we can define:

$$\Phi = \tau \otimes ad(f) + H \otimes ad(e) \in \mathcal{A}^0 (X, End (P_{PGL_2 (\CC)} [\mathfrak{sl}_2 (\CC)])),$$

where $\tau \in \mathcal{A}^{(1,0)} (X, K^{-1})$ is given in coordinates by $\tau = dz \otimes \partial_z$ (independent of the choice of coordinates), and $H$ is the hyperbolic metric thought of as $H dz \otimes \overline{dz} \in \mathcal{A}^{(0,1)}(X,K)$.

\begin{prop}[Hitchin]
\label{prop:hitchin}
The connection $D = \nabla + \Phi$ is flat and the triplet \\$(P_{PGL_2 (\CC)}[\mathfrak{sl}_2],P_B [\mathfrak{b}], D)$ defines developing data for the Fuchsian oper.
\begin{proof}
We have the well-known formula $R^D = R^\nabla + d^\nabla \Phi + \frac{1}{2}[\Phi,\Phi]$.
Notice that $d^\nabla \Phi = 0$ by compatibility of the Chern connection with $H$.
On the other hand, given $\beta_i \otimes x_i \in \mathcal{A}^0 (X, K^i \otimes \underline{\mathfrak{g}_i})$, $R^\nabla (\beta_i \otimes x_i) = i H \beta_i \otimes x_i = H[1\otimes h, \beta_i \otimes x_i]$. 
The final term left to compute is $\frac{1}{2} [\Phi,\Phi] = -H ad (1 \otimes h)$ (because $[f,e] = -h$).
Adding those terms in the expression for $R^D$, we get zero.

Notice that both $\nabla$ and $H \otimes ad (e)$ preserve the subbundle $P_B [\mathfrak{b}]$.
This shows that $P_B [\mathfrak{b}]$ is a holomorphic subbundle (for the holomorphic structure $D^{(0,1)}$) and that the Maurer-Cartan form of the triple is given by $\tau \otimes f \in \mathcal{A}^{(1,0)} (X, K^{-1}\otimes \underline{\mathfrak{g}_{-1}})$ which is clearly tangent to the set of principal directions.
All of that shows that the triple defines an oper.

To see that it defines the Fuchsian oper, one verifies that the holonomy preserves a totally geodesic copy of $\mathbb{H}^2$ inside $\mathbb{X}_{PGL_2 (\mathbb{C})}$.
Picking $\theta$ a Cartan involution adapted to the triple, and consider $*_i : K^i \rightarrow K^{-i}$ the antilinear involution defined by the metric (as in Section \ref{sec:notation}), we can define:
$$
\Theta (\beta_i \otimes x_i) = *_i \beta_i \otimes \theta x_i,\: \forall \beta_i \otimes x_i \in \mathcal{A}^0 (X,K^i \otimes \underline{\mathfrak{g}_i}),
$$
which is fiberwise a Cartan involution.
The triple $(P_G [\mathfrak{g}], \Theta, D)$ defines developing data for a map into the symmetric space. 
One can verify that its image is a totally geodesic copy of $\mathbb{H}^2$.
This will follow from the discussion in the upcoming sections, so we omit the proof. 
\end{proof}
\end{prop}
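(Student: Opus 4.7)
My proof plan has two independent parts: verify flatness of $D$ by the standard curvature decomposition, then verify the developing-data conditions.

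For the flatness, I would apply the identity $R^D = R^\nabla + d^\nabla \Phi + \tfrac12 [\Phi,\Phi]$ and compute each term on the graded summands $K^i \otimes \underline{\mathfrak{g}_i}$ for $i=-1,0,1$. The normalization stated in Section \ref{sec:notation} gives $R^{\nabla^i} = iH\,dz\wedge\overline{dz}$, so $R^\nabla$ acts on $K^i \otimes \underline{\mathfrak{g}_i}$ as multiplication by $iH\,dz\wedge\overline{dz}$; since on $\mathfrak{g}_i$ we have $ad(h) = i\cdot \mathrm{id}$, this is globally $H\,dz\wedge\overline{dz}\otimes ad(h)$. Next, $d^\nabla \Phi = 0$: the section $\tau = dz\otimes\partial_z$ is holomorphic in $K^{-1}$ so $\nabla^{(0,1)}\tau = 0$, while $H$ is $\partial$-closed as a $(0,1)$-form valued in $K$ by Chern compatibility; and $ad(e), ad(f)$ are parallel sections of $End(\underline{\mathfrak{g}})$. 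Finally, $\tfrac12[\Phi,\Phi] = [\tau\otimes ad(f), H\otimes ad(e)] = (\tau\wedge H)\otimes ad([f,e]) = -(\tau\wedge H)\otimes ad(h)$. Since $\tau\wedge H = dz\wedge H\,\overline{dz} = H\,dz\wedge\overline{dz}$, this exactly cancels $R^\nabla$.

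For the developing-data part, I first check that $P_B[\mathfrak{b}]$ is $D^{(0,1)}$-invariant: $\nabla^{(0,1)}$ preserves each graded line, and the $(0,1)$-part of $\Phi$, namely $H\otimes ad(e)$, strictly raises the grading, so it maps $P_B[\mathfrak{b}] = (\mathcal{O}\otimes\underline{\mathfrak{g}_0})\oplus(K\otimes\underline{\mathfrak{g}_1})$ into itself (with the top piece going to zero). Next, to compute the Maurer–Cartan form $\omega = D - D_B \bmod P_B[\mathfrak{b}]$, I take $D_B = \nabla + H\otimes ad(e)$, which preserves the reduction; then $\omega = \tau\otimes ad(f)$ viewed in $\mathcal{A}^{(1,0)}(X, P_B[\mathfrak{g}/\mathfrak{b}]) \cong \mathcal{A}^{(1,0)}(X, K^{-1}\otimes\underline{\mathfrak{g}_{-1}})$. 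Since $\mathfrak{g}_{-1}$ is one-dimensional and spanned by $f$, and $\tau$ is nowhere vanishing, $\omega$ is pointwise a nonzero element of the (trivially open and dense) set of principal directions in the $PGL_2(\mathbb{C})$ case. This verifies the two oper conditions.

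To identify the resulting oper with the Fuchsian oper, I would construct the Cartan involution $\Theta$ exactly as indicated after the statement: pick $\theta$ adapted to $(e,h,f)$ and define $\Theta(\beta_i\otimes x_i) = *_i\beta_i \otimes \theta(x_i)$. A direct verification, mirroring the flatness calculation (using that $*_{-i}\circ *_i = \mathrm{id}$ and that $\theta$ exchanges $e$ and $-f$), shows $D\Theta = \Theta \circ ad(\omega_\mathbb{X})$ for a Maurer–Cartan form $\omega_\mathbb{X}$, so $(P_G[\mathfrak{g}], \Theta, D)$ gives developing data for a map $\tilde X \to \mathbb{X}_{PGL_2(\mathbb{C})}$. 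The image is a totally geodesic hyperbolic plane by Corollary \ref{prop:symmetric} applied to the split real form fixed by $\theta$; therefore the holonomy lies in $PSL_2(\mathbb{R}) \hookrightarrow PGL_2(\mathbb{C})$ and the projected developing map $\mathcal{D}:\tilde X\to\mathbb{P}^1$ realizes $\tilde X$ as a round disk. The main technical obstacle is closing the loop from ``the holonomy lies in $PSL_2(\mathbb{R})$ and the developing map is a local biholomorphism onto a disk'' to ``this representation is \emph{the} Fuchsian uniformization of $X$''; this follows because the hyperbolic metric recovered from this developing data is conformal to $H$ by construction, and the uniformization theorem pins down such a pair uniquely up to the $PGL_2(\mathbb{C})$-action defining isomorphism of opers.
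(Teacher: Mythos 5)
Your argument is correct and follows essentially the same route as the paper: the same decomposition $R^D = R^\nabla + d^\nabla\Phi + \tfrac12[\Phi,\Phi]$ with the same term-by-term cancellation against $ad(h)$, and the same computation of the Maurer--Cartan form via the $\mathfrak{b}$-preserving connection $\nabla + H\otimes ad(e)$. The one place you diverge is the identification with the Fuchsian oper: the paper constructs the Cartan involution $\Theta$ and then explicitly defers the verification that the image is a totally geodesic $\mathbb{H}^2$ (and hence that the oper is Fuchsian) to later sections, whereas you attempt to close the loop on the spot via uniformization. Your sketch there is in the right spirit but has a small gap: knowing that the holonomy preserves a totally geodesic $\mathbb{H}^2$ and that the recovered metric is conformal to $H$ does not by itself give that the developing map is a biholomorphism onto a round disk --- you need to show the equivariant map to $\mathbb{H}^2$ (equivalently, the developing map onto one complementary disk of $\mathbb{RP}^1\subset\mathbb{P}^1$) is a local isometry for the complete metric $H$, so that completeness forces it to be a covering map of the simply connected disk and hence a biholomorphism; only then is the holonomy discrete and faithful and the structure the Fuchsian one. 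Since the paper itself omits this verification, your attempt is a genuine (if slightly incomplete) strengthening rather than an error.
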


Notice that one needs the curvature of $H = H(z) dz \overline{dz}$ to be $-2$ in order for the previous computation to work.

The same objects can be defined over a principal $\mathfrak{sl}_2$ subalgebra of a simple Lie algebra $\mathfrak{g}$.
We obtain:

\begin{coro}
\label{coro:fuchsian}
Let $X$ be a hyperbolic Riemann surface and $(e,h,f)$ a principal $\mathfrak{sl}_2$ triplet on $\mathfrak{g}$ inducing a grading $\mathfrak{g} = \bigoplus_{i=-m_l}^{m_l} \mathfrak{g}_i$. 
Define:
\begin{align*}
& P_G [\mathfrak{g}] = \bigoplus_{i=-m_l}^{m_l} K^i \otimes \underline{\mathfrak{g}_i}, \: P_B [\mathfrak{b}] = \bigoplus_{i=0}^{m_l} K^i \otimes \underline{\mathfrak{g}_i}, \: D = \nabla + \Phi,
\end{align*}
where $\nabla$ is the connection induced by the Chern connections $\nabla^i$ on $K^i$, and:
$$\Phi = \tau \otimes ad (f) + H \otimes ad (e).$$
Then, the connection $D$ is flat, and the mapping data $(P_G [\mathfrak{g}],P_B[\mathfrak{b}], D)$ defines the Fuchsian oper on $G$.
\begin{proof}
   Given $S$ the subgroup defined by the triplet $(e,h,f)$, notice that $P_S [\mathfrak{sl}_2 (\mathbb{C})] = (K^{-1} \otimes \underline{\langle f \rangle}) \oplus (\mathcal{O} \otimes \underline{\langle h \rangle}) \oplus (K \otimes \underline{\langle e \rangle})$ is a $D-$invariant subbundle isomorphic to $P_{PGL_2 (\mathbb{C})} [\mathfrak{sl}_2 (\mathbb{C})]$.
   This defines an injective bundle map $P_{PGL_2 (\mathbb{C})} [\mathfrak{sl}_2 (\mathbb{C})] \hookrightarrow P_G [\mathfrak{g}]$ for which the pullback of $D$, corresponds to the flat connection of the $PGL_2 (\mathbb{C})$ Fuchsian oper defined earlier.
   Moreover, the reduction of structure to the upper triangular group is pointwise contained in a unique Borel subalgebra, which is the one determined by the reduction to $P_B [\mathfrak{b}]$.
   This implies that when we develop the triple $(P_G [\mathfrak{g}],P_B [\mathfrak{b}], D)$, the image is equal to the composition of the developing map of the $PGL_2 (\mathbb{C})$ oper with a principal embedding as in Section \ref{subsection:triplets}.
\end{proof}
\end{coro}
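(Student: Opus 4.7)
The plan is to establish the three claims (flatness of $D$, the oper axioms, and identification with the Fuchsian oper) by directly generalizing the $PGL_2(\mathbb{C})$ argument of Proposition \ref{prop:hitchin}, using the $\mathfrak{sl}_2$ weight decomposition.

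First, I would verify flatness by the curvature formula $R^D = R^\nabla + d^\nabla \Phi + \tfrac{1}{2}[\Phi,\Phi]$. The middle term vanishes because $\tau$ and $H$ are parallel for the Chern connections and $ad(e), ad(f)$ are constant sections of the trivial factor. For the remaining two terms, I would decompose the bundle into the weight pieces $K^i \otimes \underline{\mathfrak{g}_i}$ and compute separately. On this summand, $R^\nabla$ acts as multiplication by $iH\,dz \wedge \overline{dz}$ by the normalization in Section \ref{sec:notation}. On the other hand, $\tfrac{1}{2}[\Phi,\Phi] = \tau \wedge H \otimes [ad(f),ad(e)] = -H\,dz\wedge\overline{dz}\otimes ad(h)$, which acts on the weight-$i$ piece as multiplication by $-iH\,dz\wedge\overline{dz}$. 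These cancel exactly.

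Second, for the oper axioms I would observe that $\nabla$ preserves each summand of $P_B[\mathfrak{b}] = \bigoplus_{i\geq 0} K^i \otimes \underline{\mathfrak{g}_i}$, and that the $(0,1)$-part of $\Phi$, namely $H \otimes ad(e)$, shifts weights upward by $1$ and thus also preserves $P_B[\mathfrak{b}]$. Hence $P_B[\mathfrak{b}]$ is a holomorphic subbundle. The Maurer-Cartan form is then represented by the $(1,0)$-component that does not preserve $P_B[\mathfrak{b}]$, namely the class of $\tau \otimes ad(f)$ modulo $P_B[\mathfrak{b}]$. This is a section of $K\otimes P_B[\mathfrak{g}/\mathfrak{b}]$ which pointwise equals the class of $f$, a principal direction by Proposition \ref{prop:principaldirections}.

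Third, to identify the developing data as the Fuchsian $G$-oper, I would restrict to the $D$-invariant sub-bundle $P_S[\mathfrak{s}] = (K^{-1}\otimes \underline{\langle f\rangle}) \oplus (\mathcal{O}\otimes \underline{\langle h\rangle}) \oplus (K\otimes \underline{\langle e\rangle})$ attached to the principal subalgebra $\mathfrak{s} = \langle e,h,f\rangle$. By construction this is exactly the $P_{PGL_2(\mathbb{C})}[\mathfrak{sl}_2]$ of Proposition \ref{prop:hitchin}, with the same flat connection and the same Borel reduction (inherited by intersection with $P_B[\mathfrak{b}]$). Consequently, the developing map of the sub-triple is the Fuchsian uniformization $u:\tilde{X}\to \mathbb{P}^1$. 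Composing with the inclusion of bundles gives that the developing map of $(P_G[\mathfrak{g}], P_B[\mathfrak{b}], D)$ is $\mathcal{P}_\phi \circ u$, where $\mathcal{P}_\phi$ is the principal embedding; its holonomy is $\phi\circ\rho_f$, which is precisely the Fuchsian $G$-oper.

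The main technical point is the sign bookkeeping in the curvature cancellation, which hinges on the convention $[f,e] = -h$ together with the normalization $R^{\nabla^i} = iH(z)\,dz\wedge\overline{dz}$ (i.e., Gaussian curvature $-2$). After that, the identification with the Fuchsian oper is structural rather than computational, reducing to the observation that $P_S[\mathfrak{s}]$ is the image of the $PGL_2(\mathbb{C})$ developing data under the principal embedding at the level of bundles and connections.
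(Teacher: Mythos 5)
Your proof is correct and follows essentially the same route as the paper: the identification with the Fuchsian oper via the $D$-invariant subbundle $P_S[\mathfrak{s}]$ and the unique Borel containing the upper-triangular reduction is exactly the paper's argument. The only difference is that you explicitly redo the flatness and oper-axiom verifications on the full graded bundle, whereas the paper treats these as carrying over verbatim from Proposition \ref{prop:hitchin}; your sign bookkeeping ($R^\nabla$ acting by $iH\,dz\wedge\overline{dz}$ cancelling against $\tfrac{1}{2}[\Phi,\Phi]=-H\,dz\wedge\overline{dz}\otimes ad(h)$) matches the paper's conventions.
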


\begin{defi}
   Given $X$ a hyperbolic Riemann surface, and $(e,h,f)$ a principal $\mathfrak{sl}_2 (\mathbb{C})-$\\triplet on $\mathfrak{g}$, we will refer to $(P_G [\mathfrak{g}], P_B [\mathfrak{b}], D)$ as the \emph{$G-$Fuchsian triplet} defined by $(e,h,f)$.
\end{defi}

Motivated by the Drinfeld-Sokolov normal form, we can alter the flat connection on the Fuchsian connection to obtain a developing data representing any oper.
Given $(e,h,f)$ a fixed principal triplet and $\{e_{m_i}\}$ a homogeneous basis for $V_e$, we can associate to $\vec{\alpha} \in \bigoplus_{i=1}^l H^0 (X,K^{m_i+1})$ the tensor:

$$\eta^{\vec{\alpha}} = \sum_{i=1}^l (\tau \alpha_i) \otimes ad(e_{m_i}) \in \mathcal{A}^{(1,0)} (X, End(P_G[\mathfrak{g}])).$$

\begin{prop}[{\cite[Proposition 4.3]{sanders2018pre}}]
\label{prop:canonicalmodel}
Given $(e,h,f)$ a principal $\mathfrak{sl}_2-$triplet, $\{ e_{m_i}\}$ a homogeneous basis for $V_e$, and $\vec{\alpha} \in \bigoplus_{i=1}^l H^0 (X,K^{m_i+1})$, we get that:
$$D^{\vec{\alpha}} = D + \eta^{\vec{\alpha}},$$
is a flat connection in the Fuchsian bundle $P_G [\mathfrak{g}]$ defined by $(e,h,f)$.
Moreover, the mapping data $(P_G[\mathfrak{g}],P_B[\mathfrak{b}],D^{\vec{\alpha}})$ defines a $G-$oper.
\end{prop}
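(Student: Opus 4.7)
The plan is to reduce both claims---flatness and the two oper axioms---to type arguments on the Riemann surface $X$, combined with the highest-weight condition $e_{m_i} \in V_e$. Write the curvature of $D^{\vec\alpha} = D + \eta^{\vec\alpha}$ as
\[
R^{D^{\vec\alpha}} \;=\; R^D \;+\; d^D \eta^{\vec\alpha} \;+\; \tfrac{1}{2}\bigl[\eta^{\vec\alpha}, \eta^{\vec\alpha}\bigr].
\]
The first term vanishes by Corollary \ref{coro:fuchsian}. The self-bracket vanishes for pure type reasons: $\eta^{\vec\alpha}$ is a $(1,0)$-form on a Riemann surface, so the wedge product of two $(1,0)$-forms is automatically zero. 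For the middle term, split $d^D = d^{D,(1,0)} + d^{D,(0,1)}$; the $(1,0)$-piece takes a $(1,0)$-form to a $(2,0)$-form and so vanishes, leaving only the $(0,1)$-part to examine.

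For that, I would use the explicit form $D = \nabla + \tau\otimes \mathrm{ad}(f) + H \otimes \mathrm{ad}(e)$ from Corollary \ref{coro:fuchsian}, so that $D^{(0,1)} = \bar\partial + H\otimes \mathrm{ad}(e)$. Then
\[
d^{D,(0,1)} \eta^{\vec\alpha} \;=\; \sum_{i=1}^{l} \bar\partial(\tau \alpha_i) \otimes \mathrm{ad}(e_{m_i}) \;+\; \sum_{i=1}^l (H \wedge \tau\alpha_i) \otimes \bigl[\mathrm{ad}(e), \mathrm{ad}(e_{m_i})\bigr].
\]
The first sum vanishes because each $\tau \alpha_i$, viewed as a $(1,0)$-form with values in $K^{m_i}$, is $\bar\partial$-closed by the holomorphy of $\alpha_i \in H^0(X, K^{m_i+1})$. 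The second sum is where the choice of a homogeneous basis of highest weight vectors pays off: $[\mathrm{ad}(e), \mathrm{ad}(e_{m_i})] = \mathrm{ad}([e,e_{m_i}]) = 0$ since $e_{m_i} \in V_e = \ker \mathrm{ad}(e)$. Thus $R^{D^{\vec\alpha}} = 0$.

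For the oper axioms, the holomorphic condition $D^{\vec\alpha,(0,1)}(P_B[\mathfrak{b}]) \subset P_B[\mathfrak{b}]$ is inherited from $D$, since $\eta^{\vec\alpha}$ is pure $(1,0)$ and thus $D^{\vec\alpha,(0,1)} = D^{(0,1)}$. For the principal direction condition, interpret $\eta^{\vec\alpha}$ as an adjoint-bundle-valued one-form $\sum_i (\tau\alpha_i) \otimes e_{m_i} \in \mathcal{A}^{(1,0)}(X, P_G[\mathfrak{g}])$. Each $e_{m_i} \in \mathfrak{g}_{m_i}$ with $m_i \geq 1$ lies in $\mathfrak{b}$, so $\eta^{\vec\alpha}$ vanishes modulo $P_B[\mathfrak{b}]$ and the Maurer-Cartan form of $D^{\vec\alpha}$ coincides with that of $D$, which is already tangent to $\mathcal{O}$. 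The only conceptual subtlety---and the main thing one must not bungle---is keeping straight the double role of $\eta^{\vec\alpha}$ as an endomorphism-valued one-form (for the curvature computation) and as an adjoint-valued one-form (for reading off the Maurer-Cartan form); the Drinfeld--Sokolov ansatz is engineered precisely so that the type cancellation and the $\ker\mathrm{ad}(e)$ cancellation both occur.
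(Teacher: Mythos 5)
Your proof is correct and is essentially the standard argument: the paper itself gives no proof here and simply cites Sanders, but your computation is exactly the natural extension of the curvature calculation in Proposition \ref{prop:hitchin} — $R^D=0$, the self-bracket of a $(1,0)$-form vanishes by type, $\bar\partial(\tau\alpha_i)=0$ by holomorphy, and $[e,e_{m_i}]=0$ since $e_{m_i}\in V_e=\ker\mathrm{ad}(e)$ — together with the observation that $\eta^{\vec\alpha}$ is $(1,0)$ and $\mathfrak{b}$-valued, so it perturbs neither the holomorphic structure of $P_B[\mathfrak{b}]$ nor the Maurer--Cartan form modulo $P_B[\mathfrak{b}]$. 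No gaps.
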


Thanks to Theorem \ref{thm:bd}, we get:

\begin{coro}
\label{coro:parametrization}
The map: 
\begin{align*}
\bigoplus_{i=1}^l H^0 (&X,K^{m_i+1}) \rightarrow \op_X (G) \\
&\vec{\alpha} \to [(P_G [\mathfrak{g}],P_B [\mathfrak{b}], D^{\vec{\alpha}})].
\end{align*}
is a bijection.
\end{coro}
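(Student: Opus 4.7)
The plan is to extract this statement directly from Theorem \ref{thm:bd} applied with the Fuchsian triplet $(P_G[\mathfrak{g}], P_B[\mathfrak{b}], D)$ of Corollary \ref{coro:fuchsian} playing the role of the basepoint oper, together with the fixed principal triplet $(e,h,f)$ and the homogeneous basis $\{e_{m_i}\}$ of $V_e$, which identify $P_B[V_e]$ with $\bigoplus_{i=1}^l K^{m_i}$. Well-definedness of the map is already provided by Proposition \ref{prop:canonicalmodel}, so only bijectivity has to be addressed.

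For surjectivity, let $[(Q_G[\mathfrak{g}], Q_B[\mathfrak{b}], E)]\in \op_X (G)$ be any class. Theorem \ref{thm:bd} supplies a unique bundle isomorphism $\phi : P_B[\mathfrak{b}] \to Q_B[\mathfrak{b}]$ with $\phi^{*} E - D \in H^0(X, K \otimes P_B[V_e])$. Expanding in the basis $\{e_{m_i}\}$, one obtains a unique $\vec{\alpha}\in \bigoplus_{i=1}^l H^0(X, K^{m_i+1})$ such that $\phi^{*} E - D = \eta^{\vec{\alpha}}$, and $\phi$ then implements an oper isomorphism between $(P_G, P_B, D^{\vec{\alpha}})$ and $(Q_G, Q_B, E)$.

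For injectivity, suppose $\vec{\alpha}_1, \vec{\alpha}_2$ give isomorphic opers, realized by a $B$-valued gauge transformation $\psi : P_B[\mathfrak{b}] \to P_B[\mathfrak{b}]$ with $\psi^{*} D^{\vec{\alpha}_2} = D^{\vec{\alpha}_1}$. Apply Theorem \ref{thm:bd} with basepoint the Fuchsian triplet and target $(P_G, P_B, D^{\vec{\alpha}_2})$. Both the identity and $\psi$ are candidates for the normalizing isomorphism, because
$$\mathrm{Id}^{*} D^{\vec{\alpha}_2} - D = \eta^{\vec{\alpha}_2}, \qquad \psi^{*} D^{\vec{\alpha}_2} - D = D^{\vec{\alpha}_1} - D = \eta^{\vec{\alpha}_1},$$
and both lie in $H^0(X, K \otimes P_B[V_e])$. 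Uniqueness forces $\psi = \mathrm{Id}$, whence $\eta^{\vec{\alpha}_1} = \eta^{\vec{\alpha}_2}$, and linearity of $\vec{\alpha}\mapsto \eta^{\vec{\alpha}}$ together with the fact that $\{e_{m_i}\}$ is a basis of $V_e$ yields $\vec{\alpha}_1 = \vec{\alpha}_2$.

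There is no substantial obstacle here: the heavy work has been absorbed into Lemma \ref{lemma:drinfeldsokolov} and Proposition \ref{prop:canonicalmodel}. The only mildly delicate point is to check that the notion of oper isomorphism used in the definition of $\op_X(G)$ corresponds, on the developing-data side, to gauge equivalence of the triples $(P_G, P_B, D)$ by automorphisms preserving the $B$-reduction, which is exactly the dictionary recalled in Section \ref{subsec:developing}; once this is in hand, the whole corollary is a purely formal consequence of Theorem \ref{thm:bd}.
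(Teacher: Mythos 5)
Your argument is correct and is exactly the route the paper takes: the paper derives this corollary directly from Theorem \ref{thm:bd} (with well-definedness from Proposition \ref{prop:canonicalmodel}), and your injectivity step via uniqueness of the normalizing isomorphism is precisely the content of Remark \ref{rmk:bdgauge}. You have simply written out in full what the paper leaves implicit.
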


The following remark will be key for future computations:

\begin{rmk}
\label{rmk:bdgauge}
   The unique isomorphism $\phi : P_G [\mathfrak{g}] \rightarrow P_G [\mathfrak{g}]$ for which $\phi^* D^{\vec{\alpha_1}} - D^{\vec{\alpha_2}} \in H^0 (X, K \otimes P_B [V_e])$ given by Theorem \ref{thm:bd} is the identity map.
\end{rmk}

In this construction, the choice of $\mathfrak{sl}_2$ triplet is not crucial, since two different choices are always naturally identified.
However, the geometry of the parametrization of Corollary \ref{coro:parametrization} will be sensitive to the choice of highest weight vectors $\{e_{m_i}\}_{i=1,\ldots,l}$.
To get rid of this ambiguity, we introduced the following definition:

\begin{defi}
\label{defi:normalizedparam}
Given a principal $\mathfrak{sl}_2$ triplet $(e,h,f)$ in a simple Lie algebra $\mathfrak{g}$, we say that a collection $\{e_{m_i}\}_{i=1,\ldots,l}$ of highest weight vectors $e_{m_i} \in V_e \cap W^{m_i}$ is \emph{normalized} if:
\begin{itemize}
    \item $e_{m_1} = e$.
    \item $-\kappa (e_{m_i}, \theta e_{m_i}) = \kappa (e,f)$ for all $i$, where $\theta$ is the Cartan involution adapted to the triple $(e,h,f)$ (see Definition \ref{defi:adapted}). 
\end{itemize}
We say that the bijection between $\op_X (G) \cong \bigoplus_{i=1}^{l} H^0 (X,K^{m_i+1})$ is \emph{normalized} if we construct it using a Fuchsian oper as our basepoint, and a normalized collection $\{e_{m_i}\}$.
\end{defi}

Normalized collections $\{e_{m_i}\}_{i=1,\ldots,l}$ always exist because $-\kappa (e_{m_i},\theta e_{m_i}) >0$ by the Cartan involution condition.
They are not unique, two such collections differ by $e_{m_j}' = e^{i\theta_j} e_{m_j}$ for some $\theta_j \in \RR$, $j > 1$.
Notice that for $\mathfrak{g} = \mathfrak{sl}_2 (\CC)$, there is no choice to be made.

\begin{rmk}
   As mentioned before, the simplicity of $\mathfrak{g}$ is not crucial if one is careful in picking normalized collections from each simple factor to come up with the parametrization of $\op_X (G)$, and remembers that the highest weight vectors in $\mathfrak{g}_1$ play a special role.
   Most of what we do can be generalized to this setting verbatim.
\end{rmk}

Notice that the highest weight $e_{m_l}$ is special in the sense that it belongs to the root space of $\mathfrak{g}_\theta$, for $\theta$ the highest root (see Remark \ref{rmk:highestroot}).

\begin{defi}
\label{defi:cyclicoper}
   We say that an oper $(D,\rho)$ is \emph{cyclic} if it corresponds to a vector $\vec{\alpha} \in \bigoplus_{i=1}^l H^0 (X,K^{m_i+1})$ via a parametrization of $\op_X (G)$ based at the Fuchsian oper, whose only non-zero entry is in the top differential $\alpha \in H^0 (X,K^{m_l+1})$.
\end{defi}

\section{Epstein-Poincar\'e surfaces}
\label{sec:epstein}
Given $G$ a simple Lie group of adjoint type, the goal of this section is to introduce the notion of an Epstein-Poincar\'e (EP) surface associated to a $G-$oper.
After performing some initial computations with these surfaces, we verify that this recovers Epstein's classical definition  (see \cite{epstein2024envelopes}).

\subsection{The definition}

As we saw in the previous section, a principal $\mathfrak{sl}_2 (\mathbb{C})$ (and a hyperbolic metric) defines a $G-$Fuchsian triplet describing the Fuchsian oper, for $X$ a hyperbolic Riemann surface. 
The corresponding Fuchsian bundle can be equipped with extra structure, e.g., the Cartan involution $\Theta$ defined in Proposition \ref{prop:hitchin} which defines a totally geodesic embedding of $\mathbb{H}^2$ into $\mathbb{X}$. 
The key idea in our construction is that Theorem \ref{thm:bd} provides us a canonical isomorphism between any bundle coming from a $G-$oper and the Fuchsian bundle, allowing us to pullback any extra structure we may have there.

Given $H = H(z) dz \overline{dz}$ the hyperbolic metric on $X$ (normalized as in Section \ref{sec:notation}), we can define a Cartan involution $\Theta^0$ on the $G-$Fuchsian bundle $P_G^0 [\mathfrak{g}]$ defined by $(e,h,f)$ via:
\begin{equation}
    \label{eq:cartan}  
      \Theta^0 (\beta_i \otimes x_i) = * \beta_i \otimes \theta x_i,\: \forall \beta_i \otimes x_i \in \mathcal{A}^0 (X,K^i \otimes \overline{\mathfrak{g}_i}),
\end{equation}
for every $i$, where $*$ is involution in $\bigoplus_{i=-\infty}^\infty K^i$ defined in coordinates by sending $\alpha dz^i \to \frac{\overline{\alpha}}{H^k} dz^{-i}$, and $\theta$ is a Cartan involution adapted to the triplet.
We encountered this Cartan involution used in Proposition \ref{prop:hitchin}.

\begin{defi}[Gauge theoretic definition]
\label{defi:epstein}
 Let $X$ be a hyperbolic Riemann surface and $(P_G [\mathfrak{g}],P_B[\mathfrak{b}], D)$ mapping data for a $G-$oper.
 We define the \emph{Epstein-Poincar\'e} (EP) involution $\Theta^{EP}$ on $P_G [\mathfrak{g}]$ as the fiberwise Cartan involution defined via $\Theta^{EP} = \phi^* \Theta^0$, where $\phi : P_G [\mathfrak{g}] \rightarrow P_G^0 [\mathfrak{g}]$ is the unique isomorphism given by Theorem \ref{thm:bd} and $P_G^0 [\mathfrak{g}]$ is the Fuchsian bundle defined by a principal triplet $(e,h,f)$. 
\end{defi}

This definition is independent of the chosen triplet by the uniqueness of the isomorphism $\phi$.

We can reinterpret this definition in terms of developing maps.
Notice that there are two maps associated with the $G-$Fuchsian oper (defined by a triplet) equipped with the Cartan involution $\Theta^0$.
On the one hand, we have the developing map of the Fuchsian oper $\mathcal{D}^0 : \tilde{X} \rightarrow \mathcal{B}$, and on the other we get the developing $\Ep^0 : \tilde{X} \rightarrow \mathbb{X}$ of the triplet $(P_G^0 [\mathfrak{g}], \Theta^0, D^0)$ with image in a copy of $\mathbb{H}^2 \subset \mathbb{X}$.

\begin{rmk}
 From now on, when we go from developing data to a map to a homogeneous space, we implicity assume that we choose the same trivialization for $P_G [\mathfrak{g}]$ to obtain $\mathcal{D}$ (to $\mathcal{B}$) and $\Ep$ (to $\mathbb{X}$).
\end{rmk}

The isomorphism $\phi$ between the principal bundles is represented by the unique osculating map  $\Osc: \tilde{X} \rightarrow G$ satisfying $\Osc (z). \mathcal{D}^0 (z) = \mathcal{D}(z)$.
This tells us that:

\begin{lemma}[Developed picture]
 Given $(P_G [\mathfrak{g}],P_B [\mathfrak{b}], D)$ mapping data for a $G-$oper, and $\Theta$ the EP involution, then:
   $$\Ep (z) = Osc (z) . \Ep^0 (z),$$
 is a developing map for the triplet $(P_G [\mathfrak{g}],\Theta, D)$.
\end{lemma}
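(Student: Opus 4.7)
The plan is to unpack both $\Ep^0$ and $\Ep$ in parallel using the construction of developing maps from developing data reviewed in Subsection \ref{subsec:developing} (and detailed in Appendix \ref{subsub:homogeneousspaces}), then to read off the relation between them from the fact that $\Theta=\phi^*\Theta^0$ and that $\phi$, at the level of developed objects, \emph{is} the osculating map.

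First I would lift everything to $\tilde{X}$. Since $\pi^*D^0$ and $\pi^*D$ are flat on a simply connected base, I can choose global flat sections $\sigma^0:\tilde{X}\to\pi^*P_G^0$ and $\sigma:\tilde{X}\to\pi^*P_G$. In these trivializations, the Cartan involutions $\Theta^0$ and $\Theta$, viewed as sections of $\mathrm{Aut}(P_G^0[\mathfrak{g}])$ and $\mathrm{Aut}(P_G[\mathfrak{g}])$, become maps $\Ep^0,\Ep:\tilde{X}\to\mathbb{X}$, and by construction these are developing maps for $(P_G^0[\mathfrak{g}],\Theta^0,D^0)$ and $(P_G[\mathfrak{g}],\Theta,D)$ respectively. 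Likewise the reductions $P_B^0$ and $P_B$ give the developing maps $\mathcal{D}^0,\mathcal{D}:\tilde{X}\to\mathcal{B}$ to the flag variety.

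Next, I would interpret $\phi$ in the chosen trivializations. The lift $\tilde{\phi}:\pi^*P_G\to\pi^*P_G^0$ is $G$-equivariant, hence in the two flat frames it becomes a map $g:\tilde{X}\to G$, defined by $\tilde{\phi}(\sigma(z))=\sigma^0(z)\cdot g(z)^{-1}$. Using the induced isomorphism on associated bundles $\hat{\phi}_z=\mathrm{Ad}(g(z)):\mathfrak{g}\to\mathfrak{g}$ and the identity $\Theta=\phi^*\Theta^0$, a direct computation gives
\[
\Ep(z) \;=\; \mathrm{Ad}(g(z))^{-1}\circ \Ep^0(z)\circ \mathrm{Ad}(g(z)) \;=\; g(z)^{-1}.\Ep^0(z),
\]
where the $G$-action on $\mathbb{X}$ is by conjugation. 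In exactly the same way, the $\phi$-compatibility of the Borel reductions yields $\mathcal{D}(z) = g(z)^{-1}.\mathcal{D}^0(z)$ in $\mathcal{B}$. Comparing with the definition of $\Osc$ from the discussion following Theorem \ref{thm:bd}, namely $\Osc(z).\mathcal{D}^0(z) = \mathcal{D}(z)$, together with the canonicity of the Drinfeld–Sokolov gauge transformation (Remark \ref{rmk:bdgauge}) which pins down $\phi$ uniquely, I conclude that $g(z)^{-1}=\Osc(z)$, and therefore $\Ep(z)=\Osc(z).\Ep^0(z)$.

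The main obstacle is purely bookkeeping: one must keep straight the conventions for (i) left/right actions on the principal bundles, (ii) the induced conjugation action on $\mathbb{X}\subset \mathrm{Aut}(\mathfrak{g})$, and (iii) the orientation in which $\phi$ is defined (source versus target), so that the two independently derived relations $\mathcal{D}(z)=\Osc(z).\mathcal{D}^0(z)$ and $\Ep(z)=\Osc(z).\Ep^0(z)$ involve the \emph{same} $G$-valued function. Once this is pinned down, no extra computation is needed: the statement is a formal consequence of $\Theta=\phi^*\Theta^0$ and the definition of $\Osc$ as the developed representative of $\phi$.
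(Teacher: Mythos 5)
Your argument is correct and is essentially the paper's own (the paper treats the lemma as an immediate consequence of the observation that the Drinfeld--Sokolov isomorphism $\phi$, once developed in flat frames, is represented by $\Osc$, so pulling back $\Theta^0$ along $\phi$ translates $\Ep^0$ by $\Osc$ in the developed picture). Your version simply makes the frame and sign bookkeeping explicit, which the paper leaves implicit.
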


We will call a developing map for $(P_G [\mathfrak{g}], \Theta^{EP}, D)$ an \emph{Epstein-Poincar\'e} (EP) surface.

\begin{center}
   \begin{figure}[h!]
      \label{fig:epstein}
      \includegraphics{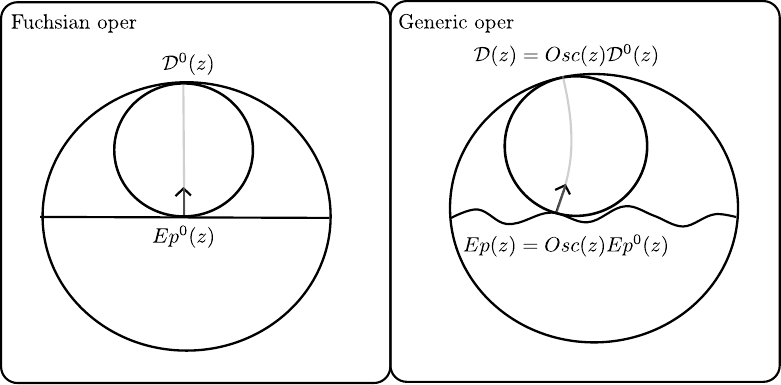}
      \caption{The osculating map allows us to propagate the data at infinity down to the symmetric space.}
   \end{figure}
\end{center}

As we will see in the next section (see Lemma \ref{lemma:normalweyl}), we have a natural distribution of Weyl chambers pointed at the Epstein surface $\Ep$ and orthogonal to it. 
The apartments at infinity corresponding to these Weyl chambers coincide pointwise with the developing map for the oper $\mathcal{D}$. 
This is illustrated in Figure \ref{fig:epstein}.

\subsection{Basic computations}

Thanks to the explicit mapping triples parametrizing the space of opers introduced in Corollary \ref{coro:parametrization}, we can compute any geometric quantity we are interested about $\Ep$ in terms of the defining differentials.
We will assume that $G$ is simple, $(e,h,f)$ a principal triplet and we identify $\op_X (G) \cong \bigoplus_{i=1}^l H^0 (X, K^{m_i +1})$ via a normalized parametrization (see Definition \ref{defi:normalizedparam}). 

For each $\vec{\alpha} \in \bigoplus_{i=1}^l H^0 (X, K^{m_i +1})$ we get developing data $(P_G [\mathfrak{g}],P_B [\mathfrak{b}],D^{\vec{\alpha}})$ as in Proposition \ref{prop:canonicalmodel}.
Remark \ref{rmk:bdgauge} implies that the unique isomorphism $\phi$ between any two such opers given by Theorem \ref{thm:bd} is simply the identity. 
This implies that the EP-involution $\Theta$ on the Fuchsian bundle $P_G [\mathfrak{g}]$ remains the same independent of $\vec{\alpha}$, and is given by Equation (\ref{eq:cartan}).
Thus $(P_G [\mathfrak{g}],\Theta, D^{\vec{\alpha}})$ is developing data for the EP-surface corresponding to the oper defined by $\vec{\alpha}$.

The Cartan involution $\Theta$ splits $P_G [\mathfrak{g}] = P_K [\mathfrak{k}] \oplus P_K [\mathfrak{p}]$, where $\mathfrak{g} = \mathfrak{k} \oplus \mathfrak{p}$ is the splitting defined by the Cartan involution $\theta$ adapted to the triplet, and $K$ is a group integrating $\mathfrak{k}$.
Recall that the bundle $P_K [\mathfrak{p}]$ gets identified with $\Ep^* T\mathbb{X}$, and that the Maurer-Cartan form $\omega$ defined via $D^{\vec{\alpha}} \Theta = \Theta \circ ad (\omega)$ is identified with the differential of the EP-surface corresponding to $\vec{\alpha}$. 
Therefore, we can think of the following as the computation of $d \Ep$:

\begin{lemma}
\label{lemma:maurercartan}
Given $\vec{\alpha} \in \oplus_{i=1}^l H^0 (X, K^{m_i+1})$, the Maurer-Cartan form of $(P_G [\mathfrak{g}], \Theta, D^{\vec{\alpha}})$ is given by the expression:
\begin{align*}
\omega = -2 \left( \tau \otimes f + H \otimes e + \frac{1}{2}  \left(\sum_{i=1}^l (\tau \alpha_i)\otimes e_{m_i} - (H *\alpha_i) \otimes \theta (e_{m_i})\right) \right).
\end{align*}
\begin{proof}
We get that $D^{\vec{\alpha}}\Theta = \nabla\Theta + ad (\Phi + \eta^{\vec{\alpha}}) \circ \Theta - \Theta \circ ad (\Phi + \eta^{\vec{\alpha}})$, with $\eta^{\vec{\alpha}}$ as in Section \ref{subsec:normalizedparam}.
By compatibility of $*$ with the Hermitian metric defined by $H$ on $K^i$, we get that $\nabla \Theta = 0$.
On the other hand: 

\begin{align*}
\Theta \circ (\Phi + \sum_{i=1}^l \eta^{\alpha_i}) \circ \Theta &= ad (\Theta (\tau \otimes f + H \otimes e + \sum_{i=1}^l (\tau \alpha_i) \otimes e_{m_i})),\\    
&= ad(- H \otimes f - \tau \otimes e + \sum_{i=1}^l (H *\alpha_i) \otimes \theta (e_{m_i}) ),
\end{align*}

where we used that $\Theta (\tau \circ f) = - H \circ e$, and $\Theta (H \circ e) = - \tau \circ f$.
subtracting $\Phi + \sum_{i=1}^l \eta^{\alpha}_i$, we get the desired result.
\end{proof}
\end{lemma}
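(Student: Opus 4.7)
The plan is to unpack the defining equation $D^{\vec{\alpha}} \Theta = \Theta \circ ad(\omega)$ and solve for $\omega$ explicitly. Writing $D^{\vec{\alpha}} = \nabla + ad(\Phi + \eta^{\vec{\alpha}})$ and using the Leibniz rule for the induced covariant derivative on $End(P_G[\mathfrak{g}])$, I would first express $D^{\vec{\alpha}} \Theta$ as $\nabla \Theta + ad(\Phi + \eta^{\vec{\alpha}}) \circ \Theta - \Theta \circ ad(\Phi + \eta^{\vec{\alpha}})$. This is the standard computation and parallels the one already used in Proposition \ref{prop:hitchin}.

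Next, I would verify that $\nabla \Theta = 0$. Since $\Theta$ is assembled from the antilinear isomorphisms $*_i: K^i \to K^{-i}$ and from the (locally constant) Cartan involution $\theta$ on each $\underline{\mathfrak{g}_i}$, this reduces to the compatibility of the Chern connections $\nabla^i$ with the Hermitian metrics induced by $H$, which is the defining property of a Chern connection. With this vanishing, the Maurer-Cartan equation reduces to $\Theta \circ ad(\omega) = [ad(\Phi + \eta^{\vec{\alpha}}), \Theta]$. Composing with $\Theta$ on the left, using $\Theta^2 = \mathrm{Id}$ fiberwise, and invoking the identity $\Theta \circ ad(X) \circ \Theta = ad(\Theta X)$, I would obtain $ad(\omega) = ad(\Theta(\Phi + \eta^{\vec{\alpha}}) - (\Phi + \eta^{\vec{\alpha}}))$. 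Injectivity of $ad$ on the semisimple Lie algebra $\mathfrak{g}$ then gives
\[
\omega = \Theta(\Phi + \eta^{\vec{\alpha}}) - (\Phi + \eta^{\vec{\alpha}}).
\]

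The rest is an explicit evaluation of $\Theta$ on the summands. Using Definition \ref{defi:adapted} ($\theta e = -f$, $\theta h = -h$, $\theta f = -e$) together with the coordinate computation $*_{-1}(\partial_z) = H\, dz$ (and the analogous statement for higher weights), I would check that $\Theta(\tau \otimes f) = -H \otimes e$, $\Theta(H \otimes e) = -\tau \otimes f$, and more generally $\Theta(\tau \alpha_i \otimes e_{m_i}) = (H * \alpha_i) \otimes \theta(e_{m_i})$. Substituting and subtracting $\Phi + \eta^{\vec{\alpha}}$ yields the claimed formula, with the factor of $-2$ in front of $\tau \otimes f + H \otimes e$ arising from $\Theta(\Phi) - \Phi = -2\Phi$.

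The main obstacle is purely bookkeeping: both $\tau$ and $H$ carry a differential-form index (respectively $(1,0)$ and $(0,1)$) and a $K^{\pm 1}$ index, and one must be careful that $\Theta$ swaps the $(1,0)$ and $(0,1)$ parts as it applies $*_i$ to the bundle index. Once one sees that applying $*$ to the $K^{-1}$-part of $\tau$ produces exactly the $K$-part of $H$ (and vice versa), the conclusion is immediate. No new ideas are required beyond Corollary \ref{coro:fuchsian} and the properties of $\Theta$ established in Section \ref{subsection:complexasreal}.
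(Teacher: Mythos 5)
Your proposal is correct and follows essentially the same route as the paper: expand $D^{\vec{\alpha}}\Theta$ via the Leibniz rule, use $\nabla\Theta = 0$ from the compatibility of the Chern connections with the $H$-induced Hermitian metrics, conjugate by $\Theta$ to get $\omega = \Theta(\Phi + \eta^{\vec{\alpha}}) - (\Phi + \eta^{\vec{\alpha}})$, and evaluate termwise using the adapted involution. The only cosmetic difference is that you make explicit the appeal to $\Theta \circ ad(X)\circ\Theta = ad(\Theta X)$ and the injectivity of $ad$ on a semisimple algebra, which the paper leaves implicit.
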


The Killing form $\langle\cdot,\cdot \rangle$ on $P_G [\mathfrak{g}]$ is a $\CC-$bilinear form that restricts to a (real) inner product on $P_K [\mathfrak{p}]$, which identifies with the metric on the symmetric space $\mathbb{X}$.
The grading simplifies computations with the Killing form as follows:

\begin{rmk}
\label{rmk:orthogonality}
Given $\beta_i \otimes x_i \in \mathcal{A}^0 (X, K^i \otimes \underline{\mathfrak{g}_i})$ for $i =1,2$, then:

$$\langle \beta_i \otimes x_i, \beta_j \otimes x_j\rangle = 
\begin{cases}
0,\: \text{if }i\neq -j.\\
(\beta_i,\beta_j) \kappa (x_i,x_j),\: \text{if }i=-j,
\end{cases}$$

where $(\cdot,\cdot)$ denotes the pairing between $K^i$ and $K^{-i}$.
The formula follows from $\kappa (\mathfrak{g}_i,\mathfrak{g}_j)=0$ if $i \neq -j$.
\end{rmk}

Notice that the $\mathfrak{h}-$subbundle $\mathcal{O} \otimes \mathfrak{h} \subset P_G [\mathfrak{g}]$ intersects $P_K [\mathfrak{p}]$ along $\mathcal{O} \otimes \mathfrak{a}$, where $\mathfrak{a}$ is the real form of $\mathfrak{h}$ spanned by the simple coroots.
The subbundle $\mathcal{O} \otimes \mathfrak{a}$ defines a distribution in $T \mathbb{X}$ along $\Ep$ that is orthogonal to the EP-surface thanks to Lemma \ref{lemma:maurercartan} and Remark \ref{rmk:orthogonality}. 
This distribution is tangent to a family of flats.

\begin{lemma}
\label{lemma:normalweyl}
 Let $W \subset \mathfrak{a}$ be the Weyl chamber containing $h$, then after developing, $\mathcal{O} \otimes W$ defines a distribution of $T\mathbb{X}$ along $\Ep$ tangent to Weyl chambers $W_z \subset \mathbb{X}$ tipped at $\Ep(z)$ whose apartment at infinity is determined by the Borel subalgebra $\mathcal{D}(z)$ corresponding to the developing map of the oper.
\begin{proof}
 As explained in Section \ref{subsection:complexasreal}, any vector tangent to the distribution $\mathcal{O}\otimes \mathfrak{a}$ in the same Weyl chamber as $1 \otimes h$ acts on $P_G [\mathfrak{g}]$ with real eigenvalues and defines a Borel subalgebra by taking the direct sum of non-negative eigenspaces. 
 This Borel subalgebra is the same as the one defined by $1 \otimes h$, which is simply the reduction of structure $P_B [\mathfrak{b}]$.
   
 This implies that all the vectors in that Weyl chamber normal to $\Ep (z)$ are pointing towards points in $\partial \mathbb{X}$ defined by $\mathcal{D} (z)$, where $\mathcal{D}(z)$ is the developing map of the oper. 
\end{proof}
\end{lemma}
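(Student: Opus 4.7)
The plan is to unwind three separate identifications, each corresponding to one clause of the statement: (a) that $\mathcal{O} \otimes W$ really does define a distribution along $\Ep$ tangent to $\mathbb{X}$; (b) that pointwise this distribution is the tangent cone to a Weyl chamber tipped at $\Ep(z)$; and (c) that the apartment of this Weyl chamber at infinity is the one containing the Borel subalgebra $\mathcal{D}(z)$.

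First I would check (a). Since $\mathfrak{a} \subset \mathfrak{p}_\theta$ by definition of a Cartan involution adapted to $(e,h,f)$, applying $\Theta$ (defined fiberwise by $\ast \otimes \theta$ on $\mathcal{O} \otimes \mathfrak{g}_0$) to a section $1 \otimes x$ with $x \in \mathfrak{a}$ returns $-1 \otimes x$. So $\mathcal{O} \otimes W \subset P_K[\mathfrak{p}]$, and under the identification $P_K[\mathfrak{p}] \cong \Ep^* T\mathbb{X}$ coming from the Maurer-Cartan form we get a genuine family of convex cones inside $T_{\Ep(z)} \mathbb{X}$.

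For (b) I would appeal directly to Corollary \ref{prop:symmetric} and the general structure of flats in $\mathbb{X}$ recalled in Section \ref{subsection:complexasreal}. The bundle $\mathcal{O} \otimes \mathfrak{h}$ gives, in the fiber at each $z$, a complex Cartan subalgebra of $\mathfrak{g}$; since this Cartan is preserved by the fiberwise Cartan involution $\Theta|_z$, it integrates to a totally geodesic flat $F_z \subset \mathbb{X}$ through $\Ep(z)$, with tangent space precisely the $-1$-eigenspace of $\Theta$ in the Cartan, i.e.\ $\mathcal{O} \otimes \mathfrak{a}$. Exponentiating the Weyl chamber $W \subset \mathfrak{a}$ inside this flat produces the Weyl chamber $W_z$ tipped at $\Ep(z)$.

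Finally, for (c) — which I expect to be the only step requiring care — I would argue as follows. An apartment of $\partial \mathbb{X}$ is determined by the $G$-orbit structure of parabolic subalgebras attached to the regular directions it contains, and from Section \ref{subsection:complexasreal} the Borel subalgebra attached to a regular $x \in W$ is the sum of non-negative eigenspaces of $\mathrm{ad}(x)$. Applying this with $x = 1 \otimes h \in \mathcal{O} \otimes W$ in the fiber of $P_G[\mathfrak{g}]$ at $z$, the non-negative eigenspaces assemble to $\bigoplus_{i \geq 0} \mathcal{O} \otimes \mathfrak{g}_i$, which is exactly the reduction of structure $P_B[\mathfrak{b}]$ restricted to the fiber at $z$. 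Developing this fiberwise Borel subspace yields $\mathcal{D}(z)$, so the apartment of $W_z$ at infinity contains $\mathcal{D}(z)$ and is therefore the apartment it determines, finishing the lemma.
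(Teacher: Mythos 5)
Your proposal is correct and follows essentially the same route as the paper's proof: identify the Borel subalgebra attached to the regular direction $1\otimes h$ as the sum of non-negative $\mathrm{ad}$-eigenspaces, observe that this is exactly the reduction of structure $P_B[\mathfrak{b}]$, and conclude that the Weyl chamber normal to $\Ep(z)$ points toward the apartment of $\mathcal{D}(z)$; your steps (a) and (b) simply make explicit the background from Section \ref{subsection:complexasreal} that the paper invokes implicitly. One cosmetic slip: the non-negative eigenspaces assemble to $\bigoplus_{i\geq 0} K^i\otimes\underline{\mathfrak{g}_i}$ rather than $\bigoplus_{i\geq 0}\mathcal{O}\otimes\mathfrak{g}_i$, though fiberwise this is the same subspace and the identification with $P_B[\mathfrak{b}]$ stands.
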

This generalizes the picture described in the previous section in $\mathbb{H}^3$ to higher rank.

Any section of $P_K [\mathfrak{p}]$ is in the $K-$orbit of a unique element of $1 \otimes W$.
This unique element defines a \textit{Cartan projection}: 
$$\mu: P_K [\mathfrak{p}] \rightarrow 1 \otimes W,$$ 
sending any section $s \in \mathcal{A}^0 (X,P_K [\mathfrak{p}])$ to its unique $K-$representative in $1 \otimes W \subset 1 \otimes \mathfrak{h}$.

Recall that the Levi-Civita connection $\nabla^{LC} = D^{\vec{\alpha}} + \frac{1}{2} ad (\omega)$ preserves $P_K [\mathfrak{p}]$ and corresponds to the pullback of the Levi-Civita connection in $\mathbb{X}$.
Lemma \ref{lemma:maurercartan} gives us an expression for $\nabla^{LC}$:

\begin{coro}
\label{coro:levicivita}
Fixing $(P_G[\mathfrak{g}],\Theta, D^{\vec{\alpha}})$, the Levi-Civita connection on $P_K [\mathfrak{p}]$ is given by the expression:
\begin{equation}
\label{eq:levicivita}
\nabla^{LC} = \nabla + \frac{1}{2} ad \left( \sum_{i=1}^l (\tau \alpha_i) \otimes e_{m_i} + (H *\alpha_i) \otimes \theta (e_{m_i}) \right).
\end{equation}    
\end{coro}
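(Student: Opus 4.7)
The plan is to apply the general identity
\[
\nabla^{LC} = D^{\vec{\alpha}} + \tfrac{1}{2}\,ad(\omega),
\]
stated just above the corollary, and then substitute the explicit formulas already available for the two summands. Recall that in Section \ref{subsection:complexasreal} this identity is derived over the trivial bundle $\underline{\mathfrak{g}}\to\mathbb{X}$ from the definition of the Maurer-Cartan form $D\Theta=\Theta\circ ad(\omega)$ together with the splitting $\Theta=\pm 1$ on $\mathfrak{k}\oplus\mathfrak{p}$; the first step is to observe that exactly the same derivation goes through in our associated bundle $P_G[\mathfrak{g}]$ equipped with the fiberwise Cartan involution $\Theta$ and the flat connection $D^{\vec{\alpha}}$, so the same formula applies, and moreover the resulting connection preserves $P_K[\mathfrak{p}]$.

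Next, I substitute the expressions in hand. By Proposition \ref{prop:canonicalmodel},
\[
D^{\vec{\alpha}} \;=\; \nabla + \Phi + \eta^{\vec{\alpha}} \;=\; \nabla + ad\!\bigl(\tau\otimes f + H\otimes e + \textstyle\sum_{i=1}^l (\tau\alpha_i)\otimes e_{m_i}\bigr),
\]
and by Lemma \ref{lemma:maurercartan},
\[
\tfrac{1}{2}\,ad(\omega) \;=\; -\,ad\!\bigl(\tau\otimes f + H\otimes e\bigr) \;-\; \tfrac{1}{2}\,ad\!\Bigl(\textstyle\sum_{i=1}^l (\tau\alpha_i)\otimes e_{m_i} - (H*\alpha_i)\otimes\theta(e_{m_i})\Bigr).
\]
Adding these two expressions, the term $ad(\tau\otimes f + H\otimes e)$ cancels, the coefficient of $(\tau\alpha_i)\otimes e_{m_i}$ reduces from $1$ to $\tfrac{1}{2}$, and the $(H*\alpha_i)\otimes\theta(e_{m_i})$ term enters with coefficient $+\tfrac{1}{2}$, yielding exactly the formula in the statement.

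The only non-routine step is the first one: checking that the Maurer-Cartan/Levi-Civita relation transfers from the model $\underline{\mathfrak{g}}\to\mathbb{X}$ to $P_G[\mathfrak{g}]\to X$ along the developing data. This is where one must use that, by construction, developing the triple $(P_G[\mathfrak{g}],\Theta,D^{\vec{\alpha}})$ identifies $P_K[\mathfrak{p}]$ with $\Ep^*T\mathbb{X}$, the pullback of the Levi-Civita connection on $\mathbb{X}$ equals the restriction of $D^{\vec{\alpha}}+\tfrac{1}{2}ad(\omega)$ to $P_K[\mathfrak{p}]$, and the Maurer-Cartan form computed fiberwise via $D^{\vec{\alpha}}\Theta=\Theta\circ ad(\omega)$ agrees with $d\,\Ep$ under this identification. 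Once this identification is in place, the rest is the elementary bookkeeping above.
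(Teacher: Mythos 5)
Your proposal is correct and follows exactly the route the paper takes (implicitly, since the paper states the corollary without a written proof): apply $\nabla^{LC}=D^{\vec\alpha}+\tfrac12\,ad(\omega)$, substitute $D^{\vec\alpha}=\nabla+ad(\Phi+\eta^{\vec\alpha})$ and the expression for $\omega$ from Lemma \ref{lemma:maurercartan}, and cancel the $ad(\tau\otimes f+H\otimes e)$ terms. The algebra checks out and the coefficients come out as in Equation (\ref{eq:levicivita}).
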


\subsection{Relation with the classical definition}

We now verify that when $G = PGL_2 (\mathbb{C})$, this recovers Epstein's construction.
More precisely, Epstein proved:

\begin{thm}[{\cite{epstein2024envelopes}}]
\label{thm:epstein}
 Let $(\lambda,\Dev)$ be a pair consisting of a $C^1$ conformal metric $\lambda$ in $\mathbb{H}^2$, and $\Dev: \mathbb{H}^2 \rightarrow \mathbb{P}^1$ a locally injective holomorphic map. 
 Then, there exists a unique continuous map $\Ep (\lambda, \Dev) : \mathbb{H}^2 \rightarrow \mathbb{H}^3$ such that:
   $$\Dev^* \nu_{\Ep (\lambda,Dev) (z)} = \lambda, \: \forall z \in \mathbb{H}^2,$$
 where $\nu_{\Ep (\lambda,Dev) (z)}$ is the visual metric on $\partial \mathbb{H}^3 \cong \mathbb{P}^1$ induced from the point $\Ep (\lambda,Dev) (z) \in \mathbb{H}^3$.
 Furthermore, the point $\Ep(\lambda,Dev)(z)$ depends only on the $1-$jet of $\lambda$ at $z$ and $\Ep$ has one less derivative than $\lambda$.
\end{thm}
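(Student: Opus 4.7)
The plan is to construct $\Ep(\lambda,\Dev)(z_0)$ pointwise as the unique point in $\HH^3$ whose visual metric pulls back along $\Dev$ to a conformal metric with the prescribed $1$-jet of $\lambda$ at $z_0$. Existence and uniqueness of such a point will follow from an explicit computation plus a dimension count, and the regularity claim is then essentially automatic.

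First, I would use that $\Dev$ is a local biholomorphism around each $z_0$ to reduce the problem to matching $1$-jets of conformal metrics on $\PP^1$: pullback by a biholomorphism sends the $1$-jet of a conformal metric at $z_0$ to that at $\Dev(z_0)$ via a fixed invertible transformation depending only on the $2$-jet of $\Dev$ at $z_0$, so finding $p \in \HH^3$ with $\Dev^* \nu_p$ agreeing with $\lambda$ to first order at $z_0$ is equivalent to finding $p$ such that $\nu_p$ realizes a prescribed $1$-jet at $w_0 = \Dev(z_0)$. Next, in the upper half-space model $\HH^3 = \CC \times \RR_{>0}$, the visual metric from $p = (c, t)$ has the explicit Poisson-kernel form
\[
\nu_p(w) = \frac{t}{|w-c|^2 + t^2}\,|dw|
\]
(up to a fixed multiplicative normalization). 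Its $1$-jet at $w_0$ is encoded by the value $\nu_p(w_0)/|dw|$ and by $\partial_w \log \nu_p|_{w_0} = -\overline{(w_0 - c)}/(|w_0 - c|^2 + t^2)$. Dividing this derivative by the value yields $\overline{(w_0 - c)}/t$, which together with $\nu_p(w_0)$ then determines $t$ and hence $c$ uniquely. Thus the map $p \mapsto (\text{$1$-jet of $\nu_p$ at $w_0$})$ is a real-analytic bijection from $\HH^3$ onto the $3$-dimensional space of $1$-jets of conformal metrics at $w_0$.

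Defining $\Ep(\lambda, \Dev)(z_0)$ as the unique $p$ realizing the prescribed $1$-jet gives a map depending only on the $1$-jet of $\lambda$ at $z_0$. The inverse of the above bijection is real-analytic, so composing with the $1$-jet of $\lambda$ (which is $C^{k-1}$ when $\lambda \in C^k$, hence continuous when $\lambda$ is $C^1$) yields the claimed regularity. Uniqueness of $\Ep$ among continuous maps with the pullback property is immediate, since that property forces the pointwise matching of $1$-jets. The main technical point of the argument is the explicit solvability of the matching equation in the chart; coordinate-independence of the construction follows from the equivariance $g^* \nu_{g \cdot p} = \nu_p$ for $g \in \PGL_2(\CC)$, so the choice of boundary chart is harmless.
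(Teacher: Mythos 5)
The paper does not actually prove this statement: it is quoted from Epstein's preprint \cite{epstein2024envelopes} and used as a black box (only Proposition \ref{prop:epsteinisepstein} is derived from it), so there is no internal proof to compare against. Your argument is a correct, self-contained proof and is essentially the standard one for Epstein's theorem: interpret the displayed equality as agreement of $1$-jets at $z$, reduce to $\PP^1$ via the local biholomorphism, and solve explicitly in the upper half-space model. The computation checks out: with the prescribed $1$-jet recorded as the value $\lambda_0>0$ and logarithmic derivative $\mu_0=\partial_w\log\lambda\,(w_0)$, your equations force $t=\bigl(\lambda_0(1+|\mu_0|^2)\bigr)^{-1}$ and then determine $c$ uniquely, so $p\mapsto(\text{$1$-jet of }\nu_p\text{ at }w_0)$ is indeed a real-analytic bijection onto $\RR_{>0}\times\CC$; the equivariance $g^*\nu_{g\cdot p}=\nu_p$ disposes of the chart at $\infty$, and the regularity and uniqueness claims follow as you say. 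Two small points: the quotient of the derivative by the value should be $-\overline{(w_0-c)}/t$ (a harmless sign slip), and you should state explicitly that the defining property is first-order agreement at $z$ — the literal reading of the display (equality of metrics) is overdetermined, while requiring only equality of the values at $z$ merely confines $\Ep(z)$ to a horosphere based at $\Dev(z)$ and would destroy uniqueness; your dimension count is exactly the reason the $1$-jet condition is the right one.
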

 
If we normalize the curvature of $\mathbb{H}^3$ to be $-1$, the classical Epstein-Poincar\'e maps are recovered when we consider the pair associated with a hyperbolic metric of constant curvature $-1$, and a given projective structure.

The equivalence with our definition follows from the following:

\begin{prop}
\label{prop:epsteinisepstein}
 Fix $\lambda$ a conformal metric in $\mathbb{H}^2$, and $\Dev^i : \mathbb{H}^2 \rightarrow \mathbb{P}^1$, $i=1,2$ be locally injective holomorphic maps. 
 Let $\Osc : \mathbb{H}^2 \rightarrow PGL_2 (\mathbb{C})$ be the unique osculating Mobius transformation satisfying that the $2-$jets at $z$ of $ \Osc (z)\circ \Dev^2 (\cdot) = \Dev^1 (\cdot)$ coincide, then:
   $$\Osc (z) . Ep_{(\lambda,Dev^2)} =  Ep_{(\lambda,\Dev^1)}.$$
   \begin{proof}
 Notice that $(\Osc(z))^* \nu_{\Osc(z).\Ep (\lambda,\Dev^2)(z)} = \nu_{\Ep (\lambda,\Dev^2)(z)}$. 
 On the other hand:
      $$\lambda = (\Dev^2)^* \nu_{\Ep (\lambda,\Dev^2)(z)} = (\Dev^1)^* \nu_{\Osc (z). \Ep (\lambda,\Dev^2)(z)},$$
 where we used that the $1-$jet of $\Osc (z) \circ \Dev^2 (\cdot) = \Dev^1 (\cdot)$ coincide.
 Uniqueness of the Epstein surface in Theorem \ref{thm:epstein} finishes the proof.
   \end{proof} 
\end{prop}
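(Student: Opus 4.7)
The plan is to reduce the identity to the uniqueness half of Theorem~\ref{thm:epstein}. Fix $z \in \mathbb{H}^2$ and set $p(z) := \Osc(z).\Ep_{(\lambda,\Dev^2)}(z)$; I would like to recognize $p(z)$ as the Epstein point for $(\lambda,\Dev^1)$ at $z$, so it suffices to verify $(\Dev^1)^* \nu_{p(z)} = \lambda$ at $z$ for every $z$.

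Two ingredients drive the verification. The first is the equivariance of the visual metric under the isometric action of $PGL_2(\mathbb{C})$ on $\mathbb{H}^3$: for every $g \in PGL_2(\mathbb{C})$ and every $q \in \mathbb{H}^3$ one has $g^* \nu_{g.q} = \nu_q$, since $g$ extends to an isometry of $\mathbb{H}^3$. The second is that the pullback of a conformal metric at a point depends only on the $1$-jet of the map there; combined with the defining property of $\Osc(z)$ (matching $2$-jets, hence in particular matching $1$-jets at $z$), this lets me replace $\Dev^1$ by $\Osc(z) \circ \Dev^2$ when evaluating a pulled-back tensor at $z$.

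Chaining these at the basepoint $z$ gives
\[
(\Dev^1)^* \nu_{p(z)}\big|_z = (\Osc(z)\circ \Dev^2)^* \nu_{p(z)}\big|_z = (\Dev^2)^* \nu_{\Ep_{(\lambda,\Dev^2)}(z)}\big|_z = \lambda\big|_z,
\]
where the first equality uses the $1$-jet agreement of $\Dev^1$ and $\Osc(z)\circ \Dev^2$ at $z$, the second uses the isometric equivariance of $\Osc(z)$ applied to the Möbius transformation $g = \Osc(z)$ and the point $q = \Ep_{(\lambda,\Dev^2)}(z)$, and the third is the defining property of $\Ep_{(\lambda,\Dev^2)}$. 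Since this holds for every $z$, the uniqueness in Theorem~\ref{thm:epstein} forces $p(z) = \Ep_{(\lambda,\Dev^1)}(z)$, i.e.\ the claimed identity.

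I do not anticipate any genuine obstacle, as Epstein's theorem does the heavy lifting. The only subtlety worth checking is that verifying the defining equation pointwise is enough to invoke uniqueness; this is automatic because the characterization in Theorem~\ref{thm:epstein} is itself pointwise in $z$ and the verification at $z$ involves only the $1$-jet of the developing map there, which is precisely what $\Osc(z)$ was designed to preserve.
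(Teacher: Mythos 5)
Your argument is correct and is essentially identical to the paper's proof: both rest on the isometric equivariance $\Osc(z)^*\nu_{\Osc(z).q}=\nu_q$, the fact that the pullback at $z$ only sees the jet of the developing map that $\Osc(z)$ is built to match, and the uniqueness clause of Theorem~\ref{thm:epstein}. Nothing further is needed.
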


More intuitively, one can think of a totally geodesic $\mathbb{H}^2$ as generated by an envelope of horocycles tangent to it. 
Each horocycle has a preferred point defined by the point of tangency.
The osculating Mobius map defined at the tip of the horocycle maps each horocycle with its preferred point to a unique horocycle and preferred point. 
The envelopes of these new horocycles define the new Epstein surface. 
These horocycles are illustrated in Figure \ref{fig:epstein}.

\section{Regularity of the EP-surfaces}
\label{sec:regularity}

In this section, we will study when the EP-surface of an oper is \emph{regular}, in the following sense:

\begin{defi}
Let $M$ be a smooth manifold, and $\mathbb{X}$ a symmetric space.
We will say that an immersion $F: M \rightarrow \mathbb{X}$ is \emph{regular} if $d_p F (v) \in \mathfrak{p}^{F(p)}$ is a regular vector for all $(p,v) \in TM$ (as defined in Section \ref{subsection:complexasreal}).
\end{defi}

This infinitesimal condition is natural in the study of well-behaved surfaces from a large-scale geometric perspective.
The main result of this section is a sufficient condition for $\Ep$ to be a regular immersion:

\begin{thm}
\label{thm:regularity}    
Let $G$ be a complex simple Lie group, and $\Ep$ be an EP-surface for the oper $(P_G [\mathfrak{g}],P_B[\mathfrak{b}],D^{\vec{\alpha}})$, as in a normalized parametrization of $\op_X (G)$.
Then:
\begin{itemize}
    \item $\Ep$ is immersed at $z \in \tilde{X}$ as long as $\norm{\alpha_1}(z) \neq 2$ or $\alpha_i (z) \neq 0$.
    \item Let $\phi_{min}$ be the minimal angle between $h$ and the walls of a Weyl chamber.
Then as long as $\Ep$ is immersed, and:
$$\frac{(1-\frac{\norm{\alpha_1}}{2})}{\sqrt{(1-\frac{\norm{\alpha_1}}{2})^2 + \frac{1}{4} \sum_{i=2}^l \norm{\alpha_i}^2}} \geq \cos (\phi_{min}),$$
the surface $\Ep$ will be a regular immersion. 
\end{itemize}
\end{thm}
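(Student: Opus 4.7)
My plan is to work pointwise using Lemma \ref{lemma:maurercartan}. Fix $z_0 \in \tilde{X}$ and a local coordinate $z$ centered at $z_0$ with $H(z_0) = 1$; then $P_G[\mathfrak{g}]|_{z_0} \cong \mathfrak{g}$ and $P_K[\mathfrak{p}]|_{z_0} \cong \mathfrak{p}$ tautologically, and for a unit real tangent vector $v = \cos\phi\,\partial_x + \sin\phi\,\partial_y$ the lemma gives
\[
\omega(v) = -2(e^{i\phi} f + e^{-i\phi} e) - \sum_{i=1}^{l}\bigl(e^{i\phi}\alpha_i\, e_{m_i} - e^{-i\phi}\overline{\alpha_i}\,\theta e_{m_i}\bigr).
\]
Using the normalization $e_{m_1} = e$, so $\theta e_{m_1} = -f$, I would absorb the $i=1$ correction into the Fuchsian term to rewrite
\[
\omega(v) = F_0 + \sum_{i \ge 2} y_i,\quad F_0 := -2(Ae + \overline{A}f),\quad A := e^{-i\phi} + \tfrac{1}{2}e^{i\phi}\alpha_1,
\]
with $y_i := -\bigl(e^{i\phi}\alpha_i\, e_{m_i} - e^{-i\phi}\overline{\alpha_i}\,\theta e_{m_i}\bigr)$.

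Next I would exploit the principal $\mathfrak{sl}_2$-decomposition $\mathfrak{g} = \bigoplus_{i=1}^l W^{2m_i}$, which is preserved by the adapted Cartan involution and hence gives an orthogonal decomposition $\mathfrak{p} = \bigoplus_i \mathfrak{p}_i$ with $\mathfrak{p}_i = \mathfrak{p} \cap W^{2m_i}$ under the inner product $-\kappa(\cdot,\theta\cdot)$; orthogonality of the summands follows from Schur's lemma applied to the $\mathfrak{sl}_2$-invariant pairing $\kappa$. Then $F_0 \in \mathfrak{p}_1$ and each $y_i \in \mathfrak{p}_i$. For the first bullet I compare the complexifications $\omega(\partial_z)$ and $\omega(\partial_{\overline{z}})$ weight-by-weight: a $\mathbb{C}$-linear dependence forces $|\alpha_1| = 2$ and $\alpha_i = 0$ for all $i \ge 2$. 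For the second bullet I use the normalization $-\kappa(e_{m_i}, \theta e_{m_i}) = \kappa(e,f)$ to compute $\|F_0\|^2 = 8|A|^2 \kappa(e,f)$ and $\|y_i\|^2 = 2|\alpha_i|^2 \kappa(e,f)$, so the angle $\vartheta$ between $\omega(v)$ and $F_0$ in $\mathfrak{p}$ satisfies
\[
\cos\vartheta = \frac{|A|}{\sqrt{|A|^2 + \tfrac{1}{4}\sum_{i\ge 2}|\alpha_i|^2}} \ge \frac{1 - \tfrac{\|\alpha_1\|}{2}}{\sqrt{\bigl(1-\tfrac{\|\alpha_1\|}{2}\bigr)^2 + \tfrac{1}{4}\sum_{i\ge 2}\|\alpha_i\|^2}},
\]
by the triangle inequality $|A| \ge 1 - |\alpha_1|/2$ and the fact that $\|\alpha_i\|(z_0) = |\alpha_i|$ in this normalization.

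To conclude regularity, I would observe that $F_0$ is $K$-conjugate through $\exp(-i\arg(A)\cdot ih)\in K$ to $-2|A|(e+f)$, and that $e+f$ is in turn $K$-conjugate, via the principal $SU(2)\subset K$, to a positive multiple of $h \in \mathfrak{a}$. Let $N \subset \mathfrak{p}$ denote the closed $K$-invariant cone of non-regular elements; the key geometric input is the identity $\angle(h, N) = \phi_{\min}$ for the angular distance in the Killing metric. This reduces to showing that a nearest point $n \in N$ to $h$ must lie in $\mathfrak{a}$: the variational identity $[n, h-n] = 0$ combined with regularity of $h$ forces $n \in \mathfrak{a}\cap N = \bigcup_\alpha \ker\alpha$, so $d(h, N) = |h|\sin\phi_{\min}$ by definition of $\phi_{\min}$, giving $\angle(h, N) = \phi_{\min}$. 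By $K$-invariance of $N$ the same angular distance holds for $F_0$, and since $\omega(v) - F_0$ is orthogonal to $F_0$ the hypothesis $\cos\vartheta \ge \cos\phi_{\min}$ places $\omega(v)$ within angular distance $\phi_{\min}$ of $F_0$ and therefore outside $N$, proving regularity. The main obstacle I expect is this identification of $\phi_{\min}$ with the angular distance from $h$ to the \emph{entire} cone $N$ rather than only to its slice $N \cap \mathfrak{a}$, while the rest is careful bookkeeping with the weight decomposition and the two normalizations $e_{m_1} = e$ and $-\kappa(e_{m_i}, \theta e_{m_i}) = \kappa(e,f)$.
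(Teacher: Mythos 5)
Your argument is correct, and up to the final geometric step it runs parallel to the paper's: you use the same decomposition of the Maurer--Cartan form from Lemma \ref{lemma:maurercartan} into the principal $\mathfrak{sl}_2$ part $F_0=\omega^{\alpha_1}(v_\phi)$ plus the orthogonal higher-weight terms (orthogonality being Remark \ref{rmk:orthogonality}), the same characterization of the non-immersed locus, and the same angle estimate as Lemma \ref{lemma:angles} (your normalization $H(z_0)=1$ introduces a harmless overall factor in $\norm{F_0}^2$ that cancels in the cosine). Where you genuinely diverge is in converting the bound $\vartheta\leq\phi_{min}$ into regularity. The paper passes through the Cartan projection: Lemmas \ref{rmk:distribution} and \ref{lemma:cartanofomega} place $\mu(\omega^{\alpha_1}(v_\phi))$ on the ray of $h$, and Proposition \ref{prop:cartan} (the Morse-theoretic fact that $\mu$ does not increase angles at a regular vector) then bounds the angle between the Cartan projections by $\vartheta$. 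You instead stay in $\mathfrak{p}$: you exhibit $F_0$ as $K$-conjugate to a real multiple of $h$ by explicit elements ($\exp(i\psi h)$ and the principal $SU(2)$), and prove directly that the angular distance from $h$ to the closed $K$-invariant cone $N$ of non-regular vectors equals $\phi_{min}$. Your first-variation identity is sound: at a nearest point $n\in N$, orthogonality of $h-n$ to the $K$-orbit of $n$ gives $\kappa([n,h],k)=0$ for all $k\in\mathfrak{k}$, hence $[n,h]=0$ since $[\mathfrak{p},\mathfrak{p}]\subset\mathfrak{k}$ and $\kappa$ is definite there, so regularity of $h$ forces $n\in\mathfrak{a}$ and onto a wall. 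This is a clean, self-contained substitute for Proposition \ref{prop:cartan} in this application, at the cost of the explicit conjugation of $F_0$ rather than the rank-one argument via the totally geodesic $\mathbb{H}^3$ of Lemma \ref{rmk:distribution}. Two minor caveats, neither fatal: your $F_0$ comes out as a \emph{negative} multiple of $h$ for one choice of rotation, which is harmless since $N$ is invariant under $x\mapsto -x$; and at equality in the hypothesis your chain of angle inequalities only yields $\angle(\omega(v),N)\geq 0$, so strictly speaking regularity is only forced when the inequality is strict --- but the paper's own proof has exactly the same boundary looseness, inherited from the non-strict inequality in the statement.
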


When $\mathfrak{g} = \mathfrak{sl}_2 (\mathbb{C})$, the second condition is vacuous.
Notice that the constant $\phi_{min}$ can easily be computed in terms of the root data (see Appendix \ref{appendix:root}).

We will be using the Cartan projection $\mu : P_K [\mathfrak{p}] \rightarrow 1 \otimes W$ introduced by the end of the previous section.

\subsection{A totally geodesic distribution along $\Ep$}

The principal triple $(e,h,f)$ used to define $P_G [\mathfrak{g}]$ defines a subbundle:

$$P_S [\mathfrak{s}] = (K^{-1} \otimes \langle f \rangle) \oplus (\mathcal{O} \otimes \langle h \rangle) \oplus (K \otimes \langle e \rangle),$$

The intersection $P_S [\mathfrak{s}] \cap P_K [\mathfrak{p}]$ defines a complex, three-dimensional subbundle, which corresponds to a real three-dimensional distribution along $\Ep$ (after developing).

\begin{lemma}
\label{rmk:distribution}
The bundle $P_S [\mathfrak{s}] \cap P_K [\mathfrak{p}]$ induces a plane field distribution along $\Ep$ that is pointwise tangent to a totally geodesic copy of the symmetric space of $\mathfrak{sl}_2 (\CC)$ (i.e., $\HH^3$).
\begin{proof}
   Pulling back the triplet $(P_G [\mathfrak{g}],\Theta, D^{\vec{\alpha}})$ via $p : \tilde{X} \rightarrow X$ a universal cover and trivializing in such a way that $(p^* P_S [\mathfrak{s}])_{z_0} \cong \mathfrak{s}$ for a basepoint $z_0 \in \tilde{X}$, we get that $p^*\Theta_{z_0}$ preserves $\mathfrak{s}$, therefore belongs to the totally geodesic submanifold defined as:
   $$P_\mathfrak{s} = \{\theta \in \mathfrak{g}: \theta \mathfrak{s} = \mathfrak{s}\},$$
   which integrates $\mathfrak{s}$ at $z_0$, thus proving the claim.
\end{proof}
\end{lemma}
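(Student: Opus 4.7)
The plan is to reduce the lemma to Corollary \ref{prop:symmetric}, which says that any reductive subalgebra $\mathfrak{s} \subset \mathfrak{g}$ determines a totally geodesic submanifold $P_\mathfrak{s} = \{\theta \in \mathbb{X}: \theta \mathfrak{s} = \mathfrak{s}\}$ whose tangent space at $\theta$ equals $\mathfrak{s} \cap \mathfrak{p}_\theta$. The subalgebra $\mathfrak{s} = \langle e,h,f\rangle \cong \mathfrak{sl}_2(\mathbb{C})$ is certainly reductive, so once we verify that the fiber of $P_S[\mathfrak{s}] \cap P_K[\mathfrak{p}]$ at each $z$ lands inside one such copy $P_\mathfrak{s}$ under developing, the statement will follow, and the totally geodesic submanifold is a copy of the symmetric space of $\mathfrak{sl}_2(\mathbb{C})$, namely $\mathbb{H}^3$.

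The first step is to check that the Cartan involution $\Theta$ from Equation (\ref{eq:cartan}) preserves the subbundle $P_S[\mathfrak{s}]$. Since $\Theta$ acts on a section of $K^i \otimes \underline{\mathfrak{g}_i}$ by combining $*_i$ with $\theta$, this reduces to $\theta(\mathfrak{s}) = \mathfrak{s}$, which is immediate from the relations $\theta(e)=-f$, $\theta(h)=-h$, $\theta(f)=-e$ characterizing a Cartan involution adapted to the triplet.

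The second step is to pick a basepoint $z_0 \in \tilde{X}$ and trivialize $P_G[\mathfrak{g}]$ near $z_0$ so that $(P_S[\mathfrak{s}])_{z_0}$ is identified with $\mathfrak{s} \subset \mathfrak{g}$. Under the developing correspondence between developing data $(P_G[\mathfrak{g}],\Theta,D^{\vec\alpha})$ and the map $\Ep: \tilde{X} \to \mathbb{X}$, the value $\Ep(z_0)$ is the Cartan involution on $\mathfrak{g}$ represented by $\Theta_{z_0}$. By the previous step this involution preserves $\mathfrak{s}$, so $\Ep(z_0) \in P_\mathfrak{s}$, and hence Corollary \ref{prop:symmetric} applies at $\Ep(z_0)$.

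The third step just matches the two tangent spaces. The fiber of $P_K[\mathfrak{p}]$ at $z_0$ corresponds after developing to $\mathfrak{p}_{\Ep(z_0)}$, so the fiber of $P_S[\mathfrak{s}] \cap P_K[\mathfrak{p}]$ at $z_0$ corresponds to $\mathfrak{s} \cap \mathfrak{p}_{\Ep(z_0)}$, which is exactly $T_{\Ep(z_0)} P_\mathfrak{s}$ by Corollary \ref{prop:symmetric}. Since $P_\mathfrak{s}$ is totally geodesic and isomorphic to the symmetric space of $\mathfrak{s} \cong \mathfrak{sl}_2(\mathbb{C})$, i.e., $\mathbb{H}^3$, the claim follows. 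There is no real obstacle here beyond bookkeeping; the only thing that could have gone wrong is $\Theta$ failing to preserve $P_S[\mathfrak{s}]$, and that is prevented by the adaptedness of $\theta$ to the principal triplet.
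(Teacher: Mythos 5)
Your proposal is correct and follows the same route as the paper: both arguments trivialize at a basepoint, observe that the Cartan involution $\Theta_{z_0}$ preserves $\mathfrak{s}$ (which you justify via the adaptedness relations $\theta(e)=-f$, $\theta(h)=-h$, $\theta(f)=-e$, a detail the paper leaves implicit), and then invoke Corollary \ref{prop:symmetric} to identify the fiber of $P_S[\mathfrak{s}]\cap P_K[\mathfrak{p}]$ with the tangent space $\mathfrak{s}\cap\mathfrak{p}_{\Ep(z_0)}$ of the totally geodesic copy of $\mathbb{H}^3$. No gaps; this is the paper's proof with the bookkeeping spelled out.
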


This is an instance of the geometry of the Slodowy slice \cite{bradlow2024general} (see \cite[Section 4]{bronstein2025anosov} for a similar remark in a slightly different setup).

By Lemma \ref{lemma:maurercartan}, we can split the Maurer-Cartan form of the triple $(P_G [\mathfrak{g}], \Theta, D^{\vec{\alpha}})$ as:

\begin{equation}
\label{eq:decompositionomega}
\omega = \omega^{\alpha_1} + \sum_{i=2}^l (H *\alpha_i) \otimes \theta(e_{m_i}) - (\tau \alpha_i) \otimes e_{m_i},
\end{equation}

where $\omega^{\alpha_1} = -2 ((\tau + \frac{1}{2} H *\alpha_1) \otimes f + (H + \frac{1}{2} \tau \alpha_1)  \otimes e)$ is tangent to $P_S [\mathfrak{s}] \cap P_K [\mathfrak{p}]$.

In order to check regularity, we need some control on the Cartan projection $\mu$ of $\omega$. 
The advantage of the previous decomposition and Lemma \ref{rmk:distribution} is that any section $s\in \mathcal{A}^0 (X,P_S [\mathfrak{s}] \cap P_K [\mathfrak{p}])$ will lie in the same $K-$orbit because $\mathbb{H}^3$ is a rank one symmetric space. 
Therefore, we get:

\begin{lemma}
\label{lemma:cartanofomega}
Let $z$ be a local coordinate on $X$, then given $v_\phi = \cos \phi \partial_x + \sin \phi \partial_y \in T_z X$, we get:
$$\mu (\omega^{\alpha_1} (v_\phi)) = \sqrt{2 H(z)} \vline 1 + \frac{e^{-2i\phi} \alpha_1 (z)}{2H(z)} \vline 1 \otimes h,$$
where $H = H(z) dz \overline{dz}$, $\alpha_1 = \alpha_1 (z) dz^2$ are the hyperbolic metric and the differential in coordinates.
\begin{proof}
As previously remarked, Lemma \ref{rmk:distribution} implies that $\kappa (\omega^{\alpha_1} (v_\phi)) = \lambda(\phi) 1 \otimes h$, for $\lambda (\phi) \in \RR$. To figure out the constants, it is enough to compute the norm $\norm{\omega^{\alpha_1} (v_\phi)}$.
Notice that $v_\phi = \frac{1}{2} (e^{-i\phi} \partial_z + e^{i\phi} \partial_{\bar{z}})$, therefore:

$$\omega^{\alpha_1} (v_\phi) = - \left( (e^{-i\phi} + \frac{e^{i\phi} \overline{\alpha_1}}{2H}) \partial_z \otimes f + (e^{i\phi} H + \frac{e^{-i\phi}\alpha_1}{2}) dz \otimes e \right).$$

Taking inner products, we get:

$$\norm{\omega^{\alpha_1} (v_\phi)}^2 = 2 H \left|e^{i\phi} + \frac{e^{-i\phi} \alpha_1}{2H}\right|^2 \kappa (e,f).$$

Dividing by $\norm{1 \otimes h}^2 = \kappa (h,h) = \kappa (e,f)$ and taking square roots, we get the lemma.
\end{proof}
\end{lemma}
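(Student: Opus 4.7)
The target statement is a concrete coordinate computation, so the key is to find a conceptual shortcut that avoids diagonalizing the $\mathfrak{sl}_2$-valued $1$-form directly. The plan is to exploit the fact that $\omega^{\alpha_1}$ has been isolated precisely so that it takes values in the rank-one subbundle $P_S[\mathfrak{s}] \cap P_K[\mathfrak{p}]$ (by the decomposition (\ref{eq:decompositionomega}) and the definition of $\omega^{\alpha_1}$ given just before Lemma \ref{lemma:cartanofomega}). Combined with Lemma \ref{rmk:distribution}, this tangent vector sits inside a totally geodesic copy of $\mathbb{H}^3$, whose symmetric space is rank one; consequently, any nonzero vector in the fiber of $P_S[\mathfrak{s}] \cap P_K[\mathfrak{p}]$ lies in the $K$-orbit of a unique positive multiple of $1 \otimes h$. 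This reduces the problem from finding a Cartan projection (a priori an algebraic diagonalization) to computing a single scalar, namely the norm.

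Concretely, I would first argue that $\mu(\omega^{\alpha_1}(v_\phi)) = \lambda(\phi) \cdot 1 \otimes h$ for some nonnegative real number $\lambda(\phi)$, determined by the relation
\[
\lambda(\phi) \, \|1\otimes h\| \;=\; \|\omega^{\alpha_1}(v_\phi)\|,
\]
where the norm is the one induced by the Killing form on $P_K[\mathfrak{p}]$. Then I would compute the right-hand side directly in local coordinates: writing $v_\phi = \tfrac{1}{2}(e^{-i\phi}\partial_z + e^{i\phi}\partial_{\bar{z}})$, I pair the Maurer-Cartan expression for $\omega^{\alpha_1}$ given just above the lemma against $v_\phi$, which produces a section of $(K^{-1}\otimes\underline{\langle f\rangle}) \oplus (K\otimes\underline{\langle e\rangle})$ with two explicit coefficients involving $e^{\pm i\phi}$, $H$, and $\alpha_1$.

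For the norm computation, the key tool is Remark \ref{rmk:orthogonality}: the $f$-component (in $K^{-1}\otimes \underline{\mathfrak{g}_{-1}}$) is Killing-orthogonal to itself and the $e$-component is Killing-orthogonal to itself (both map to $\mathfrak{g}_{\pm 1}$), while cross terms $\mathfrak{g}_{-1}$ against $\mathfrak{g}_{1}$ pair via $\kappa(f,e)$. Carefully tracking the contributions — each one involving $|\,e^{i\phi} + \tfrac{e^{-i\phi}\alpha_1}{2H}|^2$ after using the Hermitian pairing $\langle dz, dz\rangle = 1/H$ and its variants — collapses the squared norm to a single factor $2H\,|1 + \tfrac{e^{-2i\phi}\alpha_1}{2H}|^2 \kappa(e,f)$.

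Finally, I normalize by $\|1\otimes h\|^2 = \kappa(h,h)$; since the triplet $(e,h,f)$ is the principal one inside a principally embedded $\mathfrak{sl}_2$ and $\kappa(h,h)=\kappa(e,f)$ under the conventions fixed in Section \ref{subsection:triplets}, taking a square root yields the claimed formula. The only step where I expect a genuine obstacle is keeping the phases and metric normalizations consistent through the pairing — this is purely bookkeeping, but one has to remain careful that the $*$-operator conventions from Section \ref{sec:notation} match those used implicitly in Lemma \ref{lemma:maurercartan}, so that the modulus inside the final expression indeed comes out to $|1 + e^{-2i\phi}\alpha_1(z)/2H(z)|$ rather than some rotated variant.
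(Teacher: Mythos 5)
Your proposal is correct and follows essentially the same route as the paper: reduce to a scalar via the rank-one observation of Lemma \ref{rmk:distribution}, then compute $\norm{\omega^{\alpha_1}(v_\phi)}$ in coordinates using the grading-orthogonality of Remark \ref{rmk:orthogonality} and normalize by $\kappa(h,h)=\kappa(e,f)$. The bookkeeping concern you flag is exactly where the paper's computation lives, and it resolves as you predict.
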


Notice that Remark \ref{rmk:orthogonality} implies that terms in the decomposition given by Equation (\ref{eq:decompositionomega}) are pairwise orthogonal.
This fact and Lemma \ref{lemma:cartanofomega} allow us to say when the EP-surfaces $\Ep$ are immersed.

\begin{prop}
The EP-surface associated to an oper given by $(P_G [\mathfrak{g}], P_B[\mathfrak{b}],D^{\vec{\alpha}})$ fails to be immersed precisely when $\norm{\alpha_1}(z) = 2$, and $\alpha_i (z) = 0$ for every $i =2,\ldots,l$.
Here $\norm{\alpha_1}$ denotes the hyperbolic norm.
\begin{proof}
Given $v_\phi \in T_z X$ as in Lemma \ref{lemma:cartanofomega}, we need to show that $\omega (v_\phi) \neq 0$ for every $\phi$.
That will happen as long as at least one term on the decomposition given by Equation (\ref{eq:decompositionomega}) is non-zero.
The term $\left( (H * \alpha_i) \otimes \theta (e_{m_i}) - (\tau \alpha_i) \otimes e_{m_i} \right) (v_\phi)$ will be non-zero as soon as $\alpha_i (z) \neq 0$.
On the other hand, Lemma \ref{lemma:cartanofomega} tells us that $\omega^{\alpha_1} (v) \neq 0$ as soon as:
$$min_{\phi \in [0,2\pi)} \left|1 + \frac{e^{-2i\phi}\alpha_1}{2 H}\right|= 1- \frac{\norm{\alpha_1}}{2} \neq 0.$$
This finishes the proof.
\end{proof}
\end{prop}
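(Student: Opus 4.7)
The plan is to identify the Maurer-Cartan form $\omega$ from Lemma \ref{lemma:maurercartan} with the derivative $d\Ep$, as explained in Section \ref{subsec:developing}; then $\Ep$ fails to be immersed at $z$ precisely when $\omega$ has a nontrivial kernel on $T_z X$. Since $T_z X$ is two-real-dimensional, this is equivalent to the existence of some unit tangent $v_\phi \in T_z X$ with $\omega(v_\phi) = 0$.

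First I would use the decomposition \eqref{eq:decompositionomega} to write $\omega = \omega^{\alpha_1} + \sum_{i=2}^l \eta_i$, where $\eta_i = (H * \alpha_i) \otimes \theta(e_{m_i}) - (\tau \alpha_i) \otimes e_{m_i}$. The key observation is that each $\eta_i$ takes values in $(K^{-m_i} \otimes \underline{\mathfrak{g}_{-m_i}}) \oplus (K^{m_i} \otimes \underline{\mathfrak{g}_{m_i}})$, and $\omega^{\alpha_1}$ in the analogous sum with $m_i$ replaced by $1$. Since the exponents $m_i$ are distinct, Remark \ref{rmk:orthogonality} gives that these summands are pairwise Killing-orthogonal; hence $\omega(v_\phi) = 0$ if and only if $\omega^{\alpha_1}(v_\phi) = 0$ and $\eta_i(v_\phi) = 0$ for every $i \geq 2$.

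Second, I would analyze each piece separately. For $i \geq 2$, since $e_{m_i} \in \mathfrak{g}_{m_i}$ and $\theta(e_{m_i}) \in \mathfrak{g}_{-m_i}$ lie in opposite graded pieces (and $m_i \neq 0$), the equation $\eta_i(v_\phi) = 0$ forces both $(\tau \alpha_i)(v_\phi) = 0$ and $(H * \alpha_i)(v_\phi) = 0$. Since $\tau$ is a nonzero $(1,0)$-form and the $(1,0)$-part of $v_\phi$ is nonzero for every $\phi$, the first of these equations holds for some (equivalently, every) $\phi$ if and only if $\alpha_i(z) = 0$. For the $\omega^{\alpha_1}$ piece, Lemma \ref{lemma:cartanofomega} together with the fact that the Cartan projection $\mu$ preserves norms yields
\[
\norm{\omega^{\alpha_1}(v_\phi)}^2 = 2 H(z)\, \kappa(e,f) \left|1 + \frac{e^{-2i\phi}\alpha_1(z)}{2H(z)}\right|^2,
\]
whose minimum over $\phi \in [0, 2\pi)$ equals $2H(z) \kappa(e,f) \cdot (1 - \norm{\alpha_1}(z)/2)^2$. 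This vanishes for some $\phi$ if and only if $\norm{\alpha_1}(z) = 2$, in which case it vanishes at a unique direction $\phi_0$ modulo $\pi$.

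Combining the two analyses, non-immersion at $z$ requires a common $\phi$ at which all constraints vanish simultaneously. The higher-$i$ conditions, once satisfied by $\alpha_i(z) = 0$, hold for every $\phi$; meanwhile $\omega^{\alpha_1}$ vanishes at some $\phi = \phi_0$ precisely when $\norm{\alpha_1}(z) = 2$. Thus all summands vanish at the common direction $\phi_0$ exactly when $\norm{\alpha_1}(z) = 2$ and $\alpha_i(z) = 0$ for all $i \geq 2$, which is the claim. The only subtle point is ensuring the simultaneous vanishing at a common angle, and this is automatic because the $\eta_i$ conditions are $\phi$-independent once imposed.
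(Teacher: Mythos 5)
Your proof follows the paper's argument essentially verbatim: decompose $\omega$ via Equation (\ref{eq:decompositionomega}), use the pairwise orthogonality of the graded pieces (Remark \ref{rmk:orthogonality}) to reduce to the simultaneous vanishing of each summand, and invoke Lemma \ref{lemma:cartanofomega} for the $\omega^{\alpha_1}$ term; the extra care you take about the common angle $\phi_0$ is a welcome clarification of a point the paper leaves implicit. The only quibble is your parenthetical claim that the exponents $m_i$ are distinct --- they need not be (e.g.\ $\mathfrak{so}_{4k}$ has a repeated exponent) --- but the argument survives because the vectors $e_{m_i}$ lying in a common graded piece are still linearly independent, so each $\eta_i(v_\phi)$ must vanish separately.
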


\subsection{Proof of Theorem \ref{thm:regularity}}

The sketch of Theorem \ref{thm:regularity} goes as follows: we know that the direction $\omega^{\alpha_1} (v_\phi)$ is regular (if non-zero).
On the other hand, we have the following classical result:

\begin{prop}
\label{prop:cartan}
Let $\mathbb{X}$ be the symmetric space of $G$, and $K_\theta$ the stabilizer of $\theta \in \mathbb{X}$. 
Given $x,y \in T_\theta \mathbb{X}$ unit vectors, define:
\begin{gather*}
F : K_\theta \rightarrow \RR\\
F(k) = \kappa (x,Ad(k)y),    
\end{gather*}
then:
\begin{itemize}
    \item If $k$ is a local maximum for $F$, then $Ad (k) y$ lies in a common Weyl-chamber with $x$.
    \item If $x$ is regular, then $F$ is Morse and has a unique local maximum.
\end{itemize}
\end{prop}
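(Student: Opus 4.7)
The plan is to identify the critical points of $F$ through a first-variation argument, then use a Hessian computation to pin down local maxima, and finally read off the Morse and uniqueness statements from regularity of $x$.

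First I would compute, for $Z \in \mathfrak{k}_\theta$, the variation
\[
\frac{d}{dt}\bigg|_{t=0} F(e^{tZ} k) \;=\; \kappa\bigl(x, [Z, y']\bigr), \qquad y' := Ad(k) y.
\]
By ad-invariance of $\kappa$ this equals $-\kappa(Z, [x, y'])$. Since $x, y' \in \mathfrak{p}_\theta$ we have $[x, y'] \in [\mathfrak{p}_\theta, \mathfrak{p}_\theta] \subset \mathfrak{k}_\theta$, and $\kappa$ is negative definite on $\mathfrak{k}_\theta$, so the critical-point condition $\kappa(Z, [x, y']) = 0$ for all $Z \in \mathfrak{k}_\theta$ forces $[x, y'] = 0$. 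Thus $x$ and $y'$ commute and can be placed in a common maximal abelian $\mathfrak{a} \subset \mathfrak{p}_\theta$, i.e.\ in a common apartment, corresponding to a Cartan $\mathfrak{h} = \mathfrak{a} \oplus i\mathfrak{a}$.

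Next I would compute the Hessian at a critical point:
\[
\frac{d^2}{dt^2}\bigg|_{t=0} F(e^{tZ}k) \;=\; \kappa\bigl(x, [Z, [Z, y']]\bigr).
\]
Decompose $\mathfrak{g} = \mathfrak{h} \oplus \bigoplus_{\alpha \in \Sigma(\mathfrak{h})} \mathfrak{g}_\alpha$ and write $Z = Z_0 + \sum_\alpha Z_\alpha$ with $Z_0 \in \mathfrak{k}_\theta \cap \mathfrak{h}$. The component $Z_0$ commutes with $x$ and hence contributes trivially. For the off-diagonal components, using that the adapted Cartan involution $\theta$ pairs $\mathfrak{g}_\alpha$ with $\mathfrak{g}_{-\alpha}$ and that $\alpha(x), \alpha(y') \in \mathbb{R}$ (since $x, y' \in \mathfrak{a}$), a direct bracket calculation yields
\[
\kappa\bigl(x, [Z, [Z, y']]\bigr) \;=\; -\sum_{\alpha \in \Sigma^+} \alpha(x)\,\alpha(y')\, c_\alpha \norm{Z_\alpha}^2
\]
for positive constants $c_\alpha$. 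If $k$ is a local maximum, this must be $\leq 0$ for every $Z$, so $\alpha(x)\alpha(y') \geq 0$ for every root $\alpha$. This is exactly the statement that $x$ and $y'$ lie in a common closed Weyl chamber of $\mathfrak{a}$, proving (1).

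For (2), assume $x$ is regular. Then $Z_\mathfrak{g}(x) = \mathfrak{h}$, the centralizer $\mathfrak{t} := \mathfrak{k}_\theta \cap \mathfrak{h}$ is the Lie algebra of a maximal compact torus $T \subset K_\theta$, and $F$ is left $T$-invariant because $T$ fixes $x$ under the adjoint action; so $F$ descends to $T \backslash K_\theta$. At a critical point with $y'$ in the \emph{open} Weyl chamber of $x$, all the inequalities $\alpha(x)\alpha(y') > 0$ become strict, so the Hessian above is strictly negative on every direction $Z_\alpha$ transverse to $\mathfrak{t}$. This makes the descended $F$ a Morse function. For uniqueness, the classical Cartan/$KAK$ decomposition says that each $K_\theta$-orbit in $\mathfrak{p}_\theta$ meets each open Weyl chamber of $\mathfrak{a}$ in exactly one $T$-coset, so there is a single critical $T$-coset at which $y'$ lies in the open Weyl chamber of $x$; by (1) this is the unique local maximum.

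The main obstacle is the root-space Hessian identity displayed above. The algebra $\mathfrak{g}$ is being viewed as a real Lie algebra, so each complex root space contributes two real dimensions, and $\mathfrak{k}_\theta \cap (\mathfrak{g}_\alpha \oplus \mathfrak{g}_{-\alpha})$ is a specific real $2$-plane determined by the Cartan involution adapted to the apartment containing $x$ and $y'$. Pinning down the positive constants $c_\alpha$ and confirming the sign requires a careful bookkeeping using $\theta(\mathfrak{g}_\alpha) = \mathfrak{g}_{-\alpha}$, but once this identity is in place, both the characterization of local maxima and the non-degeneracy underlying the Morse property follow by routine linear algebra.
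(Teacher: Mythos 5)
The paper itself gives no proof of this proposition --- it simply cites Riestenberg --- so I am judging your argument on its own terms. Your treatment of the first bullet is correct and is the standard argument: the first variation $-\kappa(Z,[x,y'])$ together with negative-definiteness of $\kappa$ on $\mathfrak{k}_\theta$ shows that $k$ is critical iff $[x,Ad(k)y]=0$, the commuting pair then sits in a common maximal abelian $\mathfrak{a}\subset\mathfrak{p}_\theta$, and the root-space Hessian identity (using $Z_{-\alpha}=\theta Z_\alpha$ and $\kappa(\theta Z_\alpha,Z_\alpha)=-\norm{Z_\alpha}^2$ to get the positive constants $c_\alpha$) forces $\alpha(x)\alpha(y')\geq 0$ for every root, which is exactly the common-closed-chamber condition. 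The only step you elide is the purely combinatorial fact that $\alpha(x)\alpha(y')\geq 0$ for all $\alpha$ lets one choose a single positive system making both vectors dominant; that is routine.

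The gap is in the second bullet. As literally defined on $K_\theta$, the function $F$ is invariant under left translation by $Z_{K_\theta}(x)$ \emph{and} under right translation by $Z_{K_\theta}(y)$, so it is never Morse; descending to $T\backslash K_\theta$ removes only the left invariance. Your own Hessian formula exhibits the problem: the quadratic form $-\sum_\alpha \alpha(x)\alpha(y')c_\alpha\norm{Z_\alpha}^2$ is degenerate in every direction $Z_\alpha$ with $\alpha(y')=0$, and such directions lie outside $\mathfrak{t}$ whenever $y$ is singular --- but regularity is assumed only for $x$. So your nondegeneracy argument (``$y'$ in the open Weyl chamber of $x$'') silently requires $y$ regular as well; for singular $y$ one must either pass to the orbit $Ad(K_\theta)y\cong K_\theta/Z_{K_\theta}(y)$, on which the height function $v\mapsto\kappa(x,v)$ is the classical Bott--Morse function, or weaken ``Morse'' to Morse--Bott. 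Relatedly, your uniqueness step misquotes the Cartan decomposition: the orbit meets the \emph{closed} dominant chamber in exactly one point, and the corresponding critical locus is a right coset of $Z_{K_\theta}(y)$, not a $T$-coset. The uniqueness of the local maximum --- which is the only part of the proposition the paper actually uses, to compare angles of Cartan projections --- does survive: by the first bullet any local maximum has $Ad(k)y$ in the closed chamber of the regular vector $x$ (whose unique maximal abelian is $\mathfrak{a}=Z_{\mathfrak{p}_\theta}(x)$), and that chamber meets the orbit once. But the Morse claim as you have written it does not go through without the additional hypothesis on $y$.
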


The reader can find a proof in \cite[Proposition 3.5]{riestenberg2024quantified}.
Since $1 \otimes h$ is regular, Proposition \ref{prop:cartan} tells us that the angle between the Cartan projections $\mu (\omega^{\vec{\alpha}_1} (v_\phi))$ and $\mu (\omega (v_\phi))$ is bounded above by the angle between $\omega^{\vec{\alpha}_1} (v_\phi)$ and $\omega (v_\phi)$.
If the last angle is sufficiently small, then the Cartan projection $\mu (\omega (v_\phi))$ will not reach the boundary of the Weyl chamber.

\begin{lemma}
\label{lemma:angles}
Taking coordinates $z$ in $X$, and let $v_\phi = \cos \phi \partial_x + \sin \phi \partial_y$ as before, then: 
$$\cos \angle (\omega (v_\phi), \omega^{\alpha_1} (v_\phi)) = \frac{\left|1+ \frac{e^{-2i\phi} \alpha_1}{2H}\right|}{\sqrt{\left|1+ \frac{e^{-2i\phi} \alpha_1}{2H}\right|^2 + \frac{1}{4}\sum_{i=2}^l \norm{\alpha_i}^2}}.$$
\begin{proof}
Recall that the terms in the decomposition $\omega = \omega^{\alpha_1} + \sum_{i=2}^l (H *\alpha_i) \otimes \theta(e_{m_i}) - (\tau \alpha_i) \otimes e_{m_i}$ are pairwise orthogonal. 
Therefore:
$$\cos \angle (\omega (v_\phi), \omega^{\alpha_1} (v_\phi)) = \frac{\langle \omega (v_\phi), \omega^{\alpha_1} (v_\phi)\rangle}{\norm{\omega(v_\phi)}\norm{\omega^{\alpha_1}(v_\phi)}} = \frac{\norm{\omega^{\alpha_1} (v_\phi)}}{\norm{\omega (v_\phi)}}.$$
We know from Lemma \ref{lemma:cartanofomega} that $\norm{\omega^{\alpha_1}(v_\phi)}= \sqrt{2H \kappa (e,f)} \left|1+\frac{e^{-2i\phi}\alpha_1}{2H}\right|.$
On the other hand, we have:
$$\langle (H  *\alpha_i) \otimes \theta(e_{m_i}) - (\tau \alpha_i) \otimes e_{m_i},(H *\alpha_i) \otimes \theta(e_{m_i}) - (\tau \alpha_i) \otimes e_{m_i}\rangle = 2 H\norm{\alpha_i}^2 \kappa (e,f) dz \overline{dz},$$
for $\norm{\alpha_i}^2 = H^i (\alpha_i,\alpha_i)$.
This shows that:
$$\langle \omega (v_\phi),\omega(v_\phi) \rangle = 2H \kappa (e,f) \left( \left|1 + \frac{e^{-2i\phi}\alpha_1}{2H}\right|^2 + \frac{1}{4} \sum_{i=2}^l \norm{\alpha_i}^2 \right).$$
Which finishes with the proof.
\end{proof}
\end{lemma}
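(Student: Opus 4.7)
The plan is to exploit the orthogonal decomposition of $\omega$ displayed in equation \eqref{eq:decompositionomega}, together with the grading orthogonality of the Killing form recorded in Remark \ref{rmk:orthogonality}. First I would argue that every summand in the decomposition $\omega = \omega^{\alpha_1} + \sum_{i=2}^l \bigl((H*\alpha_i)\otimes\theta(e_{m_i}) - (\tau\alpha_i)\otimes e_{m_i}\bigr)$ lives in the weight $\pm m_i$ part of $P_G[\mathfrak{g}]$: the piece $\omega^{\alpha_1}$ uses only $e\in\mathfrak{g}_1$, $f\in\mathfrak{g}_{-1}$, while the $i$-th summand uses only $e_{m_i}\in\mathfrak{g}_{m_i}$ and $\theta(e_{m_i})\in\mathfrak{g}_{-m_i}$. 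Since $\mathfrak{g}$ is simple the exponent $1$ appears uniquely, so $m_i\neq 1$ for $i\geq 2$ and the exponents for $i=2,\dots,l$ are distinct. By Remark \ref{rmk:orthogonality}, $\kappa(\mathfrak{g}_p,\mathfrak{g}_q)=0$ unless $p=-q$, which forces each pair of distinct summands in the decomposition to be orthogonal.

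Once orthogonality is established, the angle computation collapses to a norm ratio: since $\omega-\omega^{\alpha_1}\perp\omega^{\alpha_1}$, one immediately gets $\langle \omega(v_\phi),\omega^{\alpha_1}(v_\phi)\rangle = \norm{\omega^{\alpha_1}(v_\phi)}^2$, and hence
\[
\cos\angle(\omega(v_\phi),\omega^{\alpha_1}(v_\phi)) \;=\; \frac{\norm{\omega^{\alpha_1}(v_\phi)}}{\norm{\omega(v_\phi)}}.
\]
The numerator is already known from the proof of Lemma \ref{lemma:cartanofomega}: the Cartan projection preserves norms and $\norm{1\otimes h}^2=\kappa(h,h)=\kappa(e,f)$, so $\norm{\omega^{\alpha_1}(v_\phi)}^2 = 2H\kappa(e,f)\bigl|1+\tfrac{e^{-2i\phi}\alpha_1}{2H}\bigr|^2$.

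For the denominator I would use Pythagoras together with the self-pairing of each $i$-th summand. Expanding $\norm{(H*\alpha_i)\otimes\theta(e_{m_i}) - (\tau\alpha_i)\otimes e_{m_i}}^2$, the pure squares vanish (both $e_{m_i}\otimes e_{m_i}$ and $\theta(e_{m_i})\otimes\theta(e_{m_i})$ die by the weight rule), while the cross terms survive and get multiplied by $\kappa(e_{m_i},\theta(e_{m_i}))=-\kappa(e,f)$—exactly where the normalization condition of Definition \ref{defi:normalizedparam} enters, ensuring the same constant $\kappa(e,f)$ for every $i$. The scalar pairing of $H*\alpha_i$ against $\tau\alpha_i$ contributes $H\norm{\alpha_i}^2 dz\overline{dz}$ (using the hyperbolic metric conventions from Section \ref{sec:notation}), giving $\norm{(H*\alpha_i)\otimes\theta(e_{m_i}) - (\tau\alpha_i)\otimes e_{m_i}}^2 = 2H\norm{\alpha_i}^2\kappa(e,f) dz\overline{dz}$. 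Summing and evaluating on $v_\phi$ yields
\[
\norm{\omega(v_\phi)}^2 = 2H\kappa(e,f)\Bigl(\bigl|1+\tfrac{e^{-2i\phi}\alpha_1}{2H}\bigr|^2 + \tfrac{1}{4}\sum_{i=2}^l \norm{\alpha_i}^2\Bigr),
\]
and the ratio gives the stated formula.

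The only delicate point is keeping the bookkeeping straight between the $(1,0)$ and $(0,1)$ parts of each summand and the evaluation on a real tangent vector $v_\phi$; the real inner product on $\mathfrak{p}$ mixes these appropriately, and I would carry out that evaluation explicitly once for a single generic $i$, since all $i\geq 2$ behave identically thanks to the uniform normalization $-\kappa(e_{m_i},\theta(e_{m_i}))=\kappa(e,f)$. This normalization is really the only thing that makes the formula depend on the $\norm{\alpha_i}$ alone (rather than differential-independent constants), so missing it would be the main way to get the answer wrong.
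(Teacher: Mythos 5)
Your proof is correct and follows essentially the same route as the paper: pairwise orthogonality of the summands in the decomposition of $\omega$ reduces the cosine to the ratio $\norm{\omega^{\alpha_1}(v_\phi)}/\norm{\omega(v_\phi)}$, the numerator is taken from Lemma \ref{lemma:cartanofomega}, and the denominator is a Pythagorean sum of the self-pairings $2H\norm{\alpha_i}^2\kappa(e,f)$. One caveat: your assertion that the exponents $m_2,\dots,m_l$ are distinct is false in general (type $D_{2k}$ has a repeated exponent), so for $i\neq j$ with $m_i=m_j$ the grading rule of Remark \ref{rmk:orthogonality} alone does not kill the cross term involving $\kappa(e_{m_i},\theta e_{m_j})$; one additionally needs the highest-weight vectors within an isotypic component to be chosen orthogonal for the form $-\kappa(\cdot,\theta\,\cdot)$, a point the paper's own proof also passes over silently.
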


Notice that given $c >0$, the function $x \to \frac{x}{\sqrt{x^2 + c}}$ is monotonic in $x$.
This fact together with Propostion \ref{prop:cartan} imply:

\begin{coro}
In the same setting as in the previous lemma, we get that:
$$\cos \angle (\mu (\omega (v_\phi)), \mu (\omega^{\alpha_1} (v_\phi))) \geq \frac{(1-\frac{\norm{\alpha_1}}{2})}{\sqrt{(1-\frac{\norm{\alpha_1}}{2})^2 + \frac{1}{4} \sum_{i=2}^l \norm{\alpha_i}^2}}.$$
\end{coro}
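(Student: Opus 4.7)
The plan is to combine the explicit formula from Lemma \ref{lemma:angles} with the elementary monotonicity remark appearing immediately before the corollary, mediated by Proposition \ref{prop:cartan} to pass between angles in $\mathfrak{p}$ and angles between Cartan projections.

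The first ingredient I would record, as a general fact, is that the Cartan projection is angle-nonincreasing: for any two nonzero vectors $x, y$ in a fiber of $P_K [\mathfrak{p}]$, one has
$$\cos \angle(x, y) \leq \cos \angle(\mu(x), \mu(y)).$$
Writing $x = Ad(k_1)\mu(x)$ and $y = Ad(k_2)\mu(y)$ with $k_i$ in the stabilizer $K_\theta$ and using the invariance of $\kappa$ under $Ad(k_1)$, the inequality reduces to showing that $k \mapsto \kappa(\mu(x), Ad(k) \mu(y))$ is maximized at $k = e$. This is exactly the conclusion of the first item of Proposition \ref{prop:cartan}, since $\mu(x)$ and $\mu(y)$ already lie together in a common closed Weyl chamber.

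Second, I would apply this inequality with $x = \omega^{\alpha_1}(v_\phi)$ and $y = \omega(v_\phi)$. By Lemma \ref{lemma:cartanofomega}, $\mu(\omega^{\alpha_1}(v_\phi))$ is a positive scalar multiple of $1 \otimes h$, and since $h$ is principal it is regular; thus Proposition \ref{prop:cartan} genuinely applies, yielding
$$\cos \angle\bigl(\mu(\omega(v_\phi)), \mu(\omega^{\alpha_1}(v_\phi))\bigr) \geq \cos \angle\bigl(\omega(v_\phi), \omega^{\alpha_1}(v_\phi)\bigr).$$

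Finally, I would substitute the formula from Lemma \ref{lemma:angles}, apply the triangle-inequality bound $\left|1 + e^{-2i\phi}\alpha_1/(2H)\right| \geq 1 - \norm{\alpha_1}/2$, and invoke the monotonicity of $x \mapsto x/\sqrt{x^2 + c}$ for fixed $c > 0$ (observed in the text just above the corollary). Each of these three substitutions preserves the direction of the inequality, and chaining them produces exactly the stated lower bound. The main (and only slightly delicate) step is the first ingredient; the remaining manipulations are mechanical, and no additional Lie-theoretic input is needed.
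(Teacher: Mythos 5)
Your argument is correct and follows essentially the same route the paper sketches: Proposition \ref{prop:cartan} shows the Cartan projection does not decrease the cosine of the angle, and then Lemma \ref{lemma:angles}, the reverse triangle inequality $\left|1 + e^{-2i\phi}\alpha_1/(2H)\right| \geq 1 - \norm{\alpha_1}/2$, and the monotonicity of $x \mapsto x/\sqrt{x^2+c}$ give the stated bound. The one imprecision is that the maximization of $k \mapsto \kappa(\mu(x), Ad(k)\mu(y))$ at $k=e$ is not ``exactly'' the first item of Proposition \ref{prop:cartan} (which only goes from local maximum to common chamber); you also need the second item --- regularity of $\mu(x)$, which here holds because $\mu(\omega^{\alpha_1}(v_\phi))$ is a positive multiple of the regular element $1\otimes h$, makes $F$ Morse with a unique local maximum --- together with uniqueness of the Weyl-chamber representative, to conclude that the common-chamber configuration realizes the global maximum; since you verify regularity in your second paragraph, this is only a matter of attribution, not a gap.
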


This completes the proof of Theorem \ref{thm:regularity}:

\begin{proof}[Proof of Theorem \ref{thm:regularity}]
Let $\phi_{min}$ be the minimal angle between $h$ and a wall of the Weyl chamber.
Notice that as long as $\cos \angle (\mu (\omega (v_\phi)), \mu (\omega^{\alpha_1} (v_\phi))) \geq \cos \phi_{min}$, then the EP-surface $\Ep$ will be regular.
The previous corollary shows that this inequality is satisfied when:
$$\frac{(1-\frac{\norm{\alpha_1}}{2})}{\sqrt{(1-\frac{\norm{\alpha_1}}{2})^2 + \frac{1}{4} \sum_{i=2}^l \norm{\alpha_i}^2}} \geq \cos (\phi_{min}).$$
\end{proof}

\section{A quantifiable Anosov neighborhood}
\label{sec:criterion}

The goal of this section is to provide an explicit neighborhood for the zero differential such that the associated opers are $\Delta-$Anosov (see Theorem \ref{thm:megatheorem}).
The extra ingredient we will need is Davalo's work on nearly geodesic immersions \cite{davalo2025nearly}, which we review below.
This will reduce the criterion for Anosovness to having a good control over the second fundamental form of the EP-surface.

As in previous sections, $\mathfrak{g}$ is a simple Lie algebra, and we identify $\op_X (G)$ with $\bigoplus_{i=1}^l H^0 (X,K^{m_i+1})$ via a normalized parametrization.

\subsection{Nearly geodesic immersions}

In this section, we briefly recall the previous work of Davalo \cite{davalo2025nearly}, which will be used to verify the Anosov condition for the holonomy of an oper. 
We will restrict the discussion to our setting of $G$, a simple complex Lie group of adjoint type.

We will use the following notations: given $\xi \in \partial \mathbb{X}$, $b_\xi (\cdot,x_0): \mathbb{X} \rightarrow \mathbb{R}$ denotes the Busemann function at $\xi \in \partial \mathbb{X}$ based at $x_0 \in \mathbb{X}$. 
Given $\tau \subset \partial \mathbb{X}$ a stratum of the visual boundary defined as the $G-$orbit of the direction at infinity of a $v \in T \mathbb{X}$, we define the opposite stratum $\iota \tau$ as the $G-$orbit of the point at infinity defined by $-v \in T \mathbb{X}$ (i.e., the set antipodal to $\tau$).

As mentioned in the introduction, immersions into $\mathbb{H}^3$ with principal curvatures in between $(-1,1)$ are well behaved from a large-scale geometry viewpoint (see \cite[Section 3]{epstein2024envelopes}, \cite[Section 5]{bridgeman2024epstein}).
Motivated by trying to generalize this picture for higher rank, Davalo proposed the following definition:

\begin{defi}
 Given $\tau \subset \partial \mathbb{X}$ a stratum of the visual boundary, an immersion $u : M \rightarrow \mathbb{X}$ is called $\tau\text{-}$nearly geodesic if whenever the differential $d(b_\xi (u(\cdot),x_0))(v)=0$ for some $v \in TM \setminus \{0\}$ and $\xi \in \tau \cup \iota \tau$, then the Hessian $\Hess(b_\xi (u(\cdot),x_0))$ is positive definite. The Hessian is taken with respect to the induced metric $u^* g_\mathbb{X}$.
\end{defi}

If we refer back to Figure \ref{fig:epstein}, this condition means that the surface will stay outside the horosphere tangent to it.

Complete $\tau\text{-}$nearly geodesic immersions $u$ satisfy many desirable properties; for instance, they are always quasi-isometrically embedded into $\mathbb{X}$.
They are also quasi-isometrically embedded in a stronger sense, which we will now describe.

Fix $\mathfrak{h} \subset \mathfrak{g}$ a Cartan subalgebra, and $W \subset \mathfrak{a}$ a Weyl chamber of the span of the coroots, and $\mu : T\mathbb{X} \rightarrow W$ the Cartan projection sending each vector $v \in \mathfrak{p}_\theta$ to the unique element of its $Ad(G)-$orbit in $W$.
Given $\theta, \theta' \in \mathbb{X}$ two Cartan involutions, we get $\theta' = \exp_{\theta} (v)$, for a unique $v \in \mathfrak{p}_\theta$.
We define the \textit{vectorial distance} between $\theta$ and $\theta'$ to be $\vec{d_\mathfrak{a}} (\theta,\theta') = \mu (v)$.

The Anosov condition is a strengthening of a group being quasi-isometrically embedded into $\mathbb{X}$.
We will follow the definitions given in \cite{kapovich2017anosov}, \cite{bochi2019anosov}:

\begin{defi}
Let $\Theta \subset \Delta$ be a subset of the set of simple roots, and $\Gamma$ a group equipped with a word metric $|\cdot |$. 
We will say that a representation $\rho : \Gamma \rightarrow G$ is $\Theta-$Anosov if given $\theta_0 \in \mathbb{X}$ there are $A,B > 0$ such that:
$$\alpha (\vec{d_\mathfrak{a}} (\theta_0,\rho(\gamma) \theta_0)) \geq A |\gamma| - B,\: \forall \alpha \in \Theta, \gamma \in \Gamma.$$ 
\end{defi}

We say that $\Theta \subset \Delta$ is a Weyl orbit of simple roots if it is the intersection of a Weyl orbit of a root with the set of simple roots.
Every Weyl orbit of roots $W \alpha$ has the property that there is a unique $\beta \in W\alpha$ such that its coroot is contained in our fixed Weyl chamber $W$.
We will denote by $h_\Theta$ this coroot, by $\norm{\Theta}$ the norm of any root in the orbit, and by $\tau_\Theta \subset \partial \mathbb{X}$, the stratum at infinity defined as the $G-$orbit of the direction defined by $h_\Theta$.

Using the notion of $\tau\text{-}$nearly geodesic immersions, Davalo was able to give a criterion for $\Theta-$Anosovness:

\begin{thm}[{\cite[Theorem 5.20]{davalo2025nearly}}]
\label{thm:davaloholonomy}
 Let $\Theta \subset \Delta$ be a Weyl orbit of simple roots. 
 Suppose that a representation $\rho : \pi_1 (M) \rightarrow G$ admits an equivariant, $\tau_\Theta-$nearly geodesic immersion $u$, then $\rho$ is $\Theta-$Anosov.
\end{thm}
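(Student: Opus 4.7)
The plan is to exploit the $\tau_\Theta$-nearly geodesic condition to obtain uniform quantitative control on pullbacks of Busemann functions along $u$, and then convert that control into the root-functional lower bound in the definition of $\Theta$-Anosov via the geometric relationship between Busemann functions at points of $\tau_\Theta$ and the Cartan projection.

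First I would fix $\xi \in \tau_\Theta \cup \iota \tau_\Theta$ and study $f_\xi := b_\xi(u(\cdot), x_0) : \tilde{M} \to \RR$. Since $\mathbb{X}$ is nonpositively curved, $b_\xi$ is convex on $\mathbb{X}$; consequently $f_\xi$ is convex along any geodesic of the induced metric $u^* g_\mathbb{X}$. The nearly geodesic hypothesis says that whenever $df_\xi(v) = 0$ then $\Hess(f_\xi)$ is positive definite, so every critical point of $f_\xi$ is a strict local minimum; combined with convexity, this forces $f_\xi$ to have at most one critical point (a unique global minimum, if it exists) and to have compact sublevel sets.

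Second, I would upgrade strict convexity to a uniform linear growth estimate. The stratum $\tau_\Theta \cong G/P_\Theta$ is compact, and by equivariance of $u$ the family $\{f_\xi\}_{\xi \in \tau_\Theta}$ is permuted by $\rho(\pi_1(M))$ on the parameter. Using compactness of $M$ and of $\tau_\Theta$ to quantify the strict positivity of the Hessian uniformly, one extracts constants $A, B > 0$ depending only on $u$ such that
\[
f_\xi(z) - \inf f_\xi \;\geq\; A \cdot d_{u^* g_\mathbb{X}}(z, z_\xi) - B
\]
uniformly in $\xi$, where $z_\xi$ realizes the infimum. Standard comparison between $d_{u^* g_\mathbb{X}}$ and a word metric $|\cdot|$ on $\pi_1(M)$ then yields $f_\xi(\gamma z_0) \geq A'|\gamma| - B'$ for a fixed basepoint $z_0 \in \tilde{M}$ and every $\gamma \in \pi_1(M)$.

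The final step is the translation into an Anosov estimate. For $\theta_0 = u(z_0)$ and $\theta = \rho(\gamma) \theta_0 = u(\gamma z_0)$, every $\alpha \in \Theta$ can be realized by choosing $\xi_{\gamma,\alpha} \in \tau_\Theta$ so that the geodesic ray from $\theta_0$ to $\xi_{\gamma,\alpha}$ lies in the same Weyl chamber (based at $\theta_0$) as the geodesic segment from $\theta_0$ to $\theta$; the geometry of the Weyl chamber then gives an identity of the form
\[
-b_{\xi_{\gamma,\alpha}}(\theta, \theta_0) \;=\; \frac{\alpha(\vec{d_\mathfrak{a}}(\theta_0, \theta))}{\norm{h_\Theta}} + O(1),
\]
with the error uniform in $\gamma$. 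Combining this with the lower bound on $f_{\xi_{\gamma,\alpha}}(\gamma z_0)$ produces $\alpha(\vec{d_\mathfrak{a}}(\theta_0, \rho(\gamma)\theta_0)) \geq A''|\gamma| - B''$ for all $\alpha \in \Theta$, which is exactly the $\Theta$-Anosov bound. The principal obstacle in this outline is the last translation: choosing $\xi_{\gamma,\alpha}$ compatibly across $\gamma$ and bounding the error term independently of $\gamma$. This is where the fact that $\tau_\Theta$ is the $G$-orbit of the distinguished coroot $h_\Theta$, together with $\Theta$ being a single Weyl orbit of simple roots (so that all $\alpha \in \Theta$ are handled symmetrically), becomes essential.
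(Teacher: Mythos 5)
First, note that the paper does not prove this statement at all: it is imported verbatim from Davalo's work (\cite[Theorem 5.20]{davalo2025nearly}) and used as a black box, so there is no internal proof to compare against; I am therefore assessing your outline against what such a proof actually requires.

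Your outline has the right global shape (Busemann functions at points of $\tau_\Theta$, positivity of the Hessian, translation into the Cartan projection), but the first step contains a genuine error that the rest of the argument leans on. The claim that convexity of $b_\xi$ on $\mathbb{X}$ implies convexity of $f_\xi = b_\xi(u(\cdot),x_0)$ along geodesics of $u^*g_{\mathbb{X}}$ is false for a non--totally geodesic immersion: along a $u^*g_{\mathbb{X}}$-geodesic one has $\Hess(f_\xi)(v,v) = \Hess^{\mathbb{X}}(b_\xi)(du(v),du(v)) + \langle \nabla b_\xi, \II(v,v)\rangle$, and the second fundamental form term can be arbitrarily negative --- controlling exactly this term is the entire content of the nearly geodesic condition, which only gives positivity at points where $df_\xi$ degenerates. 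The standard repair (used both by Davalo and in Section 7 of this paper) is to show instead that $\exp(c\,f_\xi)$ is convex for a uniform $c>0$, obtained by combining the Hessian positivity where $df_\xi(v)=0$ with the term $c\,(df_\xi(v))^2$ elsewhere and a compactness argument over $M\times\tau_\Theta$; without this, "at most one critical point" and compactness of sublevel sets are unjustified.

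Two further problems sit in the quantitative steps. Your uniform estimate $f_\xi(z)-\inf f_\xi \geq A\,d(z,z_\xi)-B$ presupposes that every $f_\xi$, $\xi\in\tau_\Theta$, attains a minimum, which fails for $\xi$ outside the domain of discontinuity; and even where it holds, convexity of $\exp(c f_\xi)$ does not by itself yield linear growth (a bounded convex positive function is possible), so the uniform linear rate --- which is the real content of Davalo's theorem, obtained there via a compactness/limiting argument on pointed nearly geodesic immersions --- is asserted rather than derived. Finally, the last translation has the sign reversed: for $\xi_{\gamma,\alpha}\in\tau_\Theta$ aligned with the segment from $\theta_0$ to $\theta=u(\gamma z_0)$ one has $\alpha(\vec{d_\mathfrak{a}}(\theta_0,\theta)) \approx -\,\mathrm{const}\cdot b_{\xi_{\gamma,\alpha}}(\theta,\theta_0)$, so a \emph{lower} bound on $f_{\xi_{\gamma,\alpha}}(\gamma z_0)$ gives an \emph{upper} bound on $\alpha(\vec{d_\mathfrak{a}})$; to obtain the Anosov inequality you need linear \emph{decay} of the Busemann function toward the aligned point of $\tau_\Theta$ (equivalently, linear growth toward the antipodal point of $\iota\tau_\Theta$), which is not what your step 2 produces.
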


Moreover, Davalo gave necessary conditions for a surface to be $\tau_\Theta-$nearly geodesic, which generalizes the principal curvature condition in $\mathbb{H}^3$:

\begin{thm}[{\cite[Theorem 5.23]{davalo2025nearly}}]
\label{thm:davalosufficient}
 Given a Weyl orbit of simple roots $\Theta \subset \Delta$ , define:
    $$c_\Theta = \min_{
        \substack{\beta \in \Sigma, \\ \beta (h_\Theta) \neq 0}
        } \frac{|\beta (h_\Theta)|}{2\norm{\Theta}},$$
 for $h_\Theta \in W$ the unique coroot of $\Theta$ in $W$, and $\alpha \in \Theta$.
 Suppose that an immersion $u : M \rightarrow \mathbb{X}$ satisfies:
    $$\norm{\II (v,v)} < c_\Theta |\alpha (\mu (du(v)))|^2,\: \forall v \in TM, \alpha \in \Theta,$$
 where $\II$ is the second fundamental form of $u$, then $u$ is a $\tau_\Theta$ nearly geodesic immersion.
\end{thm}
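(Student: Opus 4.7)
The plan is to combine the standard decomposition of the Hessian of a composition with a root-theoretic estimate of the Hessian of the Busemann function. For any smooth $f:\mathbb{X} \to \mathbb{R}$,
\[
\Hess(f \circ u)(v,v) = \Hess^{\mathbb{X}} f(du(v), du(v)) + \langle \nabla f, \II(v,v)\rangle.
\]
Setting $f = b_\xi(\cdot,x_0)$ with $\xi \in \tau_\Theta \cup \iota\tau_\Theta$, and using $\|\nabla b_\xi\| = 1$ together with Cauchy--Schwarz, positive-definiteness of $\Hess(b_\xi \circ u)$ along a direction $v$ with $db_\xi(du(v)) = 0$ will follow as soon as one proves $\Hess^{\mathbb{X}} b_\xi(du(v), du(v)) > \norm{\II(v,v)}$. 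So the argument reduces to a lower bound on the ambient Hessian that strictly dominates the hypothesized upper bound on $\norm{\II(v,v)}$.

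For the lower bound, fix $p = u(z)$ with Cartan involution $\theta_p$, and choose a Cartan $\mathfrak{a} \subset \mathfrak{p}_{\theta_p}$ and Weyl chamber $W \subset \mathfrak{a}$ with $v_\xi := -\nabla b_\xi(p)$ in its closure; since $\xi \in \tau_\Theta$, the chamber can be chosen so that $v_\xi$ is proportional to $h_\Theta$. Decomposing $T_p\mathbb{X} = \mathfrak{a} \oplus \bigoplus_\beta \mathfrak{p}_\beta$ into restricted root spaces of $\mathfrak{a}$, the classical Jacobi-field computation for Busemann functions on symmetric spaces of non-compact type yields
\[
\Hess^{\mathbb{X}} b_\xi(w,w) \;=\; \sum_{\beta:\; \beta(h_\Theta)\neq 0} \frac{|\beta(h_\Theta)|}{\|h_\Theta\|}\, \|w_\beta\|^2,
\]
whose kernel consists of the parallel directions to the flat through $\xi$. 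One must then translate this root-decomposed estimate into one involving the Cartan projection $\mu(du(v))$. The condition $\langle v_\xi, du(v)\rangle = 0$ forces $du(v)$ to lie in the horospherical hyperplane, and for any $\alpha \in \Theta$ the quantity $\alpha(\mu(du(v)))$ is detected by components of $du(v)$ in eigenspaces on which some root $\beta$ with $\beta(h_\Theta) \neq 0$ is supported. Using $K$-equivariance and the $1$-Lipschitz property of $\mu$, one extracts
\[
\Hess^{\mathbb{X}} b_\xi(du(v), du(v)) \;\geq\; 2 c_\Theta\, |\alpha(\mu(du(v)))|^2 \quad \text{for every } \alpha \in \Theta,
\]
where the worst-case ratio $|\beta(h_\Theta)|/(2\|\Theta\|)$ over roots $\beta$ with $\beta(h_\Theta)\neq 0$ produces the constant $c_\Theta$, and the factor $2$ accounts for the paired spaces $\mathfrak{p}_{\pm\beta}$.

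Combining the two estimates gives, for any nonzero $v$ satisfying $db_\xi(du(v)) = 0$,
\[
\Hess(b_\xi \circ u)(v,v) \;\geq\; 2 c_\Theta\, |\alpha(\mu(du(v)))|^2 - \norm{\II(v,v)} \;>\; c_\Theta\, |\alpha(\mu(du(v)))|^2 \;\geq\; 0,
\]
with strict inequality from the hypothesis, establishing the nearly geodesic property. The main obstacle in this outline is the conversion step between the two descriptions of the ambient Hessian: the root-decomposed expression is natural at $p$ after choosing a flat through $v_\xi$, while $\mu(du(v))$ requires an \emph{a priori} different $K$-rotation of $du(v)$ into $W$. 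Identifying the sharp constant $c_\Theta$ with the factor $2\|\Theta\|$ in the denominator is what pins down the precise statement, and is the place where a careful case analysis over Weyl orbits of simple roots is unavoidable.
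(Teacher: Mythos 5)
First, note that the paper does not prove this statement at all: it is quoted verbatim from Davalo's work (\cite[Theorem 5.23]{davalo2025nearly}) and used as a black box, so there is no internal proof to compare against. Your proposal is therefore judged on its own merits as a reconstruction of Davalo's argument. The skeleton is right: the identity $\Hess(b_\xi \circ u)(v,v) = \Hess^{\mathbb{X}}b_\xi(du(v),du(v)) + db_\xi(\II(v,v))$, the bound $|db_\xi(\II(v,v))| \leq \norm{\II(v,v)}$ from $\norm{\nabla b_\xi}=1$, and the eigenvalue formula $\Hess^{\mathbb{X}}b_\xi(w,w) = \sum_{\beta(h_\Theta)\neq 0}\tfrac{|\beta(h_\Theta)|}{\norm{h_\Theta}}\norm{w_\beta}^2$ for the Busemann Hessian are all correct and are surely the starting point of the actual proof.

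The gap is the step you yourself flag as the ``main obstacle,'' and it is not merely technical: the inequality $\Hess^{\mathbb{X}} b_\xi(du(v),du(v)) \geq 2c_\Theta\,|\alpha(\mu(du(v)))|^2$ \emph{for every} $\alpha\in\Theta$ is false. Take $w=du(v)$ lying in the maximal flat $\mathfrak{a}$ asymptotic to $\xi$ and orthogonal to $v_\xi \propto h_\Theta$; then the left-hand side vanishes identically, while $\mu(w)$ is just a Weyl-chamber representative of $w\in\mathfrak{a}$ and $\alpha(\mu(w))$ is nonzero for at least one $\alpha\in\Theta$ (in $A_2$ with $w$ conjugate to $\mathrm{diag}(1,1,-2)$ one gets $\alpha_1(\mu(w))=0$ but $\alpha_2(\mu(w))=3$). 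What saves the theorem is that the orthogonality $\langle w,h_\Theta\rangle=0$ forces $\beta(w)=0$ for the root $\beta$ with coroot $h_\Theta$, hence $\alpha(\mu(w))=0$ for \emph{some} $\alpha\in\Theta$, so the hypothesis $\norm{\II(v,v)}<c_\Theta|\alpha(\mu(du(v)))|^2$ is vacuously violated and such $v$ cannot occur. In other words, the lemma you need is existential --- for each $w\perp v_\xi$ there exists $\alpha\in\Theta$ with $\Hess^{\mathbb{X}}b_\xi(w,w)\geq 2c_\Theta|\alpha(\mu(w))|^2$ --- and proving it requires genuinely comparing the restricted-root decomposition of $w$ relative to the flat through $\xi$ with the Cartan projection $\mu(w)$, tracking how much of $\norm{w}^2$ must sit in root spaces with $\beta(h_\Theta)\neq 0$. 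That comparison, together with the normalization bookkeeping that produces $2\norm{\Theta}$ in the denominator (your ``factor of $2$ from $\mathfrak{p}_{\pm\beta}$'' is not right, since the standard decomposition of $\mathfrak{p}_{\theta_p}$ already pairs $\pm\beta$), is the actual content of Davalo's theorem and is missing here. There is also an unresolved mismatch between what you prove (positivity of the Hessian along directions $v$ in the kernel of $d(b_\xi\circ u)$) and the positive-definiteness on all of $T_zM$ demanded by the definition as recorded in this paper; the latter cannot follow from your estimates, since $\Hess^{\mathbb{X}}b_\xi$ vanishes in the direction $v_\xi$ itself.
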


The combination of the previous couple of results is the crucial ingredient in our Anosov criterion.

When $\mathfrak{g}$ is a simple Lie algebra, we can have at most two Weyl orbits of roots characterized by the set of roots of a given fixed length. 
If there are two, we will call one of them long or short depending on whether the norm $\norm{\alpha}$ for $\alpha \in \Theta$ is the largest/minimal possible in the root system, and denote them as $\Theta_{L}, \Theta_{S}$.
We check in Appendix \ref{appendix:root} that $c_{\Theta_{L}}$ is always smaller that $c_{\Theta_{S}}$.

\subsection{The second fundamental form}

As one can observe in Theorem \ref{thm:davalosufficient}, to check the Anosovness of the oper holonomy, it will be enough to control the second fundamental form of the EP-surface.

Recall that the second fundamental form $\II$ is a symmetric tensor with values in the orthogonal complement to $\omega (TX)\subset P_K [\mathfrak{p}]$ satisfying:

$$\nabla^{LC}_X \omega (Y) = \omega (\nabla^g_X Y) + \II (X,Y),\: \forall X,Y \in \mathcal{A}^0 (X,TX),$$

where $\nabla^g$ is the Levi-Civita connection of the first fundamental form $g = \Ep^* g_\mathbb{X}$.
Pick a local coordinate $z$ for the Riemann surface $X$, then using Equation (\ref{eq:decompositionomega}), $g$ takes the form:

\begin{equation}
\label{eq:metric}
g = 4 \kappa (e,f) \left( \alpha_1 dz^2 + 2H(z) \left( 1 + \sum_{i=1}^l \frac{\norm{\alpha_i}^2}{4}\right)dz \overline{dz} + \overline{\alpha_1 dz^2}\right).
\end{equation}

We abuse notation slightly: $\alpha_1$ denotes the coefficient of the quadratic differential $\alpha_1 = \alpha_1 (z)dz^2$, and $H = H(z)$ is the coefficient of the hyperbolic metric $H(z) dz \overline{dz}$.
The norms $\norm{\alpha_i}^2$ are taken with respect to the metric (i.e., $\norm{\alpha_i}^2 = \frac{\alpha_i (z) \bar{\alpha_i (z)}}{H^{m_i+1} (z)}$).

\begin{rmk}
When $\alpha_1 = 0$ and the conditions of Theorem \ref{thm:regularity} are verified, the surface $\Ep: \tilde{X} \rightarrow \mathbb{X}$ is a conformal immersion.
\end{rmk}

We work in the complexified tangent bundle $TX \otimes \mathbb{C}$, extending all tensors and connections $\mathbb{C}-$linearly.

Since $\nabla^{LC}$, and $\omega$ are explicit, one can straightforwardly compute the second fundamental form $\II$ for the EP-surface. 
However, this expression is complicated in general, so we have to work around it to access the norm of the second fundamental form.

\begin{lemma}
\label{lemma:secondform-general}
   Let $\vec{\alpha} \in \bigoplus_{i=1}^l H^0 (X,K^{m_i +1})$, and $z$ a local chart for $X$. Defining $v_\phi = \cos \phi \partial_x + \sin \phi \partial_y \in \mathcal{A}^0 (X, TX)$, then the second fundamental form of the EP-surface associated to the oper defined by $\vec{\alpha}$ satisfies the following bound:

   \begin{equation*}
   \resizebox{\displaywidth}{!}{$\norm{\II(v_\phi,v_\phi)}^2 \leq \frac{1}{4} \!\left( \!\left( R^{g}_{z\overline{zz}z} \!- \!4 H^2 \kappa (e,f) \!\left( 1 \!-\! \frac{1}{2} \sum_{i=1}^l m_i \norm{\alpha_i}^2 \right) \right)^{\frac{1}{2}} \!+ \!\left( R^{LC}_{z\overline{zz}z} \!- \!4 H^2 \kappa (e,f) \!\left( 1 \!- \!\frac{1}{2} \sum_{i=1}^l m_i \norm{\alpha_i}^2 \right) \right)^{\frac{1}{2}}  \right)^2$,}   
   \end{equation*}
   where $R_{ijkl} = \langle R (\partial_i,\partial_j) \partial_k, \partial_l\rangle$ for $i,j,k,l \in \{z,\overline{z}\}$.
\begin{proof}
   The first part of the estimate is just a series of algebraic manipulations.
   Notice that $v_\phi = \frac{1}{2} (e^{-i\phi}\partial_z + e^{i\phi} \partial_{\overline{z}})$, therefore $\II(v_\phi,v_\phi) = \frac{1}{4} (e^{-2i\phi} \II_{zz} + 2 \II_{z\overline{z}} + e^{2i\phi} \II_{\overline{zz}})$, where $\II_{ij} = \II (\partial_i,\partial_j)$ for $i,j \in \{z,\overline{z}\}$. Since $\nabla^{LC} \Theta = 0$, we get that $\II_{zz} = - \Theta \II_{\overline{zz}}$, and $\II_{z\overline{z}} = - \Theta \II_{z\overline{z}}$.

   The Killing form $\kappa$ verifies that $\kappa (\theta \cdot, \theta \cdot) = \overline{\kappa (\cdot,\cdot)}$.
   This fact and the previous paragraph imply that:

   \begin{align*}
   \norm{\II (v_\phi,v_\phi)}^2 \!&= \!\frac{1}{16} \!\left( 2 Re (e^{-4i\phi} \!\langle \II_{zz}, \II_{zz}\rangle) \!+ \!2 \langle \II_{zz},\II_{\overline{zz}} \rangle \!+ \!4 \langle \II_{z\overline{z}},\II_{z\overline{z}}\rangle \!+ \!8 Re (e^{-2i\phi} \langle \II_{zz}, \II_{z\overline{z}}\rangle) \right)\\
   &\leq \frac{1}{4} \left( \langle \II_{zz},\II_{\overline{zz}}\rangle + 2 Re (e^{-2i\phi} \langle \II_{zz},\II_{z\overline{z}} \rangle) + \langle \II_{z\overline{z}} , \II_{z\overline{z}} \rangle \right),
   \end{align*}

   where we used that $Re (\kappa (x,x)) \leq -\kappa (x,\theta x)$ for any $x \in \mathfrak{g}$ applied to $\II_{zz}$.
   Applying Cauchy-Schwarz inequality with $\langle \II_{zz},\II_{z\overline{z}} \rangle$, we get that the norm of this term is upper bounded by $\langle \II_{zz},\II_{\overline{zz}}\rangle^{\frac{1}{2}} \langle \II_{z\overline{z}},\II_{z\overline{z}} \rangle^{\frac{1}{2}}$, therefore:

   $$\norm{\II (v_\phi,v_\phi)}^2 \leq \frac{1}{4} (\langle \II_{zz},\II_{\overline{zz}}\rangle^{\frac{1}{2}} + \langle \II_{z\overline{z}},\II_{z\overline{z}} \rangle^{\frac{1}{2}})^2.$$

   The term $\II_{z\overline{z}}$ appearing on the right-hand side has a simple expression.
   Holomorphicity of $\alpha_i$ implies that:

   \begin{align*}
   \nabla_{\partial_{\bar{z}}}^{LC} &\omega (\partial_z) = -2 \left( \nabla_{\partial_{\bar{z}}} +ad \left( \frac{1}{2}  \sum_{i=1}^l H (\cdot,\partial_z) *\alpha_i \otimes \theta e_{m_i}\right) \right) \left( \partial_z \otimes f + \frac{1}{2} \sum_{i=1}^l \alpha_i (\partial_z,\cdot) \otimes e_{m_i} \right) \\
   &= -\frac{1}{2} \sum_{i,j=1}^l H(z) *\alpha_i \alpha_j \otimes [\theta (e_{m_i}), e_{m_j}].
   \end{align*}

   Since $\kappa (e_{m_i}, [e_{m_k},\theta e_{m_j}]) = 0$ for every $i,j,k =1,\ldots,l$, we get that $\II_{z\bar{z}} = \nabla_{\partial_z} \omega (\partial_{\bar{z}})$.
   Thus, one gets $\II_{z\overline{z}} = \frac{1}{2} [\omega (\partial_{\bar{z}}),\omega (\partial_z)] + [H(\cdot,\partial_z)\otimes e,\omega (\partial_z)]$, where $[H(\cdot,\partial_z)\otimes e,\omega (\partial_z)] = -2 H (z) \otimes h$.
   Using the equation $R^{LC} (\partial_z,\partial_{\bar{z}}) = - \frac{1}{4} ad ([\omega (\partial_z),\omega (\partial_{\overline{z}})])$ and doing standard manipulations, we get:

   \begin{equation}
   \label{eq:II_zbarz}
   \langle \II_{z\overline{z}},\II_{z\overline{z}} \rangle = R^{LC}_{z \overline{zz}z}-4H^2 \kappa (e,f) \left( 1 - \frac{1}{2} \sum_{i=1}^l m_i \norm{\alpha_i}^2 \right).
   \end{equation}

   We can avoid an explicit computation of $\II_{zz}$ by using the Gauss equation (see \cite[Vol. 2, Proposition 2.1]{Kobayashi1963Foundations}):
   
   \begin{equation}
   \label{eq:gauss}
   R^{LC}_{z\overline{z}z\overline{z}} = R^g_{z\overline{z}z\overline{z}}
   + \langle \II_{zz},\II_{\overline{zz}} \rangle - \langle \II_{z\overline{z}}, \II_{z\overline{z}}\rangle.   
   \end{equation}
   
   Plugging the expression for $\langle \II_{z\overline{z}},\II_{z\overline{z}} \rangle$ into the Gauss equation, and substituting in our original inequality, we get our estimate.
\end{proof}
\end{lemma}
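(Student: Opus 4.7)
The plan is to work in complexified coordinates, where the defining tensors $\omega$ and $\nabla^{LC}$ have the most economical expressions. First I would write $v_\phi = \frac{1}{2}(e^{-i\phi}\partial_z + e^{i\phi}\partial_{\bar z})$ and expand $\II(v_\phi,v_\phi)$ bilinearly, producing a phase-dependent combination of the three tensors $\II_{zz}$, $\II_{z\bar z}$, $\II_{\bar z\bar z}$. Since $\nabla^{LC}$ preserves $\Theta$ and $\omega$ is valued in the $(-1)$-eigenbundle $P_K[\mathfrak{p}]$, one gets the compatibility relations $\II_{\bar z \bar z} = -\Theta \II_{zz}$ and $\Theta \II_{z\bar z} = -\II_{z\bar z}$. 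These let me express all Killing-form pairings in terms of the two genuinely nonnegative real quantities $\langle \II_{zz},\II_{\bar z\bar z}\rangle$ and $\langle \II_{z\bar z},\II_{z\bar z}\rangle$, plus a residual phase-sensitive cross term.

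Next, to get rid of the $\phi$-dependence I would use two standard manoeuvres. The identity $\operatorname{Re}\kappa(x,x) \leq -\kappa(x,\theta x) = \|x\|^2$ for $x$ in a complex semisimple Lie algebra absorbs the $\operatorname{Re}(e^{-4i\phi}\kappa(\II_{zz},\II_{zz}))$ term into $\langle \II_{zz},\II_{\bar z\bar z}\rangle$, and a Cauchy--Schwarz applied to the remaining cross term $\operatorname{Re}(e^{-2i\phi}\langle\II_{zz},\II_{z\bar z}\rangle)$ collapses the expression into the desired $\tfrac{1}{4}(\sqrt{A}+\sqrt{B})^2$ shape, where $A = \langle\II_{zz},\II_{\bar z\bar z}\rangle$ and $B = \langle\II_{z\bar z},\II_{z\bar z}\rangle$.

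Now I need explicit expressions for $A$ and $B$. For $B$, I would compute $\II_{z\bar z}$ directly: holomorphicity of the $\alpha_i$ ensures that $\nabla^{LC}_{\partial_{\bar z}}\omega(\partial_z)$ kills its $(1,0)$-pieces, leaving only commutator-type terms. Combined with the observation that $\kappa(e_{m_i},[e_{m_k},\theta e_{m_j}])=0$ for all $i,j,k$, this shows $\II_{z\bar z}$ agrees with $\tfrac{1}{2}[\omega(\partial_{\bar z}),\omega(\partial_z)]$ up to an explicit term coming from $H\otimes e$. Then the Maurer--Cartan curvature formula $R^{LC} = -\tfrac{1}{8}\operatorname{ad}([\omega,\omega])$ lets me identify $\langle\II_{z\bar z},\II_{z\bar z}\rangle$ with $R^{LC}_{z\bar z\bar z z}$ minus the correction term $4H^2\kappa(e,f)(1-\tfrac12\sum m_i\|\alpha_i\|^2)$. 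For $A$, rather than computing $\II_{zz}$ directly, I would invoke the Gauss equation $R^{LC}_{z\bar z z\bar z} = R^g_{z\bar z z\bar z} + \langle\II_{zz},\II_{\bar z\bar z}\rangle - \langle\II_{z\bar z},\II_{z\bar z}\rangle$, which gives $A$ in terms of the intrinsic curvature $R^g$ and the already-computed $B$. Plugging both into the bound from the previous paragraph finishes the proof.

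The main obstacle I expect is the first step: one has to apply the Cauchy--Schwarz / Re-Killing inequality loosely enough to eliminate all dependence on $\phi$ while still producing a sharp enough bound that the right-hand side depends only on $R^g$, $R^{LC}$, and the norms $\|\alpha_i\|$. The temptation is to bound the cross term $\langle \II_{zz},\II_{z\bar z}\rangle$ more crudely, which would give an uglier, non-quadratic expression; using the additive-square-root form $(\sqrt{A}+\sqrt{B})^2$ is exactly what is needed to interact cleanly with the Gauss equation in the final substitution.
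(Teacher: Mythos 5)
Your proposal is correct and follows essentially the same route as the paper's proof: the same complexification of $v_\phi$, the same use of $\nabla^{LC}\Theta=0$ and the inequality $\operatorname{Re}\kappa(x,x)\leq-\kappa(x,\theta x)$ plus Cauchy--Schwarz to reach the $\tfrac14(\sqrt{A}+\sqrt{B})^2$ form, the same direct computation of $\II_{z\bar z}$ via holomorphicity and the Maurer--Cartan curvature identity, and the same appeal to the Gauss equation to avoid computing $\II_{zz}$. No gaps.
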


One can observe that Lemma \ref{lemma:simplifications} below implies that this inequality is never sharp.
We set up the computation to avoid using the explicit expression for $\II_{zz}$.
However, in specific instances of opers, we can produce better upper bounds for $\norm{\II (v_\phi,v_\phi)}$.

Equations (\ref{eq:levicivita}) and (\ref{eq:decompositionomega}) imply that:

\begin{align*}
\nabla_{\partial_z}^{LC} &\omega (\partial_z) = -2 \left( \nabla_{\partial_z} + ad \left( \frac{1}{2}\sum_{i=1}^l \alpha_i (\partial_z,\cdot) \otimes e_{m_i}\right) \right) \left( \partial_z \otimes f + \frac{1}{2} \sum_{i=1}^l \alpha_i (\partial_z,\cdot) \otimes e_{m_i} \right)\\
&= -2 \left( \nabla_{\partial_z}^{-1} \partial_z \otimes f + \frac{1}{2} \sum_{i=1}^l \nabla^{m_i}_{\partial_z} (\alpha_i (\partial_z,\cdot)) \otimes e_{m_i} + \frac{1}{2} \sum_{i=1}^l \alpha_i (\partial_z,\partial_z,\cdot) \otimes [e_{m_i},f]\right).
\end{align*}

To compute $\II_{zz}$, we must subtract the Levi-Civita connection of the metric $g = \Ep^* g_{\mathbb{X}}$ that can be computed using Lemma \ref{lemma:koszul} found in the appendix.
Straightforward computations show:

\begin{lemma}
\label{lemma:simplifications}
With the same setup as before, we get that:
\begin{itemize}   
\item The term $\langle \II_{zz},\II_{zz}\rangle$ is zero iff $\alpha_1 = 0$.
\item The term $\langle \II_{zz},\II_{z \overline{z}} \rangle$ is equal to:
$$ \langle \nabla_{\partial_z} \omega (\partial_z) , \nabla_{\partial_z} \omega (\partial_{\bar{z}}) \rangle = \frac{1}{2} \sum_{i,j,k} H \alpha_i * \alpha_j \alpha_k (\partial_z,\partial_z) \kappa ([e_{m_i},f],[\theta e_{m_j},e_{m_k}]),$$
where we sum indices satisfying $m_i -1 = m_j- m_k$.
\end{itemize}   
\end{lemma}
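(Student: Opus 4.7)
The plan is to substitute the explicit formulas for $\II_{zz}$ and $\II_{z\bar z}$ derived in the computations preceding the lemma, then reduce the Killing-form pairings via the grading of $\mathfrak{g}$ and certain vanishing identities. Three Lie-theoretic tools are used throughout: the abelianness of $V_e = \ker(\mathrm{ad}(e))$ (as the centralizer of a regular nilpotent in a simple $\mathfrak{g}$, by Kostant), so $[e_{m_i}, e_{m_j}] = 0$; the identity $\kappa(e_{m_i}, [e_{m_k}, \theta e_{m_j}]) = 0$ invoked in the proof of Lemma \ref{lemma:secondform-general}; and the grading-orthogonality of the Killing form (Remark \ref{rmk:orthogonality}), which enforces strict weight-matching on the surviving pairings.

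For Part 1, write $\II_{zz} = \nabla^{LC}_{\partial_z}\omega(\partial_z) - \omega(\nabla^g_{\partial_z}\partial_z)$, expand $\nabla^{LC}_{\partial_z}\omega(\partial_z)$ via the formula stated just above the lemma, and compute $\omega(\nabla^g_{\partial_z}\partial_z)$ by applying the Koszul formula (Lemma \ref{lemma:koszul} in the appendix) to the pullback metric of Equation \eqref{eq:metric}. When $\alpha_1 = 0$ the metric is conformal, so $\Gamma^{\bar z}_{zz} = 0$ and $\omega(\nabla^g_{\partial_z}\partial_z)$ has no $\omega(\partial_{\bar z})$-contribution; thus $\II_{zz}$ is supported in the weight spaces $\mathfrak{g}_{-1}$, $\mathfrak{g}_{m_i}$, and $\mathfrak{g}_{m_i-1}$. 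Grading-orthogonality reduces the self-pairing to contributions from $\kappa([e_{m_i}, f], [e_{m_j}, f])$ with $m_i + m_j = 2$, forcing $m_i = m_j = 1$; the coefficient is $\alpha_1(\partial_z,\partial_z)^2$, which vanishes. Conversely, when $\alpha_1 \neq 0$ this pairing gives $\alpha_1(\partial_z,\partial_z)^2 \kappa(h,h) \neq 0$, and the new weight-matched cross-terms introduced by $\omega(\nabla^g_{\partial_z}\partial_z)$ all carry $\alpha_1$-factors that combine without cancellation against the dominant $\kappa(h,h)$ contribution.

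For Part 2, decompose $\II_{zz} = \nabla_{\partial_z}\omega(\partial_z) + R_1$ and $\II_{z\bar z} = \nabla_{\partial_z}\omega(\partial_{\bar z}) + R_2$, where the remainders $R_1, R_2$ package together the Levi-Civita corrections $\tfrac{1}{2}\mathrm{ad}(\omega(\partial_{\bullet}))$ with the tangential corrections $-\omega(\nabla^g_{\partial_{\bullet}}\partial_{\bullet'})$. Each summand of $R_1$ is either an $\mathrm{ad}(e_{m_i})$-bracket (and hence lies in spans of $[e_{m_i}, f]$ and $[e_{m_i}, e_{m_j}] = 0$) or a combination of $\omega(\partial_z)$ and $\omega(\partial_{\bar z})$; when paired against $\nabla_{\partial_z}\omega(\partial_{\bar z})$, whose components live in $\mathfrak{g}_1$ and $\mathfrak{g}_{-m_j}$ (and arise from $\theta e_{m_j}$-type vectors), every surviving Killing-pairing takes the form $\kappa(e_{m_i}, [e_{m_k}, \theta e_{m_j}])$ and therefore vanishes. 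The symmetric argument dispatches $R_2$, yielding $\langle \II_{zz}, \II_{z\bar z}\rangle = \langle \nabla_{\partial_z}\omega(\partial_z), \nabla_{\partial_z}\omega(\partial_{\bar z})\rangle$. Substituting the explicit formulas for $\nabla_{\partial_z}\omega(\partial_z)$ and $\nabla_{\partial_z}\omega(\partial_{\bar z})$ obtained from Lemma \ref{lemma:maurercartan} and expanding the Killing form, grading-orthogonality imposes exactly the weight constraint $m_i - 1 = m_j - m_k$ on the triples $(i,j,k)$ whose $\kappa([e_{m_i}, f], [\theta e_{m_j}, e_{m_k}])$ contributions survive, matching the claimed sum.

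The main obstacle is the weight-bookkeeping in Part 2. A naive expansion produces many cross-terms, and showing systematically that every term outside the claimed sum vanishes requires combined use of all three tools listed above; in particular, verifying the reduction to $\langle \nabla_{\partial_z}\omega(\partial_z), \nabla_{\partial_z}\omega(\partial_{\bar z})\rangle$ hinges on the full strength of the identity $\kappa(e_{m_i}, [e_{m_k}, \theta e_{m_j}]) = 0$.
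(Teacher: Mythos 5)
The paper offers no argument for this lemma beyond the phrase ``straightforward computations show,'' so what has to be judged is whether your outlined computation is sound. Your toolkit is the right one (the grading, $[e_{m_i},e_{m_j}]=0$, $\kappa(e_{m_i},[e_{m_k},\theta e_{m_j}])=0$, and Lemma \ref{lemma:koszul}), but the reduction you propose for Part 2 is logically broken. In the lemma $\nabla$ denotes the Levi--Civita connection $\nabla^{LC}$ (the superscript is dropped, exactly as in the displayed computation just above the lemma and in the proof of Lemma \ref{lemma:secondform-general}); you instead read it as the bare Chern-type connection and push the $\tfrac12\, ad$-corrections into the remainders $R_1,R_2$. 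That cannot work: the bare derivative $\nabla_{\partial_z}\omega(\partial_{\overline{z}})$ vanishes identically (metric compatibility kills both the $e$-term and the $\theta e_{m_j}$-terms), so $\II_{z\overline{z}}$ lies \emph{entirely} inside your $R_2$, and the $[e_{m_i},f]$-components of $\II_{zz}$ --- which are precisely what produce the nonzero answer $\kappa([e_{m_i},f],[\theta e_{m_j},e_{m_k}])$ --- sit inside your $R_1$. If, as you claim, every pairing involving $R_1$ or $R_2$ vanished, the whole inner product would be zero, contradicting the formula you are deriving. The correct (and much shorter) reduction is: $\nabla^g_{\partial_z}\partial_{\overline{z}}=0$ by Lemma \ref{lemma:koszul}, so $\II_{z\overline{z}}=\nabla^{LC}_{\partial_z}\omega(\partial_{\overline{z}})$ is normal-valued, and therefore the tangential correction $-\omega(\nabla^g_{\partial_z}\partial_z)$ inside $\II_{zz}$ pairs to zero against it; this gives $\langle\II_{zz},\II_{z\overline{z}}\rangle=\langle\nabla^{LC}_{\partial_z}\omega(\partial_z),\nabla^{LC}_{\partial_z}\omega(\partial_{\overline{z}})\rangle$ in one line, after which the grading together with $[f,\theta e_{m_j}]=0$ and $[e_{m_i},e_{m_k}]=0$ eliminates every cross-term except the claimed one.

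Two further points on Part 1. In the ``if'' direction your list of potentially surviving pairings omits $\kappa(f,[e_{m_i},f])$ for $m_i=2$ (already present for $\mathfrak{sl}_3$); it does vanish, by invariance of $\kappa$, but it is not of the form $\kappa([e_{m_i},f],[e_{m_j},f])$ and must be checked separately. In the ``only if'' direction, the assertion that the $\Gamma^{\overline{z}}_{zz}$-terms ``combine without cancellation against the dominant $\kappa(h,h)$ contribution'' restates the claim rather than proving it: since $\langle\cdot,\cdot\rangle$ is the $\mathbb{C}$-bilinear Killing form, a nonzero $\II_{zz}$ can have vanishing self-pairing, so one genuinely has to track the $\mathfrak{g}_{-1}\times\mathfrak{g}_{1}$ and $\mathfrak{g}_{m_j}\times\mathfrak{g}_{-m_j}$ contributions and rule out cancellation against $\alpha_1(\partial_z,\partial_z)^2\kappa(h,h)$. (Only the ``if'' direction is used later in the paper, but the lemma as stated asserts the equivalence.)
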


This Lemma helps us detect situations for which both terms $\langle \II_{zz},\II_{zz}\rangle$ and $\langle \II_{zz},\II_{z \overline{z}} \rangle$ vanish. 
This occurs for instance when $\vec{\alpha}$ has only one non-zero component in some $H^0 (X,K^{m_i+1})$ factor with $i >1$.

This remark allows us to improve Lemma \ref{lemma:secondform-general} when we restrict to the study of cyclic opers (Definition \ref{defi:cyclicoper}):

\begin{lemma}
\label{lemma:secondform-cyclic}
   Let $\mathfrak{g} \neq \mathfrak{sl}_2 (\mathbb{C})$, and $\alpha_l \in H^0 (X,K^{m_l +1})$ defining a cyclic $G-$oper.
   Using the same notations as Lemma \ref{lemma:secondform-general}, we get that:
   $$\norm{\II (v_\phi,v_\phi)}^2 \!= \frac{1}{8} \left( R^{LC}_{z\overline{z}\overline{z}z} + 2 R^{g}_{z\overline{zz}z} - 12 H^2 \!\kappa (e\!,\!f) \!\left(\!1 \!- \!\frac{1}{2} m_l \norm{\alpha_l}^2\!\right) \right),$$
   independent of $\phi$.
\begin{proof}
   Repeating the proof of Lemma \ref{lemma:secondform-general}, we get that:
   $$\norm{\II (v_\phi,v_\phi)}^2 = \frac{1}{16} (2 \langle \II_{zz},\II_{\overline{zz}} \rangle + 4 \langle \II_{z\overline{z}}, \II_{z\overline{z}}\rangle),$$
   because of Lemma \ref{lemma:simplifications}.
   Using Equations (\ref{eq:II_zbarz}) and (\ref{eq:gauss}), we get the desired equality.
\end{proof}
\end{lemma}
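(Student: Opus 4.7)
The plan is to follow the strategy used in the proof of Lemma \ref{lemma:secondform-general}, but to exploit the cyclic hypothesis so as to kill the two $\phi$-dependent contributions to $\norm{\II(v_\phi,v_\phi)}^2$, turning an inequality into an equality.

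First I would expand $v_\phi = \tfrac{1}{2}(e^{-i\phi}\partial_z + e^{i\phi}\partial_{\bar z})$ and write
\[
\II(v_\phi,v_\phi) \;=\; \tfrac{1}{4}\bigl(e^{-2i\phi}\II_{zz} + 2\II_{z\bar z} + e^{2i\phi}\II_{\bar z\bar z}\bigr).
\]
Squaring and using $\nabla^{LC}\Theta = 0$ (so $\II_{\bar z\bar z} = -\Theta\II_{zz}$) together with the Killing-form identity $\overline{\kappa(x,y)} = \kappa(\theta x,\theta y)$, I would reduce $\norm{\II(v_\phi,v_\phi)}^2$ to a sum of four pairings, exactly as in Lemma \ref{lemma:secondform-general}: two ``diagonal'' pairings $\langle \II_{zz},\II_{\bar z\bar z}\rangle$ and $\langle \II_{z\bar z},\II_{z\bar z}\rangle$ with no $\phi$-dependence, and two ``off-diagonal'' pairings $\langle \II_{zz},\II_{zz}\rangle$ and $\langle \II_{zz},\II_{z\bar z}\rangle$ carrying the factors $e^{-4i\phi}$ and $e^{-2i\phi}$ respectively.

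The next step is to invoke Lemma \ref{lemma:simplifications}. Because the oper is cyclic with $\mathfrak{g}\neq\mathfrak{sl}_2(\mathbb{C})$, the only non-zero differential is $\alpha_l$ with $l>1$, so $\alpha_1=0$ and hence $\langle \II_{zz},\II_{zz}\rangle = 0$. For the remaining cross term, cyclicity forces the summation indices in the formula for $\langle \II_{zz},\II_{z\bar z}\rangle$ to satisfy $i=j=k=l$, reducing the constraint $m_i-1 = m_j - m_k$ to $m_l=1$, which is ruled out since $\mathfrak{g}\neq\mathfrak{sl}_2(\mathbb{C})$ means $m_l > 1$. Both $\phi$-dependent contributions thus vanish, leaving
\[
\norm{\II(v_\phi,v_\phi)}^2 \;=\; \tfrac{1}{8}\bigl(\langle \II_{zz},\II_{\bar z\bar z}\rangle + 2\langle \II_{z\bar z},\II_{z\bar z}\rangle\bigr),
\]
which is manifestly independent of $\phi$.

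Finally I would substitute the value of $\langle \II_{z\bar z},\II_{z\bar z}\rangle$ from Equation (\ref{eq:II_zbarz}), noting that cyclicity leaves only the $i=l$ summand, and use the Gauss equation (\ref{eq:gauss}) to eliminate $\langle \II_{zz},\II_{\bar z\bar z}\rangle$ in favor of the intrinsic and ambient curvatures and $\langle \II_{z\bar z},\II_{z\bar z}\rangle$ itself. Collecting terms yields the stated closed form. The only real obstacle I expect is the bookkeeping needed to verify that the index constraint $m_i-1=m_j-m_k$ in Lemma \ref{lemma:simplifications} genuinely excludes all the cyclic contributions once $\mathfrak{g}\neq\mathfrak{sl}_2$; once that is clear, the rest is linear algebra together with the Gauss equation, and crucially no explicit computation of $\II_{zz}$ is required.
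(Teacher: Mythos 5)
Your proposal is correct and follows essentially the same route as the paper: expand $\II(v_\phi,v_\phi)$ as in Lemma \ref{lemma:secondform-general}, use Lemma \ref{lemma:simplifications} together with the cyclic hypothesis and $\mathfrak{g}\neq\mathfrak{sl}_2(\mathbb{C})$ (so $\alpha_1=0$ kills $\langle\II_{zz},\II_{zz}\rangle$ and the index constraint $m_i-1=m_j-m_k$ with $i=j=k=l$ forces $m_l=1$, killing $\langle\II_{zz},\II_{z\bar z}\rangle$), and then substitute Equation (\ref{eq:II_zbarz}) and the Gauss equation (\ref{eq:gauss}). You in fact make explicit the index-constraint argument that the paper's proof leaves implicit, which is a faithful and correct elaboration.
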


\begin{rmk}
\label{rmk:sl2}
   When $\mathfrak{g} = \mathfrak{sl}_2 (\mathbb{C})$ the situation is special because $\omega (TX)$ coincides with the span of $\{ \partial_z \otimes f, dz \otimes e\}$, therefore the expression for $\II_{zz}$ simplifies considerably and one gets that:
   $$\II = - \left( \alpha_1 dz^2 -  H\norm{\alpha_1}^2 dz \overline{dz} + \overline{\alpha_1 dz^2}\right) \otimes h,$$
   depends solely on the quadratic differential and not on its derivatives. 
   The bounds we will develop aim for the higher rank case and are not suitable for this situation, since Lemma \ref{lemma:secondform-general} is forgetting terms that involve the quadratic differential $\alpha_1$.
\end{rmk}

The last couple of lemmas reduce the upper bound for the second fundamental form to an upper bound of curvature tensors. 

Notice that the term $R^{LC}_{z\overline{zz}z}$ is equal to $\frac{1}{4} \norm{[\omega (\partial_z),\omega (\partial_{\bar{z}})]}^2$. A straightforward computation shows:

\begin{equation}
\label{eq:RLC}
   R^{LC}_{z\overline{zz}z} = \!4 H^2 \kappa (e,f) \!\left( 1 \! - \!\frac{1}{2} \sum_{i=1}^l \! m_i\norm{\alpha_i}^2  \!+ \!\frac{1}{16} \sum_{i,j,k,l} \!\alpha_i \alpha_k \!*\!\alpha_j \!* \!\alpha_l c_{ijkl} \!\right),
\end{equation}

where we define $c_{ijkl} = \frac{\kappa ([e_{m_i},\theta e_{m_j}],[e_{m_k},\theta e_{m_l}])}{\kappa (e,f)}$. 
Notice that $c_{ijkl}$ can only be non-zero in indices satisfying that $m_i + m_k = m_j + m_l$.

The $R^g$ term is more complicated in the sense that it depends on derivatives of the differentials defining the opers.
An upper bound for $R^g$ can be deduced by using Proposition \ref{coro:curvature} whose proof we delay to Appendix \ref{subsec:computations1}:

\begin{lemma}
   The term $R^{g}_{z\overline{zz}z}$ is bounded above by:
   $$R^{g}_{z\overline{zz}z} \leq 4 \kappa (e,f)H^2 \left(1 - \frac{1}{4} \sum_{i=1}^l m_i \norm{\alpha_i}^2 + \frac{1}{4} \frac{\sum_{i=1}^l \norm{\nabla \alpha_i}^2 + \frac{1}{4} \sum_{i<j} \norm{\nabla \alpha_i \alpha_j - \alpha_i \nabla \alpha_j}^2}{1 + \frac{1}{4} \sum_{i=1}^l \norm{\alpha_i}^2} \right),$$
   where we think of $\nabla \alpha_i$ as an $(m_i+2)-$differential.
   We get equality whenever $\alpha_1 =0$.
\begin{proof}
   Notice that $R^{g}_{z\overline{zz}z} = K^g \det (g_{\mathbb{C}})$, where $K^g$ is the Gaussian curvature and $\det (g_{\mathbb{C}})$ is the determinant of the complexified metric.
   Using Equation (\ref{eq:metric}), we get $\det (g_{\mathbb{C}}) = (4\kappa (e,f)H)^2 (\norm{\alpha_1}^2 - (1 + \frac{1}{4} \sum_{i=1}^l \norm{\alpha_i}^2 )^2)<0$. 

   To get an upper bound, it is enough to give a lower bound for $K^g$ (since both $K^g$ and $\det (g_{\mathbb{C}})$ are negative). 
   Proposition \ref{coro:curvature} and Equation (\ref{eq:Liouville}) imply that:

   \begin{equation*}
   K^g \geq \frac{2 \kappa (e,f)H^2}{\det (g_{\mathbb{C}})}\left( 1 + \frac{1}{4} \sum_{i=1}^l \norm{\alpha_i}^2\right)\left( 2 + \frac{1}{2}\Delta (\log (1 + \frac{1}{4} \sum_{i=1}^l \norm{\alpha_i}^2))\right), 
   \end{equation*}
   
   where $\Delta = \frac{4}{H} \partial_z \partial_{\bar{z}}$ is the hyperbolic laplacian.
   This is an exact equality if $\alpha_1 =0$.

   Recall that given $f$ a positive function, $\Delta \log f = \frac{\Delta f}{f} - \frac{\norm{\nabla f}^2}{f^2}$, where $\nabla f$ is the hyperbolic gradient of the function.
   This equality, together with the Bochner identity (Equation (\ref{eq:Bochner})) implies that:

   \begin{equation*}
   \resizebox{\displaywidth}{!}{$\frac{1}{2} \Delta (\log (1 + \frac{1}{4} \sum_{i=1}^l \norm{\alpha_i}^2)) = \frac{- \frac{1}{2} \sum_{i=1}^l (m_i +1) \norm{\alpha_i}^2 + \frac{1}{2} \frac{\sum_{i=1}^l \norm{\nabla \alpha_i}^2 + \frac{1}{4} \left( \sum_{i,j} \norm{\nabla \alpha_i}^2 \norm{\alpha_j}^2 - \frac{1}{4} \langle \nabla \norm{\alpha_i}^2,\nabla \norm{\alpha_j}^2 \rangle \right)}{1 + \frac{1}{4}\sum_{i=1}^l \norm{\alpha_i}^2}}{1 + \frac{1}{4}\sum_{i=1}^l \norm{\alpha_i}^2} .$}
   \end{equation*}

   One can check that $\langle \nabla \norm{\alpha_i}^2, \nabla \norm{\alpha_j}^2 \rangle = 2 \left( \alpha_i \nabla \alpha_j *\alpha_j * \nabla \alpha_i + \alpha_j \nabla \alpha_i * \alpha_i * \nabla \alpha_j \right)$, which implies:
   $$\norm{\nabla \alpha_i}^2 \norm{\alpha_j}^2 + \norm{\nabla \alpha_j}^2 \norm{\alpha_i}^2 - \frac{1}{2}\langle \nabla \norm{\alpha_i}^2,\nabla \norm{\alpha_j}^2\rangle = \norm{\alpha_i \nabla \alpha_j - \alpha_j \nabla \alpha_i}^2.$$
   
   Plugging this into the lower bound for $K^g$ yields the desired expression.
\end{proof}
\end{lemma}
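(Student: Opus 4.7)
The plan is to reduce the desired upper bound on $R^g_{z\overline{z}z\overline{z}}$ to a lower bound for the Gaussian curvature $K^g$ of the pulled-back metric, and then to compute that curvature via a Laplacian identity combined with the Bochner identity applied to the holomorphic sections $\alpha_i$.

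First, I would observe that $R^g_{z\overline{z}z\overline{z}} = K^g \det(g_{\mathbb{C}})$ relates the full curvature tensor to the scalar Gaussian curvature through the determinant of the complexified first fundamental form. From Equation (\ref{eq:metric}) a direct computation yields
$$\det(g_{\mathbb{C}}) = (4\kappa(e,f) H)^2 \left( \norm{\alpha_1}^2 - \Big(1 + \tfrac{1}{4}\sum_{i=1}^l \norm{\alpha_i}^2 \Big)^{\!2} \right),$$
which is strictly negative because the conformal factor dominates $\norm{\alpha_1}^2$. Thus upper bounds on $R^g_{z\overline{z}z\overline{z}}$ correspond to lower bounds on $K^g$; this is also consistent with the assertion that equality holds when $\alpha_1 = 0$, since that is precisely the case in which $g$ is conformal to the hyperbolic metric and one expects the intermediate curvature estimate to be sharp.

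The second step would invoke Proposition \ref{coro:curvature} from Appendix \ref{subsec:computations1}, which provides such a lower bound for $K^g$ of the form
$$K^g \geq \frac{2\kappa(e,f) H^2}{\det(g_{\mathbb{C}})}\Big(1 + \tfrac{1}{4}\sum_i \norm{\alpha_i}^2\Big)\Big(2 + \tfrac{1}{2}\Delta \log\big(1 + \tfrac{1}{4}\sum_i \norm{\alpha_i}^2\big)\Big),$$
reducing the problem to evaluating a Laplacian of a logarithm. I would then expand via the standard identity $\Delta \log f = \Delta f / f - \norm{\nabla f}^2 / f^2$ with $f = 1 + \tfrac{1}{4}\sum_i \norm{\alpha_i}^2$, and process each $\Delta \norm{\alpha_i}^2$ by the Bochner identity (Equation (\ref{eq:Bochner})), using holomorphicity of $\alpha_i$ to collapse everything into a combination of $\norm{\nabla \alpha_i}^2$ and $(m_i+1)\norm{\alpha_i}^2$ — which is exactly what produces the exponents in the final bound.

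The main obstacle is the algebraic bookkeeping of the cross-terms. The $\norm{\nabla f}^2$ piece produces quantities of the form $\langle \nabla\norm{\alpha_i}^2, \nabla\norm{\alpha_j}^2\rangle$ which must be combined with the products $\norm{\nabla\alpha_i}^2 \norm{\alpha_j}^2$ coming from $\Delta f / f$. The identity I would rely on is
$$\norm{\nabla\alpha_i}^2 \norm{\alpha_j}^2 + \norm{\nabla\alpha_j}^2 \norm{\alpha_i}^2 - \tfrac{1}{2}\langle \nabla\norm{\alpha_i}^2, \nabla\norm{\alpha_j}^2\rangle = \norm{\alpha_i \nabla \alpha_j - \alpha_j \nabla \alpha_i}^2,$$
which follows by expanding $\langle \nabla\norm{\alpha_i}^2, \nabla\norm{\alpha_j}^2\rangle$ via the compatibility of the Chern connection with the Hermitian metric. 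Symmetrizing over the pair $(i,j)$ reorganizes the double sum into the Wronskian-style expression $\sum_{i<j}\norm{\alpha_i \nabla \alpha_j - \alpha_j \nabla\alpha_i}^2$ appearing in the statement. Assembling everything yields the claimed bound, with equality in the $\alpha_1 = 0$ regime exactly where the lower bound from Proposition \ref{coro:curvature} is itself an equality.
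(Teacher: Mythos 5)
Your proposal is correct and follows essentially the same route as the paper's proof: reduce to a lower bound on $K^g$ via $R^g_{z\overline{z}\overline{z}z} = K^g \det(g_{\mathbb{C}})$, obtain that lower bound from Proposition \ref{coro:curvature} together with the conformal change formula (Equation (\ref{eq:Liouville})), expand the Laplacian of the logarithm, apply the Bochner identity, and reorganize the cross-terms with the same Wronskian identity. No gaps to report.
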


Notice that in the previous proof, we only disregard terms that depend on $\alpha_1 \in H^0 (X,K^2)$.
When $G = PGL_2 (\mathbb{C})$, these are precisely the terms that cancel out the dependence on $\nabla \alpha_1$.
In higher rank, these terms are not enough to cancel out the dependence on derivatives of the differentials.
An exact expression for the curvature can be computed in the general case, but we avoid it since it will not improve our results much.

The previous lemma together with Equation (\ref{eq:RLC}) and Lemma \ref{lemma:secondform-general} imply:

\begin{coro}
\label{coro:estimatesecondfundamental-general}
   Given $\vec{\alpha} \in \bigoplus_{i=1}^l H^0 (X,K^{m_i+1})$ defining a $G-$oper, we get that the second fundamental form of the associated EP-surface is bounded above by:
   
   \begin{equation*}
   \resizebox{\displaywidth}{!}{$\norm{\II (v_\phi,v_\phi)}^2 \leq \frac{H^2 \kappa (e,f)}{4} \left(  \left(\sum_{i=1}^l m_i \norm{\alpha_i}^2 + \frac{\sum_{i=1}^l \norm{\nabla \alpha_i}^2 + \frac{1}{4}\sum_{i<j} \norm{\nabla \alpha_i \alpha_j - \alpha_i \nabla \alpha_j}^2}{1 + \frac{1}{4} \sum_{i=1}^l \norm{\alpha_i}^2} \right)^{\frac{1}{2}} + \frac{1}{2}
    \left( \sum_{i,j,k,l} \alpha_i \alpha_k *\alpha_j *\alpha_l c_{ijkl} \right)^{\frac{1}{2}} \right)^2,$}
   \end{equation*}

   where $c_{ijkl} = \frac{\kappa ([e_{m_i},\theta e_{m_j}],[e_{m_k},\theta e_{m_l}])}{\kappa (e,f)}$.
\end{coro}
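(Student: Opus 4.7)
The plan is to combine the previous three results directly: Lemma \ref{lemma:secondform-general} reduces bounding $\norm{\II(v_\phi,v_\phi)}^2$ to controlling the two curvature-type quantities
\[
R^{LC}_{z\overline{zz}z} - 4H^2\kappa(e,f)\Bigl(1 - \tfrac{1}{2}\sum_{i=1}^l m_i \norm{\alpha_i}^2\Bigr) \quad \text{and} \quad R^{g}_{z\overline{zz}z} - 4H^2\kappa(e,f)\Bigl(1 - \tfrac{1}{2}\sum_{i=1}^l m_i \norm{\alpha_i}^2\Bigr).
\]
The closed-form expression for $R^{LC}_{z\overline{zz}z}$ in Equation (\ref{eq:RLC}) and the upper bound for $R^{g}_{z\overline{zz}z}$ from the lemma immediately preceding the corollary give the needed control.

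First I would substitute Equation (\ref{eq:RLC}) to see that
\[
R^{LC}_{z\overline{zz}z} - 4H^2\kappa(e,f)\Bigl(1 - \tfrac{1}{2}\sum_{i=1}^l m_i \norm{\alpha_i}^2\Bigr) = \tfrac{H^2 \kappa(e,f)}{4} \sum_{i,j,k,l} \alpha_i \alpha_k *\alpha_j *\alpha_l\, c_{ijkl},
\]
so its square root contributes the factor $\tfrac{H\sqrt{\kappa(e,f)}}{2}\bigl(\sum \alpha_i \alpha_k *\alpha_j *\alpha_l c_{ijkl}\bigr)^{1/2}$. Next, I would substitute the upper bound for $R^g_{z\overline{zz}z}$: after subtracting $4H^2\kappa(e,f)(1 - \tfrac{1}{2}\sum m_i \norm{\alpha_i}^2)$, the $-\tfrac{1}{4}\sum m_i\norm{\alpha_i}^2$ term combines with $+\tfrac{1}{2}\sum m_i \norm{\alpha_i}^2$ to yield $+\tfrac{1}{4}\sum m_i\norm{\alpha_i}^2$, and the constant $1$'s cancel. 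Factoring out $H^2\kappa(e,f)$ from the remainder gives
\[
R^g_{z\overline{zz}z} - 4H^2\kappa(e,f)\Bigl(1 - \tfrac{1}{2}\sum_{i=1}^l m_i \norm{\alpha_i}^2\Bigr) \leq H^2\kappa(e,f)\left(\sum_{i=1}^l m_i\norm{\alpha_i}^2 + \frac{\sum_{i=1}^l \norm{\nabla \alpha_i}^2 + \tfrac{1}{4}\sum_{i<j} \norm{\nabla \alpha_i \alpha_j - \alpha_i \nabla \alpha_j}^2}{1 + \tfrac{1}{4}\sum_{i=1}^l \norm{\alpha_i}^2}\right),
\]
whose square root provides the first summand inside the parentheses of the stated bound.

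Finally, plugging both square roots into the inequality of Lemma \ref{lemma:secondform-general} and pulling the common factor $H\sqrt{\kappa(e,f)}$ outside gives exactly the expression asserted in the corollary. There is no real obstacle here beyond the bookkeeping of the arithmetic on the constants $1$, $\tfrac{1}{2}\sum m_i \norm{\alpha_i}^2$, and the factor $\tfrac{1}{16}$ in front of the $c_{ijkl}$ term, which converts to $\tfrac{1}{2}$ after extracting $\tfrac{H^2 \kappa(e,f)}{4}$; the monotonicity of square root in the first summand ensures the inequality from the $R^g$ bound passes through intact.
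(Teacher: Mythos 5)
Your proposal is correct and matches the paper's own derivation, which likewise just substitutes Equation (\ref{eq:RLC}) and the curvature upper bound for $R^{g}_{z\overline{zz}z}$ into the inequality of Lemma \ref{lemma:secondform-general}; the arithmetic on the constants ($-\tfrac{1}{4}+\tfrac{1}{2}=\tfrac{1}{4}$ for the $m_i\norm{\alpha_i}^2$ terms, and $4\cdot\tfrac{1}{16}=\tfrac{1}{4}$ for the $c_{ijkl}$ term) checks out exactly as you describe.
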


For cyclic opers, we can say something stronger by inserting Equation (\ref{eq:RLC}) and the previous lemma into Lemma \ref{lemma:secondform-cyclic}:

\begin{coro}
\label{coro:estimatedsecondfundamental-cyclic}
   Let $\alpha \in H^0 (X,K^{m_l+1})$ define a cyclic $G-$oper, then the norm of the second fundamental form of the associated EP-surface is given by:
   $$\norm{\II (v_\phi,v_\phi)}^2 = \frac{H^2 \kappa (e,f)}{16} \left(2m_l \norm{\alpha}^2 + 2\frac{\norm{\nabla \alpha}^2}{1 + \frac{1}{4} \norm{\alpha}^2} + \norm{\alpha}^4 c_{llll} \right).$$
\end{coro}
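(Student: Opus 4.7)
My plan is to directly substitute the two explicit curvature expressions into the formula of Lemma \ref{lemma:secondform-cyclic} and exploit the cyclic simplifications. The three ingredients already in place are: (i) Lemma \ref{lemma:secondform-cyclic}, which gives
$$\norm{\II(v_\phi,v_\phi)}^2 = \tfrac{1}{8}\!\left(R^{LC}_{z\overline{z}\overline{z}z} + 2R^g_{z\overline{z}\overline{z}z} - 12 H^2\kappa(e,f)\!\left(1 - \tfrac{1}{2}m_l\norm{\alpha}^2\right)\right);$$
(ii) Equation (\ref{eq:RLC}) expressing $R^{LC}_{z\overline{z}\overline{z}z}$ as a quadratic/quartic polynomial in the differentials; and (iii) the preceding lemma bounding $R^g_{z\overline{z}\overline{z}z}$ from above.

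First, I would specialize (ii) to the cyclic setting. Since only $\alpha_l$ is nonzero, the sum $\sum_i m_i\norm{\alpha_i}^2$ reduces to $m_l\norm{\alpha}^2$, and the quartic sum $\sum_{i,j,k,l}\alpha_i\alpha_k *\alpha_j*\alpha_l\, c_{ijkl}$ collapses to the single term $\norm{\alpha}^4 c_{llll}$. Next, I would specialize (iii). The cross expressions $\sum_{i<j}\norm{\nabla\alpha_i\alpha_j - \alpha_i\nabla\alpha_j}^2$ vanish identically because only one differential is nonzero, and $\sum_i\norm{\nabla\alpha_i}^2$ reduces to $\norm{\nabla\alpha}^2$. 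The crucial observation here is that the bound in (iii) is stated as an \emph{equality} whenever $\alpha_1 = 0$ — and indeed cyclic opers have $\alpha_1 = 0$, so we genuinely compute $R^g_{z\overline{z}\overline{z}z}$ on the nose. This is precisely what allows the final statement to be an equality rather than an inequality.

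With both curvature terms in hand, the remaining work is purely algebraic: substitute the specialized expressions into (i) and cancel. The constant $H^2\kappa(e,f)$ contributions balance, the two $m_l\norm{\alpha}^2$ terms from $R^{LC}$ and $2R^g$ combine with the $-12H^2\kappa(e,f)\cdot\tfrac{1}{2}m_l\norm{\alpha}^2$ correction to produce the $2m_l\norm{\alpha}^2$ in the statement, and the $c_{llll}$ and $\norm{\nabla\alpha}^2$ pieces are isolated to their respective terms.

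I do not expect any genuine obstacle: the only delicate point is tracking the numerical prefactors and confirming that the $\alpha_1 = 0$ hypothesis is simultaneously responsible for (a) the equality in Lemma \ref{lemma:secondform-cyclic}, (b) the vanishing of $\langle \II_{zz},\II_{zz}\rangle$ and $\langle \II_{zz},\II_{z\bar z}\rangle$ via Lemma \ref{lemma:simplifications}, and (c) the sharpness of the bound on $R^g_{z\overline{z}\overline{z}z}$. Once these three equalities are used in concert, the corollary follows from bookkeeping.
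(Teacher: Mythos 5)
Your proposal is correct and follows exactly the paper's own route: the paper derives this corollary precisely by inserting Equation (\ref{eq:RLC}) and the curvature lemma for $R^g_{z\overline{zz}z}$ into Lemma \ref{lemma:secondform-cyclic}, using that the $R^g$ bound is an equality when $\alpha_1=0$. Your identification of the three roles played by the cyclic hypothesis (equality in Lemma \ref{lemma:secondform-cyclic} via Lemma \ref{lemma:simplifications}, collapse of the quartic sum to $c_{llll}$, and sharpness of the $R^g$ estimate) is exactly the content of the paper's one-line proof.
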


\begin{rmk}
   As mentioned in Remark \ref{rmk:highestroot}, $e_{m_l}$ lies in $\mathfrak{g}_\theta$ the root space associated to the highest root $\theta$ of $\mathfrak{g}$.
   One can check that $c_{llll} = \frac{m_l^2 \kappa (h,h)}{\sin^2 (\phi_\theta)}$, where $\phi_\theta$ is the angle between $h$ and the $\ker \theta$.
\end{rmk}

\subsection{The criteria}

In this section, we will combine all the ingredients to obtain our desired effective Anosov neighborhood.
Our goal is to detect conditions ensuring that our EP-surface is a $\tau_\Theta-$nearly geodesic immersion via Theorem \ref{thm:davalosufficient}, where $\Theta$ is a Weyl-orbit of simple roots.
Once we have this, Theorem \ref{thm:davaloholonomy} implies that under these conditions the holonomy of the oper is $\Theta-$Anosov.

The only missing ingredient is how to control the Cartan projection of the derivative of the EP-surface.
For that, we use the estimates found in Section \ref{sec:regularity}.

\begin{lemma}
 Let $\beta \in \Delta$ be a simple root of $\mathfrak{g}$ such that $\ker \beta$ forms an angle $\phi_\beta$ with the ray generated by $h$, $\vec{\alpha} \in \bigoplus_{i=1}^l H^0 (X,K^{m_i})$ be differentials such that 
 $$\frac{1- \frac{\norm{\alpha_1}}{2}}{\sqrt{(1 - \frac{\norm{\alpha_1}}{2})^2 + \frac{1}{4} \sum_{i=2}^l \norm{\alpha_i}^2}} \geq \cos (\phi_{min}),$$
  as in Theorem \ref{thm:regularity}. Choose local coordinates $z$, and let $v_\phi = \cos \phi \partial_x + \sin \phi \partial_y\in \mathcal{A}^0(X,TX)$. 
 Then we have: 
    $$\beta (\mu (\omega (v_\phi)))^2 \geq 2 H \kappa (e,f) \norm{\beta} \left( \cos \phi_\beta \left| 1 + \frac{e^{-2i\phi} \alpha_1}{2H} \right| - \sin \phi_\beta \left( \frac{1}{4} \sum_{i=2}^l \norm{\alpha_i}^2\right)^{\frac{1}{2}} \right)^2.$$ 
\begin{proof}
Notice that $|\beta (\mu (\omega (v_\phi)))| = \norm{\beta} \norm{\omega (v_\phi)} \sin (\angle (\mu(\omega (v_\phi)),\ker \beta))$, where \\ $\angle (\mu(\omega (v_\phi)),\ker \beta)$ denotes the angle between $\ker \beta$ and the Cartan projection of $\omega (v_\phi)$.

The triangle inequality implies that $\angle (\mu(\omega (v_\phi)),\ker \beta) \geq \phi_\beta - \angle (\mu (\omega (v_\phi)),h)$, therefore $\sin \angle (\mu(\omega (v_\phi)),\ker \beta) \geq \cos \angle (\mu (\omega (v_\phi)),h) \sin \phi_{\beta} - \sin \angle (\mu (\omega (v_\phi)),h) \cos \phi_{\beta}$.
On the other hand, Lemma \ref{lemma:angles} implies that $\norm{\omega (v_\phi)} \cos \angle (\mu (\omega (v_\phi)),h) \geq \sqrt{2 H \kappa (e,f)} \vert 1 + \frac{e^{-2i\phi}\alpha_1}{2H} \vert$, and $\sin \angle (\mu (\omega (v_\phi)),h)  \leq \sqrt{2 H \kappa (e,f)} \left( \frac{1}{4} \sum_{i=2}^l \norm{\alpha_i}^2 \right)^{\frac{1}{2}}$.
Putting everything together, we get the desired result.
\end{proof}
\end{lemma}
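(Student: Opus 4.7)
The plan is to interpret $\beta(\mu(\omega(v_\phi)))$ as a trigonometric quantity inside the maximal flat $\mathfrak{a}$ and reduce the lower bound to a comparison of angles already controlled earlier in the section. First, I would use the basic identity
\[
|\beta(\mu(\omega(v_\phi)))| = \norm{\beta} \cdot \norm{\mu(\omega(v_\phi))} \cdot \sin \angle(\mu(\omega(v_\phi)), \ker \beta),
\]
together with the fact that the Cartan projection is norm-preserving on $K$-orbits, so that $\norm{\mu(\omega(v_\phi))} = \norm{\omega(v_\phi)}$.

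The heart of the argument is to compare $\angle(\mu(\omega(v_\phi)), \ker \beta)$ with the angle $\angle(\mu(\omega(v_\phi)), h)$, which Lemma \ref{lemma:angles} already bounds. Since $h$ lies at angle $\phi_\beta$ from the wall $\ker \beta$, the spherical triangle inequality inside $\mathfrak{a}$ gives
\[
\angle(\mu(\omega(v_\phi)), \ker \beta) \geq \phi_\beta - \angle(\mu(\omega(v_\phi)), h),
\]
valid when $\mu(\omega(v_\phi))$ sits on the $h$-side of $\ker \beta$. The sine-subtraction formula then yields
\[
\sin \angle(\mu(\omega(v_\phi)), \ker \beta) \geq \sin \phi_\beta \cos \angle(\mu(\omega(v_\phi)), h) - \cos \phi_\beta \sin \angle(\mu(\omega(v_\phi)), h).
\]

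The final step is routine substitution. The decomposition \eqref{eq:decompositionomega} is orthogonal by Remark \ref{rmk:orthogonality}, so $\norm{\omega(v_\phi)} \cos \angle(\mu(\omega(v_\phi)), h)$ coincides with $\norm{\omega^{\alpha_1}(v_\phi)}$, evaluated in closed form by Lemma \ref{lemma:cartanofomega}, while $\norm{\omega(v_\phi)} \sin \angle(\mu(\omega(v_\phi)), h)$ equals the norm of the orthogonal complement, namely $\sqrt{2H\kappa(e,f)} \bigl( \tfrac{1}{4} \sum_{i=2}^l \norm{\alpha_i}^2 \bigr)^{1/2}$. Multiplying through by $\norm{\beta}$, combining these into the preceding inequality, and squaring delivers the claimed bound.

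The main subtlety is justifying the triangle inequality $\angle(\mu, \ker \beta) \geq \phi_\beta - \angle(\mu, h)$: one needs $\mu(\omega(v_\phi))$ to remain on the $h$-side of the wall $\ker \beta$, which is precisely why the regularity hypothesis of Theorem \ref{thm:regularity} is imposed (together with $\phi_\beta \geq \phi_{min}$, which is automatic since $\ker \beta$ is a wall of the Weyl chamber containing $h$). Once regularity is granted, the result is essentially a trigonometric unpacking of data already assembled in Section \ref{sec:regularity}.
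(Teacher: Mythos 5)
Your proposal follows essentially the same route as the paper: the identity $|\beta(\mu(\omega(v_\phi)))| = \norm{\beta}\,\norm{\omega(v_\phi)}\sin\angle(\mu(\omega(v_\phi)),\ker\beta)$, the angle triangle inequality against $h$, and the substitution of the bounds from Lemma \ref{lemma:angles}, so it is correct and matches the paper's argument. One small imprecision: $\norm{\omega(v_\phi)}\cos\angle(\mu(\omega(v_\phi)),h)$ does not \emph{coincide} with $\norm{\omega^{\alpha_1}(v_\phi)}$ in general --- the Cartan projection preserves norms but shrinks angles to the regular direction (Proposition \ref{prop:cartan}), so one only gets the inequality $\geq$ (and $\leq$ for the sine term) --- but both inequalities point the right way, so your final bound is unaffected.
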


\begin{rmk}
 Notice that $\frac{1- \frac{\norm{\alpha_1}}{2}}{\sqrt{(1 - \frac{\norm{\alpha_1}}{2})^2 + \frac{1}{4} \sum_{i=2}^l \norm{\alpha_i}^2}} \geq \cos (\phi_{min})$ ensures that the right-hand side is positive.
\end{rmk}

Now we are ready to apply Theorem \ref{thm:davalosufficient}.
As mentioned before we always get that $c_{\Theta_{L}} \leq c_{\Theta_{S}}$ (in case there are Weyl orbits of roots), and one can verify that $c_{\Theta_{L}} \norm{\beta} \geq \frac{1}{2}$ for any simple root $\beta$. 
This shows that to check Anosovness with respect to the set $\Delta$ of simple roots, it is enough to verify that:

\begin{equation}
   \norm{\II (v_\phi,v_\phi)} \leq H \kappa (e,f) \left( \cos \phi_{\Theta_{S}} \vert 1 - \frac{\norm{\alpha_1}}{2} \vert - \sin \phi_{\Theta_{S}} \left(\frac{1}{4} \sum_{i=2}^l \norm{\alpha_i}^2 \right)^{\frac{1}{2}} \right)^2.
\end{equation}

With this in hand, we can produce our desired criteria for Anosovness.
For general opers, Corollary \ref{coro:estimatesecondfundamental-general} tells us that whenever the conditions of the previous lemma are satisfied, and:

\begin{equation}
\label{eq:criteriageneral}
\begin{split}
    &\Bigg( \sum_{i=1}^l m_i \norm{\alpha_i}^2 + \frac{\sum_{i=1}^l \norm{\nabla \alpha_i}^2 + \frac{1}{4} \sum_{i<j} \norm{\nabla \alpha_i \alpha_j - \alpha_i \nabla \alpha_j}^2}{1 + \frac{1}{4} \sum_{i=1}^l \norm{\alpha_i}^2} \Bigg)^{\frac{1}{2}} \\
    &\quad + \frac{1}{2} \left( \sum_{i,j,k,l} c_{ijkl} \, \alpha_i \alpha_k *\alpha_j *\alpha_l \right)^{\frac{1}{2}} \\
    &\quad \leq  2 \sqrt{\kappa (e,f)} \left( \cos \phi_{\Theta_{S}} \bigg\vert 1 - \frac{\norm{\alpha_1}}{2} \bigg\vert - \sin \phi_{\Theta_{S}} \left(\frac{1}{4} \sum_{i=2}^l \norm{\alpha_i}^2 \right)^{\frac{1}{2}} \right)^2
\end{split}
\end{equation}

then the holonomy of the oper will be $\Delta-$Anosov.

Similarly, when $\mathfrak{g} \neq \mathfrak{sl}_2 (\mathbb{C})$ and the oper is cyclic and defined by $\alpha \in H^0 (X,K^{m_l})$, Corollary \ref{coro:estimatedsecondfundamental-cyclic} gives us that whenever the angle condition $\frac{1}{1 + \frac{\norm{\alpha}^2}{4}} \geq \cos \phi_{min}$ is satisfied and:

\begin{equation}
\label{eq:criteriacyclic}
2 m_l \norm{\alpha}^2 + 2 \frac{\norm {\nabla \alpha}^2}{1 + \frac{1}{4} \norm{\alpha}^2} + \norm{\alpha}^4 c_{llll} \leq 16 \kappa (e,f) \left( \cos (\phi_{\Theta_{S}}) - \sin \phi_{\Theta_{S}} \frac{\norm {\alpha}}{2}\right)^4,
\end{equation}

then the holonomy will be $\Delta-$Anosov.
This is a sharper criterion, and our techniques do not allow us to do any better.

Summarizing, we get:

\begin{thm}
\label{thm:megatheorem}
   Let $\mathfrak{g}$ be a simple Lie algebra, and identify $\op_X (G) \cong \bigoplus_{i=1}^l H^0 (X,K^{m_i+1})$ via a normalized parametrization.
   If we pick a differential in:
   \begin{description}
      \item[General case] The connected component of differentials containing $\vec{0}$ satisfying Equation \ref{eq:criteriageneral}. 
      \item[Cyclic case] The connected component of $\alpha \in H^0 (X,K^{m_l+1})$ containing $0$ and  verifying Equation \ref{eq:criteriacyclic}. 
   \end{description}
   Then the holonomy of the corresponding oper will be $\Delta-$Anosov.
\begin{proof}
   The result follows from noticing that if $\vec{\alpha}$ is in the connected component of $\vec{0}$ of differentials satisfying either of those equations, we automatically get $\frac{1- \frac{\norm{\alpha_1}}{2}}{\sqrt{(1 - \frac{\norm{\alpha_1}}{2})^2 + \frac{1}{4} \sum_{i=2}^l \norm{\alpha_i}^2}} \geq \cos (\phi_{min})$. 
   If the angle condition fails, the right-hand side of the inequality becomes zero, forcing $\norm{\alpha_i}=0$ for all $i$. Since $0$ satisfies the strict inequality and we are in the same connected component, continuity ensures the angle condition holds throughout.
\end{proof}
\end{thm}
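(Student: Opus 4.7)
The plan is to combine the explicit bound on the second fundamental form of $\Ep$ (Corollary \ref{coro:estimatesecondfundamental-general}, respectively Corollary \ref{coro:estimatedsecondfundamental-cyclic}) with the lower bound on $\beta(\mu(\omega(v_\phi)))$ furnished by the lemma preceding Equation (\ref{eq:criteriageneral}), and then feed the resulting pointwise control into Davalo's criteria, Theorem \ref{thm:davalosufficient} and Theorem \ref{thm:davaloholonomy}. The guiding principle is that both sides of the target inequality (\ref{eq:criteriageneral}) arise as, respectively, an upper bound on $\norm{\II(v_\phi,v_\phi)}$ and a lower bound on $c_{\Theta_S}\,|\beta(\mu(\omega(v_\phi)))|^2$ for $\beta$ any simple root, so the inequality is exactly the hypothesis needed to apply Davalo. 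I would divide the argument into three blocks.

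First, I would handle regularity. Given $\vec\alpha$ in the connected component of $\vec 0$ verifying the inequality (\ref{eq:criteriageneral}), I want to show the angle condition of Theorem \ref{thm:regularity} holds pointwise. The argument is a continuity/connectedness trick: if the angle condition failed at some point, its failure locus would disconnect the set of differentials satisfying (\ref{eq:criteriageneral}) from $\vec 0$, because the right-hand side of (\ref{eq:criteriageneral}) vanishes precisely when $\cos\phi_{\Theta_S}\,|1-\norm{\alpha_1}/2|=\sin\phi_{\Theta_S}\bigl(\tfrac14\sum_{i\geq 2}\norm{\alpha_i}^2\bigr)^{1/2}$, which lies strictly below the regularity threshold $\cos\phi_{\min}$ coming from Theorem \ref{thm:regularity} (since $\phi_{\Theta_S}\geq\phi_{\min}$ by the Lie-theoretic computation in the appendix). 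Hence along any path from $\vec 0$ to $\vec\alpha$ inside the region defined by (\ref{eq:criteriageneral}), the regularity angle bound remains valid, so $\Ep$ is a regular immersion. The same reasoning applies verbatim in the cyclic case, where the inequality $\tfrac{1}{1+\norm\alpha^2/4}\geq\cos\phi_{\min}$ is forced because $\phi_{\min}\leq\phi_{\Theta_S}$ and the cyclic criterion (\ref{eq:criteriacyclic}) fails before the regularity bound does.

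Second, once the EP-surface is regular, I would verify the nearly-geodesic hypothesis. For every simple root $\beta$ and every tangent direction $v_\phi$, combining the pointwise second fundamental form bound with the pointwise Cartan projection bound gives, after rearranging,
\[
\norm{\II(v_\phi,v_\phi)}\leq c_{\Theta_S}\,\bigl|\beta(\mu(\omega(v_\phi)))\bigr|^2.
\]
This is precisely the hypothesis of Theorem \ref{thm:davalosufficient} with $\Theta=\Theta_S$, so $\Ep$ is $\tau_{\Theta_S}$-nearly geodesic. Using the inequality $c_{\Theta_L}\leq c_{\Theta_S}$ from Appendix \ref{appendix:root}, and the normalization $c_{\Theta_L}\norm\beta\geq\tfrac12$ for any simple root, the same bound certifies the nearly-geodesic property for $\tau_{\Theta_L}$ as well. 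In short, $\Ep$ is simultaneously $\tau_\Theta$-nearly geodesic for every Weyl orbit $\Theta\subset\Delta$ of simple roots.

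Finally, I would apply Theorem \ref{thm:davaloholonomy} to each such $\Theta$ to conclude that the holonomy $\rho$ is $\Theta$-Anosov for every Weyl orbit of simple roots, and therefore $\Delta$-Anosov, completing the proof. The cyclic case is identical except that Corollary \ref{coro:estimatedsecondfundamental-cyclic} is used in place of Corollary \ref{coro:estimatesecondfundamental-general} and the regularity argument only involves $\alpha$. The single genuinely delicate point of the argument is the first block: the compatibility between the regularity threshold $\cos\phi_{\min}$ and the quantity appearing on the right-hand side of the Anosov inequalities, which is what guarantees that a connected-component assumption suffices in place of a uniform pointwise angle assumption.
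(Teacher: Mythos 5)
Your overall architecture coincides with the paper's: your second and third blocks are exactly the chain (second fundamental form bound from Corollaries \ref{coro:estimatesecondfundamental-general}/\ref{coro:estimatedsecondfundamental-cyclic}) + (Cartan projection lower bound) + (Theorems \ref{thm:davalosufficient} and \ref{thm:davaloholonomy}) that the paper assembles in the text immediately preceding the theorem, so the only content the theorem's own proof must supply is your first block: the verification that the angle condition of Theorem \ref{thm:regularity} holds on the connected component of $\vec 0$.

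It is precisely there that your justification does not establish what you need, and it runs in the direction opposite to the paper's. You observe that the right-hand side of (\ref{eq:criteriageneral}) vanishes exactly where $\cos\phi_{\Theta_{S}}\,\vert 1-\tfrac{\norm{\alpha_1}}{2}\vert = \sin\phi_{\Theta_{S}}\bigl(\tfrac14\sum_{i\geq2}\norm{\alpha_i}^2\bigr)^{1/2}$, i.e.\ where the regularity ratio $(1-\tfrac{\norm{\alpha_1}}{2})/\bigl((1-\tfrac{\norm{\alpha_1}}{2})^2+\tfrac14\sum_{i\geq2}\norm{\alpha_i}^2\bigr)^{1/2}$ equals $\sin\phi_{\Theta_{S}}$, and you correctly note $\sin\phi_{\Theta_{S}}<\cos\phi_{min}$. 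But that places the vanishing locus of the right-hand side strictly \emph{beyond} the regularity threshold: a path from $\vec 0$ (where the ratio is $1$) must first cross the locus where the ratio equals $\cos\phi_{min}$, and at that crossing the right-hand side of (\ref{eq:criteriageneral}) is still strictly positive. So your comparison of thresholds does not disconnect the region defined by (\ref{eq:criteriageneral}) along the angle-condition boundary, and the conclusion ``the regularity angle bound remains valid along the path'' does not follow from what you wrote. The paper's proof asserts the implication in the logically required orientation: failure of the angle condition is claimed to force the right-hand side of (\ref{eq:criteriageneral}) to vanish, whence the left-hand side (which dominates $(\sum_i m_i\norm{\alpha_i}^2)^{1/2}$) forces $\norm{\alpha_i}=0$ for all $i$, contradicting the failure. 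Whatever one thinks of that assertion, it is the one doing the work; your observation, if anything, highlights that on the regularity boundary the right-hand side has not yet vanished. To close your version of the argument you would need to show directly that the inequality (\ref{eq:criteriageneral}) already fails on the locus where the angle condition holds with equality, and that requires an additional pointwise estimate comparing the two sides there which is absent from your write-up.
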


\begin{rmk}
   Standard inequalities show that:
   \begin{itemize}
      \item $\frac{\sum_{i=1}^l \norm{\nabla \alpha_i}^2 + \frac{1}{4}\sum_{i<j} \norm{\nabla \alpha_i \alpha_j - \alpha_i \nabla \alpha_j}^2}{1 + \frac{1}{4} \sum_{i=1}^l \norm{\alpha_i}^2} \leq \sum_{i=1}^l \norm{\nabla \alpha_i}^2$.
      \item $(\sum_{i,j,k,l} \alpha_i \alpha_k * \alpha_j * \alpha_l c_{ijkl})^{\frac{1}{2}}\leq l \max_{ijkl} c_{ijkl}^{\frac{1}{2}} (\sum_{i=1}^l \norm{\alpha_i}^2).$
      \item Under the $\frac{1- \frac{\norm{\alpha_1}}{2}}{\sqrt{(1 - \frac{\norm{\alpha_1}}{2})^2 + \frac{1}{4} \sum_{i=2}^l \norm{\alpha_i}^2}} \geq \cos (\phi_{min})$ condition, we get that: 
\begin{equation*}
\resizebox{\displaywidth}{!}{$\cos \phi_{\Theta_{S}} \vert 1 - \frac{\norm{\alpha_1}}{2} \vert - \sin \phi_{\Theta_{S}} \left(\frac{1}{4} \sum_{i=2}^l \norm{\alpha_i}^2 \right)^{\frac{1}{2}} \geq \cos (2 \phi_{\Theta_{S}})\left( (1-\frac{\norm{\alpha_1}}{2})^2 + \frac{1}{4}\sum_{i=2}^l \norm{\alpha_i}^2 \right)^{\frac{1}{2}}.$}
\end{equation*} 
   The right-hand side is always positive (see the table in Appendix \ref{appendix:root}).
   \end{itemize}
\end{rmk}

Using these remarks, one can simplify Equation \ref{eq:criteriageneral} to obtain user-friendly criteria, as in Theorem \ref{thm:maingeneral}.
Moreover, Cauchy estimates imply Corollary \ref{coro:cauchy}:

\begin{proof}[Proof of Corollary \ref{coro:cauchy}]
   Using the previous remark, Theorem \ref{thm:maingeneral}, and the estimate $\norm{\nabla \alpha} \leq C(X) \norm{\alpha}$, the condition:
   $$2m_l \norm{\alpha}^2 + 2 C(X) \norm{\alpha}^2 + c_l \norm{\alpha}^4 \leq 16 \kappa (e,f) \cos^4 (2\phi_{\theta}),$$
   induces $\Delta-$Anosov opers.
   One can solve this biquadratic inequality by hand to obtain the desired result.
\end{proof}

\section{Domains of discontinuity and extension to the boundary}
\label{subsec:domains}

Let $(\mathcal{D},\rho)$ be a $G-$oper, and $Ep$ be a the associated EP-surface.
A consequence of Davalo's work is that whenever $Ep$ is a $\tau_\Theta-$nearly geodesic immersion, there is a constant $c>0$ such that $\exp (cb_\xi (Ep(\cdot),\theta_0))$ is strictly convex for $b_\xi (\cdot,\theta_0)$ any Busemann function centered at $\xi \in \tau_\Theta$.

Consider:

$$\Omega_{(\mathcal{D},\rho)} =\{ \xi \in \tau_\Theta : b_\xi (Ep (\cdot),\theta_0) \text{ is proper and bounded below}\}.$$

one can prove that $\Omega_{(\mathcal{D},\rho)}$ is an open domain of discontinuity for $\rho$, and the quotient $\Omega_{(\mathcal{D},\rho)}/\rho (\pi_1 (X))$ is compact, see \cite[Theorem 7.8]{davalo2025nearly} (this result was later generalized in \cite[Theorem 1.11]{davalo2024finite}).
Moreover, one can define a (smooth) map $\pi: \Omega_{(\mathcal{D},\rho)} \rightarrow \tilde{X}$ by mapping $\xi$ to the unique $z \in \tilde{X}$ for which $b_\xi (Ep(\cdot),\theta_0)$ has its unique minimum at $z$. 
This map is well-defined by the strict convexity property mentioned above.

Recall from Section \ref{subsection:complexasreal} that the stratum $\tau_\Theta$ is canonically isomorphic to a partial flag manifold $\mathcal{P}_\Theta$ of Lie algebras of parabolics stabilizing points in $\tau_\Theta$.
The parabolic defined by $h_\Theta$ belongs to $\mathcal{P}_\Theta$ by definition.
Notice that $h_\Theta$ is the coroot associated with the largest root of the Weyl orbit.
When $G$ is simple, one can easily compute these vectors for the short and long Weyl orbits of roots, and thus compute the class of the parabolic subgroup.

\begin{rmk}
\label{rmk:flagsl_3}
When $G = PGL_n (\mathbb{C})$, the partial flag manifold $\mathcal{P}_\Theta$ is identified with the space:
$$\mathcal{F}_{1,n-1} = \{ (p,H) \in \mathbb{CP}^{n-1} \times (\mathbb{CP}^{n-1})^*: p \in H\}.$$
Notice that in $PGL_3 (\mathbb{C})$, $\mathcal{P}$ is just the full flag manifold.
\end{rmk}

There is a natural projection $p_\Theta : \mathcal{B} \rightarrow \mathcal{P}_\Theta$ mapping $\mathfrak{b}$ to the unique parabolic in $\mathcal{P}_\Theta$ containing $\mathfrak{b}$ (see \cite[Section 30]{humphreys2012linear}).
More geometrically, if we identify $\mathcal{B}$ with the $G$-orbit of an apartment at infinity, the projection maps each apartment to the unique facet whose stabilizer is in $\mathcal{P}_\Theta$.

Composing $p_\Theta \circ \mathcal{D}$, we get a holomorphic map to $\mathcal{P}_\Theta$ which we now verify has image in the domain of discontinuity $\Omega_{(\mathcal{D},\rho)}$:

\begin{proof}[Proof of Theorem \ref{thm:domaindiscontinuity}]
 Recall from Lemma \ref{lemma:normalweyl} that we have a distribution of Weyl chambers normal to our EP-surface, whose apartments at infinity are defined by the developing map $\mathcal{D}$.
 In particular, the Weyl chambers tipped at $Ep(z)$ contain a unique vector $h(z)$ pointing towards $\xi (z)$ in the stratum $\tau_\Theta$.

 Recall that the gradient $\nabla b_\xi (\cdot,\theta_0)$ at $\theta$ is given by the unique unit-length vector in $\mathfrak{p}_\theta$ pointing towards $\xi$ (see for instance \cite[Proposition 4.3]{davalo2025nearly}).
 Particularly, $\nabla b_{\xi (z)}$ at $Ep (z)$ is orthogonal to the EP-surface, therefore $b_{\xi (z)} (Ep (\cdot),\theta_0)$ has a critical point at $z$.
 Since $\exp (c b_{\xi(z)} (Ep(\cdot),\theta_0))$ is strictly convex, the Busemann function must have a unique minimum realized at $z$.
   
 This implies that $\xi (z) \in \Omega_{(\mathcal{D},\rho)}$ for every $z \in \tilde{X}$.
 By construction, $\xi (z)$ is associated to the parabolic $\pi \circ \mathcal{D}(z)$.
 Therefore $\pi \circ (p_\Theta \circ \mathcal{D}) : \tilde{X} \rightarrow \tilde{X}$ is the identity map.
\end{proof}

As a consequence, we get that the developing map $\mathcal{D}: \tilde{X} \rightarrow \mathcal{B}$ is a holomorphic embedding.
Notice that $\rho$ acts properly discontinuously along $\mathcal{D}(\tilde{X})\subset \mathcal{B}$.
However, it is not clear that it is contained in an \emph{open} domain of discontinuity.

We can observe:

\begin{lemma}
 To impose $\mathcal{D} (\tilde{X})$ to be contained in a cocompact domain of discontinuity of $\mathcal{B}$ is an open condition in the space of quasi-Hitchin opers.
   \begin{proof}
 Stecker proved that any cocompact domain of discontinuity in the full flag. manifold comes from a balanced ideal construction (see \cite[Theorem 1.2]{stecker2024balanced}), therefore, we can assume that $\mathcal{D} (\tilde{X})$ is contained in a connected component of $\mathcal{B} \setminus Th (\Lambda_\rho)$, where $Th (\Lambda_\rho)$ is a thickening of the limit set of the holonomy (see \cite[Theorem 6.13]{kapovich2017dynamics}).
      
 Since the thickening is closed (hence compact), choosing a fundamental domain $K \subset \tilde{X}$ for the deck action, we get that $\mathcal{D} (K)$ is a uniform distance away from the thickening (with respect to some Riemannian metric in $\mathcal{B}$).
 The limit set of an Anosov representation varies continuously (in the Hausdorff topology) with respect to the holonomy. 
The same happens to its thickening, therefore any oper $(\mathcal{D}',\rho')$ nearby will satisfy that $\mathcal{D}' (K) \cap Th (\Lambda_{\rho'}) = \emptyset$ as well.
 This finishes the proof.
   \end{proof}
\end{lemma}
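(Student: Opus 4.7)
The plan is to reduce the containment statement to a disjointness statement on a compact set, where standard continuity arguments apply. First I would invoke Stecker's classification result \cite{stecker2024balanced}: every cocompact domain of discontinuity $\Omega \subset \mathcal{B}$ for an Anosov representation $\rho$ arises as a connected component of $\mathcal{B} \setminus Th(\Lambda_\rho)$, where $Th(\Lambda_\rho)$ is a balanced-ideal thickening of the Anosov limit set $\Lambda_\rho$ in the full flag manifold (see also \cite[Theorem 6.13]{kapovich2017dynamics}). In particular, $Th(\Lambda_\rho)$ is closed, hence compact.

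Next, suppose we have a quasi-Hitchin oper $(\mathcal{D},\rho)$ with $\mathcal{D}(\tilde{X})$ contained in some cocompact domain of discontinuity $\Omega_\rho$. Fix a compact fundamental domain $K \subset \tilde{X}$ for the deck group action, so that $\rho$-equivariance gives $\mathcal{D}(\tilde{X}) = \rho(\pi_1(X)) \cdot \mathcal{D}(K)$. Equipping $\mathcal{B}$ with some auxiliary Riemannian metric, the compact sets $\mathcal{D}(K)$ and $Th(\Lambda_\rho)$ are a positive distance $\delta > 0$ apart. The goal is then to show that for any oper $(\mathcal{D}',\rho')$ sufficiently close to $(\mathcal{D},\rho)$, we still have $\mathcal{D}'(K) \cap Th(\Lambda_{\rho'}) = \emptyset$, so that $\mathcal{D}'(\tilde{X})$ is contained in a single connected component of $\mathcal{B} \setminus Th(\Lambda_{\rho'})$.

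For this, I would use two continuity properties. First, the developing map depends continuously on the oper in the compact-open topology on $\tilde{X}$; this follows from its gauge-theoretic description via Corollary \ref{coro:parametrization} (the connection $D^{\vec{\alpha}}$ depends continuously on $\vec{\alpha}$, and developing maps depend continuously on flat connections). Thus $\mathcal{D}'(K)$ can be forced into an arbitrarily small neighborhood of $\mathcal{D}(K)$. Second, the Anosov limit map $\xi_{\rho}$ varies continuously with $\rho$ in the Anosov locus (see \cite{guichard2012anosov}), so $\Lambda_{\rho'} \to \Lambda_\rho$ in Hausdorff topology. Since the thickening operation is defined by a fixed balanced ideal and preserves Hausdorff convergence, $Th(\Lambda_{\rho'}) \to Th(\Lambda_\rho)$ in Hausdorff topology as well. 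Combining both continuity statements, for $(\mathcal{D}',\rho')$ close enough to $(\mathcal{D},\rho)$ the disjointness $\mathcal{D}'(K) \cap Th(\Lambda_{\rho'}) = \emptyset$ persists, and by equivariance so does $\mathcal{D}'(\tilde{X}) \cap Th(\Lambda_{\rho'}) = \emptyset$.

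The main subtlety is ensuring continuity of the thickening in the Hausdorff topology; the balanced ideal is a combinatorial datum attached to the Weyl group that is locally constant along a connected component of Anosov representations, so this does go through, but it is the one place where invoking a prior structural result (Stecker plus continuity of Anosov limit maps) is essential. Once this is in hand, the argument is simply compactness plus continuity.
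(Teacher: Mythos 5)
Your proposal is correct and follows essentially the same route as the paper's proof: Stecker's classification to reduce to disjointness from a balanced-ideal thickening, compactness of a fundamental domain, and Hausdorff continuity of the limit set and its thickening. The only addition is that you make explicit the continuous dependence of the developing map on the oper, which the paper uses implicitly.
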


Therefore, to answer the first part of Question \ref{ques:extension}, it is enough to show the closedness of this property.

For a $G-$Fuchsian oper (or more generally, any oper coming from a quasi-Fuchsian projective structure), one can observe that the oper map $\mathcal{D}$ can be extended continuously to $\partial \mathbb{H}^2$ as the limit map of the Anosov representation $\xi: \partial \mathbb{H}^2 \rightarrow \mathcal{B}$ (where we identify $\tilde{X}\cong \mathbb{H}^2$).
We say that a quasi-Hitchin oper verifying that property is \emph{extendable}.

Recall that two Borel subalgebras $\mathfrak{b}_1, \mathfrak{b}_2$ are said to be transverse (denoted $\mathfrak{b}_1 \pitchfork \mathfrak{b}_2$), if $\mathfrak{b}_1 \cap \mathfrak{b}_2$ is a Cartan subalgebra.
The \textit{open Schubert cell} of $\mathfrak{b}$ is defined as the set $\mathcal{C}_{\mathfrak{b}} = \{ \mathfrak{b}' \in \mathcal{B}: \mathfrak{b} \pitchfork \mathfrak{b}' \}$. 
More geometrically, two apartments in $\partial \mathbb{X}$ correspond to transverse flags $\mathfrak{b}_1 \pitchfork \mathfrak{b}_2$ if and only if they can be connected by a flat asymptotic to both of them via opposite Weyl chambers.

The following lemma provides a sufficient condition for extendability:

\begin{lemma}
\label{lemma:localanswer}
 If a quasi-Fuchsian $G-$oper $(\mathcal{D},\rho)$ verifies that any point of $\mathcal{D}(\tilde{X})$ is transverse to any point in the limit set $\Lambda_\rho$, then it is extendable.
 The property of $\mathcal{D}(\tilde{X})$ being transverse to $\Lambda_\rho$ is open in the space of quasi-Hitchin opers.
   \begin{proof}
 Calling $\mathcal{Z}_{\mathfrak{b}} = \mathcal{C}_\mathfrak{b}^c$ the set of flags non-transversal to $\mathfrak{b}$, we can define a ``naive'' thickening of $\Lambda$ as $\bigcup_{x\in \partial \mathbb{H}^2} \mathcal{Z}_{\xi (x)}$. 
 Notice that this set is compact. 
 Picking $K \subset \mathbb{H}^2$ a fundamental domain for the deck action, we get that under these hypotheses $\mathcal{D}(K)$ is at a bounded distance away from the thickening as in the previous lemma.
 Repeating the continuity argument, we get that the condition is open.

 An Anosov subgroup is a convergence subgroup in the language of \cite{kapovich2017dynamics}.
 This means that whenever $\gamma_n \to \infty$ in the group $\pi_1 (X)$, there is a subsequence and a pair of flags $\xi_+,\xi_- \in \Lambda_\rho$ with the property that $\rho (\gamma_{n_k})|_{C_{\xi_-}}$ converges to the constant map $\xi_+$ uniformly on compacta. 
 Since $\mathcal{D}(K)$ is uniformly away from the naive thickening, the convergence property implies our desired extension to the boundary.
\end{proof}
\end{lemma}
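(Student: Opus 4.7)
The plan is to handle the two assertions separately: first the openness, which is a soft compactness argument, and then the continuous extension, which will use the convergence property of Anosov representations.

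For the openness statement, I would introduce the ``naive thickening'' $T_\rho = \bigcup_{\mathfrak{b} \in \Lambda_\rho} \mathcal{Z}_{\mathfrak{b}}$ of the limit set. Each Schubert variety $\mathcal{Z}_{\mathfrak{b}}$ is a closed subvariety of $\mathcal{B}$ depending continuously on $\mathfrak{b}$, so compactness of $\Lambda_\rho$ gives compactness of $T_\rho$. Choosing a compact fundamental domain $K \subset \tilde X$ for the deck action, the transversality hypothesis rephrases as $\mathcal{D}(K) \cap T_\rho = \emptyset$; since both are compact, they are at positive distance in any fixed auxiliary Riemannian metric on $\mathcal{B}$. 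Both the restriction $\mathcal{D}|_K$ and the limit set (hence its thickening, in the Hausdorff topology) vary continuously with the oper by the structural stability of Anosov representations, so the disjointness persists in a small neighborhood of $(\mathcal{D},\rho)$ inside the quasi-Hitchin locus.

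For the boundary extension, I would set $\overline{\mathcal{D}}|_{\partial \mathbb{H}^2} = \xi_\rho$, where $\xi_\rho : \partial \mathbb{H}^2 \to \mathcal{B}$ is the Anosov limit curve, and verify continuity at each boundary point. Given a sequence $z_n \to x \in \partial \mathbb{H}^2$, cocompactness of the $\pi_1(X)$-action lets me write $z_n = \gamma_n w_n$ with $w_n$ in the closure of $K$ and $\gamma_n \in \pi_1(X)$; necessarily $\gamma_n \to \infty$ in the group, and after passing to a subsequence $\gamma_n \cdot o \to x$ and $\gamma_n^{-1}\cdot o \to y$ for some $y \in \partial \mathbb{H}^2$ and a fixed basepoint $o$. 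Equivariance yields $\mathcal{D}(z_n) = \rho(\gamma_n) \mathcal{D}(w_n)$, and the convergence-group property of Anosov representations (in the sense of \cite{kapovich2017dynamics}) tells me that $\rho(\gamma_n)$ converges to the constant map with value $\xi_\rho(x)$ uniformly on compact subsets of the open Schubert cell $C_{\xi_\rho(y)}$.

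The crucial role of the transversality hypothesis is precisely to make this convergence applicable on the compact set of flags $\mathcal{D}(\overline K)$: every point of $\mathcal{D}(\overline K)$ is, by assumption, transverse to $\xi_\rho(y) \in \Lambda_\rho$, hence lies inside $C_{\xi_\rho(y)}$. Therefore $\rho(\gamma_n)\mathcal{D}(w_n) \to \xi_\rho(x)$, giving the desired continuous extension and its identification with the limit map. I expect this to be the main technical obstacle: without the transversality assumption, a subsequential limit of $\mathcal{D}(w_n)$ could land on the closed Schubert hypersurface $\mathcal{Z}_{\xi_\rho(y)}$, where the convergence-group statement degenerates, so one would need a quantitative replacement for uniform convergence to carry the argument through.
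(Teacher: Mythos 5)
Your proposal is correct and follows essentially the same route as the paper: the openness is obtained from the compactness of the naive thickening $\bigcup_{x}\mathcal{Z}_{\xi(x)}$ together with continuity of the limit set, and the extension is obtained from the convergence-group property of Anosov subgroups applied on the compact set $\mathcal{D}(\overline{K})$, which the transversality hypothesis places inside the open Schubert cell of every repelling flag. You merely spell out in more detail the final step that the paper leaves implicit, namely writing $z_n=\gamma_n w_n$ and identifying the attracting and repelling flags with $\xi_\rho(x)$ and $\xi_\rho(y)$.
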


Notice that under these hypotheses $\mathcal{D}(\tilde{X})$ is contained in the complement of any thickening associated to an ideal of $W$.

A consequence of Remark \ref{rmk:flagsl_3} and Theorem \ref{thm:domaindiscontinuity} is that $PGL_3 (\mathbb{C})$ opers that are EP-whitnessed verify that $\mathcal{D}(\tilde{X})$ lies in a cocompact domain of $\mathcal{B}$.
This domain arises from a balanced ideal construction.
However, there is only one possibility for a balanced ideal in this setup (see \cite[Example 3.24]{kapovich2017dynamics}).
This remark is the key ingredient of Theorem \ref{thm:extensionsl3}:

\begin{proof}[Proof of Theorem \ref{thm:extensionsl3}]
 Consider the family of $(D,\rho)$ opers that are EP-whitnessed and satisfy that $\mathcal{D}(\tilde{X})$ is transverse to $\Lambda_\rho$.
 Lemma \ref{lemma:localanswer} tells us that this family of opers is open.
 The proof follows from showing that the complement of this set is also open.
   
 Observe that if $(\mathcal{D},\rho)$ is an EP-whitnessed oper which is non-transverse to $\Lambda_\rho$, then there is $z_0 \in \tilde{X}$ such that $\mathcal{D}(z_0) \in \mathcal{Z}_{\xi(x)}$ for some $x \in \partial \mathbb{H}^2$.
 Since $\mathcal{D}(z_0)$ is outside the thickening defined by the balanced ideal, it must be in the interior of a codimension-one Schubert cell (this follows from the structure of the ideal).
In particular, it lies on the smooth locus of the Schubert variety.
 We will see in the next section (see Remark \ref{rmk:transversality}), that $d_{z_0} \mathcal{D} (TX) \oplus T_{\mathcal{D}(z)} \mathcal{Z}_{\xi (x)} = T_{\mathcal{D}(z)} \mathcal{B}$.
 Since transversality is an open property, and the limit set varies continuously with the representation, this is an open condition.
\end{proof}

\section{Remarks on transversality}
\label{sec:transversality}

The goal of this section is to prove that principal embeddings are maximally transverse, and Theorem \ref{thm:transversality}.
Before we do that, we take a slight detour into the geometry of $\mathcal{B}$.

\subsection{Codimension-one Schubert cells as divisors}

In the previous section, we denoted by $\mathcal{Z}_\mathfrak{b} = \mathcal{C}_{\mathfrak{b}}^c$ the set of flags non-transverse to $\mathfrak{b}$.
This set is a $B-$invariant algebraic subset of codimension one.
We can explicitly describe polynomials defining $\mathcal{Z}$ using fundamental weights for $\mathfrak{g}$.

Choose a Cartan subalgebra $\mathfrak{h} \subset \mathfrak{b}$. 
Recall that each fundamental weight $\omega_i$ is the highest weight of an irreducible representation $\rho_i : G \rightarrow V_{\omega_i}$ (possibly taking a double cover of the adjoint group).
Given $v_{\omega_i}$ the highest-weight vector, we can define a line bundle over $G/B$ via $L_{\omega_i} := G \times \mathbb{C} v_{\omega_i}/B \rightarrow G/B$, where $B$ acts diagonally.
The space of sections $H^0 (G/B,L_{\omega_i})$ is an irreducible $G-$representation under the pullback action isomorphic to $V_{\omega_i}^*$ (this is the easy part of the Bott-Borel-Weil Theorem, see \cite[Lemma 6.1.15]{chriss1997representation}).

The following is well-known for $PGL_n (\mathbb{C})$ (see for instance \cite[Chapter 1]{brion2005lectures}).
We include a proof since we could not find an explicit reference for the general case.

\begin{prop}
 Let $\sigma_i$ be the highest weight vector of the space of sections $H^0 (G/B,L_{\omega_i})$ as a $G-$representation, then the set of zeros $\mathcal{Z}_i$ of $\sigma_i$ is contained in $\mathcal{Z}_\mathfrak{b}$. 
 Moreover, $\mathcal{Z} = \bigcup_{i=1}^l \mathcal{Z}_i$.
\begin{proof}
 Identifying $\mathcal{B}\cong G/B$ and choosing $H \subset B$ a Cartan subalgebra, Bruhat's decomposition tells us that $G/B$ decomposes as a union of the $B-$orbits $w B \in G/B$, where $w \in W$ is the Weyl group.
 We get that each orbit $B w B$ is an open cell, and the closure of each ball $\mathcal{Z}_w$ is contained in the union of cells of higher codimension and is called a Schubert variety.
 The dimension of each cell is given by the length $\ell (w)$, i.e., the minimal number of simple reflections needed to produce $w$.
 This is all classical as in \cite[Chapter 28]{humphreys2012linear}.

 Particularly, codimension-one cells correspond to elements $w_0 s_i$, where $w_0 \in W$ is the longest element of $W$, and $s_i$ is a simple reflection along the $i-th$ coroot space.
 The proposition follows from proving that $\mathcal{Z}_i = \mathcal{Z}_{w_0 s_i}$.
 Notice that $\mathcal{Z}_i$ is closed and $B-$invariant by construction. 
Therefore, it will suffice to verify that $\sigma_i (w_0 s_j B)$ is zero for $j = i$, and non-zero otherwise.

 To see this, notice that a section of $L_{\omega_i}$ is a function $\sigma: G \rightarrow \mathbb{C}$ such that $\sigma_i(b g) = \chi_i^{-1} (b) \sigma_i(g)$, where $\chi_i : B \rightarrow \mathbb{C}$ is the character defined by $\omega_i$.
 The highest weight vector of $H^0 (\mathcal{B},L_{\omega_i})$ is defined by the function $\sigma_i (g) = \alpha_{\omega_i} (g v_{\omega_i}),$ where $\alpha_{\omega_i}$ is the highest weight vector of $V_{\omega_i}^*$.
 Notice the following two facts:
   \begin{itemize}
      \item $w v_{\omega_i}$ is a weight vector with weight $w. \omega_i$ (where the dot denotes the action of $W$ in $\mathfrak{h}^*$).
      \item $s_j . \omega_i$ is $\omega_i$ if $j \neq i$, and $\omega_i - \alpha_i$ otherwise. Here $\alpha_i$ is the simple root defining $\omega_i$.
   \end{itemize}
 From these two facts, we get that $w_0 s_j v_{\omega_i}$ is a weight vector associated to $w_0 \omega_i$ if $j\neq i$, or $w_0 \omega_i - w_0 \alpha_i$ for $j = i$.
 Since $\alpha_{\omega_i}$ pairs non-trivially only with the lowest weight vector (with weight $w_0 \omega_i$), we get that $\sigma_i (w_0 s_j) = 0$ iff $i = j$.  
\end{proof}
\end{prop}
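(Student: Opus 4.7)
The plan is to combine the Bruhat decomposition of $\mathcal{B} \cong G/B$ with the Borel--Weil identification $H^0(G/B, L_{\omega_i}) \cong V_{\omega_i}^*$. Transversality of $\mathfrak{b}'$ with $\mathfrak{b}$ is equivalent to $\mathfrak{b}'$ lying in the big cell $Bw_0 B/B$, so $\mathcal{Z}_\mathfrak{b}$ is its complement. Since the complement is a closed subvariety of pure codimension one, Bruhat theory gives the decomposition $\mathcal{Z}_\mathfrak{b} = \bigcup_{j=1}^l \overline{B w_0 s_j B/B}$ into the $l$ Schubert divisors indexed by the simple reflections. The proposition thus amounts to matching the section zero loci $\mathcal{Z}_i$ bijectively with these Schubert divisors.

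For the first step, I would argue intrinsically that each $\mathcal{Z}_i$ is automatically closed, $B$-stable (because $\sigma_i$ is $B$-semi-invariant as a highest weight vector of $V_{\omega_i}^*$), and of pure codimension one (since $\sigma_i$ is a nonzero section of a line bundle). Hence $\mathcal{Z}_i$ is a union of Schubert divisors $\overline{B w_0 s_j B/B}$; a dimension count, together with the fact that $L_{\omega_i}$ represents a single generator of $\mathrm{Pic}(G/B) \cong \bigoplus_j \mathbb{Z}\,[\overline{B w_0 s_j B/B}]$, forces $\mathcal{Z}_i$ to be exactly one such divisor.

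To pinpoint which one, I would realize $\sigma_i$ as a matrix coefficient $\sigma_i(g) = \langle \alpha_{\omega_i}, g \cdot v_{\omega_i}\rangle$ and evaluate on the representatives $w_0 s_j$. The vector $w_0 s_j v_{\omega_i}$ has weight $w_0 s_j \omega_i$, and the defining property $s_j \omega_i = \omega_i - \delta_{ij}\alpha_i$ of fundamental weights shows this equals $w_0\omega_i$ iff $j \neq i$. Since $\alpha_{\omega_i}$ pairs nontrivially only with the lowest weight space of $V_{\omega_i}$ (of weight $w_0\omega_i$), we get $\sigma_i(w_0 s_j) \neq 0$ precisely when $j \neq i$. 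Consequently $\mathcal{Z}_i = \overline{B w_0 s_i B/B}$, which immediately yields $\mathcal{Z}_i \subset \mathcal{Z}_\mathfrak{b}$ and, by taking the union over $i$, the equality $\mathcal{Z}_\mathfrak{b} = \bigcup_i \mathcal{Z}_i$.

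The main obstacle is convention-tracking: ensuring the correct sign and direction in Borel--Weil so that $\sigma_i$ is a $B$-semi-invariant matrix coefficient with the claimed weight, and confirming that $\sigma_i$ vanishes to first order along its zero divisor so that $\mathcal{Z}_i$ is the full reduced Schubert divisor rather than a multiple. Both points are routine once the identifications are fixed, but they are where I expect to spend most of the bookkeeping effort.
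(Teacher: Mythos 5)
Your proposal is correct and follows essentially the same route as the paper: Bruhat decomposition identifying the codimension-one Schubert divisors with the elements $w_0 s_j$, closedness and $B$-invariance of $\mathcal{Z}_i$, and the matrix-coefficient evaluation $\sigma_i(w_0 s_j) = \alpha_{\omega_i}(w_0 s_j v_{\omega_i})$ combined with $s_j.\omega_i = \omega_i - \delta_{ij}\alpha_i$ to pin down which divisor occurs. Your additional appeal to $\mathrm{Pic}(G/B)$ to rule out multiplicities is a harmless refinement not needed for the set-theoretic identity the paper asserts.
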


\begin{ex}
When $G = SL_n (\mathbb{C})$, the representation associated to $\omega_i$ is $\Lambda^i \mathbb{C}^n$.
The highest weight vectors are $v_{\omega_i}= e_1\wedge\ldots\wedge e_i$, and $\alpha_{\omega_i} = dx^1\wedge\ldots \wedge dx^i$.
Therefore, we get:
\[
\sigma (gB) = \alpha_{\omega_i} (g v_{\omega_i})= \det \begin{pmatrix}
g_{11} & g_{12} & \cdots & g_{1i} \\
g_{21} & g_{22} & \cdots & g_{2i} \\
\vdots & \vdots & \ddots & \vdots \\
g_{i1} & g_{i2} & \cdots & g_{ii}
\end{pmatrix},
\]
\end{ex}

The element $w_0 s_i B$ lies in the smooth part of $\mathcal{Z}_i$.
A computation shows that the tangent space is given by:

$$T_{w_0 s_i B} \mathcal{Z}_i = \bigoplus_{\alpha \in \Sigma^+ \setminus \{-w_0 (\alpha_i)\}} \mathfrak{g}_{-\alpha}/ \mathfrak{b},$$

where $\alpha_i$ is the root defining the fundamental representation $V_{\omega_i}$.
Notice that $-w_0 (\alpha_i)$ is a simple root, therefore we get:

\begin{rmk}
\label{rmk:transversality}
 If a $G-$oper intersects a Schubert variety in its smooth locus, then the intersection is transverse.
 This is because the previous computation shows that $T_{w_0 s_i B} \mathcal{Z}_i$ is transverse to any principal direction.
\end{rmk}

\subsection{Coming back to opers}

As mentioned in the introduction, developing maps of opers with real monodromy are known to satisfy good transversality properties (see for instance \cite[Theorem 10.4.3]{labourie2018goldman}).
We include here some initial observations concerning transversality properties in the complex setup.

We start by noticing that the principal embedding satisfies the following:

\begin{prop}
\label{lemma:fuchsiantransverse}
   Let $\mathcal{P}: \mathbb{P}^1 \rightarrow \mathcal{B}$ be a principal embedding, then its image is maximally transverse.
   \begin{proof}
   As discussed in Section \ref{subsection:triplets}, the image of a principal embedding is identified with the boundary of a totally geodesic copy of $\mathbb{H}^3$ in $\mathbb{X}$, when thinking of $\mathcal{B}$ as the strata of $\partial \mathbb{X}$ defined by $h$ in a principal triplet.
   Since $\mathbb{H}^3$ is rank one, any pair of different points in $\partial \mathbb{H}^3$ can be connected by a geodesic.
   Thus, the image of $\mathcal{P}$ is a transverse set.

   To prove maximal transversality, fix an arbitrary flag $\mathfrak{b} \in \mathcal{B}$. 
   Let $\mathcal{Z}_i$ be the zero locus of the highest weight section $\sigma_i$ of $L_{\omega_i}$ associated with $\mathfrak{b}$.  
   The restriction $\sigma_i|_{\mathcal{P}(\mathbb{P}^1)}$ is a holomorphic section of the pullback bundle $\mathcal{P}^* L_{\omega_i}$ over $\mathbb{P}^1$.
   Therefore, to prove $\mathcal{Z}_i \cap \mathcal{P} (\mathbb{P}^1) \neq \emptyset$, it suffices to prove that the Chern number $c_1 (\mathcal{P}^* L_{\omega_i})$ is positive.
   It is easy to check from looking at $\sigma_i$ as an equivariant function $\sigma_i : G \rightarrow \mathbb{C}$ that $c_1 (\mathcal{P}^* L_{\omega_i}) = \omega_i (h)$, for $h$ half the sum of positive coroots.
   One can observe from Section \ref{subsection:triplets} that $\omega_i (h)$ are positive integers for all $i$.
   \end{proof}
\end{prop}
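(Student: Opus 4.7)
The plan is to establish two separate claims: first, that $\mathcal{P}(\mathbb{P}^1)$ is itself a transverse set (any two distinct images are transverse Borel subalgebras), and second, that every $\mathfrak{b} \in \mathcal{B}$ fails to be transverse to at least one element of $\mathcal{P}(\mathbb{P}^1)$, so no element can be added to $\mathcal{P}(\mathbb{P}^1)$ to produce a strictly larger transverse set.

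For the first claim, I would exploit the description of $\mathcal{P}(\mathbb{P}^1)$ as the visual boundary of a totally geodesic $\mathbb{H}^3 \subset \mathbb{X}$ coming from the map $\mathcal{H}_\phi$ of Section \ref{subsection:triplets}. Identifying $\mathcal{B}$ with the stratum of $\partial \mathbb{X}$ associated to the regular direction $h$ of the principal triple, the image $\mathcal{P}(\mathbb{P}^1)$ sits as $\partial \mathbb{H}^3 \subset \partial \mathbb{X}$. Any two distinct points in $\partial \mathbb{H}^3$ are joined by a bi-infinite geodesic of $\mathbb{H}^3$, which in $\mathbb{X}$ is contained in a flat asymptotic to opposite Weyl chambers pointing at both boundary points; by the discussion in Section \ref{subsection:complexasreal} this is exactly what it means for the two corresponding Borels to be transverse.

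For the second claim, fix $\mathfrak{b} \in \mathcal{B}$ and aim to produce $\mathfrak{b}' \in \mathcal{P}(\mathbb{P}^1)$ lying in $\mathcal{Z}_\mathfrak{b}$. Using the decomposition $\mathcal{Z}_\mathfrak{b} = \bigcup_{i=1}^l \mathcal{Z}_i$ established in the previous proposition, where $\mathcal{Z}_i$ is the vanishing locus of the highest weight section $\sigma_i \in H^0(\mathcal{B}, L_{\omega_i})$ determined by $\mathfrak{b}$, it suffices to find a single index $i$ for which $\sigma_i|_{\mathcal{P}(\mathbb{P}^1)}$ vanishes somewhere. This restriction is a holomorphic section of the pullback line bundle $\mathcal{P}^* L_{\omega_i} \to \mathbb{P}^1$, and any holomorphic line bundle on $\mathbb{P}^1$ with positive Chern number forces every nonzero section to vanish somewhere. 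So the proof reduces to checking $c_1(\mathcal{P}^* L_{\omega_i}) > 0$.

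The heart of the argument, and the part I expect to be most delicate, is computing this Chern number. The line bundle $L_{\omega_i}$ is induced by the character $\chi_{\omega_i}$ of $B$; pulling back along $\mathcal{P}$ corresponds to restricting $\chi_{\omega_i}$ to the Borel of the principal $PGL_2(\mathbb{C})$, where it becomes a power of the standard character, and the exponent equals $\omega_i(h)$ for $h$ the semisimple element of the principal triple. Using the formula $h = \sum_{\alpha \in \Pi} r_\alpha h_\alpha$ from Section \ref{subsection:triplets} with $r_\alpha$ positive integers, together with the defining property $\omega_i(h_{\alpha_j}) = \delta_{ij}$ of fundamental weights, we obtain $c_1(\mathcal{P}^* L_{\omega_i}) = r_{\alpha_i} > 0$. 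The only nontrivial step is rigorously identifying $c_1(\mathcal{P}^* L_{\omega_i})$ with the character exponent $\omega_i(h)$; one can do this either by trivializing $\mathcal{P}^* L_{\omega_i}$ over the two standard affine charts of $\mathbb{P}^1$ and reading off the transition cocycle from the principal embedding, or by invoking the Borel-Weil identification already used in the proof of the previous proposition.
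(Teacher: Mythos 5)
Your proposal is correct and follows essentially the same route as the paper: identify $\mathcal{P}(\mathbb{P}^1)$ with $\partial\mathbb{H}^3$ inside the stratum of $\partial\mathbb{X}$ defined by $h$ to get transversality of the image, then reduce maximal transversality to the positivity of $c_1(\mathcal{P}^*L_{\omega_i})=\omega_i(h)$ via the divisors $\mathcal{Z}_i$ cut out by the highest weight sections $\sigma_i$. Your explicit evaluation $\omega_i(h)=\sum_\alpha r_\alpha\,\omega_i(h_\alpha)=r_{\alpha_i}>0$ just makes concrete the paper's closing remark that $\omega_i(h)$ is a positive integer.
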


As a consequence, $G-$opers coming from quasi-Fuchsian projective structures are locally-maximally transverse.

We now verify that the oper condition automatically implies local transversality. In fact, we prove the stronger statement stated in the introduction:

\begin{proof}[Proof of Theorem \ref{thm:transversality}]
Fix $z_0 \in \tilde{X}$, let $\mathcal{D}(z_0) = \mathfrak{b}$, and identify $G/B \cong \mathcal{B}$ as the orbit of $\mathfrak{b}$. Using the setup from Section \ref{section:ds}, we lift $\mathcal{D}$ to a map $\overline{\mathcal{D}} : \tilde{X} \rightarrow G$ satisfying the differential equation $\overline{\mathcal{D}}^{-1} \overline{\mathcal{D}}'(z) = f + b(z)$, for $b(z) \in \mathfrak{b}$.

Let $M(z) = \rho_i (\overline{\mathcal{D}}(z)) \in GL (V_{\omega_i})$, where $\rho_i :G \rightarrow V_{\omega_i}$ is the irreducible representation (up to maybe lifting $\overline{\mathcal{D}}$ to a double cover of $G$).
Taking Taylor expansions of the differential equation at $z_0$, we find that the $k-th$ derivative satisfies: 

$$M^{(k)} (z_0) = (f^k + v_k(z_0)).M(z_0),$$

where $v_k(z_0)$ is a linear combination of terms of the form $f^j b_j,$ where $j < k$, and $b_j \in \mathfrak{b}$.
The dot denotes the derived action on the representation $V_{\omega_i}$.

Let $m_i$ be the height of the representation $V_{\omega_i}$ (the number of applications of $f_i \in \mathfrak{g}_{-\alpha_i}$ required to reach the lowest weight from the highest weight).
If $k < m_i$, any vector $f^k b_k v_i$ remains in a weight space of $V_{\omega_i}$ which is strictly higher than the lowest weight space. 
Since $\ker \alpha_{\omega_i}$ is the direct sum of weight spaces of $V_{\omega_i}$, except for the lowest weight, we get that:

$$\left( \frac{d}{dz} \right)^k\vline_{z = z_0} \sigma_i (\overline{\mathcal{D}(z)}) = \alpha_{\omega_i} (M^{(k)}(z_0) v_{\omega_i}) = 
\begin{cases}
   & 0 \: \text{ if }k < m_i,\\
   & v_i^* (f^{m_i} v_i), k = m_i,
\end{cases}$$

where $\sigma_i \in H^0 (G/B,L_{\omega_i})$ are the sections defining the non-transversal set $\mathcal{Z}$.

Since $f$ has a non-trivial component in every $f_i \in \mathfrak{g}_{-\alpha_i}$ (this is the oper condition), $v_i^* (f^{m_i} v_i) \neq 0$ for every $i$. This verifies that the holomorphic function $\sigma_i (\overline{\mathcal{D}(z)})$ is non-constant. 
Particularly, the zeros are isolated.
Applying this for all $i$ yields the desired result.
\end{proof}

Notice that unlike Remark \ref{rmk:transversality}, here we have to deal of the oper intersecting a Schubert variety along its singular locus.
This makes the transversality of an oper trickier to control.

\appendix

\section{Surfaces equidistant to \texorpdfstring{$\Ep$}{Ep}}
\label{sec:equidistant}

In this section, we show how an EP-surface $Ep$ can be naturally extended to a map from $\tilde{X}\times \mathbb{R}$ into the symmetric space $\mathbb{X}$ sending each slice to a surface equidistant to $Ep$. 

\subsection{Extending flat connections}

Let $E \rightarrow M$ be a $\mathfrak{g}-$bundle equipped with a flat connection $D$.
Consider $p : M \times \RR \rightarrow M$ the projection onto the first factor, and denote the pullback bundle $p^* E = \overline{E}$.
The goal of this section is to explain a way of producing connections $\overline{D}$ on $\overline{E}$ that extend $D$ and remain flat.

\begin{rmk}
The bundle $\overline{E}$ comes equipped with a differential operator $D^t$ defined as follows: let $s \in \mathcal{A}^0 (M \times \RR, \overline{E})$ given in a trivialization by a function $s: U \subset M \times \RR \rightarrow \mathfrak{g}$, define $D^t s = \partial_t s(z,t) dt$.
This definition is independent of the chosen trivialization, and the differential operator $\nabla^t$ satisfies:
$$D^t (f s) = df (dt \otimes \partial_t) s + f D^t s.$$
\end{rmk}

The following lemma will produce a flat extension of $D$ to $\overline{E}$ from a $x \in \mathfrak{g}$:

\begin{lemma}
Let $\overline{E}$ be as above, $x \in \mathfrak{g}$ and define a differential operator $D^x$ as:
$$D^x s = Ad (e^{tx}) D (Ad(e^{-tx}) s), \: \forall s \in \mathcal{A}^0 (M, \overline{E}).$$
Then the connection:
$$\overline{D^x} = D^x + D^t - dt \otimes ad (x),$$
is flat.
\begin{proof}
Flatness is local, therefore we can work in a trivialization of $E$ for which $D = D^0 + ad (A)$, where $D^0$ is the trivial connection, $A \in \mathcal{A}^0 (M,E)$.
This will induce a trivialization on $\overline{E}$, and the connection $\overline{D^x}$ can be written as $\overline{D^x} =\overline{D^0} + ad(\overline{A})$, where $\overline{D}_0$ is the trivial connection and:
$$\overline{A}_{(x,t)} = Ad(e^{tx}) A_x - dt \otimes x.$$
The curvature is given by $d\overline{A} + \frac{1}{2}[\overline{A},\overline{A}]$.
A quick computation shows:
\begin{align*}
& d\overline{A}_{(x,t)} = Ad(e^{tx}) dA_x + dt \wedge [x, Ad(e^{tx})A_x],\\
& \frac{1}{2} [\overline{A},\overline{A}]_{(x,t)} = \frac{1}{2} Ad(e^{tx})[A_x,A_x] - dt \wedge [x, Ad(e^{tx}) A_x].  
\end{align*}
Since $D$ is flat, we get $dA + \frac{1}{2}[A,A]=0$. 
Adding both equations yields the proof. 
\end{proof}
\end{lemma}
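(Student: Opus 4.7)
The plan is to verify flatness locally in a trivialization, since flatness is a local property. Once expressed in a convenient local form, the curvature reduces to an algebraic identity that exploits the conjugation invariance of the Maurer-Cartan equation.

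First I would trivialize $E$ over an open $U \subset M$ and write $D = D^0 + \mathrm{ad}(A)$ for some $A \in \mathcal{A}^1(U, \mathfrak{g})$, so that flatness of $D$ becomes the Maurer-Cartan equation $dA + \tfrac{1}{2}[A,A] = 0$. I would then observe that in this trivialization the operator $D^x$ simplifies nicely: since $D^0$ is the trivial connection on $U$ and does not differentiate in the $t$-direction, the conjugations by $\mathrm{Ad}(e^{\pm tx})$ pass through $D^0$ and we obtain
\[
D^x = D^0 + \mathrm{ad}\bigl(\mathrm{Ad}(e^{tx}) A\bigr).
\]
Combining with $D^t$ and the explicit $dt \otimes \mathrm{ad}(x)$ correction gives $\overline{D^x} = \overline{D^0} + \mathrm{ad}(\overline{A})$, where $\overline{D^0} = D^0 + D^t$ is the trivial connection on $\overline{E}|_{U \times \mathbb{R}}$ and
\[
\overline{A}_{(m,t)} = \mathrm{Ad}(e^{tx}) A_m - dt \otimes x.
\]

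Next I would compute the curvature $R^{\overline{D^x}} = d\overline{A} + \tfrac{1}{2}[\overline{A}, \overline{A}]$. Using $\frac{d}{dt}\mathrm{Ad}(e^{tx}) = \mathrm{ad}(x) \circ \mathrm{Ad}(e^{tx})$, the exterior derivative produces a term $\mathrm{Ad}(e^{tx}) dA$ plus a cross term $dt \wedge [x, \mathrm{Ad}(e^{tx}) A]$. On the other hand, expanding the bracket $\tfrac{1}{2}[\overline{A}, \overline{A}]$ yields $\tfrac{1}{2}\mathrm{Ad}(e^{tx})[A,A]$ plus a cross term $-dt \wedge [x, \mathrm{Ad}(e^{tx}) A]$ coming from pairing the two pieces of $\overline{A}$. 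The two cross terms cancel exactly, leaving
\[
R^{\overline{D^x}} = \mathrm{Ad}(e^{tx})\!\left( dA + \tfrac{1}{2}[A,A] \right) = 0
\]
by flatness of $D$.

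The main obstacle, though modest, is keeping the sign conventions for the bracket of Lie-algebra-valued forms consistent (especially when a $dt$-factor is involved) and confirming that the cross terms from $d\overline{A}$ and from $\tfrac{1}{2}[\overline{A},\overline{A}]$ indeed cancel rather than add. Beyond this bookkeeping, everything follows from the flatness of $D$ and the basic identity $\frac{d}{dt}\mathrm{Ad}(e^{tx}) = \mathrm{Ad}(e^{tx}) \circ \mathrm{ad}(x)$.
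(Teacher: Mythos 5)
Your proposal is correct and follows essentially the same route as the paper: trivialize so that $D = D^0 + \mathrm{ad}(A)$, identify $\overline{A} = \mathrm{Ad}(e^{tx})A - dt\otimes x$, and check that the $dt$-cross terms in $d\overline{A}$ and $\tfrac{1}{2}[\overline{A},\overline{A}]$ cancel, leaving $\mathrm{Ad}(e^{tx})\bigl(dA + \tfrac{1}{2}[A,A]\bigr) = 0$. The only point worth noting is that $\mathrm{Ad}(e^{tx})$ commutes with $\mathrm{ad}(x)$, so the two forms of the derivative identity you wrote are the same.
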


\subsection{The equidistant surfaces}

Applying the previous lemma with the vector $h$ of the principal triplet used to construct the mapping data $(P_G [\mathfrak{g}],P_B [\mathfrak{b}], D^{\vec{\alpha}})$, we obtain a mapping triple $(\overline{P_G [\mathfrak{g}]},\overline{P_B [\mathfrak{b}]}, \overline{D^{\vec{\alpha}}})$ defined over $X \times \mathbb{R}$, where the lemma gives the connection, and the bundles are given by pullbacking by $p: X \times \mathbb{R} \rightarrow X$.

A computation shows that:

$$\overline{D^{\vec{\alpha}}} = \overline{\nabla} + e^{-t} \tau \otimes ad(f) + e^t H \otimes ad(e) + \sum_{i=1}^l e^{m_i t} (\tau \alpha_i) \otimes ad (e_{m_i}) - dt \otimes ad (h),$$

where $\overline{\nabla} = \nabla + \nabla^t$, where $\nabla^t$ is the operator defined above, and $\nabla$ is the Levi-Civita connection of $H$ (acting on the $TX-$factor).
Notice that the Cartan involution $\Theta (\beta_i \otimes x_i) = *\beta_i \otimes \theta x_i$ as in \ref{eq:cartan} keeps making sense in this bundle, therefore $(\overline{P_G [\mathfrak{g}]}, \Theta, \overline{D^{\vec{\alpha}}})$ defines mapping data to the symmetric space $\mathbb{X}$.

A computation similar to Lemma \ref{lemma:maurercartan} shows:

\begin{lemma}
   The Maurer-Cartan form for the triple $(\overline{P_G [\mathfrak{g}]}, \Theta, \overline{D^{\vec{\alpha}}})$ is given by:
   $$\overline{\omega} = -2 \!\left( \cosh (t) (\tau \otimes f + H \otimes e) + \frac{1}{2} \sum_{i=1}^l e^{m_i t} \left( (\tau \alpha_i) \otimes e_{m_i} - (H * \alpha_i) \otimes \theta e_{m_i} \right) - dt \otimes h \!\right).$$
\end{lemma}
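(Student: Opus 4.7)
The plan is to mimic the strategy of Lemma \ref{lemma:maurercartan} verbatim in the enlarged bundle, exploiting the fact that the Cartan involution $\Theta$ has been extended by pullback (so it is constant in the $t$-direction) and that the extra $-dt \otimes h$ term lies entirely in the $\mathfrak{p}$-component.

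First I would observe that the Maurer-Cartan form $\overline{\omega}$ is characterized by $\overline{D^{\vec{\alpha}}}\Theta = \Theta \circ ad(\overline{\omega})$. Writing $\overline{D^{\vec{\alpha}}} = \overline{\nabla} + ad(A)$ with
\[
A = e^{-t}\tau \otimes f + e^t H \otimes e + \sum_{i=1}^l e^{m_i t}(\tau \alpha_i) \otimes e_{m_i} - dt \otimes h,
\]
we have $\overline{\nabla}\Theta = 0$ because $\nabla \Theta = 0$ was already established in the proof of Lemma \ref{lemma:maurercartan} (compatibility of $*$ with the metric on each $K^l$) and because $\Theta$ is pulled back along the projection $p: X \times \mathbb{R} \to X$, so $D^t \Theta = 0$. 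A one-line computation using $\Theta^2 = \mathrm{Id}$ then gives
\[
ad(A)\Theta - \Theta\, ad(A) = \Theta \circ ad(\Theta A - A),
\]
so $\overline{\omega} = \Theta A - A = -2 A_{\mathfrak{p}}$, where $A_{\mathfrak{p}}$ denotes the projection of $A$ onto the $-1$-eigenbundle of $\Theta$.

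Next I would compute $\Theta A$ term by term using the formulae already established: $\Theta(\tau \otimes f) = -H \otimes e$, $\Theta(H \otimes e) = -\tau \otimes f$, $\Theta((\tau \alpha_i) \otimes e_{m_i}) = (H * \alpha_i) \otimes \theta(e_{m_i})$ (extracted from the proof of Lemma \ref{lemma:maurercartan}), and $\Theta(dt \otimes h) = dt \otimes \theta(h) = -dt \otimes h$, the latter because the triple is adapted to $\theta$. Subtracting from $A$ I get
\[
A - \Theta A = (e^{-t}+e^t)(\tau \otimes f + H \otimes e) - 2\,dt \otimes h + \sum_{i=1}^l e^{m_i t}\bigl((\tau \alpha_i) \otimes e_{m_i} - (H * \alpha_i) \otimes \theta(e_{m_i})\bigr),
\]
and multiplying by $-1$ and recognizing $e^t + e^{-t} = 2\cosh(t)$ yields exactly the claimed expression for $\overline{\omega}$.

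There is no serious obstacle; the main thing to double-check is the verification $\overline{\nabla}\Theta = 0$, which reduces to the $t$-invariance of $\Theta$ and the computation already carried out for the Fuchsian bundle. The rest is bookkeeping of the exponential factors $e^{\pm t}$ and $e^{m_i t}$, which combine cleanly into $\cosh(t)$ and $e^{m_i t}$ respectively thanks to the fact that $\Theta$ swaps the $f$- and $e$-terms while preserving the degree of $e_{m_i}$.
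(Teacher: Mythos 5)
Your proof is correct and is exactly the computation the paper intends: the paper's "proof" of this lemma is literally the single sentence "A computation similar to Lemma \ref{lemma:maurercartan} shows," and you have carried out that computation faithfully, including the two points that actually need checking ($\overline{\nabla}\Theta = 0$ via $t$-invariance of the pulled-back involution, and $\Theta(dt\otimes h) = -dt\otimes h$ from adaptedness). The bookkeeping of the factors $e^{\pm t}$ into $2\cosh(t)$ and the sign conventions in $\overline{\omega} = \Theta A - A$ all check out against the statement.
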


Therefore $\overline{\omega} (\partial_t) = 2 \otimes h$ is normal to the EP-surface.
A computation with the Levi-Civita connection sounds that $\overline{\nabla}^{LC}_{\partial_t} \omega (\partial_t) = 0$, therefore when we develop the triplet $(\overline{P_G [\mathfrak{g}]}, \Theta, \overline{D^{\vec{\alpha}}})$ we get a map $\overline{Ep} : \tilde{X} \times \mathbb{R} \rightarrow \mathbb{X}$ with the property that $\overline{Ep}(z,\cdot)$ is a geodesic.
The discussion of Lemma \ref{lemma:normalweyl} shows that these geodesic points towards apartments defined by the operator $\mathcal{D}$.

Flowing to the other end, this extension flows towards another curve $\tilde{X} \rightarrow \mathcal{B}$ that one can see is no longer holomorphic unless we are in the Fuchsian oper.

\section{Differential geometric computations}

\subsection{Some identities on Riemann surfaces}
\label{subsec:computations1}

Let $g$ be a Riemannian metric on a Riemann surface $X$.
Its $\mathbb{C}-$linear extension to $TX \otimes \mathbb{C}$ is 
always of the form:

\begin{equation}
\label{eq:complexifiedmetric}
g = \alpha dz^2 + \beta dz \overline{dz} + \overline{\alpha} \overline{dz^2},
\end{equation}

where $\alpha$ is complex valued, $\beta$ is real valued, and $\det_{\mathbb{C}} (g) = |\alpha|^2- (\frac{\beta}{2})^2 < 0$.
See for instance, Equation (\ref{eq:metric}) for the EP-surface situation.
Motivated by the Epstein scenario, we will assume from now on that $\alpha$ is a holomorphic quadratic differential.

Our goal is to present a coordinate-free expression for the curvature $K^g$ that will be used in Section \ref{sec:criterion}.
Extending the Levi-Civita connection $\nabla^g$ in a $\mathbb{C}-$linear way, one can verify using Koszul's formula that:

\begin{lemma}
\label{lemma:koszul}
The Christoffel symbols of a metric $g$ given by Equation (\ref{eq:complexifiedmetric}), where $\alpha$ is holomorphic are equal to:
\[
\begin{array}{ll}
\displaystyle
\Gamma^{z}_{zz}
= \frac{1}{4\det_{\mathbb{C}} (g)}\Big(2\overline{\alpha}\alpha_z - \beta \beta_z \Big),
&
\displaystyle
\Gamma^{z}_{z\bar z}
= 0,
\\[1.5em]
\displaystyle
\Gamma^{\bar z}_{zz}
= \frac{1}{4\det_{\mathbb{C}} (g)}\Big(-\beta \alpha_z + 2\alpha \beta_z \Big),
&
\displaystyle
\Gamma^{\bar z}_{z\bar z}
= 0,
\end{array}
\]
and $\Gamma_{\overline{zz}}^z = \overline{\Gamma_{zz}^{\overline{z}}}$, $\Gamma_{\overline{zz}}^{\overline{z}} = \overline{\Gamma_{zz}^z}$.
\end{lemma}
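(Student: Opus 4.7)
The plan is to apply Koszul's formula directly in the coordinate frame $\{\partial_z, \partial_{\bar z}\}$ of $TX \otimes \mathbb{C}$, extended $\mathbb{C}$-linearly. Since these are coordinate vector fields they commute, so all Lie bracket terms drop out and for any choice of $X, Y, Z \in \{\partial_z, \partial_{\bar z}\}$ we have
\[
2g(\nabla^g_X Y, Z) = X g(Y,Z) + Y g(X,Z) - Z g(X,Y).
\]
The data needed on the right are $g(\partial_z,\partial_z) = \alpha$, $g(\partial_{\bar z},\partial_{\bar z}) = \bar\alpha$, $g(\partial_z,\partial_{\bar z}) = \beta/2$, together with the holomorphicity relation $\partial_{\bar z}\alpha = 0$ and its conjugate $\partial_z \bar\alpha = 0$.

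First I would handle the mixed connection $\nabla^g_{\partial_z}\partial_{\bar z}$. Testing against $\partial_z$ and $\partial_{\bar z}$ via Koszul gives $2g(\nabla^g_{\partial_z}\partial_{\bar z}, \partial_z) = \partial_{\bar z}\alpha = 0$ and $2g(\nabla^g_{\partial_z}\partial_{\bar z}, \partial_{\bar z}) = \partial_z \bar\alpha = 0$. Since $g$ is non-degenerate on $TX \otimes \mathbb{C}$ (the determinant $\det_{\mathbb{C}}(g) = |\alpha|^2 - \beta^2/4$ is non-zero), this forces $\nabla^g_{\partial_z}\partial_{\bar z} = 0$, establishing the two vanishing equalities $\Gamma^z_{z\bar z} = \Gamma^{\bar z}_{z\bar z} = 0$.

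For $\nabla^g_{\partial_z}\partial_z = \Gamma^z_{zz}\partial_z + \Gamma^{\bar z}_{zz}\partial_{\bar z}$, pairing with $\partial_z$ and with $\partial_{\bar z}$ using Koszul yields (after invoking holomorphicity to kill $\partial_{\bar z}\alpha$) the linear system
\[
\begin{pmatrix} \alpha & \beta/2 \\ \beta/2 & \bar\alpha \end{pmatrix}\begin{pmatrix} \Gamma^z_{zz} \\ \Gamma^{\bar z}_{zz} \end{pmatrix} = \begin{pmatrix} \alpha_z/2 \\ \beta_z/2 \end{pmatrix}.
\]
Inverting the matrix on the left (which is exactly the Gram matrix with determinant $\det_{\mathbb{C}}(g)$) produces the closed forms for $\Gamma^z_{zz}$ and $\Gamma^{\bar z}_{zz}$ displayed in the lemma. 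The last two symbols $\Gamma^z_{\bar z\bar z}$ and $\Gamma^{\bar z}_{\bar z\bar z}$ are then read off by taking complex conjugates throughout.

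There is no real obstacle here; the computation is entirely mechanical. The only care needed is bookkeeping: consistently using holomorphicity to suppress every $\partial_{\bar z}$-derivative of $\alpha$, and not mismanaging the factor $1/2$ that appears both in $g(\partial_z,\partial_{\bar z})=\beta/2$ and as the prefactor on the left-hand side of Koszul's formula.
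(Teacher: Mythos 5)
Your proposal is correct and follows exactly the route the paper indicates: the paper offers no written proof beyond the remark that the identities follow from Koszul's formula applied to the $\mathbb{C}$-linearly extended connection, and your computation (vanishing of the mixed symbols from holomorphicity of $\alpha$, then inversion of the $2\times 2$ Gram matrix with determinant $\det_{\mathbb{C}}(g)$ for the $zz$-symbols, and complex conjugation for the rest) is precisely the verification being left to the reader. No gaps.
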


Notice that $\overline{g} = \beta dz \overline{dz}$ is compatible with the complex structure, and therefore defines a Hermitian metric on each $K^{l}$ via $\norm{\alpha}^2_{\bar{g}} = \frac{|\alpha|^2}{\beta^l}$, inducing a Chern connection $\nabla^{\bar{g}}$ satisfying $\nabla^{\bar{g}}_{\partial_z} \alpha = \partial_z \alpha - l (\log \beta)_z \alpha$.

Comparing the curvature of a metric $g$ as in Lemma \ref{lemma:koszul}, with the curvature of the metric $\overline{g}$, we get the following:

\begin{prop}
\label{coro:curvature}
Let $g$ be as in Lemma \ref{lemma:koszul}, and $\overline{g} = \beta dz \overline{dz}$. The curvatures of $g$ and $\overline{g}$ are related by the expression:

\[
K^g = \frac{K^{\bar{g}}}{1 - 4 \norm{\alpha}^2_{\bar{g}}} + \frac{4}{(1-4\norm{\alpha}^2_{\bar{g}} )^2} \norm{\nabla^{\bar{g}} \alpha}^2_{\bar{g}},
\]

where we think of $\nabla^{\overline{g}} \alpha$ as a section of $K^3$.

\begin{proof}
   Let $A = \nabla^g - \nabla^{\bar{g}}$ be the difference tensor.
   Lemma \ref{lemma:koszul} and some algebra yields:
   $$A (\partial_z) \partial_z = \frac{1}{2 \beta (\norm{\alpha}^2_{\bar{g}} - \frac{1}{4})} \left( (\langle \nabla^{\bar{g}}_{\partial_z} \alpha,\alpha\rangle \partial_z - \frac{1}{2} \nabla^{\bar{g}}_{\partial_z} \alpha (\partial_z,\partial_z) \partial_{\bar{z}}  \right)= \overline{A(\partial_{\bar{z}}) \partial_{\bar{z}}},$$
   and $A(\partial_z) \partial_{\bar{z}} = A(\partial_{\bar{z}}) \partial_z = 0$. Let the $\partial_k$ component of $A(\partial_i) \partial_j$ be denoted $A_{ij}^k$.

   Using that $g_{z\overline{z}} = \overline{g}_{z\overline{z}}$, a straightforward computation shows that:

   $$R^g_{z\overline{zz}z} - R^{\overline{g}}_{z\overline{zz}z} = (\overline{A_{zz}^{\overline{z}}})_z \alpha + \overline{A_{zz}^{\overline{z}}} \frac{\alpha_z}{2} +(\overline{A_{zz}^z})_z \frac{\beta}{2}.$$

   The identity $(\overline{A_{zz}^{\overline{z}}})\alpha = - \frac{\beta}{2} \overline{A_{zz}^z}$ and some rearranging leads to $R^g_{z\overline{zz}z} - R^{\overline{g}}_{z\overline{zz}z} = -\frac{\overline{A_{zz}^{\overline{z}}}}{2} \nabla_{\partial_z}^{\overline{g}} \alpha$. 
   This is equivalent to the desired equality.
\end{proof}
\end{prop}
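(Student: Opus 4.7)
The plan is to compare the Levi-Civita connection $\nabla^g$ of the indefinite complexified metric $g = \alpha\,dz^2 + \beta\,dz\,\overline{dz} + \bar\alpha\,\overline{dz^2}$ with the Chern connection $\nabla^{\bar{g}}$ of the Hermitian companion $\bar g = \beta\,dz\,\overline{dz}$, and then repackage the difference using the fact that on a Riemann surface the Gaussian curvature is recovered from the Riemann tensor via $K = R_{z\overline{z}z\overline{z}}/\det_{\mathbb{C}}(\cdot)$. Observe at the outset that $\det_{\mathbb{C}}(g) = |\alpha|^2 - \beta^2/4 = -(\beta^2/4)\bigl(1 - 4\norm{\alpha}^2_{\bar{g}}\bigr)$ while $\det_{\mathbb{C}}(\bar g) = -\beta^2/4$, so the ratio of complexified determinants is exactly $1 - 4\norm{\alpha}^2_{\bar{g}}$, which is the origin of the denominators appearing in the statement.

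First, I would form the difference tensor $A := \nabla^g - \nabla^{\bar{g}}$, a genuine $(1,2)$-tensor since both connections are torsion-free. Using Lemma \ref{lemma:koszul} together with the explicit Christoffel symbol of the Chern connection of $\bar g$ on $TX$ (proportional to $(\log\beta)_z$), and crucially the holomorphicity of $\alpha$ (so $\partial_{\bar z}\alpha = 0$), the mixed components $A(\partial_z)\partial_{\bar z}$ and $A(\partial_{\bar z})\partial_z$ vanish identically. The only surviving component $A(\partial_z)\partial_z$ has coefficients that, after subtracting the $\bar g$ Christoffel symbols, rearrange into $\nabla^{\bar{g}}_{\partial_z}\alpha$ viewed as a section of $K^3$: the coordinate expression $\alpha_z - 2(\log\beta)_z\alpha$ is precisely that derivative, and it appears divided by the quantity $|\alpha|^2 - \beta^2/4$ coming from inverting the full metric $g$ in the Koszul formula.

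Next, I would apply the standard comparison formula
\[
R^g(X,Y)Z - R^{\bar{g}}(X,Y)Z = (\nabla^{\bar{g}}_X A)(Y,Z) - (\nabla^{\bar{g}}_Y A)(X,Z) + [A(X),A(Y)]Z,
\]
with $X = Z = \partial_z$ and $Y = \partial_{\bar z}$, then pair against $\partial_{\bar z}$ via $g$. The commutator term vanishes because $A(\partial_{\bar z})\partial_z = 0$, and one of the two $\nabla^{\bar{g}} A$ terms also dies for the same reason, leaving a single clean piece proportional to $\partial_{\bar z}A^{\bar z}_{zz}$ paired with $g_{z\bar z} = \beta/2$. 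Using metric-compatibility of the Chern connection and the identification of the conjugate of $\nabla^{\bar{g}}_{\partial_z}\alpha$ with its complex conjugate, this assembles into an expression proportional to $\norm{\nabla^{\bar{g}}\alpha}^2_{\bar{g}}$, divided by a power of $1 - 4\norm{\alpha}^2_{\bar{g}}$ inherited from the denominator already present in $A$.

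Finally, dividing the resulting identity for $R^g_{z\overline{zz}z}$ by $\det_{\mathbb{C}}(g)$ and recognizing $R^{\bar{g}}_{z\overline{zz}z}/\det_{\mathbb{C}}(\bar g) = K^{\bar{g}}$, the determinant ratio produces exactly the first summand $K^{\bar{g}}/(1 - 4\norm{\alpha}^2_{\bar{g}})$, while the second summand picks up the extra $1/(1 - 4\norm{\alpha}^2_{\bar{g}})^2$ factor from the combination of the denominator in $A$ with the additional division by $\det_{\mathbb{C}}(g)$. The main obstacle will be the bookkeeping: tracking the factors of $1/2$ coming from $g_{z\bar z} = \beta/2$, keeping the normalization of $\nabla^{\bar{g}}\alpha$ as a section of $K^3$ consistent so that $\norm{\nabla^{\bar{g}}\alpha}^2_{\bar{g}}$ is genuinely coordinate-independent, and confirming that the signs produced by the indefinite signature of $g$ (where $\det_{\mathbb{C}}(g) < 0$) combine correctly with the sign conventions for $R_{z\overline{zz}z}$. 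Once the coordinate algebra is organized carefully, the identity collapses cleanly with no additional geometric input beyond the Koszul formula and the holomorphicity of $\alpha$.
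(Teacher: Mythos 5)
Your setup is the same as the paper's: form the difference tensor $A=\nabla^g-\nabla^{\bar g}$, observe that $A(\partial_z)\partial_{\bar z}=A(\partial_{\bar z})\partial_z=0$ and that $A(\partial_z)\partial_z$ is built out of $\nabla^{\bar g}_{\partial_z}\alpha$ divided by $\det_{\mathbb C}(g)$, and then compare curvatures. The gap is in the curvature comparison step: the commutator term does \emph{not} vanish. Writing $A(\partial_z)\partial_z=A^z_{zz}\partial_z+A^{\bar z}_{zz}\partial_{\bar z}$, you have
\[
[A(\partial_z),A(\partial_{\bar z})]\partial_z \;=\; \underbrace{A(\partial_z)\bigl(A(\partial_{\bar z})\partial_z\bigr)}_{=0}\;-\;A(\partial_{\bar z})\bigl(A(\partial_z)\partial_z\bigr)
\;=\;-\,A^{\bar z}_{zz}\,A(\partial_{\bar z})\partial_{\bar z},
\]
and $A(\partial_{\bar z})\partial_{\bar z}=\overline{A(\partial_z)\partial_z}\neq 0$. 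Only the first summand dies for the reason you give. Pairing the survivor against $\partial_{\bar z}$ with $g$ (using $g_{z\bar z}=\beta/2$, $g_{\bar z\bar z}=\bar\alpha$, and the relation $\overline{A^{\bar z}_{zz}}\,\alpha=-\tfrac{\beta}{2}\overline{A^{z}_{zz}}$) yields $|A^{\bar z}_{zz}|^2\cdot\tfrac{2\det_{\mathbb C}(g)}{\beta}$, which is nonzero whenever $\nabla^{\bar g}\alpha\neq 0$ and is of exactly the same order as the target term $\norm{\nabla^{\bar g}\alpha}^2_{\bar g}/(1-4\norm{\alpha}^2_{\bar g})^2$. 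Since the surviving $(\nabla^{\bar g}_{\partial_{\bar z}}A)(\partial_z,\partial_z)$ term also produces contributions quadratic in $\nabla^{\bar g}\alpha$ (through $\partial_{\bar z}$ hitting the $1/\det_{\mathbb C}(g)$ factor), discarding the commutator changes the coefficient of $\norm{\nabla^{\bar g}\alpha}^2_{\bar g}$ in the final identity; the proof as proposed would not reproduce the stated formula.

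A second, more minor, slip: the surviving derivative term $-\nabla^{\bar g}_{\partial_{\bar z}}\bigl(A(\partial_z)\partial_z\bigr)$ has components along both $\partial_z$ and $\partial_{\bar z}$, and pairing against $\partial_{\bar z}$ via $g$ picks up both $g_{z\bar z}=\beta/2$ \emph{and} $g_{\bar z\bar z}=\bar\alpha$; it is not the single clean piece $\partial_{\bar z}A^{\bar z}_{zz}\cdot\beta/2$ you describe. For comparison, the paper avoids the covariant comparison formula altogether: it computes $R^g_{z\overline{zz}z}-R^{\bar g}_{z\overline{zz}z}$ directly in coordinates (using $g_{z\bar z}=\bar g_{z\bar z}$) as a three-term expression and collapses it with the algebraic identity $\overline{A^{\bar z}_{zz}}\,\alpha=-\tfrac{\beta}{2}\overline{A^{z}_{zz}}$. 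Your route via the comparison formula is viable, but only if the commutator term is retained and combined with that same identity.
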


Particularly, notice that the second term on the right-hand side is always positive.
We finish this section by recalling a couple of well-known identities that will be handy in Section \ref{sec:criterion}:

\begin{description}
   \item[Curvature of conformal metrics] Given $g,\overline{g}$ metrics on a surface such that $\overline{g} = f g$ for some positive function $f$, then:
   \begin{equation}
   \label{eq:Liouville}
   K^{\overline{g}} = \frac{1}{f} \left( K^g - \frac{1}{2}\Delta_g \log f \right),
   \end{equation}
   where $\Delta_g F = div(\nabla F)$ is the $g-$Laplacian.
   See for instance \cite[Chapter 7]{lee2018introduction} for a proof.
   \item[Bochner identity] Given $g = \beta dz \overline{dz}$ with curvature $K^g$, and $\alpha \in H^0 (X,K^l)$, then:
   \begin{equation}
   \label{eq:Bochner}
   \frac{1}{2} \Delta_g \norm{\alpha}^2 = 2 \norm {\nabla \alpha}^2 + l K^g \norm{\alpha}^2,
   \end{equation}
   where we think of $\nabla \alpha$ as an $l+1$ differential, and $\Delta_g = \frac{4}{\beta} \partial_z \partial_{\bar{z}}$.
   This can be checked by a direct computation. 
   Notice that the factor of $2$ in $\norm {\nabla \alpha}$ appears because we are using the ``Teichm\"uller convention'' for the norm of the quadratic differentials, instead of the one coming from Riemannian geometry (both differ by a factor of two because $g_{z\overline{z}} = \frac{\beta}{2}$).
\end{description}

\subsection{Cauchy estimates for holomorphic differentials}
\label{subsec:computations2}

The Anosov criteria we provide depend on the hyperbolic norm of $\nabla \alpha$, where $\alpha$ is a holomorphic $l-$differential, and $\nabla$ is the Chern connection induced by the hyperbolic norm $H$ in $K^l$.

Cauchy estimates allow us to upper bound $\norm{\nabla \alpha}$ in terms of the sup norm of $\alpha$ in thick regions of Teichm\"uller space.

Everything follows from the following local computation:

\begin{lemma}
   Let $\alpha$ be a holomorphic $l-$differential on $\mathbb{H}^2$, then:
   $$\norm{\nabla \alpha} (0) \leq \frac{1}{2} \coth \left( \frac{R}{2} \right) \cosh^{2l} \left( \frac{R}{2}\right) \sup_{D(0,R)} \norm{\alpha}.$$
\begin{proof}
   In the unit disk model, the line element of the hyperbolic metric is given by $\rho |dz|$, for $\rho = \frac{2}{1- |z|^2}$. 
   Notice that $\rho (0) = 2$, and $(\log \rho)_z (0) = 0$, thus $\norm{\nabla \alpha} (0) = \frac{1}{2^{l+1}} |\alpha_z| (0)$.
   
   Standard Cauchy estimates of holomorphic functions imply that $|\alpha_z (0)| \leq \frac{1}{R^{eu}} \sup_{D_{eu}(0,R^{eu})} |\alpha|$, where $D_{eu}$ denotes the Euclidean disk.
   Multiplying and dividing by $\frac{2}{1- (R^{eu})^2}$, plugging into the equation for $\norm{\nabla \alpha}$ and using $R^{eu} = \tanh \left( \frac{R}{2}\right)$, we get the desired upper bound.
\end{proof}
\end{lemma}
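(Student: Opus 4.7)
The plan is to pass to the Poincar\'e disk model and exploit the rotational symmetry of the hyperbolic metric at the origin, which makes the Chern connection on $K^l$ coincide with the Euclidean $\partial_z$ there; the bound then reduces to the classical Cauchy estimate for holomorphic functions, combined with explicit conversions between hyperbolic and Euclidean quantities.

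First, write the hyperbolic line element as $\rho|dz|$ with $\rho(z) = 2/(1-|z|^2)$, so $\rho(0) = 2$ and $\partial_z \log \rho(0) = 0$ by the radial ($|z|^2$-dependent) symmetry of $\rho$. The connection $1$-form of the Chern connection on $K^l$ is a multiple of $\partial_z \log \rho$, hence vanishes at the origin. Writing a holomorphic $l$-differential locally as $\alpha = a(z)\,dz^l$, this gives the clean identity $\norm{\nabla \alpha}(0) = |a'(0)|/\rho(0)^{l+1} = |a'(0)|/2^{l+1}$, the $(l+1)$-st power appearing because $\nabla \alpha$ lives in $K^{l+1}$.

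Next, apply the classical Cauchy estimate for the holomorphic function $a$ on a Euclidean disk to get $|a'(0)| \leq (R^{eu})^{-1}\sup_{D_{eu}(0,R^{eu})}|a|$. Finally, translate back to hyperbolic quantities: the standard identity $R^{eu} = \tanh(R/2)$ converts $(R^{eu})^{-1}$ into $\coth(R/2)$, and since $\rho$ is radially monotone with boundary value $2\cosh^2(R/2)$ on the hyperbolic circle of radius $R$ (using $1 - \tanh^2(R/2) = \cosh^{-2}(R/2)$), the pointwise relation $|a| = \rho^l \norm{\alpha}$ yields $\sup_{D_{eu}(0,R^{eu})}|a| \leq 2^l \cosh^{2l}(R/2)\sup_{D(0,R)}\norm{\alpha}$. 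Combining the three pieces and cancelling the $2^l$ factors produces the stated inequality.

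The whole argument is essentially bookkeeping, and there is no substantive obstacle. The only point worth flagging is the vanishing of $\partial_z \log \rho$ at $z=0$, which is what permits replacing the Chern covariant derivative by the Euclidean one without a correction term, and hence keeps the final constants in the closed, explicit form $\frac{1}{2}\coth(R/2)\cosh^{2l}(R/2)$ rather than as an unilluminating integral.
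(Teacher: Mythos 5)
Your proof is correct and follows essentially the same route as the paper's: pass to the disk model, use the vanishing of $(\log\rho)_z$ at the origin to identify $\norm{\nabla\alpha}(0)$ with $\frac{1}{2^{l+1}}|\alpha_z|(0)$, apply the classical Cauchy estimate, and convert back using $R^{eu}=\tanh(R/2)$. Your version is slightly more explicit about the conversion of $\sup|a|$ to $\sup\norm{\alpha}$ via the monotonicity of $\rho$, which the paper compresses into "multiplying and dividing by $\frac{2}{1-(R^{eu})^2}$," but the argument is the same.
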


\begin{coro}
   Given $\alpha$ a holomorphic $l-$differential on a compact hyperbolic Riemann surface $X$, then:
   $$\sup_X \norm{\nabla \alpha} \leq C(X) \sup_X \norm{\alpha},$$
    where $C(X)$ depends on the injectivity radius $\Inj(X)$ as follows:
   $$ C(X) = 
   \begin{cases} 
   \frac{1}{2} \coth \left( \frac{\Inj(X)}{2} \right) \cosh^{2l} \left( \frac{\Inj (X)}{2}\right) & \text{if } \tanh \left( \frac{\Inj}{2} \right) \leq \frac{1}{\sqrt{2l+1}} \\
   \frac{1}{2} \sqrt{2l+1} \left( 1 + \frac{1}{2l} \right)^l & \text{otherwise.}
   \end{cases}
   $$

\begin{proof}
   As long as $R \leq \Inj (X)$, we can take a local isometry from a Riemannian ball to a hyperbolic ball and apply the previous lemma.
   The precise constant $C(X)$ follows automatically from studying the function $f(x) = \frac{1}{2} \frac{1}{x} \left(\frac{1}{1-x^2} \right)^l$.
\end{proof}
\end{coro}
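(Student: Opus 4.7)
The plan is to reduce the global estimate on $X$ to the local one furnished by the previous lemma, and then to minimize the resulting prefactor over the allowed range of radii. Fix $z_0 \in X$. Because $R \leq \Inj(X)$ implies the exponential map $\exp_{z_0}$ restricts to a hyperbolic isometry from the disk $D(0,R) \subset T_{z_0} X \cong \mathbb{H}^2$ onto its image, we can pull back $\alpha$ and apply the previous lemma at the origin. This yields
\[
\norm{\nabla \alpha}(z_0) \leq \tfrac{1}{2}\coth(R/2)\cosh^{2l}(R/2)\sup_{D(z_0,R)}\norm{\alpha} \leq \tfrac{1}{2}\coth(R/2)\cosh^{2l}(R/2)\sup_X \norm{\alpha}.
\]
Taking the supremum over $z_0 \in X$ and then the infimum over $R \in (0,\Inj(X)]$ gives
\[
\sup_X \norm{\nabla \alpha} \leq \Big( \inf_{R \in (0,\Inj(X)]} \tfrac{1}{2}\coth(R/2)\cosh^{2l}(R/2) \Big) \sup_X \norm{\alpha},
\]
so the problem reduces to computing this infimum.

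The substitution $x = \tanh(R/2) \in (0,1)$ turns the prefactor into $f(x) = \frac{1}{2x(1-x^2)^l}$, and the constraint $R \leq \Inj(X)$ becomes $x \leq \tanh(\Inj(X)/2)$. A short calculation shows that the denominator $g(x) = x(1-x^2)^l$ satisfies $g'(x) = (1-x^2)^{l-1}(1-(2l+1)x^2)$, so $g$ has a unique interior maximum on $(0,1)$ at $x^\star = 1/\sqrt{2l+1}$, and therefore $f$ has a unique interior minimum at $x^\star$ with value $\tfrac{1}{2}\sqrt{2l+1}\,(1 + \tfrac{1}{2l})^l$.

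The two cases of $C(X)$ then correspond to whether the critical point $x^\star$ lies in the feasible interval. If $\tanh(\Inj(X)/2) \geq 1/\sqrt{2l+1}$, the minimum is attained at $x^\star$ and yields the second case. Otherwise, $f$ is monotonically decreasing on $(0,\tanh(\Inj(X)/2)]$, the minimum is attained at the right endpoint, and substituting back $R = \Inj(X)$ recovers the first case. There is no real obstacle here beyond this one-variable optimization; the only point requiring slight care is checking the monotonicity of $f$ on the subcritical interval, which follows immediately from the sign of $g'$.
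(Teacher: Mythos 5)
Your argument is correct and is exactly the paper's proof, just written out: localize via an isometric chart of radius $R \leq \Inj(X)$, apply the previous lemma, and optimize the prefactor over $R$ using the substitution $x = \tanh(R/2)$, which turns it into $f(x) = \frac{1}{2x(1-x^2)^l}$ with critical point $x^\star = 1/\sqrt{2l+1}$. The case split in $C(X)$ according to whether $x^\star$ is feasible is handled as the paper intends, so there is nothing to add.
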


Notice that when $\Inj (X) \to 0$, then $C(X) \approx \frac{1}{R}$.

\section{Construction of the Developing map}
\label{subsub:homogeneousspaces}
The goal of this section is to explain how developing data $(P_G, P_H, A)$ over a manifold $M$ gives rise to maps from the universal cover $\tilde{M}$ to $G/H$ equivariant with respect to a representation induced by $A$ (as defined in Section \ref{subsec:developing}).
The material is not new (see for instance \cite[Chapter 2]{Kobayashi1963Foundations}).

Assume first that $M$ is simply connected.
Since the bundle is flat and $M$ is simply connected, there exists a global trivialization $\Phi$ taking $A$-horizontal sections to constant sections.
This trivialization depends on the identification of the fiber $(P_G)_x$ at $x \in M$ with $G$. Parallel transport extends this identification to $\Phi$.

Without loss of generality, assume $P_H \subset P_G$. 
Notice that $\Phi ((P_H)_x) \cap \{x \} \times G$ is identified with a subset of $\{x\} \times G$ invariant under the right action of $H$, i.e., a left coset.
Projecting $M \times G$ to $M \times G/H$, we get that $\Phi (P_H)$ is the graph of a function $\Dev: M \rightarrow G/H$ which we call the \emph{developing map}.

Notice that the composition:

$$TP_H \rightarrow TP_G \xrightarrow{d \Phi} TM \times TG \rightarrow T G/H,$$

gives rise to an $H-$invariant form $\overline{\omega} \in \mathcal{A}^1 (P_H, \mathfrak{g}/\mathfrak{h})$ which descends to the differential of the developing map.
This shows that what we defined in Section \ref{subsec:developing} as the Maurer-Cartan form of the triple $(P_G, P_H,A)$ is identified with the derivative of the developing map.

Notice that the only choice we made in the construction was the trivialization $\Phi$.
Any other such trivialization will differ from the original one by an element $g \in G$, which will change $\Dev$ for the map $g \circ Dev : M \rightarrow G/H$.

If $M$ is not simply connected, one applies this discussion to $(p^* P_G,p^* P_H, p^*A)$, for $p : \tilde{M} \rightarrow M$ a choice of a universal covering map.
By construction, the developing map $\\Dev : \tilde{M} \rightarrow G/H$ will be $Deck (M) \cong \pi_1 (M)$, $\rho-$equivariant, where $\rho : \pi_1 (M) \rightarrow G$ is induced from the flat connection $A$.

\section{Root data}
\label{appendix:root}

In the following table, we compile some Lie-theoretic constants associated with complex simple Lie algebras relevant to our computations.
It contains the orbits of simple roots, together with $\norm {\Theta}^2$ and the minimal angle $\phi_\Theta$ between $h$ and the walls defined by $\Theta$.
The norms are taken with respect to the Killing form, which has the advantage that $\norm{\Theta_{L}}^2$ is always the inverse of the Coxeter number (see \cite[Chapter 22]{humphreys2012introduction}), once we know this, the Dynkin diagram encodes the value of $\norm{\Theta_{S}}$.

\begin{center}
\renewcommand{\arraystretch}{2.4}
\begin{tabular}{| >{\centering\arraybackslash}m{2.5cm} | >{\centering\arraybackslash}m{4.0cm} | >{\centering\arraybackslash}c | >{\centering\arraybackslash}c |}
\hline
\textbf{Type} & \textbf{Diagram} ($\Theta$ filled) & $ \norm{\Theta}^2$ & $\sin \phi_\Theta$ \\ \hline

$A_n$ & \dynkin{A}{*.*} & $\frac{1}{n+1}$ & $\sqrt{\frac{6}{n(n+1)(n+2)}}$ \\ \hline

$B_n$ (Long) & \dynkin{B}{*.*o} & $\frac{1}{2n-1}$ & $\sqrt{\frac{3}{n(n+1)(2n+1)}}$ \\ \hline

$B_n$ (Short) & \dynkin{B}{o.o*} & $\frac{1}{4n-2}$ & $\sqrt{\frac{6}{n(n+1)(2n+1)}}$ \\ \hline

$C_n$ (Short) & \dynkin{C}{*.**o} & $\frac{1}{2n+2}$ & $\sqrt{\frac{6}{n(2n-1)(2n+1)}}$ \\ \hline

$C_n$ (Long) & \dynkin{C}{o.oo*} & $\frac{1}{n+1}$ & $\sqrt{\frac{3}{n(2n-1)(2n+1)}}$ \\ \hline

$D_n$ & \dynkin{D}{*.***} & $\frac{1}{2n-2}$ & $\sqrt{\frac{3}{n(n-1)(2n-1)}}$ \\ \hline

$E_6$ & \dynkin{E}{6} & $\frac{1}{12}$ & $\sqrt{\frac{1}{312}}$ \\ \hline

$E_7$ & \dynkin{E}{7} & $\frac{1}{18}$ & $\sqrt{\frac{1}{798}}$ \\ \hline

$E_8$ & \dynkin{E}{8} & $\frac{1}{30}$ & $\sqrt{\frac{1}{2480}}$ \\ \hline

$F_4$ (Long) & \dynkin[edge length=0.5cm]{F}{**oo} & $\frac{1}{9}$ & $\sqrt{\frac{1}{156}}$ \\ \hline

$F_4$ (Short) & \dynkin[edge length=0.5cm]{F}{oo**} & $\frac{1}{18}$ & $\sqrt{\frac{1}{78}}$ \\ \hline

$G_2$ (Long) & \dynkin[edge length=0.5cm]{G}{o*} & $\frac{1}{4}$ & $\sqrt{\frac{1}{56}}$ \\ \hline

$G_2$ (Short) & \dynkin[edge length=0.5cm]{G}{*o} & $\frac{1}{12}$ & $\sqrt{\frac{3}{56}}$ \\ \hline

\end{tabular}
\end{center}

Here is how we compute the angle, and Davalo's constant:

\begin{description}
   \item[Computing $\phi_{min}$] Given $h_\alpha$ a simple coroot, we get that the angle $\phi_\alpha$ between $h$ and $\ker \alpha$ is given by $\sin \phi_\alpha = \frac{\kappa (h,h_\alpha)}{\sqrt{\kappa (h_\alpha,h_\alpha) \kappa (h,h)}}.$ It is well-known that given $\alpha \in \Sigma$, $\alpha (h) = ht (\alpha)$ (see \cite[Chapter 5]{serre2000complex}). 
   When $\alpha$ is simple, we get $1 = \alpha (h) = 2\frac{\kappa (h,h_\alpha)}{\kappa (h_\alpha,h_\alpha)}$.
   Thus:

   $$\sin \phi_\alpha = \frac{1}{2} \sqrt{\frac{\kappa (h_\alpha,h_\alpha)}{\kappa (h,h)}}.$$

   Notice that to minimize $\phi_\alpha$, we must pick a simple root $\alpha$ where $h_\alpha$ is as small as possible, i.e., we need to pick $\alpha$, the longest simple root. 

   \item[On Davalo's constant] Our Anosov criterion depends on the constant $c_\Theta$ defined in Theorem \ref{thm:davalosufficient}.
   To compute it, notice that given $\alpha \in \Sigma$ any root, $\alpha (h_\Theta)$ is always an integer.
   Furthermore, studying the root system one can check that there is always a root such that $\alpha (h_{\Theta_{L}}) = 1$, therefore:
   $$c_{\Theta_{S}} = \frac{m_{short}}{2 \norm{\Theta_{S}}} \geq \frac{1}{2 \norm{\Theta_{L}}} = c_{\Theta_{L}}.$$
   In fact, one can check that $m_{short} =1$, except in the $B_n$ case, where it is $2$.
\end{description}

\bibliographystyle{alpha}
\bibliography{references.bib}

\end{document}